\newtheorem*{theorem*}{Theorem}
\newtheorem{theorem}{Theorem}[section]
\newtheorem{dfn}[theorem]{Definition}
\newtheorem{proposition}[theorem]{Proposition}
\newtheorem{lemma}[theorem]{Lemma}
\newtheorem{claim}[theorem]{Claim}
\newtheorem{corollary}[theorem]{Corollary}
\newtheorem{conjecture}[theorem]{Conjecture}
\theoremstyle{remark}
\newtheorem{rem}[theorem]{Remark}
\theoremstyle{definition}
\newtheorem{remark}[theorem]{Remark}
\newtheorem{definition}[theorem]{Definition}
\newtheorem{question}[theorem]{Question}
\newtheorem{example}[theorem]{Example}
\numberwithin{equation}{section}
\renewcommand{\phi}{\varphi}
\newcommand{\cA}{\mathcal{A}}
\newcommand{\masha}[1]{{\color{blue} Masha says: #1}}
\def\XXint#1#2#3{{\setbox0=\hbox{$#1{#2#3}{\int}$}
\vcenter{\hbox{$#2#3$}}\kern-.5\wd0}}
\newcommand{\ad}{\operatorname{ad}}
\newcommand{\N}{\mathbb N}
\newcommand{\R}{\mathbb R}
\newcommand{\g}{\mathfrak{g}}
\newcommand{\HH}{\mathbb H}
\renewcommand{\ker}{\operatorname{Ker}}
\newcommand{\Lie}{\operatorname{Lie}}
\newcommand{\uno}{{\mathbbm{1}}}
\newcommand{\norm}[1]{\left\Vert#1\right\Vert}
\def\eps{\epsilon}
\def \dd {\mathrm{d}}
\newcommand{\vol}{\operatorname{vol}}
\newcommand{\sus}{\subseteq}
\newcommand{\at}[1]{\raise-.5ex\hbox{\ensuremath|}_{#1}}
\begin{document}

\title[Null sets in infinite-dimensional Carnot groups]{Notions of null sets in infinite-dimensional Carnot groups}

\author[Eldredge]{Nathaniel Eldredge{$^{\ast}$}}
\thanks{\footnotemark {$\ast$} Research was supported in part by
    Simons Foundation Grant \#355659.}
\address{N. Eldredge;
Department of Mathematical Sciences
\\
University of Northern Colorado
\\
Greeley, CO 80369, U.S.A.
}
\email{neldredge@unco.edu}

\author[Gordina]{Maria Gordina{$^{\dag}$}}
\thanks{\footnotemark {$\dag$} Research was supported in part by NSF Grant DMS-1954264.}
\address{M. Gordina;
Department of Mathematics
\\
University of Connecticut
\\
Storrs, CT 06269,  U.S.A.
}
\email{maria.gordina@uconn.edu}

\author[Le Donne]{Enrico Le Donne{$^{\ddag }$}}
\thanks{\footnotemark {$\ddag$} Research
was   supported  in part by
the Swiss National Science Foundation (grant 200021-204501 `\emph{Regularity of sub-Riemannian geodesics and applications}'),
by the European Research Council  (ERC Starting Grant 713998 GeoMeG `\emph{Geometry of Metric Groups}'),
and  by the Academy of Finland  (grant 322898 `\emph{Sub-Riemannian Geometry via Metric-geometry and Lie-group Theory}').
 }

\address{E. Le Donne;
University of Fribourg, Chemin du Mus\'ee~23, 1700 Fribourg, Switzerland \& Department of Mathematics and Statistics \\
P.O. Box 35, FI-40014 \\
University of Jyv\"askyl\"a, Finland}
\email{enrico.ledonne@unifr.ch}

\author[Li]{Sean Li}
\address{S. Li;
Department of Mathematics
\\
University of Connecticut
\\
Storrs, CT 06269,  U.S.A.
}
\email{sean.li@uconn.edu}

\begin{abstract}
We study several notions of null sets on infinite-dimensional Carnot groups. We prove that
a set is  Aronszajn null if and only if it is null with respect to measures that are convolutions of absolutely continuous (CAC) measures on Carnot subgroups. The CAC measures are the non-abelian analogue of cube measures. In the case of infinite-dimensional Heisenberg-like groups we also show that being null in the previous senses is equivalent to being null for all heat kernel measures.  Additionally, we show that infinite-dimensional Carnot groups that have locally compact commutator subgroups have the structure of Banach manifolds. There are a number of open questions included as well. 
\end{abstract}

\keywords{Infinite-dimensional Carnot groups, null sets, Aronszajn null, Gaussian measures, heat kernel, stratified Banach-Lie groups, infinite dimensional Heisenberg-like groups}

\subjclass{Primary \textcolor[rgb]{1.00,0.00,0.00}{
    22E66 
  };
  Secondary  \textcolor[rgb]{1.00,0.00,0.00}{
    28C10, 
    28C20, 
    22E25, 
    53C17 
  }
}

\date{\today}
\maketitle
\setcounter{tocdepth}{3}

\tableofcontents


\section{Introduction}

One of the main challenges in analysis on infinite-dimensional Banach spaces is that there is no canonical reference measure, and therefore, no corresponding canonical notion of a null set.  Instead, several different notions have been proposed.  These have important applications, such as in generalizations of Rademacher's theorem which show that the points of non-differentiability of a Lipschitz function are in some sense a null set \cite{LindenstraussPreiss2001}.

This raises a natural question: how do different notions of null sets relate to one another?  In this direction, we focus on a theorem due to M.~Cs\"{o}rnyei \cite{Csornyei1999a}, which states that three of them coincide: specifically, the notions of Aronszajn null, cube null, and Gaussian null sets.

A Borel set $E$ of a separable real Banach space $X$ is said to be \emph{Aronszajn null} if for every sequence $\{v_i\}_{i \in \N} \subset X$ with a dense span, the set $E$ can be decomposed into Borel subsets $\bigcup_{i=1}^\infty E_i$ so that
\begin{align*}
  m(\{t \in \R : x + tv_i \in E_i\}) = 0, \text{ for all } x \in X, i \in \N,
\end{align*}
where $m$ is the Lebesgue measure on $\R$.  Next, non-degenerate \emph{cube measures} on $X$ are distributions of $X$-valued random variables of the form $x + \sum_{i=1}^\infty X_i v_i$, where $x \in X$, $\{X_i\}_{i = 1}^\infty$ are iid uniform random variables on $[-1,1]$, and $\{v_i\}_{i=1}^\infty \subset X$ are sequences of vectors whose span is dense in $X$ and $\sum_{i=1}^\infty \|v_i\| < \infty$.  A Borel set $E$ is then said to be \emph{cube null} if it is null with respect to every non-degenerate cube measure.  Finally, a Borel set is \emph{Gaussian null} if it is null with respect to every non-degenerate Gaussian measure.  It is not hard to see that for each of these notions, the corresponding collection of null sets is a $\sigma$-ideal containing no nonempty open sets; all notions of null sets studied here will have this property.

In the present work we consider different notions of null sets when  Banach spaces are  replaced by a special class of infinite-dimensional topological groups. In such a setting we need to introduce new notions of null sets in the absence of some linear structure as the one needed to define cube and Gaussian measures. We first recall a class of infinite-dimensional groups for which the notion of Aronszajn null sets has been studied. In \cite{LeDonneLiMoisala2021}, the authors introduced \emph{infinite-dimensional Carnot groups}, a class of infinite-dimensional topological groups modeled after Carnot groups and Banach spaces.  We recall their definition, starting with their notion of (metric) scalable groups.

\begin{definition}[Scalable group] \label{def:scalable:group}
A  \emph{scalable group} is a pair $\left( G, \delta \right)$, where $G$ is a topological group and $\delta \colon \mathbb{R} \times G \to  G$  is a continuous  map such that  $\delta_{\lambda}:=\delta(\lambda, \cdot) \in	\operatorname{Aut}(G)$ for all $\lambda \in \R \setminus \{ 0 \} $,
\begin{equation}\label{eq:composition_of_dilations}
 \delta_\lambda \circ \delta_\mu =\delta_{\lambda \mu} \text{ for all } \lambda, \mu\in \R,
\end{equation}
and $\delta_0 \equiv e_G$, where $e_G$ is the identity element of $G$.
\end{definition}
It follows that $\delta_1$ is the identity map of $G$. We say that $H$ is a \emph{scalable subgroup} of a scalable group $\left(G, \delta \right)$ if $H$ is a closed subgroup of $G$  and $\delta_{\lambda}\left( H \right)= H$ for all $\lambda \in  \mathbb{R}\backslash \left\{ 0 \right\}$.

\begin{definition}[Metric scalable group] \label{def:metric:scalable:group}
A \emph{metric scalable group} is a triple $(G, \delta, d)$, where  $(G,\delta)$ is a scalable group and $d$ is an admissible left-invariant distance on $G$ such that
  \[
    d(\delta_t(p),\delta_t(q)) = |t| d(p,q), \qquad \text{ for all } t\in \R.
  \]
\end{definition}
By \emph{admissible} we mean that the metric induces the original topology on $G$.

We now introduce the notion of a filtration by (finite-dimensional) Carnot subgroups.

\begin{definition}[Filtration by Carnot subgroups]\label{d.filtration}
We say that a  scalable group $G=\left( G, \delta \right)$ is \emph{filtrated by Carnot subgroups} if there exists a sequence $\left\{ N_m \right\}_{m \in \N}$ of scalable subgroups of $G$ such that each $N_m$ has a Carnot group structure, $N_m<N_{m+1}$, and $G$ is the closure of $\cup_{m\in\N} N_m$.
In this case, we say that the sequence $\left\{ N_m \right\}_{m \in \N}$ is \emph{a filtration of the  scalable group $G$ by Carnot subgroups}.
\end{definition}

Finally, we give the definition of infinite-dimensional Carnot groups.

\begin{definition}\label{def:infinite-dim_Carnot_gp}
We call a complete metric scalable group that admits a filtration by Carnot subgroups an \emph{infinite-dimensional Carnot group}.
\end{definition}

One of the goal of \cite{LeDonneLiMoisala2021} was to extend the notion of Aronszajn null sets to these infinite-dimensional Carnot groups as we define in Definition~\ref{d.ArNullCarnot}.  We will recall properties of these groups in Section~\ref{s.InfDimCarnotGroups}, but we remark now that these groups can be wild as they need not be nilpotent nor Banach manifolds.  We will, however, give a sufficient condition for when such groups become Banach manifolds in Theorem~\ref{th:IDCG-Banach}.

In a parallel line of research, there have been studies of Brownian motion on \emph{infinite-dimensional Heisenberg-like groups}, which were introduced in \cite{DriverGordina2008} and turn out to be a subclass of the aforementioned infinite-dimensional Carnot groups.  The endpoint distributions of these Brownian motions are called \emph{heat kernel measures}.  As heat kernel measures and Gaussian measures are the same objects in the Banach space setting, heat kernel measures can be viewed as a generalization of Gaussian measures in the non-commutative setting.

In this paper we focus on \emph{hypoelliptic} heat kernel measures, which were studied in \cite{BaudoinGordinaMelcher2013, DriverEldredgeMelcher2016}.  Using regularity results from those articles, we show that the \emph{heat kernel null} sets, i.e. those which are null for every hypoelliptic heat kernel measure, coincide with the Aronszajn null sets of \cite{LeDonneLiMoisala2021}.

\begin{theorem*}[Theorem~\ref{A-iff-HK}]
A Borel set in an infinite-dimensional Heisenberg-like group is Aronszajn null if and only if it is heat kernel null.
\end{theorem*}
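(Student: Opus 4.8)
The plan is to prove the two implications separately, working directly from the definition of Aronszajn null sets (Definition~\ref{d.ArNullCarnot}). The reason the theorem should hold --- and what makes it parallel the main theorem of the paper --- is that hypoelliptic heat kernel measures share with CAC measures two structural properties. First, by the regularity results of \cite{BaudoinGordinaMelcher2013, DriverEldredgeMelcher2016}, a hypoelliptic heat kernel measure $\nu$ on an infinite-dimensional Heisenberg-like group $G$ is quasi-invariant under left and right translations by the dense Cameron--Martin subgroup $G_{CM}$, with jointly measurable Radon--Nikodym cocycles; hence for any closed subgroup $N$ of $G_{CM}$ the disintegration of $\nu$ over the left cosets $G/N$ has conditional measures which, transported to $N$, are mutually absolutely continuous with the Haar measure of $N$. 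Moreover $G_{CM}$ contains every horizontal Cameron--Martin direction, and --- elementary from the Wiener-space construction --- the admissible inner product can be chosen so that a prescribed sequence of horizontal group elements, together with a prescribed increasing sequence of finite-dimensional Carnot subgroups, all lie in $G_{CM}$. Second, on a finite-dimensional Carnot group the hypoelliptic heat kernel is absolutely continuous with respect to Haar measure with a smooth, strictly positive density (H\"{o}rmander's theorem and positivity of hypoelliptic heat semigroups). Informally, a hypoelliptic heat kernel measure disintegrates along a suitable increasing sequence of finite-dimensional Carnot subgroups into conditionals mutually absolutely continuous with Haar measure on the cosets, exactly as a CAC measure does; this common property is what reduces the proof to a Fubini argument in one direction and an adaptation of Cs\"{o}rnyei's decomposition in the other.

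For ``Aronszajn null $\Rightarrow$ heat kernel null'', let $\nu$ be a hypoelliptic heat kernel measure and $\{e_i\}_{i\in\N}$ an orthonormal basis of its horizontal Cameron--Martin space; this sequence has dense span, it is admissible in Definition~\ref{d.ArNullCarnot}, and $e_i\in G_{CM}$ for all $i$. Given an Aronszajn null Borel set $E$, write $E=\bigcup_i E_i$ with $E_i$ Borel and $m(\{t\in\R : g\cdot\delta_t(e_i)\in E_i\})=0$ for every $g\in G$ and $i\in\N$. For fixed $i$ the one-parameter subgroup $\{\delta_t(e_i):t\in\R\}$ acts on $G$ by right translations preserving the $\nu$-measure class, so disintegrating $\nu$ over the orbit space of this $\R$-action produces conditional measures which, transported to $\R$ along $t\mapsto g\cdot\delta_t(e_i)$, are absolutely continuous with respect to Lebesgue measure; hence $\nu(E_i)=0$, and summing gives $\nu(E)\le\sum_i\nu(E_i)=0$. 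The only point needing care is the construction of the disintegration and the Lebesgue-absolute continuity of its conditionals, both routine given the cited quasi-invariance.

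For the converse, ``heat kernel null $\Rightarrow$ Aronszajn null'', fix a horizontal spanning sequence $\{v_i\}$ as in Definition~\ref{d.ArNullCarnot} and an increasing sequence of finite-dimensional Carnot subgroups $N_m$ whose horizontal layers are spanned by initial segments of $\{v_i\}$ (say $v_1,\dots,v_{k_m}$) and whose union is dense in $G$; choose the admissible inner product so that every $N_m$ is contained in $G_{CM}$, and let $\nu$ be the associated heat kernel measure, so $\nu(E)=0$ by hypothesis. Disintegrating $\nu$ over the left cosets $G/N_m$, the conditionals are mutually absolutely continuous with the Haar measure of $N_m$, so $E\cap gN_m$ is Haar-null in $N_m$ for $\nu$-almost every coset $gN_m$. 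Each coset is invariant under the flow $g\mapsto g\cdot\delta_t(v_i)$ for $i\le k_m$, so every $v_i$-line of $G$ lies inside a single $N_m$-coset; one may therefore decompose each slice $E\cap gN_m$ coset-by-coset using the finite-dimensional fact that a Haar-null Borel set in the Carnot group $N_m$ is a finite union of sets null on every $v_i$-line with $i\le k_m$, do this measurably in the coset, and then let $m\to\infty$ while tracking the sets that survive each finite-dimensional step, in order to assemble the global Borel decomposition $E=\bigcup_i E_i$ demanded by Definition~\ref{d.ArNullCarnot}.

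I expect this converse to be the main obstacle. Once the disintegration property of the first paragraph is in hand, the combinatorics of assembling the global decomposition is of the same nature as in Cs\"{o}rnyei's proof \cite{Csornyei1999a}, and is in effect already carried out in the proof of the main theorem for CAC measures; but transplanting it here requires genuine care on two fronts: the non-commutativity of $G$, so that the conditionals are not literal product measures and the ``lines'' are integral curves of left-invariant fields rather than affine segments; and the bookkeeping of the remainder sets surviving the $m$-th stage, which must be shown to be re-absorbed by the same scheme in the limit, together with the measurability of the family of coset-wise decompositions. The other potentially delicate point is verifying that hypoelliptic heat kernel measures genuinely enjoy the stated disintegration property --- which is precisely where the quasi-invariance, smoothness and positivity estimates of \cite{BaudoinGordinaMelcher2013, DriverEldredgeMelcher2016} enter essentially.
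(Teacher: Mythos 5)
Your forward direction (Aronszajn null $\Rightarrow$ heat kernel null) is essentially the paper's argument: quasi-invariance of $\nu$ under right translation by the dense, dilation-invariant Cameron--Martin space $H \subset V_1(G)$ (Theorem~\ref{heis-qi}), applied to a Carnot-spanning sequence chosen inside $H$, plus Fubini. Note, though, that the paper's Lemma~\ref{A-null-qi} avoids disintegration entirely: one integrates the identity $\int_{\R} 1_{E_k}(g\delta_t(Y_k))\,dt = 0$ against $\nu(dg)$ and concludes by Fubini that $(R_{\delta_t(Y_k)})_{\ast}\nu$ annihilates $E_k$ for almost every $t$, whence $\nu(E_k)=0$ by quasi-invariance. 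This sidesteps the existence and regularity of conditional measures over the orbit space that your version would still have to establish.

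The converse as you propose it has a genuine gap that I do not believe can be repaired along the route you describe. Disintegrating a single heat kernel measure $\nu$ over the left cosets $G/N_m$ (even granting that the conditionals are equivalent to Haar measure) only yields that $E \cap gN_m$ is Haar-null for $\nu$-\emph{almost every} coset; the same holds for each left translate of each heat kernel measure, and no countable family of such ``almost every coset'' statements upgrades to the ``for \emph{all} $g \in G$'' demanded by the definition of $(N_m)_m$-null sets and by Proposition~\ref{th:ldlm}. This quantifier gap is precisely the content of Cs\"{o}rnyei-type theorems, and the paper closes it by arguing contrapositively: if $E \notin \mathcal{A}(\mathcal{X})$, Theorem~\ref{th:construction} produces a point $x \in E$ and scales $\epsilon_1, \epsilon_2, \dots$ chosen \emph{adaptively} to $E$ (via the ``bad set'' analysis and Lemma~\ref{borel-decomp}) so that the limiting CAC measure charges $x^{-1}E$; Theorem~\ref{GH-implies-A} arranges this CAC measure to be absolutely continuous with respect to a Gauss--Haar measure $\gamma \times m$; and the crucial regularity input, Theorem~\ref{HK-GH} from \cite{DriverEldredgeMelcher2016} (mutual absolute continuity of the infinite-dimensional hypoelliptic heat kernel with $\gamma \times m$), transfers the positivity to a heat kernel measure. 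Your sketch never invokes this last result, and the ``bookkeeping of the remainder sets surviving the $m$-th stage'' that you defer is not a technicality to be absorbed later: it is exactly the point where a direct decomposition argument starting from the nullity of finitely or countably many fixed measures breaks down, and where the adaptive construction must be used instead.
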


However, in the setting of general infinite-dimensional Carnot groups, the additional structure needed to construct Brownian motion in a canonical way is not present.  Thus, we introduce another notion of null sets of infinite-dimensional Carnot groups, namely, \emph{CAC null}.  These are Borel sets that are null with respect to a class of measures we call \emph{CAC measures} which are given by convolutions of measures that are absolutely continuous with respect to the Haar measures on the finite-dimensional Carnot subgroups of a filtration (see Definition~\ref{d.CAC}). These measures can be thought of as rough analogues of cube measures as we explain in  Remark~\ref{r.CACversusCube}.  We will prove the following theorem.

\begin{theorem*}[Theorem~\ref{th:aron-cac}]
A Borel set in an infinite-dimensional Carnot group is Aronszajn null if and only if it is CAC null.
\end{theorem*}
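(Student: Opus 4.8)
The plan is to adapt the scheme of Cs\"ornyei's proof \cite{Csornyei1999a} that Aronszajn null, cube null and Gaussian null coincide for separable Banach spaces, proving the two implications separately. Two new features govern the argument: the group is non-commutative, so a convolution $\rho_1 * \rho_2 * \cdots$ depends on the order of its factors; and an infinite-dimensional Carnot group need not be a manifold, so every coordinate computation has to be performed inside one of the finite-dimensional Carnot subgroups of a filtration and only then transferred to $G$. Since in a general infinite-dimensional Carnot group there is no Gaussian (or heat kernel) measure to play the role Cs\"ornyei gave them, CAC measures must be used directly on both sides.

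\textbf{Aronszajn null $\Rightarrow$ CAC null.} Let $E$ be Aronszajn null and let $\mu$ be a (non-degenerate) CAC measure. Pick out one absolutely continuous factor and split $\mu = \rho * \tau$, where (after absorbing any base point) $\rho$ is absolutely continuous with respect to a translate of the Haar measure of a finite-dimensional Carnot subgroup $H$ of the filtration, and $\tau$ is a probability measure on $G$. Then
\[
  \mu(E)=\int_G \rho\!\left(Ey^{-1}\right)d\tau(y),
\]
so it suffices to show $\rho(Ey^{-1})=0$ for every $y\in G$, i.e.\ that $Ey^{-1}$ meets (a translate of) $H$ in a Haar-null set. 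Conjugation by $y$ carries $Ey^{-1}\cap H$ isomorphically, and up to a scalar multiple of Haar measure, onto $\left(y^{-1}E\right)\cap\left(y^{-1}Hy\right)$; and $y^{-1}E$ is again Aronszajn null (left-invariance of the $\sigma$-ideal, immediate from the definition), while an Aronszajn null set meets every finite-dimensional Carnot subgroup in a Haar-null set and, in such a subgroup, Haar-null is the same as Aronszajn null \cite{LeDonneLiMoisala2021}. Hence $\rho(Ey^{-1})=0$ and $\mu(E)=0$. (The finite-dimensional Carnot subgroups here are unimodular, which is what lets one move the two-sided translations past Haar measure.)

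\textbf{CAC null $\Rightarrow$ Aronszajn null.} We argue the contrapositive: given $E$ that is not Aronszajn null, we produce a non-degenerate CAC measure charging $E$. Fix a sequence $\{v_i\}$ of horizontal directions with dense span for which the decomposition required by Definition~\ref{d.ArNullCarnot} does not exist, so that no Borel decomposition $E=\bigcup_i E_i$ has each $E_i$ null in the direction $v_i$. Consider the family of non-degenerate CAC measures $\delta_{x_0} * \rho_1 * \rho_2 * \cdots$ with $x_0\in G$ arbitrary and $\rho_i$ a small (so the convolution converges) absolutely continuous measure on a finite-dimensional Carnot subgroup of the filtration containing $\exp(\R v_i)$, and suppose for contradiction that every such measure vanishes on $E$. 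Conditioning on all but the first $n$ factors and applying Fubini inside the finite-dimensional Carnot subgroup generated by $v_1,\dots,v_n$, the vanishing of these measures says that, after fixing the other directions, $E$ becomes null along the $v_n$-direction; a selection argument of the kind Cs\"ornyei uses --- assembling, by martingale-type convergence along the filtration, a single decomposition out of these conditional statements --- then yields a Borel decomposition $E=\bigcup_i E_i$ with $E_i$ null in direction $v_i$, contradicting the choice of $\{v_i\}$. So some such CAC measure charges $E$, i.e.\ $E$ is not CAC null.

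\textbf{Main obstacle.} The first implication reduces painlessly to finite-dimensional facts about Aronszajn null sets from \cite{LeDonneLiMoisala2021}; the work is in the converse. There the non-commutativity bites hardest: ``conditioning on the other factors'' of $\rho_1 * \cdots * \rho_n * \cdots$ multiplies $E$ on both the left and the right, and conjugating a horizontal direction $v_n$ past the surrounding factors pushes it out of the first layer, so the Fubini slices of the conditioned set are lines whose directions are not among the prescribed $v_i$. Reconciling this with the tight bookkeeping the selection argument needs --- pieces $E_i$ that are null in precisely the directions $v_i$, with $\bigcup_i E_i=E$ --- is the heart of the proof, and is where the filtration is exploited to localize everything inside finite-dimensional Carnot subgroups, in which the adjoint action and Haar measure are under control. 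A secondary, ever-present nuisance is that $G$ has no coordinates of its own, so nothing can be computed ``on $G$'' directly.
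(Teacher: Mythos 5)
Your overall strategy (adapt Cs\"ornyei's argument, one implication per direction) matches the paper's, but both halves have concrete problems. In the easy direction your splitting $\mu = \rho \ast \tau$ is oriented the wrong way: a CAC measure is the weak limit of $\nu_k \ast \cdots \ast \nu_1$, so the absolutely continuous block sits on the \emph{right}, $\mu = \eta \ast (\nu_m \ast \cdots \ast \nu_1)$, and with convention \eqref{def_convo} this yields $\mu(E) = \int_G (\nu_m \ast \cdots \ast \nu_1)(x^{-1}E)\, d\eta(x)$ --- \emph{left} translates, exactly what Proposition~\ref{th:ldlm} controls. Your version produces right translates $\rho(Ey^{-1})$ with $y$ ranging over the support of the tail, i.e.\ over all of $G$, and the conjugation you propose lands in $y^{-1}Hy$, which in general is not a scalable subgroup and is not generated by horizontal elements (conjugation does not preserve $V_1$), so the finite-dimensional fact you invoke does not apply to it; unimodularity only absorbs right translations by elements of $H$ itself. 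Moreover a single middle factor $\nu_i$ cannot be pulled to one end of a non-commutative infinite convolution; the paper instead peels off the entire initial block $\nu_m \ast \cdots \ast \nu_1$, which Lemma~\ref{l:cack-ac} shows is itself one absolutely continuous measure on $H_m$ (that is where unimodularity is genuinely used, since there the right translations are by elements of $H_m$). Both issues are repairable, but as written the argument does not go through.

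The hard direction is a plan rather than a proof: the ``selection argument of the kind Cs\"ornyei uses'' is precisely the content that must be supplied. Two specific omissions. First, passing from ``every measure in the family kills $E$'' to a decomposition (equivalently, from a non-decomposable $E$ to a charging measure) requires comparing the weak limit of the finite convolutions with their values on the Borel set $E$; since $E$ need not be closed, the portmanteau theorem only controls $\overline{E}$, and the paper needs a nontrivial descriptive-set-theoretic device (Lemma~\ref{borel-decomp}, parametrizing $E$ by a closed subset of Baire space and shrinking $E$ along the induction) to close this gap --- your ``martingale-type convergence'' does not address it, and the paper explicitly flags it as a detail missing from the literature (Section~\ref{aronszajn-tight}). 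Second, the step ``after fixing the other directions, $E$ becomes null along the $v_n$-direction'' is carried out in the paper by a density-point argument (Claim~\ref{key-claim}): the set of ``bad'' starting points is shown to lie in $\mathcal{A}(X_1,\ldots,X_{n+1})$ because at a density point the approximate-identity property \eqref{eq:approximation} of the dilated measures forces a contradiction between \eqref{measure-small} and \eqref{measure-large}. You correctly identify non-commutativity and the retreading of $H_{i-1}$ by $\nu_i$ as the main obstacles, but you do not resolve them; the paper's resolution is precisely to use measures supported on all of $H_i$ and to arrange the convolutions so that only left translates of $E$ ever appear.
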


While the focus of our paper is on different notions of null sets, we also establish connections between infinite-dimensional Carnot groups and other classes of infinite-dimensional groups such as Banach-Lie groups. Many previous infinite-dimensional versions of Carnot groups were homeomorphic to Banach spaces.  In particular, the groups studied in \cite{DriverGordina2008, Melcher2009a, MagnaniRajala2014} are Banach-Lie groups.  While most of these groups are also infinite-dimensional Carnot groups as defined in  \cite{LeDonneLiMoisala2021}, it is not true that all infinite-dimensional Carnot groups are Banach-Lie groups.  We show that in the case when the commutator subgroup of an infinite-dimensional Carnot group is locally compact, then it is a stratified Banach-Lie group (definition given in Section \ref{sec:Banach}). Note that  in \cite{Melcher2009a} heat kernel measures were constructed on higher step Banach-Lie groups, but we do not know if these measures are absolutely continuous to measures we can analyze more easily such as Gaussian-Haar measures in Section~\ref{s.HM}.

The paper is organized as follows. Section~\ref{s.InfDimCarnotGroups} reviews basic facts about in\-fi\-nite-di\-men\-sion\-al Carnot groups, and stratified Banach-Lie groups including infinite-dimensional Heisenberg-like groups. In Section~\ref{s.Quotients} we study the quotients of infinite-dimensional Carnot groups; in particular, we show that their abelianizations are Banach spaces, a fact we will use throughout the paper. In Section~\ref{s.LCCommutators} we show that an infinite-dimensional Carnot group with a locally compact commutator subgroup is a stratified Banach-Lie group. A non-commutative analogue of cube measures and their null sets are defined and studied in Section~\ref{s.CAC}. Section~\ref{s.HM} gives the necessary background for studying heat kernel measure null sets in  Section~\ref{heat-kernel}.  We end with a discussion of open questions and conjectures in Section~\ref{s.Questions}.

\section{Preliminaries}

\subsection{Infinite-dimensional Carnot groups}\label{s.InfDimCarnotGroups}

We introduced the definition of scalable and infinite-dimensional Carnot groups in the introduction.  We now introduce the notion of a (finite-dimensional) Carnot group that we skipped in the introduction.

Every (finite-dimensional) Carnot group naturally has structure of a scalable group, where by a Carnot group $G$ we mean a simply connected Lie group whose Lie algebra $\Lie(G)$ is equipped with a stratification $\Lie(G) = V_1 \oplus\cdots\oplus V_s$. The stratification of $\Lie(G)$ is unique up to an isomorphism, see \cite[Proposition 2.2.10]{BonfiglioliLanconelliUguzzoniBook} or \cite{LeDonne2017}, and it defines a family of dilations on $G$. Indeed, by \cite[Remark 1.3.26]{BonfiglioliLanconelliUguzzoniBook} there is a family of Lie group homomorphisms corresponding to the Lie algebra scalings defined by $\delta^{\ast}_\lambda(X) = \lambda^k X$ for $X \in V_{k}$ and $\lambda\in \R \setminus \{0\}$. Such a group can be metrized as a metric scalable group, and the metric is unique up to a bi-Lipschitz equivalence. Vice versa, we say that a scalable group $(G,\delta)$ has a Carnot group structure if there exists a Carnot group that is isomorphic to $G$ as a topological group and whose dilations given by the stratification coincide with $\delta$.  For the rest of this paper, any occurrence of Carnot groups will be assumed to be finite-dimensional unless explicitly stated otherwise.

Recall that infinite-dimensional Carnot groups contain a dense filtration $\{N_m\}_{m \in \N}$ by Carnot subgroups.  Necessarily, a metric scalable group that admits a filtration by Carnot subgroups is separable. Note that an infinite-dimensional Carnot group $G$ cannot be equal to its filtration  $\cup_{m\in\N}N_m$ unless $G = N_m $ for some $m \in \N$. Indeed, each $N_m \sus G$ is nowhere dense and hence the union $\cup_{m\in\N}N_m$ is of first category in $G$.

We would like to have sufficient geometric conditions for a scalable group to admit  a filtration by Carnot subgroups.  For a scalable group $G$ define its $k$\emph{th layer} as
\begin{equation}\label{V1def}
  V_{k}(G) := \{ p\in G : t \mapsto \delta_{t^{1/k}} (p) \text{ is a one-parameter subgroup}\},
\end{equation}
where by a one-parameter subgroup we mean a continuous group homomorphism from the additive group $\mathbb{R}$ to $G$. This means that for all $t,s \in \R$ and $p \in V_{k}(G)$ we have
\[
\delta_{(t+s)^{1/k}}(p) = \delta_{t^{1/k}}(p)\delta_{s^{1/k}}(p).
\]
Note that if $p\in V_1(G)$, then $ \delta_r(p) \in V_1(G) $ for all $ r\in \R $, since
\[
\delta_{t+s}(\delta_r(p)) = \delta_{t r+s  r}(p) = \delta_{t r}(p)\delta_{s r}(p) = \delta_t(\delta_r(p))\delta_s(\delta_r(p)).
\]

\begin{lemma} \label{l:V1-complete}
Let $G$ be an infinite-dimensional Carnot group, then $V_{k}(G)$ is closed for all $k$.
\end{lemma}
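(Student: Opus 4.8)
The plan is to realise $V_k(G)$ as an intersection of equalizers of continuous maps, so that closedness becomes a soft consequence of the continuity of the dilations and of the group multiplication, together with the fact that $G$ is Hausdorff (being metrizable). No deep structure of infinite-dimensional Carnot groups is really needed here: only that $\delta \colon \R \times G \to G$ is continuous, that multiplication $G \times G \to G$ is continuous, and that $G$ is Hausdorff.

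First I would observe that, for a fixed $p \in G$, the curve $t \mapsto \delta_{t^{1/k}}(p)$ is automatically continuous — it is the composite of $t \mapsto t^{1/k}$ with the continuous map $\lambda \mapsto \delta_\lambda(p)$ — and it sends $0$ to $\delta_0(p) = e_G$. Since any homomorphism out of $(\R,+)$ (or, for even $k$, out of the semigroup $([0,\infty),+)$) that is continuous at the origin is continuous everywhere, the condition ``$t \mapsto \delta_{t^{1/k}}(p)$ is a one-parameter subgroup'' is equivalent to the purely algebraic family of identities
\[
  \delta_{(t+s)^{1/k}}(p) = \delta_{t^{1/k}}(p)\,\delta_{s^{1/k}}(p),
\]
ranging over all admissible $t,s$ (over all of $\R$ when $k$ is odd; over all of $[0,\infty)$ when $k$ is even, in which case the resulting one-parameter semigroup extends uniquely to a one-parameter subgroup of $G$).

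Then, for each fixed pair $(t,s)$, both $p \mapsto \delta_{(t+s)^{1/k}}(p)$ and $p \mapsto \delta_{t^{1/k}}(p)\,\delta_{s^{1/k}}(p)$ are continuous maps $G \to G$ (the latter a composition of two dilations with the multiplication map). Because $G$ is metrizable, hence Hausdorff, the locus where two continuous $G$-valued maps agree is closed; thus each identity above defines a closed subset $C_{t,s} \subseteq G$, and $V_k(G) = \bigcap_{t,s} C_{t,s}$ is an intersection of closed sets, hence closed.

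I do not anticipate a real obstacle. The only points deserving a sentence of care are: (i) checking that the parametrization of the candidate one-parameter subgroup is continuous, so that membership in $V_k(G)$ genuinely reduces to the algebraic identities above — this is exactly where continuity of $\delta$ enters; and (ii) the bookkeeping for even $k$, where $t^{1/k}$ is only defined for $t \ge 0$ and ``one-parameter subgroup'' must be interpreted via the extension of the one-parameter semigroup $t \mapsto \delta_{t^{1/k}}(p)$. As an alternative route one could note that $V_k(G) \cap N_m$ is the $k$-th stratum of the Carnot group $N_m$, hence closed in $N_m$; but since $\bigcup_m N_m$ is only dense in $G$ this does not by itself yield closedness in $G$, so the direct equalizer argument is the cleaner one.
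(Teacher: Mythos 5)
Your proof is correct and is essentially the paper's argument in different clothing: the paper verifies sequential closedness by passing the identity $\delta_{(t+s)^{1/k}}(v_i)=\delta_{t^{1/k}}(v_i)\delta_{s^{1/k}}(v_i)$ to the limit using continuity of $\delta$ and of multiplication, which in a metric space is the same soft fact as your observation that $V_k(G)$ is an intersection of equalizers of continuous maps. Your extra care about automatic continuity of the candidate one-parameter subgroup and the even-$k$ bookkeeping is a welcome refinement of a point the paper leaves implicit, but it does not change the substance of the argument.
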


\begin{proof}

Let $\{v_i\}_{ i \in \mathbb{N}}$ be a   sequence in $V_k(G)$, and let  $v \in G$ for which $v_i \rightarrow  v$.
Let $t,s \in \R$, then by continuity of $\delta$ we have
  \begin{align*}
    \delta_{(t+s)^{1/k}}(v) = \lim_{i \to \infty} \delta_{(t+s)^{1/k}}(v_i) = \lim_{i \to \infty} \delta_{t^{1/k}}(v_i)\delta_{s^{1/k}}(v_i) = \delta_{t^{1/k}}(v)\delta_{s^{1/k}}(v),
  \end{align*}
  where we also used the continuity of group multiplication in the last equality.  Thus, $t \mapsto \delta_{t^{1/k}}(v)$ is a one-parameter subgroup, and so $v \in V_k(G)$.
\end{proof}

We say that a set \emph{$ A \sus G $ generates $G$ as a scalable group} or simply that \emph{$A$ generates $G$} if $G$ is the closure of the group generated by $ \{\delta_t(a) : a\in A, \, t\in \R \} $.  Note that $V_1(G)$ is completely analogous to the (exponential image of the) generating first layer of a finite-dimensional Carnot group.  Moreover, the following proposition holds.

\begin{proposition}[{\cite[Proposition 1.10]{LeDonneLiMoisala2021}}]\label{exist:filtration}
  Let $G$ be a scalable group. If $G$ admits a filtration by Carnot subgroups then $ V_1(G) $ generates $G$ as a scalable group. Vice versa, if $G$ is nilpotent, $V_1(G)$ is separable, and  $ V_1(G) $ generates $G$ as a scalable group, then $G$ admits a filtration by Carnot subgroups.
\end{proposition}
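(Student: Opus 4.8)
The forward implication is a direct unravelling of the definitions, so I would dispatch it first. Given a filtration $\{N_m\}_{m\in\N}$ of $G$ by Carnot subgroups, write $\Lie(N_m)=W^m_1\oplus\cdots\oplus W^m_{s_m}$ for the stratification whose induced dilations agree with $\delta|_{N_m}$. If $X\in W^m_1$ then $\delta_t(\exp X)=\exp(tX)$, so $t\mapsto\delta_t(\exp X)$ is a one-parameter subgroup of $N_m\le G$; hence $\exp(W^m_1)\subseteq V_1(G)$, and moreover $\delta_t(\exp X)=\exp(tX)\in\exp(W^m_1)$ for every $t$. Since $W^m_1$ Lie-generates $\Lie(N_m)$ and $N_m$ is connected, $\exp(W^m_1)$ generates $N_m$, so the group generated by $\{\delta_t(a):a\in V_1(G),\ t\in\R\}$ contains $\bigcup_m N_m$, which is dense in $G$; thus $V_1(G)$ generates $G$.

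For the converse, assume $G$ is nilpotent of step $s$, with $V_1(G)$ separable and generating. The plan is: pick a countable dense subset $\{v_i\}_{i\in\N}\subseteq V_1(G)$ and set $N_m:=\overline{\langle\,\delta_t(v_i):1\le i\le m,\ t\in\R\,\rangle}$. A short check shows each $N_m$ is a scalable subgroup (closed by construction; $\delta$-invariant since $\delta_\lambda(\delta_t(v_i))=\delta_{\lambda t}(v_i)$), that $N_m\le N_{m+1}$, and that $\overline{\bigcup_m N_m}=G$: the closed subgroup $H:=\overline{\bigcup_m N_m}$ contains $\delta_t(v)=\lim_k\delta_t(v_{i_k})$ for every $v\in V_1(G)$ (by continuity of $\delta$, where $v_{i_k}\to v$), hence contains the group generated by $\{\delta_t(v):v\in V_1(G)\}$, whose closure is $G$. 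So everything reduces to the real content: \emph{every nilpotent scalable group $N$ topologically generated by finitely many elements of $V_1(N)$ is a finite-dimensional Carnot group.}

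I would prove this by induction on the step $s$ of $N$, the engine being the sub-lemma: \emph{a scalable abelian group $A$ topologically generated by finitely many $\delta$-homogeneous one-parameter subgroups of a fixed degree $k$ is isomorphic to some $\R^d$}. To prove the sub-lemma I assemble the generating one-parameter subgroups into a continuous homomorphism $\Phi\colon\R^r\to A$ with dense image that intertwines $\delta_\lambda$ with multiplication by $\lambda^k$; its kernel is a closed subgroup of $\R^r$ invariant under all positive dilations, hence a linear subspace, so $\Phi$ factors through a continuous $\delta$-equivariant injection $\bar\Phi\colon\R^d\hookrightarrow A$ with dense image. Then a compactness argument powered by $\delta_0\equiv e$: if $\bar\Phi(x_\alpha)\to a$ with $\|x_\alpha\|\to\infty$, rescaling by $\lambda_\alpha:=\|x_\alpha\|^{-1/k}\to0$ gives $\bar\Phi(x_\alpha/\|x_\alpha\|)=\delta_{\lambda_\alpha}(\bar\Phi(x_\alpha))\to\delta_0(a)=e$ by joint continuity of $\delta$, while a subnet of the unit vectors $x_\alpha/\|x_\alpha\|$ converges to some $u\ne0$ with $\bar\Phi(u)=e$, contradicting injectivity; so $\bar\Phi$ is proper, hence onto and open, and $A\cong\R^d$. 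For the inductive step I let $C:=\overline{\gamma_s(N)}$, a central scalable subgroup: $N/C$ is $(s-1)$-step, scalable, generated by the degree-$1$ images of the $v_i$, so finite-dimensional Carnot by induction; and $C$ is abelian and --- using multilinearity of the $s$-fold iterated commutator map modulo $\gamma_{s+1}(N)=\{e\}$ --- is topologically generated by the finitely many degree-$s$ homogeneous one-parameter subgroups $t\mapsto[\delta_{t^{1/s}}(v_{i_1}),[\dots,\delta_{t^{1/s}}(v_{i_s})]]$, so $C\cong\R^d$ by the sub-lemma. Hence $N$ is an extension of a finite-dimensional Lie group by a finite-dimensional Lie group, so it is locally compact, connected, and finite-dimensional, hence a Lie group; it is nilpotent, and contractible along $\lambda\mapsto\delta_\lambda$, hence simply connected; and the eigenspace decomposition of $\Lie(N)$ under $d\delta$ --- whose genuine degree-$1$ part equals $\Lie(\exp V_1(N))$ and generates, since $\{v_i\}$ generates $N$ --- is a stratification, so $N$ is a Carnot group with dilations $\delta$. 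The base case $s=1$ is the sub-lemma with $k=1$.

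The step I expect to be the main obstacle is the sub-lemma: it is precisely where one must exclude the compact/solenoidal behaviour that a topological group generated by finitely many one-parameter subgroups can otherwise display, and the two features that make it work --- finiteness of the generating set (for compactness of the unit sphere) and continuity of $\delta$ at $\lambda=0$ with $\delta_0\equiv e$ --- must be used essentially. By comparison, once finite-dimensionality is in hand, identifying the homogeneous iterated commutators as genuine elements of the deeper layers $V_k(N)$ and recognizing the $d\delta$-grading of $\Lie(N)$ as a stratification are routine nilpotent-group and finite-dimensional Lie theory.
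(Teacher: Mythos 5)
The paper does not prove this proposition: it is quoted from \cite[Proposition 1.10]{LeDonneLiMoisala2021}, and the only gloss given is that the converse direction ``comes from'' Proposition~\ref{p:generate-Carnot} (a nilpotent scalable group generated by finitely many elements of $V_1$ is a Carnot group), which is itself cited from the same reference. Your reduction --- take a countable dense subset $\{v_i\}$ of $V_1(G)$, let $N_m$ be the closed scalable subgroup generated by $v_1,\dots,v_m$, and reduce everything to the finitely-generated nilpotent case --- is exactly the skeleton the paper points to; what you add is a self-contained proof of the cited ingredient, via the abelian sub-lemma and induction on the step. That argument is sound: the properness argument for $\bar\Phi$ (using joint continuity of $\delta$ at $\lambda=0$ and compactness of the unit sphere in $\R^d$) is the right way to rule out solenoidal behaviour, and the identification of $C=\overline{\gamma_s(N)}$ as topologically generated by finitely many degree-$s$ homogeneous one-parameter subgroups goes through once one writes the $s$-fold commutator as a continuous multilinear map modulo $\gamma_{s+1}(N)=\{e\}$ and checks (by density and continuity of the commutator) that $\overline{\gamma_s(N)}$ equals the closure of the group generated by commutators of the topological generators. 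Your version is in one respect more general than the paper's Proposition~\ref{p:generate-Carnot}, which is stated for \emph{metric} scalable groups: you never use a metric, only the topological group structure and continuity of $\delta$.

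One step deserves an explicit citation rather than a ``hence'': the passage from ``$N$ is a locally compact, connected, finite-dimensional extension of a Lie group by a Lie group'' to ``$N$ is a Lie group.'' That you need local compactness to ascend through extensions is Hewitt--Ross; but the conclusion that such a group is a Lie group is the Gleason--Montgomery--Zippin extension theorem, i.e.\ genuinely Hilbert-fifth-problem machinery. A cleaner alternative tailored to your situation is Siebert's theorem: a locally compact group admitting a contractive automorphism (here $\delta_{1/2}$, contractive because $\delta_{2^{-n}}(g)\to\delta_0(g)=e_G$ by joint continuity) is automatically a simply connected nilpotent Lie group, which also disposes of simple connectedness in the same breath. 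With that input named, the remaining steps --- eigenspace decomposition of $\Lie(N)$ under $d\delta$, integrality of the weights because the weight-one part Lie-generates, and the resulting stratification whose induced dilations are $\delta$ --- are the routine finite-dimensional arguments you describe.
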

We point out that the nilpotency assumption in the previous proposition cannot be removed, since there exist scalable groups with generating first layer that do not admit filtrations (see \cite[Proposition 5.11]{LeDonneLiMoisala2021}). However, not every metric scalable group having filtrations is nilpotent, as shown in \cite[Proposition 5.10]{LeDonneLiMoisala2021}. We will discuss this relation in more detail in Section~\ref{s.LCCommutators}.

The second part of Proposition~\ref{exist:filtration} comes from the following.

\begin{proposition}[{\cite[Proposition 2.2]{LeDonneLiMoisala2021}}]\label{p:generate-Carnot}
  Let $(G,\delta)$ be a metric scalable group generated by $v_1,\ldots,v_n \in V_1(G)$.  If $G$ is nilpotent then $G$ is a Carnot group.
\end{proposition}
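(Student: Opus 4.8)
The plan is to show that $G$ is a finite-dimensional, connected, simply connected nilpotent Lie group, and then to read off the stratification from the dilations. Write $\gamma_i(t):=\delta_t(v_i)$, so that each $\gamma_i$ is a one-parameter subgroup of $G$ (since $v_i\in V_1(G)$) and $G$ is the closure of the subgroup generated by the images of the $\gamma_i$. Let $s$ be the nilpotency step of $G$, let $\Gamma_1=G\supseteq\Gamma_2\supseteq\cdots\supseteq\Gamma_{s+1}=\{e\}$ be the lower central series, and let $\overline{\Gamma_k}$ denote closures. Since each $\delta_\lambda$ is a topological automorphism it preserves $\Gamma_k$ and hence $\overline{\Gamma_k}$, so each $\overline{\Gamma_k}$ is a closed normal scalable subgroup and each graded quotient $\overline{\Gamma_k}/\overline{\Gamma_{k+1}}$ is an abelian metric scalable group. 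If every graded quotient is finite-dimensional, i.e.\ homeomorphic to some $\R^{n_k}$, then choosing continuous sections of the extensions $1\to\overline{\Gamma_{k+1}}\to\overline{\Gamma_k}\to\overline{\Gamma_k}/\overline{\Gamma_{k+1}}\to1$ — for instance by lifting a basis of the first layer of $\overline{\Gamma_k}/\overline{\Gamma_{k+1}}$ through dilation orbits in $\overline{\Gamma_k}$ and multiplying — exhibits $G$ as homeomorphic to $\R^{n_1}\times\cdots\times\R^{n_s}=\R^N$. A topological group that is a topological manifold is a Lie group by the solution of Hilbert's fifth problem (Montgomery--Zippin), so $G$ is a Lie group; it is contractible (the map $(p,t)\mapsto\delta_t(p)$ is a homotopy from $\mathrm{id}_G$ to the constant map $e$), hence connected and simply connected, and it is nilpotent as a Lie group because it is nilpotent as an abstract group.

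Everything thus reduces to finite-dimensionality of the graded quotients, and for every $k$ this follows once I know $\overline{\Gamma_k}/\overline{\Gamma_{k+1}}$ is the closure of the subgroup generated by finitely many one-parameter subgroups. For $k=1$ this is immediate: $G/\overline{[G,G]}$ is topologically generated by the images of the $\gamma_i$, that is, by $n$ one-parameter subgroups. For $k\ge2$, $\overline{\Gamma_k}$ is the closure of the subgroup generated by the weight-$\ge k$ iterated commutators in the $\gamma_i$, so modulo $\overline{\Gamma_{k+1}}$ it is generated by the weight-exactly-$k$ iterated commutators $c_w\big(\gamma_{i_1}(t_1),\dots,\gamma_{i_k}(t_k)\big)$, where $w$ runs over the finite set of basic commutator shapes of weight $k$ on $n$ letters and $t_1,\dots,t_k\in\R$. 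Here I would use the group identities $\gamma_i(t)\gamma_i(t')=\gamma_i(t+t')$ and $[xy,z]\equiv[x,z][y,z]$ modulo deeper commutators (and their consequences for iterated commutators) to see that $(t_1,\dots,t_k)\mapsto c_w\big(\gamma_{i_1}(t_1),\dots,\gamma_{i_k}(t_k)\big)\bmod\overline{\Gamma_{k+1}}$ is additive in each variable, hence — by continuity of $\delta$ — of the form $(t_1,\dots,t_k)\mapsto(t_1\cdots t_k)\cdot b_w$ for a single element $b_w$ (one checks $b_w$ lies in the $k$-th layer of the quotient); so its image is the one-parameter subgroup through $b_w$, and $\overline{\Gamma_k}/\overline{\Gamma_{k+1}}$ is generated by the finitely many such subgroups.

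The main obstacle is the remaining input, which I would isolate as a lemma: \emph{a complete abelian metric scalable group that is the closure of the subgroup generated by finitely many one-parameter subgroups is homeomorphic to $\R^m$ for some finite $m$.} The subtle point is that the natural continuous homomorphism $\Phi\colon\R^m\to A$ assembled from the one-parameter subgroups has dense image but, a priori, need be neither surjective nor have closed image — indeed, without further structure one can manufacture infinite-dimensional nilpotent ``scalable-looking'' objects on finitely many one-parameter subgroups. The resolution must use completeness of $A$ together with the scaling axiom $d(\delta_\lambda p,\delta_\lambda q)=|\lambda|\,d(p,q)$: after reparametrizing each $\R$-factor so that $\Phi$ intertwines the scalar dilation of the corresponding weighted model metric on $\R^m$ with $\delta$, one shows $\Phi$ is a bi-Lipschitz embedding, so its image is complete, hence closed, hence — being also dense — all of $A$, and $\Phi$ is a homeomorphism onto $A$. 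Establishing the bi-Lipschitz lower bound, i.e.\ ruling out ``contraction to zero along a sequence'', is the crux of the whole proposition, and it is exactly the step that cannot be carried out without completeness and the metric hypothesis.

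Finally, to recover the stratification: $G$ is now a simply connected nilpotent Lie group, so each $\delta_\lambda\in\operatorname{Aut}(G)$ is a Lie automorphism, and $D_\lambda:=(\delta_\lambda)_\ast\in\operatorname{Aut}(\Lie(G))$ satisfies $D_\lambda D_\mu=D_{\lambda\mu}$ and is continuous in $\lambda$, so $D_\lambda=\lambda^{A}$ for $\lambda>0$ with $A$ a derivation of $\Lie(G)$. Because $t\mapsto\delta_t(v_i)$ is a one-parameter subgroup we get $A(\log v_i)=\log v_i$; because $\{v_i\}$ generates $G$ as a scalable group, the $\log v_i$ generate $\Lie(G)$ as a Lie algebra; contraction of the metric as $\lambda\to0^+$ forces $\operatorname{Spec}(A)\subset(0,\infty)$; and closure under brackets then forces $\operatorname{Spec}(A)\subset\{1,\dots,s\}$. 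The eigenspace decomposition $\Lie(G)=V_1\oplus\cdots\oplus V_s$ of $A$ is a stratification with $[V_1,V_j]=V_{j+1}$ whose induced dilations are the $\delta_\lambda$, so $(G,\delta)$ is a Carnot group.
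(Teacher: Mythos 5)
The paper never proves this proposition: it is imported verbatim from \cite[Proposition~2.2]{LeDonneLiMoisala2021}, so there is no in-paper argument to measure you against. Judged on its own terms, your outline follows the natural route (reduce to the graded quotients of the lower central series, show each is a finite-dimensional vector group, assemble Mal'cev-type coordinates to see $G\cong\R^N$, invoke Montgomery--Zippin, and read the stratification off the derivation generating $(\delta_\lambda)_*$), and the final Lie-theoretic step is correct as written: the $\log v_i$ are genuine eigenvectors of eigenvalue $1$, brackets of eigenvectors of a derivation are eigenvectors, so the subalgebra they generate is spanned by eigenvectors with positive integer eigenvalues and the eigenspace decomposition is a stratification.

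The genuine gap is the lemma you yourself flag as the crux and do not prove: that a complete abelian metric scalable group topologically generated by finitely many one-parameter subgroups is homeomorphic to some $\R^m$. The bi-Lipschitz route you propose for it has two unaddressed problems. First, the assembled homomorphism $\Phi\colon\R^m\to A$ need not be injective (the elements $b_w$ may be dependent), so ``bi-Lipschitz embedding'' only makes sense after you show $\ker\Phi$ is a linear subspace and pass to the quotient; second, the lower bound via compactness of the unit sphere presupposes a continuous homogeneous gauge that is positive off the kernel, which is essentially what is to be proved. A cleaner way to close the gap uses machinery already in this paper. Your generators $b_w$ of $A=\overline{\Gamma_k}/\overline{\Gamma_{k+1}}$ lie in the $k$-th layer $V_k(A)$; in an abelian scalable group the computations of Lemma~\ref{l:V1-complete} and Proposition~\ref{metric_scalable_abelian} go through with $\delta_{t^{1/k}}$ in place of $\delta_t$, so $V_k(A)$ is a closed subgroup, hence equals all of $A$. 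Then $t\cdot p:=\delta_{t^{1/k}}(p)$ (extended to $t<0$ by inversion) makes $A$ a Hausdorff topological vector space with jointly continuous operations inherited from $\delta$, and a Hausdorff topological vector space that is the closed span of finitely many vectors equals their span, because finite-dimensional subspaces are closed \cite{bourbaki-evt}. Note that neither completeness of $A$ nor the metric scaling axiom is the real input here; what you need is that the additive scalar action extends from the generators to all of $A$, i.e.\ precisely the statement $V_k(A)=A$. One further small hole worth a line: your passage from multi-additivity of $(t_1,\dots,t_k)\mapsto c_w\bigl(\gamma_{i_1}(t_1),\dots,\gamma_{i_k}(t_k)\bigr)$ modulo $\overline{\Gamma_{k+1}}$ to the form $\phi(t_1\cdots t_k)$ silently uses unique divisibility (torsion-freeness) of the abelian quotient.
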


We would now like to define a notion of a generating sequence of $G$ that respects the filtration.  To that end, we say that a sequence $X_1,X_2,\ldots$ in $V_1(G)$ is a
  \emph{Carnot-spanning set}  if
for every $i\in \N$  the scalable group $H_i$ generated by $X_1,\ldots, X_i $ is finite-dimensional and
\begin{align*}
  G = \overline{\bigcup_{i=1}^\infty H_i}.
\end{align*}
In other words, we require that the subgroups $H_1, H_2, \ldots$ form a filtration of $G$, according to Definition \ref{d.filtration}. We shall refer to this filtration  as the
\emph{filtration associated to the Carnot-spanning set} $\mathcal{X} = (X_1,X_2,\ldots)$.

For infinite-dimensional Carnot groups, we have the two following definitions of null sets as introduced in \cite{LeDonneLiMoisala2021}.

\begin{definition}[Filtration null]
Given a  filtration $(N_m)_{m \in \N}$ by Carnot subgroups of a scalable group $G$, we say that a Borel set $\Omega\subseteq G$ is $(N_m)_m$ \emph{null} if $\Omega$ is the countable union of Borel sets $\Omega_m$ such that
\[
\vol_{N_m} ( N_m \cap (g  \Omega_m)  )  = 0,\qquad \text{ for all } m\in \N, \text{ for all } g\in G,
\]
where $\vol_{N_m}$ denotes any Haar measure on $N_m$. Finally, we say that  a Borel set $E \subset G$ is \emph{filtration null} if it is $(N_m)_m$ null for every filtration $(N_m)_m$ of $G$.
\end{definition}

\begin{definition}[Aronszajn null]\label{d.ArNullCarnot}
Given an element $Y \in V_1( G)$, we define the class $\cA(Y)$ to be the collection of  Borel sets $E \subset G$ for which
  \begin{align*}
    \mathcal L^1 (\{t \in \R : g\delta_t(Y) \in E\}) = 0, \qquad \text{ for all } g \in G;
  \end{align*}
where we denote by $\mathcal L^1$ the Lebesgue measure on $\R$.
Given a Carnot-spanning set $\mathcal{X} = (X_1,X_2, \ldots)$, we define $\cA(\mathcal{X})$ to be the collection of Borel sets $E \subset G$ that admit Borel decompositions $E = \bigcup_i E_i$ for which
  \begin{align*}
    E_i \in \cA(X_i), \qquad \text{ for every } i \geqslant 1.
  \end{align*}
  Finally, we say that a Borel set $E \subset G$ is {\em Aronszajn null} if it lies in $\cA(\mathcal{X})$ for every Carnot-spanning set $\mathcal{X}$.
\end{definition}

As Carnot-spanning sets of $V_1(G)$ and filtrations of $G$ give rise to one another, the following proposition easily shows that these two notions of negligibility are equivalent.

\begin{proposition}[see {\cite[Proposition 3.6]{LeDonneLiMoisala2021}}] \label{th:ldlm2}
Let $G$ be a metric scalable group.
Let $\mathcal{X} = (X_1,X_2,\ldots)$ be Carnot-spanning set with $H_i := \langle X_1,\ldots,X_i \rangle$.
 Then a set $\Omega\subseteq G$ is in the class $\cA(\mathcal{X})$ if and only if it is $(H_i)_{i\in\N}$ null.
\end{proposition}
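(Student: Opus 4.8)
\emph{Setting up.} The key point is that the Carnot‑spanning set $\mathcal X=(X_1,X_2,\dots)$ and its associated filtration $(H_i)_{i\in\N}$ carry the same data, so both notions in the statement are of the form ``$\Omega$ decomposes into countably many Borel pieces, the $i$‑th of which is small in the $i$‑th sense''. It is therefore enough to match the per‑direction and per‑subgroup notions of smallness. For the implication $\cA(\mathcal X)\subseteq\{(H_i)_i\text{-null}\}$ the \emph{same} decomposition works: if $\Omega_i\in\cA(X_i)$, I claim $\vol_{H_i}(H_i\cap g\Omega_i)=0$ for all $g\in G$. Fix $i,g$ and let $L_i=\{\delta_t(X_i):t\in\R\}\leqslant H_i$, a closed one‑parameter subgroup. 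Since $H_i$ is a Carnot group, hence unimodular, and $L_i\cong\R$, Weil's integration formula provides a Borel measure $\mu$ on $H_i/L_i$ and a Borel section $\sigma$ with
\[
\vol_{H_i}(A)=\int_{H_i/L_i}\mathcal L^1\bigl(\{t\in\R:\sigma(p)\delta_t(X_i)\in A\}\bigr)\,d\mu(p),\qquad A\subseteq H_i\ \text{Borel}.
\]
Taking $A=H_i\cap g\Omega_i$ and using $\sigma(p)\delta_t(X_i)\in g\Omega_i\iff g^{-1}\sigma(p)\delta_t(X_i)\in\Omega_i$, the inner integrand vanishes for \emph{every} $p$ by $\Omega_i\in\cA(X_i)$, so $\vol_{H_i}(H_i\cap g\Omega_i)=0$. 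This is the easy half.

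\emph{The hard direction, reduction.} Suppose $\Omega=\bigcup_i\Omega_i$ with $\vol_{H_i}(H_i\cap g\Omega_i)=0$ for all $g\in G$. Since each class $\cA(X_j)$ is closed under countable unions, and since after decomposing each $\Omega_i$ we may collect the resulting pieces according to their direction, it suffices to prove: \emph{if a Borel $T\subseteq G$ satisfies $\vol_{H_i}(H_i\cap gT)=0$ for all $g\in G$, then $T\in\cA((X_1,\dots,X_i))$}. Splitting $G$ into the left cosets $\gamma H_i$ (each coset of some $\exp(\R X_j)$, $j\le i$, lying in a single $\gamma H_i$, and $\{h\in H_i:\gamma h\in T\}=H_i\cap\gamma^{-1}T$ being $\vol_{H_i}$‑null), this reduces to the finite‑dimensional statement: a $\vol_{H_i}$‑null Borel subset of $H_i$ is a countable union of pieces, each null along the cosets of some $\exp(\R X_j)$, $j\le i$. (One must carry out the coset decomposition Borel‑measurably in $\gamma$, which is routine but should be noted.)

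\emph{The hard direction, finite‑dimensional core.} I would prove this by peeling off directions one at a time along a flag of normal subgroups $\{e\}=A_0\triangleleft A_1\triangleleft\cdots\triangleleft A_{n_i}=H_i$, $\dim A_k=k$, coming from a graded Hall basis $\xi_1,\dots,\xi_{n_i}$ of $\Lie(H_i)$ ordered by \emph{decreasing} weight, so that all partial spans from the top are ideals and the last $i$ vectors are $\log X_1,\dots,\log X_i$ (arranging, harmlessly, that the filtration is strictly increasing so these are independent). At the $k$‑th step $A_k\cong A_{k-1}\rtimes\exp(\R\xi_k)$ and $\vol_{A_k}=\vol_{A_{k-1}}\otimes\mathcal L^1$ under $(a,t)\mapsto a\exp(t\xi_k)$; writing the current residual set $R$ as $R^{\mathrm{good}}\sqcup(R\setminus R^{\mathrm{good}})$ with $R^{\mathrm{good}}=\{r\in R:\mathcal L^1(\{t:r\exp(t\xi_k)\in R\})=0\}$, one checks as in the first paragraph that $R^{\mathrm{good}}$ is null along every $\exp(\R\xi_k)$‑coset, while Fubini gives $\vol_{A_{k-1}}(A_{k-1}\cap g(R\setminus R^{\mathrm{good}}))=0$ for all $g$, so the recursion continues down the flag. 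After $n_i$ steps $T$ is a countable union of pieces that are null either along $\log X_j$‑cosets (the weight‑one steps — exactly what we want) or along $\exp(\R\xi_k)$‑cosets for a weight‑$\ge 2$ basic commutator $\xi_k=[\log X_a,\zeta]$.

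\emph{The main obstacle.} What remains is to convert a set null along a bracket direction $[\log X_a,\zeta]$ into a countable union of sets null along $X_a$‑cosets and along $\zeta$‑cosets; iterating this on the weight of $\zeta$ terminates at the generators. I expect this ``Fubini theorem for Carnot groups'' to be the technical heart. The strategy is to slice by the cosets of the relevant one‑parameter subgroups, apply ordinary Fubini inside the resulting two‑dimensional slices to separate the ``thin'' from the ``thick'' directions, and then observe that a point lying on a ``thick'' line cannot survive into the ``good'' piece because through each point there is only one such line. This is transparent when $[\log X_a,\zeta]$ is central — in particular always in the step‑two, Heisenberg‑like case relevant elsewhere in the paper — and in general requires tracking the adjoint action of $\exp(\R\log X_a)$ on the lower‑weight layers; together with the measurable‑selection bookkeeping, this is where essentially all the work lies.
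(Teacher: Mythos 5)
Your easy direction is correct: the Weil/coarea disintegration of $\vol_{H_i}$ over the cosets of $L_i=\{\delta_t(X_i):t\in\R\}$ does show that the same decomposition witnessing $\Omega\in\cA(\mathcal{X})$ witnesses $(H_i)_i$-nullity, and your reduction of the hard direction to the finite-dimensional claim ``$\vol_{H_i}(H_i\cap gT)=0$ for all $g\in G$ implies $T\in\cA(X_1,\dots,X_i)$,'' followed by regrouping the pieces by direction using closure of each $\cA(X_j)$ under countable unions, is exactly the right skeleton. Indeed this is how the paper argues: it notes that the proposition ``follows easily'' from Proposition~\ref{th:ldlm}, which is precisely that finite-dimensional equivalence, quoted as \cite[Proposition 3.12]{LeDonneLiMoisala2021} and not reproved.

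The genuine gap is that you do not prove that finite-dimensional core. Your flag/Hall-basis peeling argument ends with pieces that are null along cosets of $\exp(\R\xi_k)$ for basic commutators $\xi_k$ of weight at least $2$, and the step that would convert such a piece into a countable union of sets null along $X_a$-cosets and $\zeta$-cosets --- the ``Fubini theorem for Carnot groups'' you yourself identify as the technical heart --- is only described as a strategy and never carried out. As stated it is not clear what the two-dimensional slices are when $[\log X_a,\zeta]$ is not central, nor why the ``only one thick line through each point'' observation produces a genuine Borel decomposition rather than an almost-everywhere statement; you only claim transparency in the step-two, central-bracket case. Since this missing step is exactly the content of \cite[Proposition 3.12]{LeDonneLiMoisala2021} (recorded in the paper as Proposition~\ref{th:ldlm}), the honest options are to cite it, as the paper does, or to supply a complete proof; the proposal does neither. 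Note also that the Borel-measurable-in-$\gamma$ reassembly across cosets, which you flag as routine, disappears entirely if one uses Proposition~\ref{th:ldlm} as stated, since it is formulated for subsets of $G$ with the volume condition quantified over all $x\in G$.
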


This follows easily from the following proposition that will be used later.

\begin{proposition}[see {\cite[Proposition 3.12]{LeDonneLiMoisala2021}}] \label{th:ldlm}
  Let $Y_1,\ldots,Y_n$ be generating horizontal elements of a finite-dimensional Carnot subgroup $H$ of $G$.  Given a subset $E \subset G$,
  \begin{align*}
    E \in \mathcal{A}(Y_1,\ldots,Y_n) \quad \Longleftrightarrow \quad \vol_H(x^{-1}E \cap H) = 0, \quad \text{ for all } x \in G.
  \end{align*}
\end{proposition}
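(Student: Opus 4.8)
The plan is to localize the statement to the finite-dimensional Carnot group $H$ and then run an induction on the nilpotency step of $H$.

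\emph{Step 1: reduction to $H$.} Write $\mathcal A_H(Y_1,\dots,Y_n)$ for the class defined exactly as $\mathcal A(Y_1,\dots,Y_n)$ but with $G$ replaced by the Carnot group $H$. Both conditions in the statement are determined by the traces of $E$ on the left cosets of $H$: for $x\in G$ set $F^x:=x^{-1}E\cap H\subseteq H$; since each curve $t\mapsto g\delta_t(Y_i)$ stays inside a single left coset $gH$, one checks that $E\in\mathcal A(Y_1,\dots,Y_n)$ if and only if $F^x\in\mathcal A_H(Y_1,\dots,Y_n)$ for every $x$ in a Borel transversal $T$ of the left cosets of $H$ in $G$ (such a $T$ exists because $H$ is closed in the Polish group $G$). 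The forward implication here is immediate; the converse requires gluing the coset-wise Borel decompositions across $T$, which is legitimate once they are chosen Borel-measurably in $x$, and the decomposition constructed below is produced by explicit Borel operations on $F^x$ (thresholding the measure of one-dimensional slices), so this measurability is automatic. Hence it suffices to prove: a Borel set $F\subseteq H$ satisfies $\vol_H(F)=0$ if and only if $F\in\mathcal A_H(Y_1,\dots,Y_n)$.

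\emph{Step 2: the easy implication.} Suppose $E=\bigcup_{i=1}^{n}E_i$ with $E_i\in\mathcal A(Y_i)$, and fix $x\in G$. Each $S_i:=\{\delta_t(Y_i):t\in\R\}$ is a closed one-parameter subgroup of $H$, and since $H$ and $S_i$ are nilpotent, hence unimodular, Weil's integration formula produces an $H$-invariant measure $\mu_i$ on $H/S_i$ and a constant $c_i>0$ with
\[
\vol_H(A)=c_i\int_{H/S_i}\mathcal L^1\bigl(\{t\in\R:\ g\delta_t(Y_i)\in A\}\bigr)\,d\mu_i(gS_i)
\]
for every Borel $A\subseteq H$ (the integrand depends only on $gS_i$ because $\delta_t(Y_i)\delta_s(Y_i)=\delta_{t+s}(Y_i)$). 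Apply this with $A=x^{-1}E_i\cap H$: for $g\in H$ the point $g\delta_t(Y_i)$ lies in $H$, so $g\delta_t(Y_i)\in A$ iff $(xg)\delta_t(Y_i)\in E_i$, and this holds for a Lebesgue-null set of $t$'s because $E_i\in\mathcal A(Y_i)$. Thus the integrand vanishes identically, $\vol_H(x^{-1}E_i\cap H)=0$, and summing over $i$ gives $\vol_H(x^{-1}E\cap H)=0$.

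\emph{Step 3: the hard implication.} Argue by induction on the step $s$ of $H$. Discarding redundant generators (a superfluous $Y_i$ is simply paired with the empty set) we may assume $Y_1,\dots,Y_n$ is a basis of the first layer $V_1$ of $H$. If $s=1$ then $H\cong\R^n$ with Haar measure equal to Lebesgue and the $S_i$-curves equal to the coordinate lines, and the required splitting of a Lebesgue-null set is the classical Aronszajn decomposition obtained by iterated Fubini: peel off the coordinate lines in the first direction along which $F$ is ``thick'', which reduces matters to the same problem one dimension lower. If $s\geq 2$, let $Z:=\exp(V_s)$, a closed central (hence normal) subgroup, and $q\colon H\twoheadrightarrow\overline H:=H/Z$; then $\overline H$ is a Carnot group of step $s-1$ whose first layer again has basis $\overline Y_i:=q(Y_i)$. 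Disintegrating $\vol_H$ along $q$ (fibers $\cong Z$, conditional measures Lebesgue) write a Haar-null Borel set $F$ as $F=F'\sqcup F''$, where $F''$ is the part of $F$ lying over the $\vol_{\overline H}$-null set $\overline B\subseteq\overline H$ of those fibers on which $F$ has positive $\vol_Z$-measure, and $F'$ has $\vol_Z$-null trace on every fiber of $q$. Apply the inductive hypothesis to $\overline B$ in $\overline H$: $\overline B=\bigcup_i\overline B_i$ with $\overline B_i\in\mathcal A_{\overline H}(\overline Y_i)$, and put $F''_i:=F''\cap q^{-1}(\overline B_i)$. Then $F''_i\in\mathcal A_H(Y_i)$, because a $Y_i$-curve in $H$ projects \emph{injectively} onto a $\overline Y_i$-curve in $\overline H$, so $\mathcal L^1(\{t: g\delta_t(Y_i)\in q^{-1}(\overline B_i)\})=\mathcal L^1(\{t: q(g)\delta_t(\overline Y_i)\in\overline B_i\})=0$.

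\emph{The main obstacle.} What remains is to decompose $F'$, which is $\vol_Z$-null on every fiber of $q$ but whose image $q(F')$ may be all of $\overline H$. The downstairs induction gives no information here, and the curves $t\mapsto g\delta_t(Y_i)$ are transverse to the fibers of $q$ (meeting each at most once), so one genuinely has to produce motion in the $Z$-direction. My plan for this is to exploit that $V_s$ is spanned by weight-$s$ iterated brackets of the $Y_i$, so that translation along $Z$ is generated by concatenations of the flows $\delta_t(Y_i)$: concretely, I would interpose a chain of normal subgroups $Z=Z_0\triangleleft Z_1\triangleleft\cdots\triangleleft Z_k=H$ adapted to the generators and peel $F'$ one quotient at a time by a secondary disintegration, keeping the $n$-fold bookkeeping consistent. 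Making this precise --- choosing the intermediate subgroups and checking that each peeling step feeds back into the induction --- is where the real work lies.
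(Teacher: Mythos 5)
Your Step 2 (the implication $E\in\mathcal{A}(Y_1,\dots,Y_n)\Rightarrow \vol_H(x^{-1}E\cap H)=0$) is correct: the Weil quotient-integral formula for the closed one-parameter subgroups $S_i=\{\delta_t(Y_i)\}$ of the unimodular group $H$ does exactly what you want, and the coset-localization in Step 1 is sound modulo the measurable-gluing point you flag. (For the record, the paper does not prove this proposition at all --- it imports it verbatim from \cite[Proposition 3.12]{LeDonneLiMoisala2021} --- so the comparison here is really with that reference, not with an in-paper argument.)

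The hard implication, however, is genuinely incomplete, and the missing piece is not a technicality but the entire content of the theorem. Your induction on the step via $q:H\to H/\exp(V_s)$ only disposes of the part $F''$ of $F$ lying over a $\vol_{H/Z}$-null set; the remaining part $F'$, which is $\vol_Z$-null on every central fiber but may project onto all of $H/Z$, is exactly the case where the statement has nontrivial content (already for $H=\HH^1$: a set meeting every vertical line in a $1$-null set must be split into two pieces, each null along every left-translated horizontal line in the corresponding direction). Quotienting gives no leverage on $F'$ because, as you note, each curve $t\mapsto g\delta_t(Y_i)$ meets each fiber of $q$ at most once, and the proposed remedy --- interposing a chain of normal subgroups and ``peeling one quotient at a time'' --- runs into the same obstruction at every stage: the horizontal flows are transverse to every nontrivial normal scalable subgroup contained in $[H,H]$, so no choice of intermediate quotients produces the needed motion in the central directions. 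What is actually required is a Fubini-type argument for the composed flow map $(t_1,\dots,t_m)\mapsto x\,\delta_{t_1}(Y_{j_1})\cdots\delta_{t_m}(Y_{j_m})$ associated with a word whose image is open (using that iterated brackets of the $Y_i$ span the higher layers), together with the bookkeeping that converts ``null along the $k$-th coordinate'' upstairs into membership in $\mathcal{A}(Y_{j_k})$ downstairs; this is the step your plan gestures at but does not carry out, and without it the backward direction is unproved.
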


\subsection{Stratified Banach-Lie groups}\label{sec:Banach}

We first define a \emph{Banach-Lie algebra} as a separable Banach space $X$ equipped with a continuous, bilinear, and skew-symmetric mapping $[\cdot,\cdot] : X \times X \to X$ satisfying the Jacobi identity.  We call $[\cdot,\cdot]$ the \emph{Lie bracket} of $X$.  A Banach-Lie algebra is \emph{nilpotent} if there exists some $s \in \N$ for which
\begin{align}
  [\cdots[[x_1,x_2],x_3],\cdots],x_s],x_{s+1}] = 0 , \text{ for all } x_1,\ldots,x_{s+1} \in X.
  \label{eq:s+1-brackets}
\end{align}
The smallest $s$ for which \eqref{eq:s+1-brackets} always holds is called the \emph{step of nilpotence} of $X$.  A step-$s$ nilpotent Banach-Lie algebra is \emph{graded} if there are closed subspaces $V_1,\ldots,V_s$ for which $X = V_1 \oplus \cdots \oplus V_s$ and
\begin{align*}
[x,y] \in V_{j+k}, \qquad \text{ for all } x \in V_j, y \in V_k, \text{ for all } j,k \in \N
\end{align*}
with the understanding that $V_i = \{0\}$ when $i > s$.  Finally, a graded Banach-Lie algebra $X$ is \emph{stratified} if $V_1$ generates $X$, in the sense that there is no proper closed subalgebra of $X$ containing $V_1$.

Let us fix a stratified Banach-Lie algebra $X$. One can   define a smooth group product on $X$ via the Baker-Campbell-Dynkin-Hausdorff (BCDH) formula
\begin{align} \label{eq:BCH}
  xy = x + y + \sum_{k = 2}^s \frac{(-1)^{k-1}}{k} \sum_{\begin{smallmatrix} {r_i + s_i > 0,} \\ {1 \leq i \leq k} \end{smallmatrix}} \frac{ \left( \sum_{j=1}^k (r_i + s_i) \right)^{-1}}{r_1!s_1! \cdots r_k!s_k!} \times \\
    (\ad x)^{r_1} (\ad y)^{s_1} \cdots (\ad x)^{r_k} (\ad y)^{s_k-1} y, \notag
\end{align}
where $(\ad x)y := [x,y]$.

\begin{remark} \label{rem:BCDH}
  As Lie brackets are antisymmetric, one can assume $s_k = 1$ and $r_k > 1$.  One can then also replace the last $y$ in \eqref{eq:BCH} with $x + y$.
\end{remark}

One can now define a group $G$ as $X$ endowed with the group product given by \eqref{eq:BCH}.  Such a group will be called a \emph{stratified Banach-Lie group}.

\begin{remark}
  There have been many other constructions of groups based on infinite-dimensional Lie algebras.  We briefly review and contrast with some relevant ones.  The Banach homogeneous groups studied in \cite{MagnaniRajala2014} are almost identical to stratified Banach-Lie groups except they do not require $V_1$ to generate $X$.  They can be viewed as graded Banach-Lie groups.  The infinite-dimensional Heisenberg-like groups studied in \cite{DriverGordina2008} (reviewed in Example \ref{ex.DGgroups} below) are only step 2 and the second layer $V_2$ had to be finite dimensional.  Finally, the semi-infinite Lie algebras of \cite{Melcher2009a} are nilpotent of arbitrary step but not graded and only the $V_1$ layer is allowed to be infinite-dimensional.
\end{remark}

One can endow $X$ with the dilation $\delta_\lambda$ where for $x_i \in V_i$, we have
\begin{align}
  \delta_\lambda(x_1,\ldots,x_s) = (\lambda x_1,\ldots,\lambda^s x_s), \label{eq:dilation}
\end{align}
where we use the Cartesian product on $X$ coming from the grading.
From the graded nature of $X$, one can see that this is an automorphism of $G$.

Stratified Banach-Lie groups are examples of infinite-dimensional Carnot groups.

\begin{proposition}
Stratified Banach-Lie groups can be endowed with metrics to make them infinite-dimensional Carnot groups.
\end{proposition}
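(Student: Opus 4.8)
The goal is to verify the axioms of an infinite-dimensional Carnot group (Definition~\ref{def:infinite-dim_Carnot_gp}) for a stratified Banach-Lie group $G$ built on a stratified Banach-Lie algebra $X = V_1 \oplus \cdots \oplus V_s$ via the BCDH product \eqref{eq:BCH}. Concretely, I need to produce an admissible left-invariant distance $d$ on $G$ which is homogeneous under the dilations $\delta_\lambda$ of \eqref{eq:dilation}, verify that $(G,\delta)$ is a scalable group, check completeness, and exhibit a filtration by (finite-dimensional) Carnot subgroups. The last point is the one that actually uses that $X$ is \emph{stratified} rather than merely graded; everything else only uses the graded nilpotent structure.

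First I would record the algebraic facts. The BCDH series \eqref{eq:BCH} is a finite sum because $X$ is step-$s$ nilpotent, so the product is a polynomial (in the grading coordinates) and is manifestly continuous; associativity and the inverse $x^{-1} = -x$ are standard consequences of BCDH. That $\delta_\lambda \in \operatorname{Aut}(G)$ is checked by the usual bookkeeping: each $k$-fold bracket $(\operatorname{ad} x)^{r_1}(\operatorname{ad} y)^{s_1}\cdots y$ appearing in the degree-$n$ layer scales by $\lambda^n$, which is exactly how $\delta_\lambda$ acts on $V_n$, so $\delta_\lambda(xy) = \delta_\lambda(x)\delta_\lambda(y)$. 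The identities $\delta_\lambda \circ \delta_\mu = \delta_{\lambda\mu}$ and $\delta_0 \equiv 0 = e_G$ are immediate from \eqref{eq:dilation}. So $(G,\delta)$ is a scalable group.

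Next, the metric. I would build a homogeneous quasi-norm $\|\cdot\| : X \to [0,\infty)$ by setting $\|x\| := \sum_{i=1}^s \|x_i\|_{V_i}^{1/i}$ (or $\max_i \|x_i\|_{V_i}^{1/i}$), which satisfies $\|\delta_\lambda x\| = |\lambda|\,\|x\|$, is continuous, vanishes only at $0$, and — because the top-degree terms in the BCDH product $xy$ are polynomial in lower-degree coordinates of $x$ and $y$ with the correct homogeneity — satisfies a quasi-triangle inequality $\|xy\| \le C(\|x\| + \|y\|)$ on bounded sets and in fact, by homogeneity, globally (this is the classical argument of Folland–Stein / Hebisch–Sikora, and it goes through verbatim in the Banach setting because only finitely many layers are involved). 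One then either symmetrizes and uses a chain argument to pass from the quasi-norm to an actual left-invariant distance $d(p,q) := \inf \sum \|g_i^{-1}g_{i+1}\|$ over chains, or invokes the general metrization theorem for locally compact... — but here $G$ need not be locally compact, so I would instead argue directly that the chain-defined $d$ is an admissible left-invariant distance satisfying $d(\delta_t p, \delta_t q) = |t|\,d(p,q)$, checking that $d$ induces the original (Banach-space) topology on $X$ by sandwiching $d$ between constant multiples of $\|\cdot\|$ near the identity, which in turn is comparable to the Banach norm raised to fractional powers layer by layer. Completeness of $(G,d)$ then follows from completeness of each $V_i$ together with this comparison: a $d$-Cauchy sequence is, up to the polynomial change of coordinates, Cauchy in each Banach layer.

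Finally, the filtration. Here I use that $X$ is stratified, i.e. $V_1$ generates $X$ as a closed Lie algebra. Pick a countable dense subset $\{e_1, e_2, \ldots\}$ of $V_1$ and let $\mathfrak{h}_i$ be the Lie subalgebra of $X$ generated by $e_1,\ldots,e_i$; since $X$ is step-$s$ nilpotent, each $\mathfrak{h}_i$ is finite-dimensional, and it inherits a stratification $\mathfrak{h}_i = (\mathfrak{h}_i\cap V_1)\oplus\cdots$ compatible with the grading, hence the corresponding subgroup $N_i := \exp(\mathfrak{h}_i) \subset G$ (which as a set is just $\mathfrak{h}_i \subset X$, closed under the BCDH product) is a finite-dimensional Carnot subgroup, $\delta_\lambda$-invariant, with $N_i < N_{i+1}$. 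It remains to see $G = \overline{\bigcup_i N_i}$: the closed Lie subalgebra generated by the dense set $\bigcup_i \mathfrak{h}_i \supseteq \{e_1,e_2,\ldots\}$ contains $\overline{V_1}=V_1$, hence all of $X$ by the stratification hypothesis, and because the BCDH product and the norm comparisons above are continuous, $\overline{\bigcup_i N_i}$ is a closed subgroup containing a dense subalgebra, so it is all of $X = G$. Thus $\{N_i\}$ is a filtration by Carnot subgroups and $G$ is an infinite-dimensional Carnot group.

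\smallskip

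The main obstacle I expect is the metrization step: producing an \emph{admissible} homogeneous left-invariant distance without local compactness. The algebra and the filtration are essentially formal once the stratification hypothesis is in hand, but verifying that the quasi-norm construction yields a genuine metric inducing the original Banach topology — in particular the two-sided comparison between $d$ and the layerwise fractional norms — requires care, since the higher layers interact nonlinearly through BCDH; the homogeneity $\|\delta_\lambda x\| = |\lambda|\|x\|$ is what rescues the global quasi-triangle inequality from the merely-local estimate, and completeness then comes along for free.
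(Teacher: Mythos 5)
Your outline follows the same overall route as the paper's proof: a homogeneous gauge on $X$, a comparison of the resulting topology with the Banach topology, completeness via a two-sided layerwise estimate, and a filtration obtained from the subalgebras generated by finite subsets of a countable dense subset of $V_1$ (which are finite-dimensional by nilpotency and stratified, hence Carnot). The one place where you genuinely diverge is the metrization step, and that is also where your argument has a soft spot. The paper invokes \cite[Lemme II.1]{Guivarch1973a} (see also \cite[Lemma 2.5]{Breuillard2014}) to choose weights $\sigma_i$ so that the gauge $N(x)=\max_i \sigma_i|x_i|^{1/i}$ of \eqref{eq:max-norm} satisfies the \emph{genuine} triangle inequality $N(xy)\leqslant N(x)+N(y)$, so $d(x,y):=N(x^{-1}y)$ is immediately a left-invariant homogeneous distance and no chain construction is needed. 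You instead start from a quasi-triangle inequality $\|xy\|\leqslant C(\|x\|+\|y\|)$ and pass to $d(p,q)=\inf\sum\|g_i^{-1}g_{i+1}\|$ over chains. The upper bound $d\leqslant\|\cdot\|$, left-invariance, and homogeneity of this $d$ are automatic, but the lower bound $d\geqslant c\,\|\cdot\|$ --- which you need both for admissibility and simply to know that $d$ is non-degenerate --- does \emph{not} follow from the quasi-triangle inequality alone: for a general quasi-metric constant $C$ the chain infimum can collapse, and the standard Frink/Mac\'{\i}as--Segovia remedy (replacing $\|\cdot\|$ by a small power) would destroy the $1$-homogeneity you need. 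Establishing that lower bound is essentially equivalent to the content of Guivarc'h's lemma, so you have deferred rather than supplied the key analytic input. Your quick disposal of completeness also glosses over the point the paper handles via \eqref{eq:holder}, namely that inverting the triangular BCDH coordinate change to show a $d$-Cauchy sequence is layerwise Cauchy requires an explicit two-sided estimate on bounded sets; this is routine but is real work in the Banach setting. Everything else --- the scalable-group axioms, the filtration, and the use of the stratification hypothesis exactly where it is needed --- matches the paper.
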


\begin{proof}
Let $X$ denote the underlying Banach space of $G$.  By \cite[Lemme II.1]{Guivarch1973a} (see also \cite[Lemma 2.5]{Breuillard2014}), there exist positive constants $1 = \sigma_1,\ldots,\sigma_s$ depending on $[\cdot,\cdot]$ so that the gauge
\begin{align}
  N(x) = \max_{1 \leqslant i \leqslant  s} \sigma_i |x_i|^{1/i} \label{eq:max-norm}
\end{align}
  satisfies $N(\delta_\lambda(x)) = |\lambda|N(x)$ and $N(xy) \leqslant N(x) + N(y)$.  Although the lemmas in \cite{Guivarch1973a, Breuillard2014} are for nilpotent Lie groups, they do no make use of local compactness beyond boundedness of the Lie bracket, which is available to us.  Defining $d(x,y) := N(x^{-1}y)$, we get that $(G,d)$ is a metric scalable group.

  One can show that $G$ equipped with the topologies induced by $|\cdot|$ and $d$ are homeomorphic topological spaces.  Indeed, one immediately sees from \eqref{eq:max-norm} that origin centered balls of both $|\cdot|$ and $d$ are equivalent in terms of containment in one another.  As the Lie bracket is bounded, one gets from the BCDH formula that left translation is continuous in $|\cdot|$ (it is, of course, also continuous in $d$).  Thus, balls of $|\cdot|$ and $d$ centered around any arbitrary point are also equivalent and so the two metrics are homeomorphic.

Now let $\{x_i\}_{i \in \N}$ be a dense subset of $V_1$.  For each $n$, we let $X_n$ denote the subalgebra generated by $x_1,\ldots,x_n$.  This is a finite-dimensional stratified Lie algebra and so $X_n$ endowed with the BCDH product is a Carnot subgroup of $G$.  As $V_1$ generates $X$, we get that $\bigcup_n X_n$ is dense in $G$.

Finally, it remains to prove that $d$ is complete.  This follows immediately from the fact that on any bounded set $U \subset X$, there exist a constant $c = c_U \geqslant 1$ so that
\begin{align}
    c^{-1} |x - y| \leqslant d(x,y) \leqslant c |x-y|^{1/s}, \quad \text{for all } x,y \in U. \label{eq:holder}
\end{align}

The bound \eqref{eq:holder} essentially follows from \cite[Proposition 5.15.1]{BonfiglioliLanconelliUguzzoniBook}.  One slight change to the proof is that the product $y^{-1}x$ should be expressed in terms of the Cartesian product of the $V_k$s.  Specifically, if $x = (x_1,...,x_s)$ and $y = (y_1,...,y_s)$, then the BCDH formula allows us to write
  \begin{align*}
    (y^{-1}x)_j = x_j - y_j + \sum_{k=1}^{j-1} u_k^{(j)}
  \end{align*}
where each $u_k^{(j)} \in V_j$ is a polynomial of iterated Lie brackets of $\{x_1,...,x_{j-1},y_1,...,y_{j-1}\}$.  By Remark \ref{rem:BCDH}, one can bound $|u_k^{(j)}| \leq Q_k^{(j)}(|x|,|y|) |x_k-y_k|$ for some polynomial $Q_k^{(j)}$.  The upper bound of \eqref{eq:holder} then follows from the proof in \cite{BonfiglioliLanconelliUguzzoniBook}.

On the other hand, one can lower bound
\begin{align*}
    \left|x_j - y_j + \sum_{k=1}^{j-1} u_k^{(j)}\right| \geqslant \inf_{T_1^{(j)},...,T_{j-1}^{(j)}} \left|x_j - y_j + \sum_{k=1}^{j-1} T_k^{(j)}(x_k-y_k)\right| =: f_j(x-y),
\end{align*}
where $T_k^{(j)} : V_k \to V_j$ are operators whose norms are bounded by $Q^{(j)}_k(|x|,|y|) \leq M$ where $M$ is a constant depending only on $U$ and the Lie bracket $[\cdot,\cdot]$.  One can show that $\inf_{|z| = 1} \max_{1 \leq j \leq s} f_j(z) > 0$ by inducting on $s$ and using a triangle inequality argument.  This allows us to use the rest of the proof of \cite{BonfiglioliLanconelliUguzzoniBook} to get the lower bound of \eqref{eq:holder}.
\end{proof}

Stratified Banach-Lie groups form a proper subset of infinite-dimensional Carnot groups.  Unlike stratified Banach-Lie groups, infinite-dimensional Carnot groups need not be nilpotent \cite[Proposition 5.10]{LeDonneLiMoisala2021}.  However, even in the nilpotent case, it is not true that all infinite-dimensional Carnot groups are stratified Banach-Lie groups \cite[Proposition 5.12]{LeDonneLiMoisala2021}.

We now introduce a special family of stratified Banach-Lie groups that will be used in Section \ref{heat-kernel}.

\begin{example}[Infinite-dimensional Heisenberg-like   groups]\label{ex.DGgroups}
We recall the construction of infinite-dimensional step $2$ groups introduced in \cite{DriverGordina2008}.
These groups are modelled on an abstract Wiener space equipped with a non-degenerate continuous skew-symmetric bilinear form. We can think of the group as the Heisenberg group of an infinite-dimensional  symplectic vector space.

Suppose $W$ is a real separable Banach space $W$, and  $\mathbf{C}$ is a finite-dimensional inner product space. Define $\mathfrak{g}:=W\times \mathbf{C}$ to be an infinite-dimensional Banach-Lie algebra, which is constructed as an infinite-dimensional step $2$ nilpotent Lie algebra with continuous Lie bracket. Namely, let $\omega: W \times W \rightarrow\mathbf{C}$  be a continuous skew-symmetric bilinear form on $W$.  We will also assume that $\omega$ is surjective.

Then $\mathfrak{g}_{\omega}=\mathfrak{g}:=W\times\mathbf{C}$ is thought of as a Lie algebra with the Lie bracket given by

\begin{equation}\label{e.3.5}
[(X_1,V_1), (X_2,V_2)] := (0, \omega(X_1,X_2)).
\end{equation}
This means that $\mathfrak{g}$ is a central extension of $W$ by $\omega$. Let $G_{\omega}=G$ denote $W\times\mathbf{C}$ when thought of as a group with the multiplication law given by
\begin{equation}
  g_1 g_2 := g_1 + g_2 + \frac{1}{2}[g_1,g_2], \label{eq:DG-mult}
\end{equation}
where $g_1$ and $g_2$ are viewed as elements of $\mathfrak{g}$.  For $g_i=(w_i, c_i)$, this may be written equivalently as
\begin{equation}\label{e.3.2}
(w_1,c_1)\cdot(w_2,c_2) = \left( w_1 + w_2, c_1 + c_2 + \frac{1}{2}\omega(w_1,w_2)\right).
\end{equation}
Then $G$ is a stratified Banach-Lie group with Lie algebra $\mathfrak{g}$. These stratified Banach-Lie groups are called \emph{infinite-dimensional Heisenberg-like groups}.

Note that \eqref{eq:DG-mult} is the BCDH formula for step 2.  Thus, we have that infinite-dimensional Heisenberg-like groups are precisely step 2 stratified Banach-Lie groups with a finite-dimensional $V_2$.
\end{example}

\subsection{Convolutions and convergence of probability measures in infinite dimensions}

Below we recall several standard definitions suitable for our setting. Let $G$ be a topological group. In the case when $G$ is locally compact one can define convolutions  for Radon measures as in \eqref{def_convo}, e.g. in \cite{FollandHABook}. This definition relies on the Riesz representation theorem, so we cannot use it in our setting. Instead we use the approach in \cite{ParthasarathyBook2005} which allows us to define convolutions for probability measures on \emph{separable metric groups}. Note that in \cite{ParthasarathyBook2005} measures are probability measures, and the space of continuous functions is restricted to the space of bounded continuous functions as appropriate for the convergence of probability measures.

Suppose $G$ is a separable metric group, and denote by $\operatorname{Prob}(G)$ the space of probability measures on $G$. This choice allows us to introduce a natural topology on $\operatorname{Prob}(G)$, namely, of \emph{weak} or \emph{weak*} convergence. Note that this convergence is called weak convergence in probability literature.

\begin{dfn}[Weak convergence for probability measures]
We say that a sequence $\mu_{k}$ in $\operatorname{Prob}\left( G \right)$ \emph{converges  weakly} to a measure $\lim_{k \to \infty} \mu_{k} \in \operatorname{Prob}(G)$ if for every bounded continuous function $\phi:G \to \R$ one has
  \begin{equation}\label{def_weak_conve}
    \lim_{k\to \infty}\int_G \phi \;\dd\mu_{k} \to \int_G \phi \;\dd(\lim_{k \to \infty} \mu_{k}).
  \end{equation}
\end{dfn}
There are many equivalent ways of defining weak convergence of probability measures; we refer to the classical monograph \cite{BillingsleyConvPMBook1999}.

\begin{dfn}[Convolution of measures]
  Suppose $\mu, \nu \in \operatorname{Prob}(G)$ as follows, then

  \begin{equation}
    \left( \mu \ast \nu \right)\left( A \right) :=  \int_{G}\mu\left( A x^{-1}\right)\dd\nu \left( x \right), A \in \mathcal{B}\left( G\right),
  \end{equation}
  where $\mathcal{B}\left( G\right)$ is the Borel $\sigma$-algebra on $G$.
\end{dfn}
Consequently, for each bounded (or non-negative) Borel function $\phi:G\to \R$, one has
\begin{equation}\label{def_convo}
  \int_G \phi \;\dd(\mu \ast \nu) :=  \int_G   \int_G \phi (xy) \;\dd\mu(x) \dd\nu (y).
\end{equation}
Moreover, the convolution of probability measures is a probability measure by Fubini's theorem.

We will also need the following theorem which is essentially \cite[Theorem 1.1, p. 57]{ParthasarathyBook2005}.

\begin{theorem} \label{th:conv-cont}
Let $G$ be a separable metric group.  Then convolution is continuous with respect to the weak topology on Prob$(G)$.
\end{theorem}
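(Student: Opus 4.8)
The statement is essentially \cite[Theorem~1.1, p.~57]{ParthasarathyBook2005}, and the plan is to reproduce the argument behind it. First I would reduce to sequences: since $G$ is separable metric, the weak topology on $\operatorname{Prob}(G)$ is metrizable (for instance by the Prokhorov or the bounded--Lipschitz metric), so it suffices to show that $\mu_k \to \mu$ and $\nu_k \to \nu$ weakly in $\operatorname{Prob}(G)$ force $\mu_k \ast \nu_k \to \mu \ast \nu$ weakly. By \eqref{def_convo}, the latter amounts to
\begin{equation*}
  \int_{G}\int_{G} \phi(xy)\,\dd\mu_k(x)\,\dd\nu_k(y) \;\longrightarrow\; \int_{G}\int_{G}\phi(xy)\,\dd\mu(x)\,\dd\nu(y)
\end{equation*}
for every bounded continuous $\phi \colon G \to \R$.

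Next I would transfer the problem to the product group $G\times G$. Since $G$ is a topological group, the multiplication $m\colon G\times G \to G$ is continuous and $G\times G$ is again separable metric, so $\phi\circ m$ is bounded and continuous on $G\times G$; hence it is enough to prove the joint weak convergence of product measures $\mu_k\times\nu_k \to \mu\times\nu$ in $\operatorname{Prob}(G\times G)$ and then integrate $\phi\circ m$ against both sides. To obtain this I would split, for bounded continuous $F$ on $G\times G$,
\begin{equation*}
  \int F\,\dd(\mu_k\times\nu_k) - \int F\,\dd(\mu\times\nu) = \left[\int F\,\dd(\mu_k\times\nu_k) - \int F\,\dd(\mu_k\times\nu)\right] + \left[\int F\,\dd(\mu_k\times\nu) - \int F\,\dd(\mu\times\nu)\right].
\end{equation*}
The second bracket tends to $0$ because $x\mapsto \int_G F(x,y)\,\dd\nu(y)$ is bounded and continuous by dominated convergence and $\mu_k\to\mu$.

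The first bracket is $\int_G (H_k-H)\,\dd\mu_k$ with $H_k(x):=\int_G F(x,y)\,\dd\nu_k(y)$ and $H(x):=\int_G F(x,y)\,\dd\nu(y)$; here $H_k\to H$ only pointwise while $\mu_k$ itself is moving, and this is the step that needs care. I would handle it by the usual device of passing to a compatible totally bounded metric on $G$: by the portmanteau theorem it suffices to prove the product convergence testing only against $F$ that are bounded and uniformly continuous for such a metric, and then, taking completions, I may assume $G$ --- hence the sequence $\{\mu_k\}$ --- lives inside a compact metric space, so $\{\mu_k\}$ is automatically tight. In that setting $\{H_k\}$ is uniformly bounded and equicontinuous, with modulus of continuity controlled by that of $F$, so the Arzel\`a--Ascoli theorem upgrades $H_k\to H$ to uniform convergence on compacta; combined with tightness of $\{\mu_k\}$ this gives $\int_G(H_k-H)\,\dd\mu_k\to 0$, and the product convergence --- hence the theorem --- follows.

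I expect the joint convergence of the product measures, with both factors varying, to be the only genuinely delicate point; the metrizability reduction, the continuity of group multiplication, and the dominated-convergence step are all routine. The role of the totally bounded metric is precisely to make tightness free and Arzel\`a--Ascoli applicable; alternatively one can simply quote Parthasarathy's result that $(\mu,\nu)\mapsto\mu\times\nu$ is jointly weakly continuous on separable metric spaces as a black box and then compose with $m$.
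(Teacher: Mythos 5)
Your proof is correct. The paper offers no proof of its own for this statement --- it simply defers to Parthasarathy's Theorem~1.1 (p.~57) --- and your argument (reduce to sequences via metrizability, prove $\mu_k\times\nu_k\to\mu\times\nu$ by splitting the difference and using a totally bounded compatible metric to get uniform convergence of the functions $H_k$, then compose with the continuous multiplication map) is a faithful and complete unpacking of the standard proof behind that citation.
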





\section{Quotients of scalable metric groups}\label{s.Quotients}
The following proposition concerns submetries of quotients $G\to G/N$. The statement and its proof are a direct generalization of a previous result in \cite{LeDonneRigot2016}.
\begin{proposition}\label{prop:distance_quotient}
Let $G$ be a topological group equipped with an admissible left-invariant distance function $d$. Let $N$ be a closed normal subgroup of $G$. Then the function
\begin{equation}\label{distance_quotient}
d_{G/N}(g_1 N, g_2 N):=d(g_1 N, g_2 N) :=\inf\{ d(g_1 n_1, g_2 n_2) \,:\, n_1, n_2\in N\}
\end{equation}
is an admissible left-invariant distance function on the quotient group $G/N$ for which the projection $\pi: G\to G/N$ becomes a submetry.
\end{proposition}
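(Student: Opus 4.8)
The plan is to isolate the role of normality in a convenient reformulation of the infimum, then to verify the metric axioms, and finally to identify the topology induced by $d_{G/N}$ with the quotient topology, reading off the submetry property along the way. First I would rewrite the infimum: using only left-invariance of $d$ and normality of $N$, for any choice of representatives $g_1,g_2$ there is the identity
\[
d_{G/N}(g_1 N, g_2 N) = \inf_{n \in N} d(g_1, g_2 n) = \dist(g_1, g_2 N).
\]
To see this, set $c := g_1^{-1} g_2$; left-translating gives $d(g_1 n_1, g_2 n_2) = d(e, n_1^{-1} c n_2) = d(g_1, g_2 (c^{-1} n_1^{-1} c) n_2)$, and since $c^{-1} n_1^{-1} c \in N$ by normality, the element $(c^{-1} n_1^{-1} c) n_2$ ranges over all of $N$ as $(n_1,n_2)$ ranges over $N \times N$. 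The same reparametrization shows the value is independent of the representatives, so $d_{G/N}$ is well defined on $G/N$; nonnegativity and symmetry then follow at once (the original two-parameter infimum is visibly symmetric), and left-invariance $d_{G/N}(g g_1 N, g g_2 N) = d_{G/N}(g_1 N, g_2 N)$ follows by applying left-invariance of $d$ inside the infimum. For the separation axiom, $d_{G/N}(g_1 N, g_2 N) = 0$ means $\dist(g_1, g_2 N) = 0$; since $N$ is closed and left translations are homeomorphisms, $g_2 N$ is closed, so $g_1 \in g_2 N$ and the cosets coincide.

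The step I expect to require real care is the triangle inequality, and this is where normality does work beyond making $G/N$ a group. Given $\varepsilon > 0$, I would pick $n_1 \in N$ with $d(g_1, g_2 n_1) < d_{G/N}(g_1 N, g_2 N) + \varepsilon$, then invoke well-definedness to write $d_{G/N}(g_2 N, g_3 N) = \inf_{n} d(g_2 n_1, g_3 n)$ and pick $n_2 \in N$ with $d(g_2 n_1, g_3 n_2) < d_{G/N}(g_2 N, g_3 N) + \varepsilon$; the triangle inequality in $(G,d)$ applied to $g_1, g_2 n_1, g_3 n_2$ then yields $d_{G/N}(g_1 N, g_3 N) \le d(g_1, g_3 n_2) \le d(g_1, g_2 n_1) + d(g_2 n_1, g_3 n_2)$, and letting $\varepsilon \to 0$ finishes it. The essential point is that well-definedness on cosets lets me force the \emph{same} representative $g_2 n_1$ of the middle coset into both halves of the estimate; this is exactly what fails for a non-normal closed subgroup.

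For admissibility and the submetry claim, note that $d_{G/N}(\pi(g), \pi(h)) \le d(g,h)$ directly from the definition, so $\pi$ is $1$-Lipschitz, giving $\pi(B_d(g,r)) \subseteq B_{d_{G/N}}(\pi(g), r)$; conversely, if $d_{G/N}(\pi(g), \pi(h)) < r$ the reformulation above produces $n \in N$ with $hn \in B_d(g,r)$ and $\pi(hn) = \pi(h)$, whence $\pi(B_d(g,r)) = B_{d_{G/N}}(\pi(g), r)$, which is the submetry property. Admissibility follows: being $1$-Lipschitz, $\pi$ is continuous into $(G/N, d_{G/N})$, so every $d_{G/N}$-open set pulls back to an open subset of $G$ and is thus open in the quotient topology; conversely, if $\pi^{-1}(V)$ is open in $G$ and $\pi(g) \in V$, choosing $r > 0$ with $B_d(g,r) \subseteq \pi^{-1}(V)$ gives $B_{d_{G/N}}(\pi(g), r) = \pi(B_d(g,r)) \subseteq V$, so $V$ is $d_{G/N}$-open. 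One caveat I would flag explicitly: the defining infimum need not be attained, so the submetry identity above is the one for open balls; the closed-ball formulation $\pi(\overline{B}_d(g,r)) = \overline{B}_{d_{G/N}}(\pi(g), r)$ additionally requires attainment of the infimum, which holds for instance when closed balls of $(G,d)$ are compact.
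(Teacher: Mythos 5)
Your proof is correct and follows essentially the same route as the paper's: the key reduction of the two-parameter infimum to $\inf_{n\in N} d(g_1, g_2 n)$ via normality and left-invariance, separation from closedness of $N$, the open-ball identity $\pi(B(g,r)) = B(\pi(g),r)$, and the comparison of the metric and quotient topologies through that identity. The extra details you supply (the explicit reparametrization, the careful triangle inequality using a common representative of the middle coset, and the caveat that only the open-ball form of the submetry identity is obtained without attainment of the infimum) merely fill in steps the paper leaves implicit.
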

\begin{proof}
It is classic that $G/N$ is a (Hausdorff) topological group when $N$ is a closed normal subgroup.
We shall prove that under the assumptions on $G$ the function defined by equation \eqref{distance_quotient} metrizes $G/N$. First, observe that since $N$ is normal and the distance $d$ on $G$ is left-invariant, we have
\begin{equation}\label{distance_quotient2}
d(g_1 N, g_2 N) :=\inf\{ d(g_1  , g_2 n ) \,:\, n \in N\} .
\end{equation}
Consequently, the function $d_{G/N}$  is clearly symmetric and left-invariant. It also satisfies the triangle inequality because of \eqref{distance_quotient2}. Moreover, if $g_1 N \neq g_2 N$, i.e. if $g_2^{-1} g_1\notin N$, then, since $N$ is closed, there exists $r>0$ such that $B(  g_2^{-1} g_1,r)\cap N=\emptyset$; as standard, in a given metric space we denote by $B(g , r)$ the open ball centered at $g$ of radius $r$. Hence, $d( g_2^{-1} g_1, N)>r$ and, therefore, $d(g_1 N , g_2 N)>r\neq0$. We infer that \eqref{distance_quotient} defines a distance function on $G/H$.

Before proving that this distance is admissible we check that
\begin{equation}\label{submetry_balls}
\pi(B(g , r)) =B(\pi(g), r),\qquad \text{ for all } g\in G, \text{ for all } r\geqslant 0.
\end{equation}
Regarding the containment $\subseteq$, if $p\in \pi(B(g, r))$ with $p=\pi(\bar p)$ and $\bar p \in  B(g, r)$, then $d(\pi(\bar p), \pi(g))\leqslant d(\bar p, g)<r$ so $p\in B(\pi(g), r)$.  Regarding the containment $\supseteq$, if $p\in B(\pi(g), r)$ and $\bar p \in G$ such that $p=\pi(\bar p)$, then there exists $n\in N$ such that $d(g, \bar p n)<r$. Hence, $p=\pi(\bar p n)$ with $\bar p n \in  B(g, r)$. Thus, $p\in \pi(B(g, r))$.

Let us finally show that the distance $d_{G/N}$ defines the quotient topology on $G/N$.
Let $U\subseteq G/N$ be an open neighbourhood of some point $p\in G/N$. Thus since $\pi$ is continuous, we have that $\pi^{-1}(U) \subseteq G$ is open in $G$. Take $\bar p \in  \pi^{-1}(p)$. So there exists $r>0$ such that $B(\bar p, r) \subseteq \pi^{-1}(U)$. By \eqref{submetry_balls} we get $B(  p, r) \subseteq  U$, which then prove that $U$ is open for the topology given by the metric. Vice versa, every ball $B(  p, r) $ with $p\in G/N$ and $r\geqslant 0$ is open because of \eqref{submetry_balls} and the fact that $\pi$ is an open map.
\end{proof}

\begin{proposition}\label{quotient_by_N}
Let $(G, d, \delta)$ be a metric scalable  group  and let $N$ be a closed normal scalable subgroup. Then the quotient group $G/N$ has a natural structure of a scalable metric group  with the  dilation defined as
\begin{equation}\label{e.QuotientDilation}
\delta_\lambda(g N ) :=\delta_\lambda(g)  N,\qquad \text{ for all } g\in G,
\end{equation}
and distance given by \eqref{distance_quotient}. Moreover, if $V_1(G)$ generates $G$, then $V_1(G/N)$ generates $G/N$.
\end{proposition}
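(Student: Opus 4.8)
The plan is to verify the three assertions of Proposition~\ref{quotient_by_N} in turn: that $\delta_\lambda$ as defined in \eqref{e.QuotientDilation} is well-defined and gives $G/N$ a scalable group structure, that the quotient distance \eqref{distance_quotient} is compatible with the dilations, and finally that $V_1(G/N)$ is generated by (the image of) $V_1(G)$ when the latter generates $G$.

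First I would check well-definedness of \eqref{e.QuotientDilation}: if $gN = g'N$, then $g' = gn$ for some $n \in N$, and since $N$ is a scalable subgroup, $\delta_\lambda(g') = \delta_\lambda(g)\delta_\lambda(n) \in \delta_\lambda(g)N$, so the formula descends to the quotient. That $\delta_\lambda \in \aut(G/N)$ for $\lambda \neq 0$ follows because $\delta_\lambda$ is an automorphism of $G$ preserving $N$, hence induces an automorphism of $G/N$; continuity of $\delta\colon \R \times G/N \to G/N$ follows from continuity of $\delta$ on $G$ together with the fact that $\pi$ is continuous and open (so $\pi \times \mathrm{id}$ is a quotient map). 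The identities \eqref{eq:composition_of_dilations} and $\delta_0 \equiv e_{G/N}$ are immediate consequences of the corresponding identities on $G$. For the metric compatibility, using \eqref{distance_quotient2} and the fact that $N$ is $\delta_t$-invariant, we compute for $t \neq 0$
\begin{align*}
  d_{G/N}(\delta_t(g_1)N, \delta_t(g_2)N) &= \inf_{n \in N} d(\delta_t(g_1), \delta_t(g_2)n) = \inf_{n \in N} d(\delta_t(g_1), \delta_t(g_2)\delta_t(n)) \\
  &= \inf_{n \in N} |t|\, d(g_1, g_2 n) = |t|\, d_{G/N}(g_1 N, g_2 N),
\end{align*}
and the case $t = 0$ is trivial since both sides vanish. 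Admissibility of $d_{G/N}$ on $G/N$ is exactly Proposition~\ref{prop:distance_quotient}. Thus $(G/N, \delta, d_{G/N})$ is a metric scalable group.

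It remains to prove the last sentence: if $V_1(G)$ generates $G$ as a scalable group, then $V_1(G/N)$ generates $G/N$. The key observation is that $\pi$ maps one-parameter subgroups to one-parameter subgroups and intertwines the dilations, so $\pi(V_1(G)) \subseteq V_1(G/N)$: if $p \in V_1(G)$, then $t \mapsto \pi(\delta_t(p)) = \delta_t(\pi(p))$ is a continuous homomorphism $\R \to G/N$, so $\pi(p) \in V_1(G/N)$. Now let $A := V_1(G)$; by hypothesis $G$ is the closure of the group $\Gamma$ generated by $\{\delta_t(a) : a \in A,\, t \in \R\}$. Since $\pi$ is continuous and surjective and $N$ is closed, $\pi(\overline{\Gamma}) \subseteq \overline{\pi(\Gamma)}$, and because $\pi$ is an open map one in fact gets $\overline{\pi(\Gamma)} = \pi(\overline\Gamma) = \pi(G) = G/N$ (here one uses that $\pi$ open implies $\pi(\overline\Gamma)$ is dense-preserving appropriately — more carefully: $\pi(\Gamma)$ is dense in $G/N$ because any nonempty open $U \subseteq G/N$ has $\pi^{-1}(U)$ open nonempty, hence meeting $\Gamma$). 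Finally $\pi(\Gamma)$ is the group generated by $\{\pi(\delta_t(a))\} = \{\delta_t(\pi(a)) : a \in A, t \in \R\}$, and since each $\pi(a) \in V_1(G/N)$, this group is contained in the group generated by $\{\delta_t(b) : b \in V_1(G/N), t \in \R\}$. Taking closures, $G/N$ equals the closure of the group generated by the dilates of $V_1(G/N)$, i.e.\ $V_1(G/N)$ generates $G/N$.

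The only genuinely delicate point is the topological bookkeeping that density passes to the quotient — i.e.\ that $\pi(\Gamma)$ is dense in $G/N$ — which relies on $\pi$ being an open continuous surjection (Proposition~\ref{prop:distance_quotient} and the classical fact that quotient maps of topological groups are open); everything else is a formal verification. I do not expect any serious obstacle beyond being careful that $\pi(V_1(G))$ need not equal $V_1(G/N)$ — we only need the inclusion $\pi(V_1(G)) \subseteq V_1(G/N)$, which suffices for the generation statement.
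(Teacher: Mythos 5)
Your proposal is correct and follows essentially the same route as the paper's proof: well-definedness and the morphism/continuity properties of the induced dilation, the scaling identity for the quotient distance via \eqref{distance_quotient2} and scalability of $N$, and the generation statement via $\pi(V_1(G))\subseteq V_1(G/N)$ together with density of the image of the generated subgroup. The only differences are cosmetic — you prove continuity of the quotient dilation through openness of $\pi\times\mathrm{id}$ rather than the paper's sequence-lifting argument, and you spell out in full the final generation step that the paper dispatches in one sentence (correctly noting that only the inclusion $\pi(V_1(G))\subseteq V_1(G/N)$ is needed, not equality).
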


\begin{proof}
By Proposition~\ref{prop:distance_quotient} we have that the quotient topology of $G/N$ is the same as induced by the distance defined by  \eqref{distance_quotient}. We need to check that the map $\delta$ defined by \eqref{e.QuotientDilation} turns $G/N$ into a  scalable metric group. First of all, we observe that the map  $\delta: \R\times G/N\to G/N$ is well-defined.
\[
\delta_\lambda (g n )N =\delta_\lambda(g)\delta_\lambda(n)    N=\delta_\lambda(g)   N,\qquad \text{ for all } g\in G,  \text{ for all } n\in N,
\]
where we used that $\delta_\lambda $ is a group morphism and that $N$ is scalable. Moreover, such a map $\delta$ is continuous. If $\lambda_i\to \lambda$ and $g_iN \to gN$, then there exists $n_i\in N$ such that $g_in_i\to g$. Then $\delta_{\lambda_i} (g_i n_i )\to \delta_\lambda (g )$, so $\delta_{\lambda_i} (g_i n_i )N\to \delta_\lambda (g N )$, and therefore $\delta_{\lambda_i} (g_i N )\to \delta_\lambda (g  N)$.

Next we verify that  $\delta_\lambda$ is a group morphism.
\begin{multline*}
    \delta_\lambda (g_1  N) \delta_\lambda (g_2  N) =
    \delta_\lambda (g_1  )N \delta_\lambda (g_2  )N =
    \delta_\lambda (g_1  ) \delta_\lambda (g_2  )N
    \\
    =
    \delta_\lambda (g_1   g_2  )N =
    \delta_\lambda (g_1   g_2  N) =    \delta_\lambda (g_1  N g_2  N),
  \end{multline*}
  where we used that $\delta_\lambda $ is a group morphism and that $N$ is normal.
  Also, the two properties $\delta_0\equiv 1_{G/H}$ and \eqref{eq:composition_of_dilations} are obvious. Hence, $(G/N,d)$ is a scalable group.

  Finally, the scaling property of the distance is trivial.
  \begin{align*}
    d(\delta_t(g_1 N),\delta_t(g_2 N))&=
    d(\delta_t(g_1) N,\delta_t(g_2 )N)
    \\
    &=
    \inf\{ d(\delta_t(g_1) n_1, \delta_t(g_2) n_2) \,:\, n_1, n_2\in N\}
    \\
    &=
    \inf\{ d(\delta_t(g_1 n_1), \delta_t(g_2 n_2)) \,:\, n_1, n_2\in N\}
    \\
    &=
    |t|\inf\{ d(g_1 n_1, g_2 n_2) \,:\, n_1, n_2\in N\} = |t|d( g_1 N, g_2 N),
  \end{align*}
  where we used that $\delta_\lambda $ is a group morphism, that $N$ is scalable, and that the distance on $G$ has the scaling property.

The last assertion about the $V_1$s follows from the fact that the projection of the closure of the group generated by $V_1(G) $ is the closure of the group generated by $V_1(G/N)$.
\end{proof}

\begin{remark}
A scalable metric group that is abelian might not be a normed space. For example, one may consider $\R^2$ with dilations given by
  \[
    \delta_\lambda: =
    \begin{pmatrix} \lambda^2 &0\\ 0 & \lambda^2\end{pmatrix}
      \qquad\text{ or } \qquad\lambda
      \begin{pmatrix} \cos(\log\lambda) & -\sin(\log\lambda)\\ \sin(\log\lambda) & \cos(\log\lambda)\end{pmatrix}
        = \exp\left(\log(\lambda) \begin{pmatrix} 1 & -1\\ 1 & 1\end{pmatrix}\right).
        \]
See \cite{LeDonneGolo_Nicolussi2021} for the existence of scalable metrics.
\end{remark}
\begin{proposition}\label{metric_scalable_abelian}
  Let $(G,d, \delta)$ be a metric scalable   group.
  If $G$ is abelian and $V_1(G)$ generates $G$, then $G$ is a normed vector space where the scalar multiplication is given by
  \begin{equation}\label{scalar}
    t g := \delta_t g, \qquad \text{ for all } t\in \R, \text{ for all } g\in G
  \end{equation}
  and norm
  \begin{equation}\label{norm}
    \norm{g} := d(e_G, g), \qquad  \text{ for all } g\in G.
  \end{equation}
\end{proposition}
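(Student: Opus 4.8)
The plan is to write $G$ additively (allowed since $G$ is abelian), give it the scalar multiplication \eqref{scalar} and the norm \eqref{norm}, and then verify the vector space and norm axioms one at a time, isolating the single substantive point — distributivity over scalar addition — and deriving it from the defining property of $V_1(G)$ by a density argument.

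First I would dispose of the formal identities. Since $\delta_1 = \id_G$ we get $1\cdot g = g$; since $\delta_0 \equiv e_G$ we get $0\cdot g = 0$; and \eqref{eq:composition_of_dilations} gives $(st)\cdot g = \delta_s(\delta_t(g)) = s\cdot(t\cdot g)$. Each $\delta_\lambda$, $\lambda\neq 0$, is a group automorphism and $\delta_0$ is trivially additive, so $t\cdot(g+h) = \delta_t(g)+\delta_t(h) = t\cdot g + t\cdot h$. What remains is the distributive law $(s+t)\cdot g = s\cdot g + t\cdot g$ for all $s,t\in\R$ and $g\in G$, which is exactly the assertion $g\in V_1(G)$. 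This holds for $g\in V_1(G)$ by definition; also $V_1(G)$ is closed under negation, since for $p\in V_1(G)$ one has $p + \delta_{-1}(p) = \delta_1(p) + \delta_{-1}(p) = \delta_0(p) = e_G$, so $-p = \delta_{-1}(p)\in V_1(G)$ by the $\delta$-invariance of $V_1(G)$ noted before Lemma~\ref{l:V1-complete}. The law is moreover stable under finite sums: if $g = g_1 + \cdots + g_n$ with each $g_i\in V_1(G)$, then, using that $\delta_{s+t}$ is a homomorphism, that each $g_i\in V_1(G)$, and that $G$ is abelian,
\[
\delta_{s+t}(g) = \sum_{i=1}^n \delta_{s+t}(g_i) = \sum_{i=1}^n \bigl(\delta_s(g_i) + \delta_t(g_i)\bigr) = \delta_s(g) + \delta_t(g).
\]
Thus the law holds on the subgroup generated by $V_1(G)$, whose closure is $G$ by hypothesis; since $\delta$ and the group operation are continuous, the identity $\delta_{s+t}(g) = \delta_s(g) + \delta_t(g)$ passes to the closure. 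Hence $(G,+,\cdot)$ is a real vector space (in particular $-g = \delta_{-1}(g)$, i.e.\ $V_1(G) = G$).

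Next I would check that $\|g\| := d(e_G,g)$ is a norm. Non-negativity and $\|g\| = 0 \iff g = 0$ are immediate since $d$ is a metric. Since $\delta_t(e_G) = e_G$ for every $t$, the scaling property of $d$ gives $\|\delta_t(g)\| = d(\delta_t(e_G),\delta_t(g)) = |t|\,d(e_G,g) = |t|\,\|g\|$. Left-invariance of $d$ yields the triangle inequality, $d(e_G,g+h)\le d(e_G,g) + d(g,g+h) = \|g\| + \|h\|$, and also $d(g,h) = d(e_G,-g+h) = \|h-g\|$; thus $d$, which is admissible, coincides with the norm metric, so $\|\cdot\|$ induces the original topology on $G$ and the operations are continuous (addition because $G$ is a topological group, scalar multiplication because $\delta$ is continuous). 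Therefore $(G,\|\cdot\|)$ is a normed vector space.

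The only step that is not a routine unwinding of Definitions~\ref{def:scalable:group} and~\ref{def:metric:scalable:group} is the distributive law, and the obstacle there is that it genuinely fails without the generation hypothesis — the remark preceding this proposition exhibits abelian metric scalable groups in which $V_1 = \{0\}$ and the putative scalar multiplication is not additive in the scalar. It is precisely the density argument above that makes the assumption ``$V_1(G)$ generates $G$'' do the work.
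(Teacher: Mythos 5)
Your proof is correct and follows essentially the same route as the paper's: the only substantive point is the distributive law $(s+t)g = sg+tg$, i.e.\ that $V_1(G)=G$, which both arguments obtain by showing $V_1(G)$ is stable under the group operations and then combining density of the generated subgroup with topological closedness (your continuity argument for passing to the closure is exactly the content of Lemma~\ref{l:V1-complete}, which the paper cites instead). You are somewhat more explicit than the paper about the routine vector-space axioms and about the norm metric reproducing the original topology, but the mathematical content is the same.
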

\begin{proof}
To check that the scalar multiplication is linear in the scalar we shall show that $V_1:=V_1(G)$ coincides with $G$. Let $x_1 $ and $x_2\in V_1(G)$, so that from \eqref{V1def} we have
\[
\delta_{s+t}(x_i) = \delta_{s}(x_i) \delta_{t}(x_i), \qquad \text{ for all } s,t\in \R,\text{ for } i=1,2.
\]
Hence, we have the same property for $x_1 x_2$
\begin{multline*}\delta_{s+t}(x_1 x_2) =
    \delta_{s+t}(x_1)\delta_{s+t}( x_2)
    = \delta_{s}(x_1) \delta_{t}(x_1) \delta_{s}(x_2) \delta_{t}(x_2) \\
    = \delta_{s}(x_1)  \delta_{s}(x_2) \delta_{t}(x_1) \delta_{t}(x_2)
  =\delta_{s}(x_1 x_2) \delta_{t}(x_1 x_2),
\end{multline*}
  where we used that $\delta_{s}$, $\delta_{t}$, and $\delta_{s+t}$ are group morphisms, that $x_1,x_2$ are in $ V_1$, and that $G$ is abelian.
  Moreover, because of the same properties we also have that
  \begin{multline*}\delta_{s+t}(x_1^{-1}) =
    \delta_{s+t}(x_1)^{-1} = (\delta_{s}(x_1) \delta_{t}(x_1) )^{-1} \\
  = ( \delta_{t}(x_1) \delta_{s}(x_1))^{-1} =  \delta_{s}(x_1) ^{-1} \delta_{t}(x_1)^{-1} =\delta_{s}(x_1^{-1} ) \delta_{t}(x_1^{-1} ). \end{multline*}
  Therefore, $V_1$ is a subgroup.

  As Lemma~\ref{l:V1-complete} gives that $V_1$ is closed in $G$, the assumption that $V_1$ generates $G$ implies that $V_1=G$.
  Therefore the multiplication given by $\delta$ gives a scalar multiplication \eqref{scalar} turning the abelian group $G$ into a vector space.

  Next, we show that \eqref{norm} is indeed a norm.
  We have linearity in the scalar:
  \[
    \norm{tg} = \norm{\delta_t g} = d(e_G,\delta_t g) = t d(e_G,g) = t \norm {g}.
  \]
  We have the triangle inequality:
  \[
    \norm{g_1 g_2} =   d(1,g_1 g_2) \leqslant d(1,g_1) + d(g_1,g_1 g_2)
    =d(1,g_1) + d(1,  g_2)
    =\norm{  g_1}+ \norm{g_2}.
  \]
\end{proof}

\begin{corollary}\label{c.3.5}
Let $(G,d, \delta)$ be a metric scalable group, for which $V_1(G)$ generates $G$. If $G^{\prime}$ denotes the closure of the commutator subgroup $[G,G]$, then $G/G^{\prime}$ is a normed vector space.
\end{corollary}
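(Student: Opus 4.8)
The plan is to deduce this as a direct consequence of Propositions~\ref{quotient_by_N} and \ref{metric_scalable_abelian}, with the only real content being the verification that $G' := \overline{[G,G]}$ is a closed normal scalable subgroup of $G$ to which those propositions apply.

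First I would check that $G'$ qualifies as the normal subgroup $N$ in Proposition~\ref{quotient_by_N}. Being the closure of a subgroup in a topological group, $G'$ is a closed subgroup; being the closure of a normal subgroup (with conjugation by any fixed $g\in G$ a homeomorphism of $G$), it is normal. For scalability, recall that for $\lambda\in\R\setminus\{0\}$ the dilation $\delta_\lambda\in\aut(G)$ is in particular a group automorphism and a homeomorphism of $G$; hence $\delta_\lambda([G,G])=[G,G]$ and, since $\delta_\lambda$ maps closures to closures, $\delta_\lambda(G')=\delta_\lambda(\overline{[G,G]})=\overline{\delta_\lambda([G,G])}=\overline{[G,G]}=G'$. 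Thus $G'$ is a closed normal scalable subgroup.

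Next I would apply Proposition~\ref{quotient_by_N}: the quotient $G/G'$ is a metric scalable group under the dilation \eqref{e.QuotientDilation} and the quotient distance \eqref{distance_quotient}, and since $V_1(G)$ generates $G$ by hypothesis, the last assertion of that proposition gives that $V_1(G/G')$ generates $G/G'$. Finally, $G/G'$ is abelian: for $g_1,g_2\in G$ the commutator $[g_1,g_2]$ lies in $[G,G]\subseteq G'$, so $g_1g_2 G' = g_2 g_1 G'$. Therefore $G/G'$ is an abelian metric scalable group whose first layer generates it, and Proposition~\ref{metric_scalable_abelian} applies verbatim to conclude that $G/G'$ is a normed vector space, with scalar multiplication $t\,(gG') := \delta_t(g)G'$ and norm $\|gG'\| := d_{G/G'}(e_{G/G'}, gG')$.

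I do not anticipate a genuine obstacle here; the argument is entirely formal once the three hypotheses of Proposition~\ref{metric_scalable_abelian} are matched up. The only point deserving a line of care is the scalability of $G'$, i.e.\ that dilations commute with taking the closure of the commutator subgroup, which is immediate from $\delta_\lambda$ being simultaneously an automorphism and a homeomorphism.
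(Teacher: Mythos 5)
Your proposal is correct and follows essentially the same route as the paper: verify that $G'$ is a closed, normal, scalable subgroup, apply Proposition~\ref{quotient_by_N} to get a metric scalable group structure on $G/G'$ with $V_1(G/G')$ generating, note that the quotient is abelian, and conclude via Proposition~\ref{metric_scalable_abelian}. The extra detail you give on why dilations commute with taking closures is a fine elaboration of the paper's one-line assertion that the closure of a normal scalable subgroup is normal and scalable.
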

\begin{proof}
  Start observing that $[G,G]$ is a normal subgroup that is scalable. The closure of a normal scalable subgroup is normal and scalable. Therefore, the closure $G^{\prime}$ of the commutator subgroup $[G,G]$ is a closed, normal, and scalable subgroup. By
  Proposition~\ref{quotient_by_N} we have that naturally $G/G^{\prime}$ is a metric scalable group and, since we are assuming that $V_1(G)$ generates $G$, we also have that $V_1(G/G^{\prime})$ generates $G/G^{\prime}$.
  Moreover, since $G^{\prime}$ contains the commutator subgroup $[G,G]$, we have that the quotient $G/G^{\prime}$ is abelian. Consequently, from Proposition~\ref{metric_scalable_abelian}  we infer that
  $G/G^{\prime}$ is a normed vector space.
\end{proof}

It is not true that $[G,G]$ is always closed in an infinite-dimensional Carnot group.  Let $G := \ell_2 \times \ell_2 \times \ell_2$ and define the skew-symmetric bilinear form
\begin{align*}
  \omega : (\ell_2 \times \ell_2) \times (\ell_2 \times \ell_2) &\to \ell_2 \\
  ((x,y),(x^{\prime},y^{\prime})) &\mapsto (x_1y_1^{\prime} - x_1^{\prime}y_1, x_2y_2^{\prime} - y_2 x_2^{\prime},\ldots)
\end{align*}
By Cauchy-Schwarz and the fact that $\|\cdot\|_{\ell_2} \leqslant \|\cdot\|_{\ell_1}$, we have that $\omega$ is continuous.  We then define the group multiplication on $G$ as
\begin{align*}
  (x,y,z) \cdot (x^{\prime},y^{\prime},z^{\prime}) = (x + x^{\prime}, y + y^{\prime}, z + z^{\prime} + \frac{1}{2} \omega((x,y),(x^{\prime},y^{\prime}))).
\end{align*}
Note that this construction is similar to that of Example \ref{ex.DGgroups} except that the bilinear form takes value in an infinite-dimensional Banach space.

To show $G$ is an infinite-dimensional Carnot group, we first define the dilations
\begin{align*}
  \delta_t(x,y,z) = (tx,ty,t^2z).
\end{align*}
By \cite[Lemma 2.5]{Breuillard2014}, there exists a $\sigma > 0$ so that we can define an admissible left-invariant metric $d(g,h) = N(g^{-1}h)$ from the gauge
\begin{align*}
  N(x,y,z) = \max \{\|x\|_{\ell_2}, \|y\|_{\ell_2}, \sigma \|z\|_{\ell_2}\}.
\end{align*}
Finally, the subgroups $H_k$ generated by $x_1,\ldots,x_k,y_1,\ldots,y_k,z_1,\ldots,z_k$ are isomorphic to to the $k$-fold product of the first Heisenberg group $(\HH)^k$ and form a filtration of $G$.
We get that $[G,G] = \{(0,0)\} \times \ell_1$ which is dense but not all of $\overline{[G,G]} = \{(0,0)\} \times \ell_2$.

\begin{lemma} \label{l:V1-commutator-intersect}
  Let $G$ be a nilpotent infinite-dimensional Carnot group.  Then
  \begin{align*}
    \#(V_1(G) \cap g\overline{[G,G]}) \leqslant 1, \qquad \text{ for all } g \in G.
  \end{align*}
\end{lemma}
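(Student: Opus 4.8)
The plan is to prove the equivalent statement that if $v_1,v_2\in V_1(G)$ and $v_1^{-1}v_2\in\overline{[G,G]}$, then $v_1=v_2$: this yields the lemma, since any two points $v_1,v_2\in V_1(G)\cap g\overline{[G,G]}$ satisfy $v_1^{-1}v_2=(g^{-1}v_1)^{-1}(g^{-1}v_2)\in\overline{[G,G]}$. I would argue by induction on the nilpotency step $s$ of $G$. If $s=1$ then $G$ is abelian, $\overline{[G,G]}=\{e\}$, and there is nothing to prove.

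For the inductive step assume the statement for nilpotent infinite-dimensional Carnot groups of step $<s$, and let $G$ have step $s\geqslant 2$. Put $Z:=\overline{G^{(s)}}$, the closure of the $s$-th term of the lower central series; since $G^{(s)}$ is central, $Z$ is a closed, central (hence normal), scalable subgroup, and $\delta_\lambda(Z)=Z$. By Proposition~\ref{quotient_by_N} the quotient $\bar G:=G/Z$ is a metric scalable group with $V_1(\bar G)$ generating it; it is nilpotent of step $<s$, separable, complete, and admits a filtration by Carnot subgroups by Proposition~\ref{exist:filtration}, hence is again an infinite-dimensional Carnot group. Writing $\rho\colon G\to\bar G$ for the projection, one has $\rho(v_1),\rho(v_2)\in V_1(\bar G)$ (since $\rho$ is a continuous homomorphism commuting with dilations) and $\rho(v_1)^{-1}\rho(v_2)=\rho(v_1^{-1}v_2)\in\rho(\overline{[G,G]})\subseteq\overline{[\bar G,\bar G]}$. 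The inductive hypothesis forces $\rho(v_1)=\rho(v_2)$, so $c:=v_1^{-1}v_2$ lies in $\ker\rho=Z=\overline{G^{(s)}}$; in particular $c$ is central.

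It now suffices to deduce $c=e$ from the two facts $c\in V_1(G)$ and $c\in V_s(G)$. First, $c\in V_1(G)$: each $\delta_a(c)$ lies in $Z$ and is central, so expanding the one-parameter-subgroup identities for $v_1$ and for $v_2=v_1c$ and using centrality of $\delta_a(c)$ to rearrange and cancel gives $\delta_{a+b}(c)=\delta_a(c)\delta_b(c)$ for all $a,b$. Second, $c\in V_s(G)$: fix any filtration $\{H_j\}$ of $G$ by Carnot subgroups; continuity of iterated commutators gives $G^{(s)}\subseteq\overline{\bigcup_j H_j^{(s)}}$, so $c$ is a limit of elements $c_n\in H_{j_n}^{(s)}$, and in each finite-dimensional Carnot group $H_{j_n}$ the $s$-fold commutator subgroup sits inside the top layer, $H_{j_n}^{(s)}\subseteq V_s(H_{j_n})=V_s(G)\cap H_{j_n}$, so $c_n\in V_s(G)$; since $V_s(G)$ is closed by Lemma~\ref{l:V1-complete}, $c\in V_s(G)$. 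Finally, suppose $c\neq e$. Then $\gamma(t):=\delta_t(c)$ is a one-parameter subgroup, and it is injective, for $\gamma(r_0)=e$ with $r_0\neq0$ would give $\gamma(\lambda r_0)=\delta_\lambda(\gamma(r_0))=e$ for all $\lambda$, hence $\gamma\equiv e$ and $c=e$. Combining the $V_1$-relation $\gamma(u)\gamma(w)=\gamma(u+w)$ with the $V_s$-relation $\delta_{(u^s+w^s)^{1/s}}(c)=\delta_u(c)\delta_w(c)$ yields $\gamma\big((u^s+w^s)^{1/s}\big)=\gamma(u+w)$ for all $u,w>0$, so $(u^s+w^s)^{1/s}=u+w$; taking $u=w=1$ gives $2^{1/s}=2$, contradicting $s\geqslant2$. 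Hence $c=e$ and $v_1=v_2$.

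The obstacle the argument is designed to circumvent is precisely the one flagged just before the lemma: $[G,G]$ need not be closed, so one cannot simply work inside a single finite-dimensional Carnot subgroup $H$, where $V_1(H)\cap[H,H]=\{e\}$ is immediate. The induction on step, the closedness of the layers $V_k(G)$ (Lemma~\ref{l:V1-complete}), and the observation that passing to $G/\overline{G^{(s)}}$ discards only a central piece together reduce everything to the elementary strict inequality $(u^s+w^s)^{1/s}<u+w$. I expect the only genuinely delicate bookkeeping to be verifying that $G/\overline{G^{(s)}}$ satisfies the definition of an infinite-dimensional Carnot group (completeness of the quotient and existence of a filtration, via Propositions~\ref{quotient_by_N} and~\ref{exist:filtration}), and that the identification of $V_s$ with the top stratum is compatible with the inclusions $H_{j_n}\hookrightarrow G$.
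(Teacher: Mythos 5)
Your proof is correct and follows essentially the same route as the paper's: induction on the step, passage to the quotient by $\overline{G_s}$ (the closure of the last nontrivial term of the lower central series), the fact that this subgroup is central and contained in the closed set $V_s(G)$, and the observation that a nontrivial element cannot lie in both $V_1(G)$ and $V_s(G)$. The only differences are organizational — you invoke the inductive hypothesis first and then dispose of the leftover central element, whereas the paper first proves $\#(V_1(G)\cap g\overline{G_s})\leqslant 1$ and then quotients — and you additionally spell out, via the identity $(u^s+w^s)^{1/s}=u+w$, why $V_1(G)\cap V_s(G)=\{e_G\}$, a step the paper asserts without proof.
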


\begin{proof}
We will prove the lemma by induction on the step of $G$.  When $G$ is step 1, then $G$ is abelian and so the lemma follows from the fact that $\overline{[G,G]} = \{e_G\}$.

Assume now that the lemma holds for steps up to $s-1$ and let $G$ be a step $s > 1$ infinite-dimensional Carnot group. Define $G_i$ as the usual lower central series.  Thus, $G_2 = [G,G]$, $G_s \neq \{e_G\}$, and $G_{s+1} = \{e_G\}$. Let $H_{k}$ denote the filtration of $G$ by finite-dimensional Carnot subgroups, $K$ the dense subgroup $\bigcup_{k} H_{k}$, and $K_i$ denote the lower central series of $K$.  If $g \in K_s$, then $g$ is in the $s$-th lower central group of $H_j$ for some $j$.  By the finite-dimensional Carnot group theory, it follows that $g \in V_s(H_j) \subset V_s(G)$ and so $K_s \subseteq V_s(G)$.  Finally, as $K_s$ is dense in $\overline{G_s}$ and $V_s(G)$ is closed, we get that $\overline{G_s} \subseteq V_s(G)$.

We first claim that
\begin{align}
    \# (V_1(G) \cap g\overline{G_s}) \leqslant 1, \qquad \text{ for all } g \in G. \label{eq:V1-Vs-intersect}
\end{align}
Let $g \in G$ and assume $u,v \in V_1(G) \cap g\overline{G_s}$.  Then there exists $h \in \overline{G_s} \subseteq V_s(G)$ so that $uh = v \in V_1(G)$.  Let $s,t \in \R$.  As $\overline{G_s}$ lies in the center of $G$, we have
\begin{align*}
    \delta_t(uh) \delta_s(uh) = \delta_t(u)\delta_t(h) \delta_s(u)\delta_s(h) = \delta_{t+s}(u) \delta_t(h)\delta_s(h).
  \end{align*}
  On the other hand $\delta_{t+s}(uh) = \delta_{t+s}(u)\delta_{t+s}(h)$ and so we must have $h \in V_1(G)$ so that $\delta_t(h)\delta_s(h) = \delta_{t+s}(h)$.  But then $h \in V_s(G) \cap V_1(G) = \{e_G\}$ and so $u = v$, proving \eqref{eq:V1-Vs-intersect}.

  Now assume for contradiction that $u, v \in V_1(G) \cap g\overline{[G,G]}$ are distinct elements.  Define the projection $\pi : G \to G/\overline{G_s} = L$.  Then $L$ is a step $s-1$ infinite-dimensional Carnot group.  Indeed, this follows from the fact that $L$ is nilpotent and Propositions \ref{exist:filtration} and \ref{quotient_by_N}.  By \eqref{eq:V1-Vs-intersect}, we have that $\pi(u) \neq \pi(v)$.  By the construction of $\delta$ on $L$, it easily follows that $\pi(u),\pi(v) \in V_1(L)$.  Note that $\pi(g\overline{G_2}) \subseteq \pi(g)\overline{[L,L]}$.  Thus, we have that $\pi(u)$ and $\pi(v)$ are distinct elements of $V_1(L) \cap \pi(g)\overline{[L,L]}$.  But this is a contradiction of the inductive hypothesis.
\end{proof}

\section{Locally compact commutators}\label{s.LCCommutators}

As mentioned previously, an infinite-dimensional Carnot group need not be a Banach-Lie group in the sense of Section~\ref{sec:Banach}.  In this section, we will prove that this does hold when the commutator subgroup is locally compact.

\begin{theorem} \label{th:IDCG-Banach}
Let $G$ be an infinite-dimensional Carnot group.  If the commutator subgroup of $G$ is locally compact, then $G$ is a stratified Banach-Lie group.
\end{theorem}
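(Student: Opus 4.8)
\emph{Outline.} The plan is to use the hypothesis to force $G$ to be nilpotent with a finite dimensional commutator, split off that commutator as a finite dimensional Carnot group, equip a complementary ``horizontal'' direction with a Banach space structure, and then reassemble a stratified Banach--Lie algebra whose BCDH group is $G$. First, a locally compact subgroup of a Hausdorff group is closed, so $G':=[G,G]$ is closed and equals $\overline{[G,G]}$; being the closure of a normal scalable subgroup, it is closed, normal, and scalable. For $g\in G'$ the path $\lambda\mapsto\delta_\lambda(g)$ joins $e_G$ to $g$ inside $G'$, so $G'$ is connected, and for $0<|\lambda|<1$ the automorphism $\delta_\lambda$ contracts $G'$ to $e_G$ since $d(\delta_\lambda g,e_G)=|\lambda|\,d(g,e_G)$. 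By Siebert's structure theorem for connected locally compact groups admitting a contractive automorphism, $G'$ is a simply connected nilpotent Lie group; in particular $q:=\dim G'<\infty$, and $(G',d|_{G'})$ is a proper metric space, since a small closed ball about $e_G$ is compact and every ball is a left translate of a dilate of it.

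\emph{Nilpotency and the abelianized quotient.} Each $N_m$ in a filtration satisfies $[N_m,N_m]\subseteq G'$, so $\dim[N_m,N_m]\leqslant q$; since a step $s$ Carnot group has $\dim[N,N]\geqslant s-1$, the step of $N_m$ is at most $q+1$, and by continuity of iterated commutators together with density of $\bigcup_m N_m$, $G$ is nilpotent of some step $s\leqslant q+1$. Put $B:=G/G'$ with projection $\pi$; by Corollary~\ref{c.3.5} it is a normed vector space, it is separable, and it is complete, hence a separable Banach space. For $k\geqslant2$ and $p\in V_k(G)$, the curve $t\mapsto\delta_{t^{1/k}}(\pi p)=t^{1/k}\pi p$ can be a one parameter subgroup of the normed space $B$ only if $\pi p=0$; thus $V_k(G)\subseteq G'$, and then $V_k(G)=V_k(G')$ is a finite dimensional linear subspace of $\mathfrak g':=\Lie(G')$. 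The increasing subgroups $[N_m,N_m]$ of the finite dimensional $G'$ stabilize, and a density argument gives $G'=[N_m,N_m]$ for large $m$, whence $\mathfrak g'=\bigoplus_{k=2}^{s}V_k(G)$.

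\emph{The horizontal Banach space.} Since $G$ is nilpotent with $\overline{[G,G]}=G'$, Lemma~\ref{l:V1-commutator-intersect} with $g=e_G$ gives $V_1(G)\cap G'=\{e_G\}$, so $\pi|_{V_1(G)}$ is injective; it also intertwines $\delta_t$ with scalar multiplication by $t$. The key estimate is a quantitative transversality $d(e_G,v)\leqslant C\,\dist(v,G')$ for all $v\in V_1(G)$: otherwise there are $v_n\in V_1(G)$ with $d(e_G,v_n)=1$ and $\dist(v_n,G')\to0$, hence $h_n\in G'$ with $v_nh_n^{-1}\to e_G$, so $\{h_n\}$ is bounded in the proper space $G'$, a subsequence converges to some $h\in G'$, $v_n\to h$, and $h\in V_1(G)\cap G'=\{e_G\}$ contradicts $d(e_G,h)=1$. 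Since $\dist(v,G')=\norm{\pi v}_B$, this shows $\pi|_{V_1(G)}$ is a homeomorphism onto its image; it is onto because for $b\in B$ the group $\pi^{-1}(\R b)$ is locally compact with no small subgroups (an extension of $\R$ by the Lie group $G'$), hence a Lie group, and a vector $Y$ in its Lie algebra with $d\pi(Y)=b$ yields a one parameter subgroup $t\mapsto\exp(tY)$ with $\exp Y\in V_1(G)$ mapping to $b$. Transporting the Banach structure of $B$ onto $V_1(G)$ and using continuity of $(\pi|_{V_1(G)})^{-1}$, the map $(v,h)\mapsto vh$ is a homeomorphism $V_1(G)\times G'\to G$, so $G$ is already a Banach manifold modelled on $V_1(G)\times\mathfrak g'$, with dilations acting componentwise.

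\emph{Reassembly, and the main obstacle.} For large $m$, $N_m$ is a Carnot group with $[N_m,N_m]=G'$, so $\mathfrak n_m=W_m\oplus\mathfrak g'$ as graded Lie algebras with $W_m:=V_1(N_m)$ increasing, $\bigcup_m W_m$ dense in $V_1(G)$, and $\mathfrak n_m\subseteq\mathfrak n_{m+1}$ as graded subalgebras. This yields a Lie bracket on the dense subspace $(\bigcup_m W_m)\oplus\mathfrak g'$ of $\mathfrak g:=V_1(G)\oplus\mathfrak g'$; comparing, via the scaling of $d$ and a homogeneous norm on $G'$, the group commutator $\delta_t(v)^{-1}\delta_t(w)^{-1}\delta_t(v)\delta_t(w)$ with the degree $2$ term $t^2[v,w]$ of its BCDH logarithm gives bounds $|[v,w]|\leqslant C\norm v\,\norm w$ (and similar bounds for $\ad v$ on $\mathfrak g'$) that are uniform in $m$, so the bracket extends continuously to $\mathfrak g$. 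Then $\mathfrak g$ is a nilpotent Banach--Lie algebra graded by $V_1(G),\dots,V_s(G)$, and it is stratified because $V_1(G)$ generates $G$ as a scalable group (Proposition~\ref{exist:filtration}) and, for large $m$, $[W_m,W_m]=V_2(G)$ with iterated brackets filling the higher layers. Finally, the BCDH product \eqref{eq:BCH} of $\mathfrak g$ agrees with the group law of $G$ on the dense subgroup $\bigcup_m N_m$ (the classical finite dimensional identification $N_m\cong\mathfrak n_m$) and both are continuous on the common underlying space, hence agree, while \eqref{eq:dilation} matches $\delta$; so $G$ is the stratified Banach--Lie group of $\mathfrak g$. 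I expect the hard part to be precisely these infinite dimensional limits---the uniform transversality estimate and, above all, the uniform-in-$m$ bound on the brackets of the $\mathfrak n_m$ needed to fuse them into one continuous bracket on $\mathfrak g$ globally rather than only on the dense part---together with the appeal to Siebert's structure theorem for $G'$.
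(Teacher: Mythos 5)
Your outline follows essentially the same architecture as the paper's proof: show that $[G,G]$ is a finite-dimensional Lie group and that $G$ is nilpotent, identify $V_1(G)$ homeomorphically with the Banach space $G/[G,G]$ (the quantitative transversality estimate you prove by a properness/contradiction argument is exactly Lemma~\ref{l:V1-norm-comp}, and its proof there is the same compactness argument), decompose $G$ as $V_1(G)\times[G,G]$, and reassemble a bounded stratified bracket from the finite-dimensional filtration, with the crux being the uniform-in-$m$ bound that lets the bracket and the BCDH product pass to the closure. Two sub-arguments genuinely differ. For finite-dimensionality of $G'=[G,G]$ you invoke Siebert's structure theorem for connected locally compact groups with a contractive automorphism; the paper instead proves the elementary exhaustion Lemma~\ref{l:locally-compact-acc} (a Riesz-type covering argument with $\delta_{1/2}$) to conclude directly that $[G,G]=[H_N,H_N]$ for some $N$ --- a fact you end up needing anyway for the reassembly, so Siebert buys you little here. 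For surjectivity of $\pi|_{V_1(G)}$ you pass through Gleason--Yamabe applied to $\pi^{-1}(\R b)$, whereas the paper shows the image is dense (since $\pi(H_k)=\pi(V_1(H_k))$) and closed (by the same properness of $G'$ that you use for transversality).

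The one concrete misstep is in that surjectivity argument: from a vector $Y$ in the Lie algebra of $\pi^{-1}(\R b)$ with $d\pi(Y)=b$ you conclude $\exp Y\in V_1(G)$. That inference is false as stated: $t\mapsto\exp(tY)$ is a one-parameter subgroup, but membership in $V_1(G)$ requires $\delta_t(\exp Y)=\exp(tY)$, which already fails in the three-dimensional Heisenberg group for $Y=X+Z$ with $X$ horizontal and $Z$ central (and $d\pi(Y)=d\pi(X)$). You must take $Y$ in the weight-one eigenspace of the derivation generating $\lambda\mapsto d\delta_\lambda$ on $\Lie(\pi^{-1}(\R b))$; this eigenspace exists and maps isomorphically onto $\R b$ because the dilation weights on $\mathfrak g'$ are $2,\dots,s$, so the argument is repairable. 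A smaller presentational gap: in the last step you identify $G$ with $\mathfrak g$ via the product map $(v,h)\mapsto vh$ but then compare group laws using the exponential coordinates of each $N_m$; these are different parametrizations, and making ``agree on a dense subgroup plus continuity'' rigorous requires assembling $\log|_{N_m}$ into a single homeomorphism $G\to\mathfrak g$ defined on all of $G$, which is precisely what the paper's Lemmas~\ref{l:finite-contains} and~\ref{p:FD-parameterization} together with the explicit two-topology comparison at the end of its proof accomplish.
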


A locally compact subgroup is automatically closed, so in contrast to the previous section, we do not need to distinguish between $[G,G]$ and its closure.

Next, we note that the hypothesis of local compactness effectively implies that $[G,G]$ is finite-dimensional.

\begin{lemma}\label{l:locally-compact-acc}
Let $G$ be a locally compact scalable group.  Suppose  $H_1 \leqslant H_2 \leqslant \dots \leqslant G$ is an increasing chain of closed scalable subgroups such that $H := \bigcup_{i=1}^\infty H_i$ is dense in $G$.  Then $G = H_N$ for some $N$.
\end{lemma}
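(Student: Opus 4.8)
The plan is to reduce the statement to finite-dimensional Lie theory. First I would observe that every scalable group is connected, and in particular so is each closed scalable subgroup $H_i$ (which is itself a scalable group under the restriction of $\delta$): for any $g$ the map $[0,1]\to G$, $t\mapsto\delta_t(g)$, is a continuous path from $\delta_0(g)=e_G$ to $\delta_1(g)=g$. Next I would note that $\delta_{1/2}$ is a \emph{contractive} automorphism of the locally compact group $G$: it is a continuous group automorphism, and for every $g\in G$ one has $\delta_{1/2}^{\,n}(g)=\delta_{2^{-n}}(g)\to\delta_0(g)=e_G$ by continuity of $t\mapsto\delta_t(g)$ at $t=0$.

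Now I would invoke the structure theory of locally compact groups admitting a contractive automorphism (a theorem of Siebert; this can also be extracted from the Gleason--Yamabe theorem by ruling out nontrivial compact normal subgroups): the identity component of such a group is a simply connected nilpotent Lie group. Since $G$ is connected, $G$ is itself a finite-dimensional (simply connected, nilpotent) Lie group; write $d:=\dim G<\infty$.

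Once $G$ is a finite-dimensional Lie group the argument finishes quickly. By Cartan's closed-subgroup theorem each $H_i$ is an embedded Lie subgroup, and being connected it is determined by its Lie algebra $\mathfrak h_i:=\Lie(H_i)\subseteq\g:=\Lie(G)$. The chain $\mathfrak h_1\subseteq\mathfrak h_2\subseteq\cdots$ of linear subspaces of the $d$-dimensional space $\g$ must stabilize, say $\mathfrak h_i=\mathfrak h_N$ for all $i\geqslant N$; then a small enough $\exp$-image of $\mathfrak h_N$ is simultaneously a neighbourhood of $e_G$ in $H_i$ and in $H_N$, and since a connected Lie group is generated by any neighbourhood of its identity this forces $H_i=H_N$ for all $i\geqslant N$. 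Hence $\bigcup_i H_i=H_N$ is closed; being dense in $G$, it equals $G$, so $G=H_N$.

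The main obstacle is precisely the reduction to the Lie case: one must exclude "infinite-dimensional" locally compact behaviour, which is exactly what the contractive automorphism $\delta_{1/2}$ forbids via the cited structure theorem — without it, the conclusion fails (for instance $\R=\overline{\bigcup_i 2^{-i}\Z}$ with each $2^{-i}\Z$ closed but not scalable, which shows the hypothesis that the $H_i$ are scalable cannot be dropped). Everything after the reduction is routine; no Baire-category argument is needed once $G$ is known to be a finite-dimensional Lie group, though a Baire argument would also close the proof once one knows $\bigcup_i H_i=G$, since then some $H_N$ would have nonempty interior, hence be open, hence equal the connected group $G$. (As a side remark, the same tube-lemma argument shows $\{\delta_{1/n}(\overline V)\}_n$ is a countable neighbourhood basis at $e_G$ for any compact neighbourhood $\overline V$, so a locally compact scalable group is metrizable by Birkhoff--Kakutani, a fact worth recording.)
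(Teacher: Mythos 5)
Your proof is correct, but it takes a genuinely different route from the paper's. You reduce the statement to finite-dimensional Lie theory: you note that a scalable group is connected (via the paths $t\mapsto\delta_t(g)$) and that $\delta_{1/2}$ is a contractive automorphism, then invoke Siebert's structure theorem for locally compact groups with a contractive automorphism to conclude that $G$ is a simply connected nilpotent Lie group, after which the chain $H_1\leqslant H_2\leqslant\cdots$ stabilizes because the corresponding chain of Lie subalgebras of the finite-dimensional $\Lie(G)$ does. The paper instead adapts Bourbaki's elementary covering argument for locally compact topological vector spaces: take a relatively compact open neighbourhood $U$ of $e_G$, cover $\overline{U}$ by finitely many translates $h_j\delta_{1/2}(U)$ with $h_j\in H_N$ (using density of $H$), iterate to obtain $U\subset\bigcap_{k}H_N\delta_{1/2^k}(U)=H_N$ (using that $H_N$ is closed and scalable), and conclude $G=\bigcup_k\delta_k(U)\subset H_N$. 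The paper's argument is short and self-contained, using nothing beyond local compactness and the scalability of a single $H_N$; yours imports a deep external theorem but in exchange delivers the stronger structural fact that $G$ is a simply connected nilpotent Lie group, something the paper only extracts afterwards (Proposition~\ref{p:commutator-HN}) via finite-dimensional Carnot group theory once it knows $[G,G]=[H_N,H_N]$. Two small caveats: the parenthetical suggestion that Siebert's theorem ``can be extracted from Gleason--Yamabe by ruling out compact normal subgroups'' is a substantial project in itself and should not be leaned on in place of the actual citation; and your closing Baire-category remark only closes the proof once one already knows $\bigcup_i H_i$ \emph{equals} $G$ rather than merely being dense in it, which is exactly the point in question, so it is the Lie-algebra stabilization (or the paper's covering argument) that must carry the load.
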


\begin{proof}
We roughly follow the proof in \cite[I.2.4 Theorem 3]{bourbaki-evt} for topological vector spaces. Let $U \subset G$ be a relatively compact open neighborhood of the identity.  Since $H$ is dense in $G$, we have that $\{ h \delta_{1/2} (U) : h \in H\}$ is an open cover of $\overline{U}$, so there exist finitely many $h_1, \dots, h_n \in H$ so that $U \subset \overline{U} \subset \bigcup_{j=1}^n h_j \delta_{1/2} (U)$.  Since $H$ is the increasing union of the $H_i$, we have $h_1, \dots, h_n \in H_N$ for some large enough $N$.  Hence $U \subset H_N \delta_{1/2} (U)$.  Iterating, we also have $U \subset H_N \delta_{1/2}(H_N \delta_{1/2} (U)) = H_N \delta_{1/4}( U)$ because $H_N$ is a scalable subgroup.  Continuing in this way, we conclude $U \subset \bigcap_{k=1}^\infty H_N \delta_{1/2^k}( U)$, which equals $H_N$ because $H_N$ is closed.
  Since $H_N$ is scalable, we have $G = \bigcup_k \delta_k U \subset H_N$.
\end{proof}

\begin{proposition}\label{p:commutator-HN}
  Let $G$ be an infinite-dimensional Carnot group with filtration $H_1 \leqslant H_2 \leqslant \cdots$.  Suppose that $[G,G]$ is locally compact.  Then $[G,G]=[H_N, H_N]$ for some $N$.  In particular, $[G,G]$ is a finite-dimensional scalable Lie group, and $G$ is nilpotent.
\end{proposition}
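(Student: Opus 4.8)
The plan is to reduce everything to the dense subgroup $K := \bigcup_i H_i$ and then quote Lemma~\ref{l:locally-compact-acc}. Since $[G,G]$ is locally compact it is closed in $G$, and it is a scalable subgroup because every $\delta_\lambda$ is an automorphism; restricting $\delta$ gives $[G,G]$ the structure of a \emph{locally compact} scalable group. I would then check the two ingredients needed to apply the lemma to the chain $\big([H_i,H_i]\big)_i$ inside $[G,G]$. First, each $[H_i,H_i]$ is a closed subgroup of the finite-dimensional simply connected nilpotent Lie group $H_i$ (the lower central subgroups of a simply connected nilpotent Lie group are closed), hence closed in $G$ and in $[G,G]$, and it is visibly scalable; and clearly $[H_i,H_i]\leqslant [H_{i+1},H_{i+1}]$ since $H_i\leqslant H_{i+1}$. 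Second, because the $H_i$ are nested, any product of commutators of elements of $K$ lies in some $H_j$, so $\bigcup_i [H_i,H_i]=[K,K]$; and $[K,K]$ is dense in $[G,G]$: given $\gamma=\prod_k[g_k,h_k]\in[G,G]$, approximate each $g_k,h_k$ by elements of $K$ and use joint continuity of multiplication and inversion to see $\gamma\in\overline{[K,K]}$, whence $\overline{[K,K]}=[G,G]$ by closedness of $[G,G]$. Lemma~\ref{l:locally-compact-acc} then gives $[G,G]=[H_N,H_N]$ for some $N$.

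Once this is in hand, $[G,G]=[H_N,H_N]$ is a closed subgroup of the Lie group $H_N$, hence by the closed subgroup theorem a finite-dimensional Lie group, and it inherits the dilations, so it is a finite-dimensional scalable Lie group. For nilpotency I would use the stabilization: for every $j\geqslant N$ we have $[H_N,H_N]\subseteq[H_j,H_j]\subseteq[G,G]=[H_N,H_N]$, so $[H_j,H_j]=[G,G]$ and $\dim[H_j,H_j]=d:=\dim[G,G]$. Now invoke the elementary fact that the lower central series of a finite-dimensional nilpotent Lie algebra $\mathfrak n$ strictly drops dimension at each step from $\mathfrak n_2$ onward until it hits $0$, so the step of $\mathfrak n$ is at most $\dim\mathfrak n_2+1$; applied to $\mathrm{Lie}(H_j)$ this shows the step of $H_j$ is at most $d+1$ for all $j\geqslant N$, and for $j<N$ we have $H_j\leqslant H_N$, so the bound $d+1$ holds for every $j$. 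An easy induction (using $(H_{j'})_i\subseteq(H_j)_i$ for $j'\leqslant j$) gives $K_i=\bigcup_j (H_j)_i$ for the lower central series of $K$, hence $K_{d+2}=\bigcup_j (H_j)_{d+2}=\{e_G\}$, i.e.\ $K$ is nilpotent of step $\leqslant d+1$. Finally, the $(d+2)$-fold iterated commutator map $G^{d+2}\to G$ is continuous and vanishes on the dense set $K^{d+2}$, hence vanishes identically, so $G$ is nilpotent.

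The main obstacle is the nilpotency conclusion: local compactness of $[G,G]$ by itself only bounds $\dim[G,G]$, not the step of $G$, and a priori $\dim H_j\to\infty$. What rescues the argument is that the commutator subgroups $[H_j,H_j]$ stabilize (this is exactly what Lemma~\ref{l:locally-compact-acc} provides), which pins the nilpotency step of every $H_j$ to a bound depending only on $d=\dim[G,G]$; the remaining points (density of $[K,K]$, closedness of the $[H_i,H_i]$, and passing commutator identities to the closure) are routine.
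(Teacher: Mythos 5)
Your argument for the main claim is essentially identical to the paper's: you exhibit $\bigl([H_i,H_i]\bigr)_i$ as an increasing chain of closed scalable subgroups of the locally compact scalable group $[G,G]$, prove density of $\bigcup_i[H_i,H_i]$ by approximating generators $[g,g']$ with commutators from the dense subgroup $K$, and invoke Lemma~\ref{l:locally-compact-acc}. The one genuine difference is that the paper stops there and leaves the ``in particular'' clause unproved, whereas you supply a correct argument for it: the stabilization $[H_j,H_j]=[G,G]$ for $j\geqslant N$ caps $\dim\gamma_2(H_j)$ at $d$, the strict descent of the lower central series of a finite-dimensional nilpotent Lie algebra then bounds the step of every $H_j$ by $d+1$, and the vanishing of the $(d+2)$-fold left-normed commutator passes from the dense subgroup $K$ to $G$ by continuity (using that $\gamma_{d+2}(G)$ is generated by such commutators); this is a worthwhile completion of a step the paper treats as immediate.
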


\begin{proof}
  Clearly $[H_1, H_1] \leqslant [H_2, H_2] \leqslant \dots \leqslant [G,G]$ is an increasing chain of closed scalable subgroups of $[G,G]$.  To apply Lemma \ref{l:locally-compact-acc} to $[G,G]$, it only remains to show that $\bigcup_{i=1}^\infty [H_i, H_i]$ is dense in $[G,G]$.  Let $g, g' \in G$ be arbitrary and consider the commutator $[g,g'] = g^{-1} g'^{-1} g g'$.  Since $H = \bigcup_{i=1}^\infty H_i$ is dense, we can find $h_n, h_n' \in H$ with $h_n \to g$, $h_n' \to g'$.  By continuity of the multiplication and inverse operations, we have  $[h_n, h_n'] \to [g,g']$.  For some large enough $N_n$ we have $h_n, h_n' = H_{N_n}$, so $[h_n, h_n'] \in \bigcup_{i=1}^\infty [H_i, H_i]$.  Therefore we have $[g, g'] \in \overline{\bigcup_{i=1}^\infty [H_i, H_i]}$.  Since the former generate $[G,G]$ and the latter is a closed subgroup, we have $[G,G] \subset \overline{\bigcup_{i=1}^\infty [H_i, H_i]}$ as desired.  Now we invoke Lemma \ref{l:locally-compact-acc} to conclude that $[G,G] = [H_N, H_N]$ for some $N$.
\end{proof}

When $[G,G]$ is locally compact, we have the following improvement on Lemma~\ref{l:V1-commutator-intersect}.

\begin{proposition} \label{prop:V1-homeo}
  Suppose $[G,G]$ is locally compact and let $L = G / [G,G]$ equipped
  with its quotient topology (equivalently, its norm topology), with
  $\pi : G \to L$ the quotient map.  Then
  the restriction $\pi|_{V_1(G)} : V_1(G) \to L$ is a homeomorphism.
\end{proposition}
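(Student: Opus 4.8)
The plan is to verify separately that $\pi|_{V_1(G)}$ is continuous, injective, surjective, and open, the last two being handled together by an induction along the lower central series. Continuity is immediate since $\pi$ is a submetry (Proposition~\ref{prop:distance_quotient}). For injectivity: if $u,v\in V_1(G)$ and $\pi(u)=\pi(v)$, then $v\in u[G,G]$, so $u$ and $v$ both lie in $V_1(G)\cap u[G,G]$, which by Lemma~\ref{l:V1-commutator-intersect} has at most one element. By Proposition~\ref{p:commutator-HN}, $G$ is nilpotent, say of step $s$; write $\{e_G\}=G_{s+1}\leqslant G_s\leqslant\cdots\leqslant G_2=[G,G]\leqslant G_1=G$ for the lower central series. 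Each $G_i$ is closed, and finite-dimensional for $i\geqslant2$ (being contained in $[G,G]$); the successive quotients $M_i:=G_i/G_{i+1}$ are central in $G/G_{i+1}$ and, as connected finite-dimensional abelian scalable Lie groups, are isomorphic to some $\R^{d_i}$ on which $\delta_\lambda$ acts as $\lambda^i\,\mathrm{id}$ for $\lambda>0$ (since $M_i\subseteq V_i(G/G_{i+1})$, as in the proof of Lemma~\ref{l:V1-commutator-intersect}). Each quotient $G/G_i$ is again an infinite-dimensional Carnot group (quotient by a closed normal scalable subgroup; apply Propositions~\ref{quotient_by_N} and~\ref{exist:filtration}, completeness being checked by lifting Cauchy sequences through the submetry). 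Using Corollary~\ref{c.3.5} to identify $G/[G,G]=G/G_2$ with $L$, I will prove by induction on $i\in\{2,\dots,s+1\}$ that $\pi_i|_{V_1(G/G_i)}\colon V_1(G/G_i)\to L$ is a homeomorphism, where $\pi_i\colon G/G_i\to L$ is the natural projection. The case $i=s+1$ is the proposition since $G_{s+1}=\{e_G\}$. The base case $i=2$ is trivial: $L$ is a normed vector space, so $V_1(L)=L$ and $\pi_2=\mathrm{id}$.

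For the inductive step, factor $\pi_{i+1}$ as $G/G_{i+1}\xrightarrow{\,q\,}G/G_i\xrightarrow{\,\pi_i\,}L$, where $q$ is a central extension with kernel $M:=M_i\cong\R^{d_i}$. Since $\pi_i|_{V_1(G/G_i)}$ is a homeomorphism by hypothesis, it suffices to prove that $q|_{V_1(G/G_{i+1})}\colon V_1(G/G_{i+1})\to V_1(G/G_i)$ is one. It maps into $V_1(G/G_i)$ (continuous homomorphisms carry one-parameter subgroups to one-parameter subgroups), is continuous, and is injective by the same use of Lemma~\ref{l:V1-commutator-intersect} as above (note $M\subseteq[G/G_{i+1},G/G_{i+1}]$ because $i\geqslant2$). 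The heart of the matter is surjectivity. Given $\bar v\in V_1(G/G_i)$, pick any lift $g\in G/G_{i+1}$ and set $\beta(s,t):=(\delta_{s+t}(g))^{-1}\,\delta_s(g)\,\delta_t(g)$; since $\bar v\in V_1(G/G_i)$, the element $\beta(s,t)$ lies in $\ker q=M$, and one checks that $\beta\colon\R^2\to M$ is continuous, satisfies the $2$-cocycle identity, and is homogeneous in the sense $\beta(\lambda s,\lambda t)=\delta_\lambda(\beta(s,t))$. A direct computation with the BCDH-free multiplication (using that $M$ is central and abelian) shows that for $m\in M$ the element $gm$ lies in $V_1(G/G_{i+1})$ exactly when $\beta(s,t)=\delta_{s+t}(m)-\delta_s(m)-\delta_t(m)$ for all $s,t\in\R$.

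Thus I must produce such an $m$. Continuous $2$-cocycles on $\R$ with values in a finite-dimensional vector space are coboundaries: there is a continuous $b\colon\R\to M$ with $b(0)=0$ and $\beta(s,t)=b(s+t)-b(s)-b(t)$, given by an explicit integral formula (e.g. $b(x)=\int_0^x\beta(r,1)\,\dd r-\int_0^1\beta(x,u)\,\dd u$), so in particular $b$ depends continuously --- indeed linearly --- on $\beta$ in the topology of uniform convergence on compacta. The homogeneity of $\beta$, combined with the freedom to alter $b$ by a continuous homomorphism $x\mapsto xq$ and with $\delta_\lambda|_M=\lambda^i\,\mathrm{id}$ for $\lambda>0$ (which, since $i\geqslant2$, makes the operators $\delta_\lambda|_M-\lambda\,\mathrm{id}$ invertible for suitable $\lambda$), forces that after one such modification $b(x)=\delta_x(m)$ for all $x$, where $m:=b(1)$; this $m$ is then a continuous function of $\beta$, and $gm\in V_1(G/G_{i+1})$ is a lift of $\bar v$. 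Finally, to see that $(q|_{V_1})^{-1}$ is continuous, let $\bar v_n\to\bar v$ in $V_1(G/G_i)$; choosing lifts $g_n\to g$ (possible since $q$ is open, e.g. by the submetry property), one gets $\beta_n\to\beta$ uniformly on compacta from joint continuity of $\delta$ and continuity of the group operations, hence $m_n\to m$ by the formulas above, hence $g_nm_n\to gm$. Since $g_nm_n$ and $gm$ are the unique preimages of $\bar v_n$ and $\bar v$ in $V_1(G/G_{i+1})$ (by injectivity and the closedness of $V_1$, Lemma~\ref{l:V1-complete}), this gives the continuity of the inverse, completing the inductive step.

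The step I expect to be the main obstacle is the surjectivity: $V_1(G)$ is only a closed subset, not a subgroup, and $G$ is not locally compact, so no direct compactness argument is available. The key idea is the reduction, via the lower central series, to a central extension by a finite-dimensional vector group, where the obstruction to lifting a one-parameter subgroup from the quotient becomes a continuous $2$-cocycle on $\R$ --- which is a coboundary --- so the lift can always be corrected, and keeping track of the explicit coboundary simultaneously yields continuity of the inverse. A secondary technical point requiring care is checking that each quotient $G/G_i$ is genuinely an infinite-dimensional Carnot group (in particular complete) so that the inductive hypothesis and the auxiliary lemmas apply at every stage, and that the lower central series terms $G_i$ are closed and finite-dimensional for $i\geqslant2$ (both following from Proposition~\ref{p:commutator-HN} together with the finite-dimensionality of $[G,G]$).
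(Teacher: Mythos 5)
Your proof is essentially correct, but it takes a genuinely different route from the paper's. The paper argues directly: injectivity and dense image come from Lemma~\ref{l:V1-commutator-intersect} and the finite-dimensional filtration, and then surjectivity plus continuity of the inverse are obtained in one stroke by a compactness argument inside the locally compact subgroup $[G,G]$ --- given $\pi(x_n)\to\pi(g)$ one finds $k_n\in[G,G]$ with $k_nx_n\to g$, and if $k_n$ were unbounded a rescaling by $\delta_{r_n}$ with $r_n=1/d(e,k_n)$ would produce (after passing to a subsequence, using that bounded subsets of $[G,G]$ are relatively compact) a nontrivial element of $V_1(G)\cap[G,G]$, contradicting Lemma~\ref{l:V1-commutator-intersect}. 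Your argument instead inducts along the lower central series, reducing to central extensions $0\to M\to G/G_{i+1}\to G/G_i\to 0$ with $M\cong\R^{d_i}$, and produces the lift of a one-parameter subgroup constructively by showing the obstruction is a continuous $2$-cocycle on $\R$, hence a coboundary, and then using the degree-$i$ homogeneity (via invertibility of $\delta_\lambda|_M-\lambda\,\mathrm{id}$ for $i\geqslant 2$) to normalize the primitive to $b(x)=\delta_x(m)$; I checked this normalization and the cocycle algebra, and they are correct (your explicit primitive is a valid one, differing from the obvious choice by a linear map, which is the allowed ambiguity). The paper's proof is considerably shorter and uses local compactness of $[G,G]$ head-on; yours is longer and leaves two technical points to be filled in carefully (closedness and stabilization of the terms $G_i=(H_N)_i$ of the lower central series, and completeness of the quotients $G/G_i$ so that Lemma~\ref{l:V1-commutator-intersect} applies at each stage), both of which you correctly flag and both of which do follow from Proposition~\ref{p:commutator-HN}. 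What your approach buys is a sharper view of where finite-dimensionality actually enters --- only through the vanishing of continuous $H^2(\R;M)$ and the closedness of the central series --- which is directly relevant to the paper's open remark about whether local compactness of $[G,G]$ is really needed for this proposition; your surjectivity argument, unlike the paper's, does not use relative compactness of bounded sets in $[G,G]$ at all.
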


\begin{proof}
  Injectivity follows from Lemma~\ref{l:V1-commutator-intersect}.  We also have that
  $\pi|_{V_1(G)}$ has a dense image.  Indeed, let $H = \bigcup_k H_k$ denote
  the union of the finite dimensional Carnot groups from the filtration
  of $G$.  As $H$ is dense, $\pi(H)$ is dense in $L$.  We have by finite
  dimensional Carnot group theory that $\pi(H_k) = \pi(V_1(H_k))$ for
  every $k$.  Thus, $\pi(V_1(H)) = \pi(H)$ and so $\pi|_{V_1(G)}$ has
  dense image.

  We now show that the image is
  closed, so that $\pi|_{V_1(G)}$ is surjective, and that the inverse
  map $\phi = \pi|_{V_1}^{-1} : \pi(V_1(G)) \to V_1(G)$ is continuous.

  Let $y$ be an arbitrary element of $L$, so that $y = \pi(g)$ for
  some $g \in G$.  Let $y_n \in \pi(V_1(G))$ with $y_n \to y$, so that
  $y_n = \pi(x_n)$ for some (unique) $x_n \in V_1(G)$.  We want to
  show that $x_n$ converges to some $x$.  If so, then $x \in V_1(G)$,
  and by the continuity of $\pi$, we have $\pi(x_n) \to \pi(x) = y$,
  so that $y \in \pi(V_1(G))$.  Thus the image is closed and so
  $\pi|_{V_1(G)}$ is surjective.  Moreover, we have $\phi(y_n) = x_n
  \to x = \phi(y)$, and since
  $y_n$ was arbitrary, this shows that $\phi$ is continuous.

  Indeed, it suffices to show that $x_n$ has a convergent
  subsequence; this would in fact show that every subsequence of $x_n$
  has a convergent subsequence.  Moreover, every convergent
  subsequence $x_n'$ of $x_n$ must converge
  to some $x'$ satisfying $\pi(x') = y$, so by injectivity, all
  convergent subsequences converge to the same point $x$.  Thus we
  would have shown that every
  subsequence of $x_n$ has a further subsequence converging to $x$,
  which implies that $x_n \to x$.

  Using the definition of the quotient metric \eqref{distance_quotient} and
  the fact that $\pi(x_n) \to \pi(g)$, we get that there exist $u_n, v_n \in
  [G,G]$ such that $d(u_n x_n, v_n g) \to 0$.  By left-invariance, we
  have $v_n^{-1} u_n x_n \to g$, so let $k_n = v_n^{-1} u_n \in [G,G]$
  and then $k_n x_n \to g$.

  Note that, because of the scaling
  and translation invariance, if $[G,G]$ is locally compact then every
  bounded subset of $[G,G]$ is relatively compact.  Thus it suffices
  to show that $k_n$ is bounded.  If so, then by passing to a
  subsequence, we can suppose $k_n \to k$, whence $x_n = k_n^{-1} (k_n
  x_n) \to k^{-1} g$.

   Suppose, to get a contradiction, that $k_n$ is unbounded.  Passing
   to a subsequence, we can suppose $d(e, k_n) \to \infty$, so that
   $r_n := 1/d(e, k_n) \to 0$.  Then $d(e, \delta_{r_n}(k_n)) = 1$.
   Since $[G,G]$ is a scalable subgroup, we have $\delta_{r_n}(k_n)
   \in [G,G]$, so passing to a further subsequence, we can suppose that
   $\delta_{r_n}(k_n)$ converges to some $h \in [G,G]$.  Note that $d(e,
   h)=1$ and in particular $h \ne e$.  Now
  \begin{equation*}
    \delta_{r_n}(x_n) = \delta_{r_n}(k_n)^{-1} \delta_{r_n}(k_n x_n)
  \end{equation*}
  where, since $k_n x_n \to g$ and $r_n \to 0$, we have
  $\delta_{r_n}(k_n x_n) \to e$.  Hence $\delta_{r_n}(x_n) \to h^{-1}
  \ne e$.  But since $V_1(G)$ is closed and closed under scaling, we have
  $\delta_{r_n}(x_n) \in V_1(G)$ for all $n$, and hence $h^{-1} \in
  V_1(G)$.  Thus $e$ and $h^{-1}$ are distinct elements of $V_1(G)
  \cap [G,G]$, contradicting Lemma~\ref{l:V1-commutator-intersect}.
\end{proof}

The following lemma gives a slight quantitative refinement and shows that the homeomorphism $\pi|_{V_1(G)}$ preserves bounded sets.

\begin{lemma}  \label{l:V1-norm-comp}
  Let $[G,G]$ be locally compact and let $\pi : G \to (G/[G,G], \|\cdot\|)$, where $\|\cdot\|$ is the induced quotient norm.  There exists a constant $c \in (0,1)$ so that
  \begin{align}
    c d(e_G,g) \leqslant \|\pi(g)\| \leqslant d(e_G,g) \qquad \text{ for all } g \in V_1. \label{eq:V1-norm-comp}
  \end{align}
\end{lemma}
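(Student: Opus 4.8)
The plan is to establish the two inequalities in \eqref{eq:V1-norm-comp} separately. The right-hand inequality is immediate and holds for every $g\in G$: by the description of the quotient distance (cf.\ \eqref{distance_quotient2}) we have $\|\pi(g)\| = \inf\{d(e_G, g n) : n\in [G,G]\}$, and taking $n = e_G$ gives $\|\pi(g)\|\leqslant d(e_G,g)$. So the content of the lemma is the lower bound, and for this I would exploit homogeneity to reduce to a single compactness statement on a ``unit sphere''.

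Recall that the hypothesis that $[G,G]$ is locally compact forces $[G,G]$ to be closed and $G$ to be nilpotent (Proposition~\ref{p:commutator-HN}), and that since $V_1(G)$ generates $G$ (Proposition~\ref{exist:filtration}), Corollary~\ref{c.3.5} applies: $L = G/[G,G]$ is a normed vector space whose scalar multiplication is $\delta$ and whose norm is $\|\pi(g)\| = d(e_G, g[G,G])$, so in particular $\|\pi(\delta_t g)\| = |t|\,\|\pi(g)\|$. Also $V_1 = V_1(G)$ is closed (Lemma~\ref{l:V1-complete}) and closed under dilations. Hence it suffices to prove
\[
  c_0 := \inf\{\|\pi(g)\| : g\in V_1,\ d(e_G,g) = 1\} > 0 ,
\]
for then, given $g\in V_1\setminus\{e_G\}$ with $t := d(e_G,g)$, applying this to $\delta_{1/t}(g)\in V_1$ gives $\tfrac1t\|\pi(g)\|\geqslant c_0$, i.e.\ $\|\pi(g)\|\geqslant c_0\, d(e_G,g)$; the case $g=e_G$ is trivial. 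Since the upper bound already forces $c_0\leqslant 1$, any $c<1$ with $c\leqslant c_0$ then works, and such $c$ exists because $0<c_0\leqslant 1$.

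To show $c_0>0$ I argue by contradiction: suppose there are $g_n\in V_1$ with $d(e_G,g_n)=1$ and $\|\pi(g_n)\|\to 0$. Choose $h_n\in [G,G]$ with $d(e_G, g_n h_n)\to 0$; by normality of $[G,G]$ set $k_n := g_n h_n g_n^{-1}\in[G,G]$, so $k_n g_n = g_n h_n\to e_G$. By left-invariance, $d(e_G,k_n)\leqslant d(e_G, k_n g_n) + d(k_n g_n, k_n) = d(e_G, k_n g_n) + d(g_n,e_G)\leqslant 1 + o(1)$, so $\{k_n\}$ is a bounded subset of $[G,G]$. As in the proof of Proposition~\ref{prop:V1-homeo}, local compactness together with the scaling and translation invariance of $d$ makes every bounded subset of $[G,G]$ relatively compact, so after passing to a subsequence $k_n\to k\in[G,G]$. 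Then $g_n = k_n^{-1}(k_n g_n)\to k^{-1}$, and since $V_1$ is closed we get $k^{-1}\in V_1\cap[G,G]$. But $G$ is nilpotent and $e_G\in V_1\cap[G,G]$, so Lemma~\ref{l:V1-commutator-intersect} (with $g=e_G$) forces $k^{-1}=e_G$, whence $g_n\to e_G$, contradicting $d(e_G,g_n)=1$.

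I do not anticipate a genuine obstacle: this is a homogeneity-plus-compactness variant of the argument already used for Proposition~\ref{prop:V1-homeo}. The only points requiring care are the bookkeeping for the scaling identity $\|\pi(\delta_t g)\| = |t|\,\|\pi(g)\|$ on the quotient (which is precisely the normed-vector-space structure of Corollary~\ref{c.3.5}) and the reduction to the unit sphere; the positivity of the infimum over that noncompact sphere is exactly what the contradiction argument delivers.
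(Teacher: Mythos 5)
Your proof is correct. The two arguments rest on the same two pillars --- local compactness of $[G,G]$ (so that bounded subsets of $[G,G]$ are relatively compact) and Lemma~\ref{l:V1-commutator-intersect} (so that $V_1(G)\cap[G,G]=\{e_G\}$) --- and both begin with the same homogeneity reduction to the unit sphere $S=\{g\in V_1 : d(e_G,g)=1\}$, but the mechanics differ. The paper argues directly: it sets $B=[G,G]\cap\overline{B(e_G,2)}$, takes $c=d(B,S)>0$ (positive since $B$ is compact, $S$ is closed, and they are disjoint), and then bounds $d(e_G,gh)$ by splitting into the cases $h\in B$ and $d(e_G,h)>2$, the latter handled by the triangle inequality. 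You instead run a sequential contradiction: assuming $\|\pi(g_n)\|\to 0$ on the unit sphere, you conjugate the approximating elements $h_n$ to the left via $k_n=g_nh_ng_n^{-1}$, show $\{k_n\}$ is bounded in $[G,G]$ hence subconvergent, and derive $g_n\to k^{-1}\in V_1\cap[G,G]=\{e_G\}$, contradicting $d(e_G,g_n)=1$; this is essentially the compactness argument already used in the proof of Proposition~\ref{prop:V1-homeo}, recycled here. The paper's version is slightly more economical (no subsequences, and it produces the constant $c$ explicitly as a distance between two sets); yours has the advantage of unifying this lemma with the earlier homeomorphism argument and of making the role of the scaling identity $\|\pi(\delta_t g)\|=|t|\,\|\pi(g)\|$ explicit. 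No gaps.
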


\begin{proof}
  The upper bound directly follows from the quotient metric defined in Proposition~\ref{prop:distance_quotient} (for all $g \in G$).  It remains to prove the lower bound.  By homogeneity, we may suppose $d(e_G,g) = 1$.

  Let $S$ denote all the elements $g \in V_1$ for which $d(e_G,g) = 1$.  This is a closed set.  Let $B = [G,G] \cap \overline{B(e_G,2)}$, which is a compact set.  As $B$ and $S$ do not intersect, $c:=d(B,S)  > 0$.

  Let $g \in S$.  We will show that
  \begin{align*}
    d(e_G, gh) \geqslant c, \qquad \text{ for all } h \in [G,G],
  \end{align*}
  which proves the lemma.  If $d(e_G,h) > 2$, then $d(e_G,gh) \geqslant 1$ by the triangle inequality (note that $1 \geqslant c$).  Now suppose $h \in B$, then
  \begin{align*}
    d(e_G,gh) = d(g^{-1}, h) \geqslant d(S,B) \geqslant c,
  \end{align*}
  which finishes the proof.
\end{proof}

Proposition~\ref{p:generate-Carnot} shows that every finite subset of $V_1(G)$ lies in a finite-dimensional Carnot subgroup.  When $[G,G]$ is locally compact, we get the same result for every finite subset of $G$.

\begin{lemma} \label{l:finite-contains}
  Let $[G,G]$ be locally compact.  Then for every finite subset $\{g_1,\ldots,g_n\} \subset G$, there exists a finite-dimensional Carnot subgroup $K \leqslant G$ so that $\{g_1,\ldots,g_n\} \subset K$.  Moreover, $K$ can be chosen so that $[K,K] = [G,G]$.
\end{lemma}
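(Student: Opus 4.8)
The plan is to lift each $g_i$ to a horizontal element modulo $[G,G]$ and then enlarge by generators of a subgroup in the filtration whose commutator is already all of $[G,G]$.

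First I would record the structural consequences of local compactness of $[G,G]$: by Proposition~\ref{p:commutator-HN}, $G$ is nilpotent, $[G,G]$ is finite-dimensional, and $[G,G]=[H_N,H_N]$ for some $N$, where $\{H_k\}$ is the given filtration of $G$ by finite-dimensional Carnot subgroups. Being a finite-dimensional Carnot group, $H_N$ is generated as a scalable group by finitely many horizontal elements $Y_1,\dots,Y_m$; since $H_N$ is a scalable subgroup of $G$ its Carnot dilations are $\delta|_{H_N}$, so each $Y_j$ lies in $V_1(G)$ as well. Next, since $[G,G]$ is locally compact, Proposition~\ref{prop:V1-homeo} gives that $\pi|_{V_1(G)}\colon V_1(G)\to L:=G/[G,G]$ is surjective; hence for each $i$ I may choose $x_i\in V_1(G)$ with $\pi(x_i)=\pi(g_i)$, equivalently $g_i=c_ix_i$ with $c_i\in[G,G]$.

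Now let $K$ be the scalable subgroup of $G$ generated by $\{x_1,\dots,x_n,Y_1,\dots,Y_m\}\subset V_1(G)$, i.e.\ the closure of the abstract subgroup generated by all dilates $\delta_t(x_i),\delta_t(Y_j)$. Then $K$ is a closed, $\delta$-invariant subgroup, hence a metric scalable group for the restricted metric and dilations (admissibility and the scaling of $d$ are inherited, and $K$ is complete as a closed subset of the complete group $G$). Because $K$ is $\delta$-invariant and a subgroup, $t\mapsto\delta_t(x_i)$ and $t\mapsto\delta_t(Y_j)$ are one-parameter subgroups of $K$, so $K$ is generated as a scalable group by finitely many elements of $V_1(K)$; since $G$ — and therefore $K$ — is nilpotent, Proposition~\ref{p:generate-Carnot} yields that $K$ is a finite-dimensional Carnot group, hence a finite-dimensional Carnot subgroup of $G$.

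It remains to see that $K$ works. Since $K$ is closed and contains $Y_1,\dots,Y_m$ together with all their dilates, it contains $H_N$, and therefore $[G,G]=[H_N,H_N]\subseteq[K,K]\subseteq[G,G]$, so $[K,K]=[G,G]$. In particular $c_i\in[G,G]\subseteq K$, whence $g_i=c_ix_i\in K$ for every $i$, as desired. The only point requiring care is the application of Proposition~\ref{p:generate-Carnot}: one must confirm that $K$ meets all the hypotheses of a metric scalable group and that its generators genuinely lie in $V_1(K)$ and not merely in $V_1(G)$; once this is in place, the finite-dimensionality of $K$ is automatic and the rest is bookkeeping with the normality of $[G,G]$.
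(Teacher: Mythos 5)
Your proposal is correct and follows essentially the same route as the paper's proof: invoke Proposition~\ref{p:commutator-HN} to get $[G,G]=[H_N,H_N]$, lift each $g_i$ to a horizontal representative via Proposition~\ref{prop:V1-homeo}, and let $K$ be the scalable subgroup generated by these lifts together with horizontal generators of $H_N$, applying Proposition~\ref{p:generate-Carnot} to conclude $K$ is a finite-dimensional Carnot subgroup. The extra care you take in checking the hypotheses of Proposition~\ref{p:generate-Carnot} is sound but not a departure from the paper's argument.
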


\begin{proof}
  As shown in Proposition \ref{p:commutator-HN}, we have $[G,G] = [H, H]$ for some (finite-dimensional) Carnot subgroup $H \leqslant G$.  Choose some finite set $y_1, \dots, y_m \in V_1(H) \subset V_1(G)$ which generates $H$ as a scalable group.

  Next, for each $1 \leqslant i \leqslant n$, let $x_i$ be the unique element of $V_1(G)$ for which $g_i \in x_i[G,G]$, whose existence is guaranteed by Proposition \ref{prop:V1-homeo}.  Let $K$ be the scalable subgroup generated by $\{x_1,\dots, x_n, y_1, \dots, y_m\}$, which by Proposition \ref{p:generate-Carnot} is a Carnot subgroup.  Then $[G,G] = [H,H] \subset [K,K]$, and it follows that $g_i \in x_i [G,G] \subset K$ for each $i$.
\end{proof}

\begin{rem}
  We do not know if local compactness of $[G,G]$ is needed for Proposition \ref{prop:V1-homeo}, Lemma \ref{l:V1-norm-comp}, or the first part of Lemma \ref{l:finite-contains} asserting the existence of a Carnot subgroup $K$ with $\{x_1, \dots, x_n\} \subset K$.
\end{rem}

Before formulating the next result, we mention that if $[G,G]$ is locally compact, then we can view $G/[G,G]$ and $[G,G]$ as linear spaces. Indeed, we can identify $G/[G,G]$ with $V_{1}(G)$ and $[G,G]$ with its Lie algebra.

We will need the following lemma.

\begin{lemma} \label{p:FD-parameterization}
  Let $G$ be a (topologically) $n$-dimensional Carnot group and $X$ be a Banach space with $\dim(X) \geqslant n$.  Suppose there are linear embeddings $f : V_1(G) \to X$ and $g : [G,G] \to X$  such that $f(X) \cap g(X) = \{0\}$.  Then there exist a subspace $Y$ of $X$, a Lie bracket law on $Y$, and a homeomorphic isomorphism $F : G \to Y$, where $Y$ is endowed with the BCDH group product, extending $f$ and $g$.  The subspace $Y$, the Lie bracket law, and the extension are all unique.

Furthermore, if $H \leqslant G$ is a Carnot subgroup, then $F|_H$ is the homeomorphic embedding extending $f|_{V_1(H)}$ and $h|_{[H,H]}$.
\end{lemma}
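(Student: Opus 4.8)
The plan is to pull everything back to the Lie algebra $\g$ of $G$ and transport it into $X$ by a single linear map, exploiting that finite-dimensional Carnot groups are completely encoded by their graded Lie algebras.

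First I would recall the standard facts: since $G$ is a finite-dimensional Carnot group, $\exp_G\colon\g\to G$ is a homeomorphism, $\g=V_1\oplus[\g,\g]$ with $V_1$ the first layer of the stratification $\g=V_1\oplus\cdots\oplus V_s$, $V_1(G)=\exp_G(V_1)$, $[G,G]=\exp_G([\g,\g])$, the group law on $G$ is $\exp_G$ of the BCH product on $\g$, and $\delta^G_\lambda=\exp_G\circ\delta^\ast_\lambda\circ\log_G$ for the layer dilations $\delta^\ast_\lambda$. Under the identifications $V_1(G)\cong V_1$ and $[G,G]\cong[\g,\g]$ via $\log_G$ (these are precisely the linear structures referred to in the statement), the given data become linear injections $f\colon V_1\to X$ and $g\colon[\g,\g]\to X$ with $f(V_1)\cap g([\g,\g])=\{0\}$. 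I would then set $\phi\colon\g\to X$ to be the linear map $\phi(v+c):=f(v)+g(c)$, which is injective by hypothesis, so that $Y:=\phi(\g)$ is an $n$-dimensional subspace of $X$ (in particular the dimension hypothesis $\dim X\geqslant n$ is automatic), and define a bracket on $Y$ by $[y_1,y_2]_Y:=\phi\bigl([\phi^{-1}y_1,\phi^{-1}y_2]_{\g}\bigr)$, making $\phi$ an isomorphism of (finite-dimensional, hence Banach) Lie algebras. Since $\g$ is stratified by $V_1\oplus\cdots\oplus V_s$, the space $Y$ is then stratified by $f(V_1)\oplus g(V_2)\oplus\cdots\oplus g(V_s)$, so it carries the BCDH group product \eqref{eq:BCH} and the dilations \eqref{eq:dilation}.

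For existence I would take $F:=\phi\circ\log_G\colon G\to Y$. This is a homeomorphism onto $Y$ with inverse $\exp_G\circ\phi^{-1}$; it is a group isomorphism because $\phi$, being a Lie algebra isomorphism, intertwines the two BCH products; and it intertwines $\delta^G$ with $\delta^Y$ because $\phi$ sends each layer $V_k$ onto the $k$-th layer of $Y$. Hence $F$ is a homeomorphic isomorphism of metric scalable groups, and $F(\exp_G v)=\phi(v)=f(v)$ for $v\in V_1$ while $F(\exp_G c)=\phi(c)=g(c)$ for $c\in[\g,\g]$, so $F$ extends $f$ and $g$.

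For uniqueness, given another admissible triple $(Y',[\cdot,\cdot]_{Y'},F')$, note that $Y'=F'(G)$ is a subspace of $X$ homeomorphic to $\R^n$, hence $n$-dimensional, and $F'\colon G\to(Y',\text{BCDH})$ is a continuous isomorphism of simply connected nilpotent Lie groups, hence smooth; so $F'=\exp_{Y'}\circ(\dd F')_e\circ\log_G$, and since in exponential (BCDH) coordinates $\exp_{Y'}$ is the identity of $Y'$, we get $F'=(\dd F')_e\circ\log_G$. Differentiating $F'(\exp_G tv)=tf(v)$ for $v\in V_1$ and $F'(\exp_G tc)=tg(c)$ for $c\in[\g,\g]$ at $t=0$ gives $(\dd F')_e=\phi$ on $V_1$ and on $[\g,\g]$, hence $(\dd F')_e=\phi$ and $F'=\phi\circ\log_G=F$; consequently $Y'=Y$, and $[\cdot,\cdot]_{Y'}=[\cdot,\cdot]_Y$ since the BCDH group law on $Y'=Y$ determines the bracket through its bilinear part. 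The final assertion then follows by applying the proven statement to a Carnot subgroup $H\leqslant G$: with $\h:=\Lie(H)$ one has $\h=\bigoplus_k(\h\cap V_k)$, $V_1(H)=\exp_G(\h\cap V_1)$, $[H,H]=\exp_G([\h,\h])$ and $[\h,\h]=\h\cap[\g,\g]$, so $f|_{V_1(H)}$ and $g|_{[H,H]}$ have trivially intersecting images, and the unique triple they produce has underlying map $\phi|_{\h}\circ\log_H=(\phi\circ\log_G)|_H=F|_H$. The step requiring the most care is this uniqueness argument, which rests on the rigidity of simply connected nilpotent Lie groups (automatic smoothness of continuous homomorphisms, together with $\exp_{Y'}=\mathrm{id}$ in BCDH coordinates) to pin down $F'$; the rest is transport of structure along $G\leftrightarrow\g\leftrightarrow Y$, where I expect no genuine obstacle.
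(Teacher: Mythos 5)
Your proposal is correct and follows essentially the same route as the paper's proof: identify $G$ with its graded Lie algebra $\g$ via $\exp$, note that $V_1(G)$ and $[G,G]$ correspond to complementary subspaces, build $F$ by linear algebra, and push the bracket forward onto $Y=\phi(\g)$. The only difference is that you spell out the uniqueness step (via automatic smoothness of continuous homomorphisms of simply connected nilpotent Lie groups and $\exp=\mathrm{id}$ in BCDH coordinates), which the paper leaves implicit in the phrase ``simple linear algebra.''
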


\begin{proof}
We identify $G$ with its Lie algebra $\g = \g_1 \oplus \cdots \oplus \g_s$ using the exponential map.  Under this identification, the set $V_1(G)$ is precisely the first layer $\g_1$ and $[G,G]$ is $\g_2 \oplus \cdots \oplus \g_s$, so the two sets are complementary subspaces of $\g$.  As such, the extension $F$ follows by simple linear algebra.  We can then pushforward the Lie bracket of $\g$ onto the image $Y$ to get the first statement.

For the second statement, it follows that if $H$ is a Carnot subgroup of $G$, then its Lie algebra $\mathfrak{h}$ is a Lie subalgebra (and thus subspace) of $\g$.  Furthermore, the Lie bracket of $\mathfrak{h}$ is the restriction of the Lie bracket of $\g$ onto $\mathfrak{h}$.
\end{proof}

\begin{lemma} \label{l:G'-homeo}
  Let $G' = [G,G]$ be locally compact and $\|\cdot\|$ any norm on $G'$ that induces its Lie algebra vector space topology.  Then the topologies on $G'$ coming from $d_G$ and $\|\cdot\|$ coincide.  Furthermore, a set $A \subset G'$ is bounded under $d_G$ if and only if it is bounded under $\|\cdot\|$.
\end{lemma}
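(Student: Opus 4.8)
The plan is to push everything down to the finite-dimensional picture provided by Proposition~\ref{p:commutator-HN}: since $G' = [G,G]$ is locally compact, that proposition gives $G' = [H_N,H_N]$ for some $N$, where $H_N$ is a finite-dimensional Carnot subgroup from the filtration. In the standard realization of $H_N$ as its Lie algebra $\g_N = W_1\oplus\cdots\oplus W_r$ equipped with the BCDH product, the commutator $[H_N,H_N]$ is exactly the closed coordinate subgroup $W_2\oplus\cdots\oplus W_r=:\g'$, so the subspace topology it inherits from $H_N$ is the vector-space topology of $\g'$. By the definition of a Carnot-group structure, the inclusion $H_N\hookrightarrow G$ is a topological isomorphism onto its image, so the topology that $G'$ inherits as a subset of $G$ is the vector-space topology of $\g'$. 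Finally, since $d_G$ is admissible, the topology induced on $G'$ by the restriction of $d_G$ is precisely this subspace topology. All norms on the finite-dimensional space $\g'\cong G'$ are equivalent, so any norm $\|\cdot\|$ inducing the Lie-algebra vector-space topology induces the same topology as $d_G$; this is the first assertion.

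For the ``furthermore'' I would prove, for $A\subseteq G'$, the chain $A$ is $d_G$-bounded $\iff$ $A$ is relatively compact in $G'$ $\iff$ $A$ is $\|\cdot\|$-bounded. The last equivalence is Heine--Borel in the finite-dimensional normed space $(\g',\|\cdot\|)$, whose topology coincides with the $d_G$-topology by the first part. Relative compactness implies $d_G$-boundedness because $d_G(e_G,\cdot)$ is bounded on any compact subset of $G'$. For the remaining implication, suppose $A$ is $d_G$-bounded, say $A\subseteq B_{d_G}(e_G,R)\cap G'$; I would argue as in the proof of Proposition~\ref{prop:V1-homeo}, using local compactness together with the dilations. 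Choose $r_0>0$ and a compact $C\subseteq G'$ with $B_{d_G}(e_G,r_0)\cap G'\subseteq C$. Since $G'$ is a scalable subgroup, $\delta_{R/r_0}$ is a homeomorphism of $G'$ onto itself, and from $d_G(e_G,\delta_t q)=|t|\,d_G(e_G,q)$ it maps $B_{d_G}(e_G,r_0)\cap G'$ onto $B_{d_G}(e_G,R)\cap G'$; hence $A\subseteq\delta_{R/r_0}(C)$, which is compact, so $A$ is relatively compact and therefore $\|\cdot\|$-bounded.

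The proof is mostly bookkeeping, with two substantive ingredients: invoking Proposition~\ref{p:commutator-HN} to replace the potentially wild subgroup $G'$ by the tame finite-dimensional commutator $[H_N,H_N]$, and the dilation-plus-local-compactness trick that converts $d_G$-boundedness into relative compactness. The step I expect to require the most care is the identification of the subspace topology on $[H_N,H_N]\subseteq G$ with its vector-space topology: one needs that $H_N\hookrightarrow G$ is a homeomorphism onto its image carrying the Carnot topology, and that $[H_N,H_N]$ is the closed coordinate subgroup $\exp_{H_N}(W_2\oplus\cdots\oplus W_r)$ — facts which are standard for finite-dimensional Carnot groups but worth stating explicitly, and which also mesh with the identification of $[G,G]$ with its Lie algebra used in Lemma~\ref{p:FD-parameterization}.
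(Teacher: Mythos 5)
Your proof is correct, and the first assertion (topologies coincide) is handled the same way the paper does it: reduce to $G'=[H_N,H_N]$ for a finite-dimensional Carnot subgroup via Proposition~\ref{p:commutator-HN} / Lemma~\ref{l:finite-contains} and invoke the standard finite-dimensional identification; the paper simply calls this ``well known,'' while you spell out the subspace-topology bookkeeping. For the ``furthermore'' part your route is genuinely different. The paper works directly with balls: it normalizes the norm balls $B_{\|\cdot\|}(r)$ to be ellipsoids adapted to the grading, establishes a one-scale containment $B_{G'}(r_1)\subseteq B_{\|\cdot\|}(r_2)\subseteq B_{G'}(r_3)$, and then propagates it to all scales using the explicit action of the dilation on the graded components, namely $\lambda B_{\|\cdot\|}(r)\subseteq\delta_\lambda(B_{\|\cdot\|}(r))\subseteq\lambda^s B_{\|\cdot\|}(r)$ for $\lambda\geqslant 1$. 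This yields quantitative two-sided estimates (a ball of radius $r$ in one gauge sits inside a ball of radius comparable to $r$ or $r^s$ in the other), in the spirit of \eqref{eq:holder}. You instead route both notions of boundedness through relative compactness: Heine--Borel on the $\|\cdot\|$ side, and on the $d_G$ side the local-compactness-plus-dilation trick ($\delta_{R/r_0}$ carries a compact neighborhood $C\supseteq B_{d_G}(e_G,r_0)\cap G'$ onto a compact set containing $B_{d_G}(e_G,R)\cap G'$), which is exactly the observation the paper makes inside the proof of Proposition~\ref{prop:V1-homeo}. Your argument is more elementary --- it needs no normalization of the norm and no graded expression of $\delta_\lambda$ --- but it is purely qualitative, whereas the paper's computation records the explicit polynomial relation between the two gauges. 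For the lemma as stated, either suffices.
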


\begin{proof}
  By Lemma \ref{l:finite-contains}, the subgroup $G'$ is the commutator subgroup of a finite-dimensional Carnot subgroup, and it is well known that the topology induced by $d_G$ and $\|\cdot\|$ are the same.  It suffices to prove that the bounded sets are also the same.

  Let $B_{G'}(r)$ and $B_{\|\cdot\|}(r)$ be balls centered at $e_G$ with respect to $d_G$ and $\|\cdot\|$, respectively.  We may assume without loss of generality that $B_{\|\cdot\|}(r)$ are axis aligned ellipsoids that are orthogonal with respect to $\{V_k(G)\}_{k=2}^s$.  We have that there exist constants $0 < r_1 < r_2 < r_3$ so that
  \begin{align}
    B_{G'}(r_1) \subseteq B_{\|\cdot\|}(r_2) \subseteq B_{G'}(r_3). \label{eq:ball-containment}
  \end{align}

  By \eqref{eq:dilation} and our assumptions of $B_{\|\cdot\|}(r)$, we have that
  \begin{align}
    \lambda B_{\|\cdot\|}(r) \subseteq \delta_\lambda(B_{\|\cdot\|}(r)) \subseteq \lambda^s B_{\|\cdot\|}(r), \qquad \text{for all } r > 0, \lambda \geqslant 1. \label{eq:convex-containment}
  \end{align}

  Let $r > 0$ and we may assume without loss of generality that $r \geqslant \max\{r_1,r_2\}$.  Then
  \begin{align*}
    B_{\|\cdot\|}(r) = \frac{r}{r_2} B_{\|\cdot\|}(r_2) \overset{\eqref{eq:convex-containment}}{\subseteq} \delta_{r/r_2}(B_{\|\cdot\|}(r_2)) \overset{\eqref{eq:ball-containment}}{\subseteq} \delta_{r/r_1}(B_{G'}(r_3)) = B_{G'}\left( \frac{r_3}{r_1}r\right).
  \end{align*}
  Similarly,
  \begin{align*}
    B_{G'}(r) = \delta_{r/r_1}(B_{G'}(r_1)) \overset{\eqref{eq:ball-containment}}{\subseteq} \delta_{r/r_1}(B_{\|\cdot\|}(r_2)) \overset{\eqref{eq:convex-containment}}{\subseteq} \left( \frac{r}{r_1} \right)^s B_{\|\cdot\|}(r_2) = B_{\|\cdot\|}\left( \frac{r_2}{r_1^s} r^s\right).
  \end{align*}
  This shows that the bounded sets of $d_G$ and $\|\cdot\|$ are the same.
\end{proof}

We can now prove Theorem~\ref{th:IDCG-Banach}.

\begin{proof}[Proof of Theorem~\ref{th:IDCG-Banach}]
  Let $G^{\prime} := [G,G]$ and $W = G/G^{\prime}$.  We let $K \subset G$ be any finite-dimensional Carnot subgroup such that $[K,K] = G^{\prime}$ as was shown to exist by Lemma \ref{l:finite-contains}.  Thus, as $[K,K]$ is graded, so is $G^{\prime}$ and we have a decomposition $G^{\prime} = \g_2 \oplus \cdots \oplus \g_s$,
where we identify $G^{\prime}$ with its Lie algebra.  We get that $W \times G^{\prime}$ is a Banach space, which we endow with the $\ell_\infty$-norm
\[
\|(u,v)\| = \max\{\|u\|_W,\|v\|_{G^{\prime}}\}.
\]
As $G^{\prime}$ is finite-dimensional, we have that  $G^{\prime}$  with the topologies induced by $d_G$ and $\|\cdot\|$ are homeomorphic topological spaces.  We will construct a Lie bracket $[\cdot,\cdot]$ on $W \times G^{\prime}$ and a map $F : G \to X$ so that when $W \times G^{\prime}$ is viewed as a Banach-Lie group, the map $F$ is an isomorphism.  Setting $W =: \g_1$, we get a grading $W \times G^{\prime} = \g_1 \oplus \g_2 \oplus \cdots \oplus \g_s$.

  We define $f : V_1(G) \to W$ to be the identification given by Proposition~\ref{prop:V1-homeo}.  We also let $g : G^{\prime} \to G^{\prime}$ be the identity.  We can then view $f$ and $g$ as maps into $W \times G^{\prime}$ in the obvious way.  We now construct a map $F : G \to W \times G^{\prime}$ as follows.  Let $x \in G$ be arbitrary.  Let $H$ be some finite-dimensional Carnot subgroup of $G$ so that $x \in H$ and $G^{\prime} \subset H$, the existence of which follows from Lemma~\ref{l:finite-contains}.  We then set $F(x) := h_H(x)$ where $h_H : H \to W \times G^{\prime}$ is the unique map extending $f|_{V_1(H)}$ and $g$ as given by Lemma~\ref{p:FD-parameterization}.

For the map $F$, injectivity is clear, while surjectivity follows from  Lemma \ref{l:finite-contains}.   To see that this is independent of choice of $H$, let $K$ be another finite-dimensional Carnot subgroup for which $G^{\prime} \subset K$ and $x \in K$.  Let $L$ be some finite-dimensional Carnot subgroup containing $K$ and $H$ and let $h_K$ and $h_L$ be the maps extending $g$ and the restriction of $f$ on the relevant subsets.  By the second part of Proposition~\ref{p:FD-parameterization}

\begin{align*}
h_H(x) = h_L(x) = h_K(x).
\end{align*}

We now define a Lie bracket on $W \times G^{\prime}$ in a similar way.  Let $x^{\prime},y^{\prime} \in W \times G^{\prime}$ and let $x,y \in G$ be their preimages under $F$.  Then $x,y$ lie in some finite-dimensional Carnot subgroup $H$ that also contains $G^{\prime}$.  We now define the Lie bracket on $h_H(H)$ to be the Lie bracket given by Proposition~\ref{p:FD-parameterization}.  That this Lie bracket is well-defined follows from reasoning similar to how we showed that $F$ is well-defined.

To see that the Lie bracket respects the grading $\g_1 \oplus \g_2 \oplus \cdots \oplus \g_s$, take two vectors $x \in \g_i, y \in \g_j$ and let $H$ be the subgroup from Lemma~\ref{l:finite-contains}.  Then, identifying $H$ with its Lie algebra, we get $H = \g_H \oplus \g_2 \oplus \cdots \oplus \g_s$ where $\g_H \subset V_1(G)$.  As $H$ is graded, we get that $[x,y] \in \g_{i+j}$, which shows that the Lie bracket on $G$ respects the grading.

  We now show $[\cdot,\cdot]$ is bounded.  First, it suffices to prove boundedness for elements of $W \times G^{\prime}$ whose image under $F^{-1}$ lies in $V = \bigcup_{k} V_{k}(G)$ as $W \times G^{\prime}$ is the direct sum of $\{f(V_{k}(G))\}_{k}$.  Let $x^{\prime} \in f(V_i(G)) \cap B_{W \times G^{\prime}}(0,1), y^{\prime} \in f(V_j(G)) \cap B_{W \times G^{\prime}}(0,1)$, $x = F^{-1}(x^{\prime})$, $y = F^{-1}(y^{\prime})$, and let $H$ be a finite-dimensional Carnot subgroup containing $G^{\prime}$ and $\{x,y\}$. By Lemmas \ref{l:V1-norm-comp} and \ref{l:G'-homeo}, we have that there is some constant $C > 0$ depending only on $G$ and the choice of norm on $G^{\prime}$ so that $x, y \in B_G(e_G,C)$.

  Applying the BCDH formula to $H$ gives that the coordinates of the commutator $xyx^{-1}y^{-1}$ in the subspace $V_{i+j}(H)$ is exactly $[x^{\prime},y^{\prime}]$.  As $x,y \in B_G(e_G,C)$, the triangle inequality gives that $xyx^{-1}y^{-1}$ lies in $G^{\prime} \cap B_G(e_G,4C)$.  By Lemma \ref{l:G'-homeo}, this is also a bounded subset under $\|\cdot\|$ and so
\begin{align*}
\sup_{x \in f(V) \cap B_{W \times G^{\prime}}(0,1)} \sup_{y \in f(V) \cap B_{W \times G^{\prime}}(0,1)} \|[x,y]\| < \infty,
\end{align*}
which proves boundedness of $[\cdot,\cdot]$.

Given the Lie bracket $[\cdot,\cdot]$, we can now define a group product on $W \times G^{\prime}$ using the BCDH product.  As $[\cdot,\cdot]$ is a bounded Lie bracket and the group is nilpotent, we have that the group product is continuous.  We also have that $F : G \to (W \times G^{\prime}, \cdot)$ is a group isomorphism, because of  Lemma \ref{l:finite-contains}. We can pushforward the metric from $G$ to a left-invariant metric $d_G$ on $W \times G^{\prime}$.  Note that $W \times \{0\}$ can be identified with $V_1(W \times G^{\prime})$.  In conclusion, on $(W \times G^{\prime}, \cdot)$ we have two topologies, one coming from $d_G$ and one coming from $\|\cdot\|$, and the group product is continuous in both topologies.  To prove the theorem, it only remains to show that these two topologies are the same.

We let $B_{W \times G^{\prime}}$ and $B_G$ denote balls in the $\|\cdot\|$ and $d_G$ metrics, respectively.  Let $(x,z) \in W \times G^{\prime}$.  To show that the topologies from $\|\cdot\|$ and $d_G$ are equivalent, we will show for every $\rho > 0$, there exists an $r > 0$ such that
\begin{align*}
B_{W \times G^{\prime}}((x,z),r) \subseteq B_G((x,z),\rho)
\end{align*}
and
\begin{align*}
B_G((x,z),r) \subseteq B_{W \times G^{\prime}}((x,z),\rho).
\end{align*}
As the group product is continuous in both topologies, it suffices to consider the case when $(x,z) = 0$.

Now fix $\rho > 0$ and let $(u,v) \in W \times G^{\prime}$ be so $\|u\|, \|v\| < r$ for some small $r > 0$ to be determined.  We have that
\begin{align*}
(u,v)(u,0)^{-1}=(u,v)(-u,0) = (0,v + P((u,v),(-u,0))).
\end{align*}
As $P$ is a polynomial of iterated Lie brackets of $[(u,v),(-u,0)]$,  we get that as $r \to 0$

\begin{align} \label{e.4.10}
  \lim_{r \to 0} \|v + P((u,v),(-u,0))\| = 0.
\end{align}
Thus
\begin{align}
  \lim_{r \to 0} d_G((u,v)(u,0)^{-1},(0,0)) = \lim_{r \to 0} d_G((0,0),(0,v+P((u,v),(-u,0)))) = 0, \label{eq:reposition-small}
\end{align}
where in the last equality, we used the fact that $G^{\prime}$ equipped with the topologies induced by $d_G$ and $\|\cdot\|$ are homeomorphic topological spaces. We now calculate

\begin{align*}
\lim_{r \to 0} d_G((u,v),(0,0)) &\leqslant \lim_{r \to 0} (d_G((u,v),(u,v)(u,0)^{-1}) + d_G((u,v)(u,0)^{-1}, (0,0)))
\\
  &\overset{\eqref{eq:reposition-small}}{\leqslant} \lim_{u \to 0} d_G((u,0),(0,0)) + 0 \\
    &= 0,
\end{align*}
where for the last equality we used Proposition \ref{prop:V1-homeo}. Thus there exists $r$ sufficiently small such that $B_{W \times G^{\prime}}((0,0),r) \subseteq B_G((0,0),\rho)$.

Now let $(u,v) \in B_G((0,0),r)$ for some small $r > 0$ to be determined.  By the definition of the norm on $W$, we have that $B_G((0,0),r) \subset B_W(0,r) \times G^{\prime}$ so that $\|u\|_W \leqslant r$.  Thus by Proposition \ref{prop:V1-homeo}
\begin{align*}
  \lim_{r \to 0} d_G((0,0),(u,0)) = 0.
\end{align*}
We then get that
\begin{align*}
  \lim_{r \to 0} d_G((u,v)(-u,0),(0,0)) \leqslant \lim_{r \to 0} d_G((u,v),(0,0)) + \lim_{r \to 0} d_G((0,0),(u,0)) = 0.
\end{align*}
Thus, as $G^{\prime}$ with respect to $d_G$ and $\|\cdot\|$ are homeomorphic topological spaces,
\begin{align*}
\lim_{r \to 0} \|(u,v)(-u,0)\| = 0.
\end{align*}
We also have that
\begin{align*}
    (u,v) - (u,v)(-u,0) = (u,P((u,v),(-u,0))).
\end{align*}
Again, similarly to \eqref{e.4.10} we get that
\begin{align*}
\lim_{r \to 0} \|(u,v) - (u,v)(-u,0)\| = 0.
\end{align*}
We now calculate
\begin{align*}
\lim_{r \to 0} \|(u,v)\| &\leqslant \lim_{r \to 0} \|(u,v) - (u,v)(-u,0)\| + \lim_{r \to 0} \|(u,v)(-u,0)\| = 0
\end{align*}
and so there exists $r$ sufficiently small so that $B_G((0,0),r) \subseteq B_{W \times G^{\prime}}((0,0),\rho)$.  This proves the theorem.
\end{proof}

\section{Convolutions of absolutely continuous measures}\label{s.CAC}
Throughout this section, we let  $G$ be an infinite-dimensional Carnot group, with first layer $V_1(G)$.
Given a locally compact subgroup $H\subset G$, we denote by $\vol_H$ any normalization of the Haar measure on $H$.
We shall say that a measure on $H$ is \emph{absolutely continuous} if it is absolutely continuous with respect to $\vol_H$, and hence absolutely continuous with respect to every other Haar measure on $H$.
 We denote by $\operatorname{AC}(H)$   the set of probability measures on $H$ that are absolutely continuous, i.e.,
\[
  \operatorname{AC}(H):=\{ \mu\in\operatorname{Meas}(H) \;:\; \mu(H)=1, \mu\ll\vol_H\}.
\]
We will also view measure in $\operatorname{AC}(H)$ as measures on $G$ supported on $H$.  We now introduce a new family of null sets based on \textbf{c}onvolutions of \textbf{a}bsolutely \textbf{c}ontinuous measures.

\begin{definition}[CAC$(\mathcal{X})$ and CAC null]\label{d.CAC}
Let $\mathcal{X} = (X_1,X_2,\ldots)$ be a Carnot-spanning set of an infinite-dimensional Carnot group $G$ with associated filtration $H_k := \langle X_1,\ldots,X_k\rangle$.
We define the set $\operatorname{CAC}^{\prime}(\mathcal{X})$ as the set of all those probability measures $\mu$ on $G$ for which for every $k \in \N$ there is a measure $\nu_{k}\in \operatorname{AC}(H_{k})$ such that for every $m\in \N$
 the sequence
  \begin{align*}
    (\nu_{k} \ast \cdots \ast \nu_m)_{k} \text{ converges weakly, as } k\to \infty,
  \end{align*}
  and the sequence
  \begin{align*}
    (\nu_{k} \ast \cdots \ast \nu_1)_{k} \text{ converges weakly to } \mu, \text{ as } k\to \infty.
  \end{align*}
  Finally, we let $\operatorname{CAC}(\mathcal{X}) := \{(L_g)_\ast\mu : g \in G, \mu \in \operatorname{ CAC}^{\prime}(\mathcal{X})\}$.
  We say that a Borel set $E \subset G$ is \emph{$\operatorname{CAC}$ null} if for every Carnot-spanning set  $\mathcal{X}$ and every $\mu \in \operatorname{CAC}(\mathcal{X})$, we have $\mu(E) = 0$.
\end{definition}

For auxiliary purposes, given a
Carnot-spanning set $\mathcal{X} = (X_1,X_2,\ldots) $   with associated filtration $H_k := \langle X_1,\ldots,X_k \rangle$,
we shall also consider the following set
\[
  \operatorname{CAC}_{k}^{\prime}(X_1,\ldots,X_{k}) := \{\nu_{k} \ast \cdots \ast \nu_1: \nu_i \in \operatorname{AC}(H_i), \text{ for all } i =1\ldots,k\}.
\]
and
$\operatorname{CAC}(X_1,\ldots,X_{k}) := \{(L_g)_\ast\mu : g \in H_{k}, \mu \in \operatorname{ CAC}^{\prime}(X_1,\ldots,X_{k})\}$.

\begin{remark}\label{r.CACversusCube}
We remark that CAC measures have some similarities to cube measures in Banach spaces.  Recall that a \emph{cube measure} in a Banach space $X$ is the distribution of a random variable of the form $a + \sum_k X_k e_k$ where $a, e_1, e_2,\ldots \in X$, the span of the $e_k$s is dense, $\sum_k \|e_k\| < \infty$, and the $X_k$ are iid uniform random variables on $[0,1]$.  If one lets $H_i$ denote the subspace generated by $e_1,\ldots,e_i$, then one sees that the cube measure is a translate of measures of the form $\cdots \ast \mu_2 \ast \mu_1$ where each $\mu_i$ is a bounded uniform probability measure on the subspace spanned by $e_i$, the complementary subgroup of $H_{i-1}$ in $H_i$.

Unlike in Banach spaces, it is not true that $H_{i-1}$ always has a complementary Carnot subgroup in $H_i$.  This is why we have to have $\nu_i$ defined on \emph{all} of $H_i$ and so $\nu_i$ will \emph{retread} $H_{i-1}$ as opposed to $\mu_i$.
\end{remark}

As in the case of cube null measures in Banach spaces, we prove that this notion of null sets is equivalent to being Aronszajn null.

\begin{theorem} \label{th:aron-cac}
A Borel set in an infinite-dimensional Carnot group with locally compact commutator subgroup is Aronszajn null if and only if it is $\operatorname{CAC}$ null.
\end{theorem}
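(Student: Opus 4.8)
The plan is to invoke Proposition~\ref{th:ldlm2}, which identifies the Aronszajn null sets with the sets that are $(H_k)_k$ null for every Carnot-spanning set $\mathcal{X}=(X_1,X_2,\ldots)$ (with associated filtration $H_k=\langle X_1,\ldots,X_k\rangle$), and then to prove the sharper statement that, for each \emph{fixed} $\mathcal{X}$, a Borel set $E$ is $(H_k)_k$ null if and only if $\mu(E)=0$ for every $\mu\in\operatorname{CAC}(\mathcal{X})$. Granting this, $E$ is Aronszajn null iff it is $(H_k)_k$ null for all $\mathcal{X}$ iff all $\operatorname{CAC}(\mathcal{X})$ measures vanish on $E$ for all $\mathcal{X}$, i.e.\ iff $E$ is CAC null. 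In the hard direction I will use that, since $[G,G]$ is locally compact, $G$ is nilpotent with finite-dimensional commutator subgroup (Proposition~\ref{p:commutator-HN}), $W:=G/[G,G]$ is a Banach space (Corollary~\ref{c.3.5}), and every finite subset of $G$ lies in a finite-dimensional Carnot subgroup whose commutator equals $[G,G]$ (Lemma~\ref{l:finite-contains}); in particular $[H_k,H_k]$ stabilizes to $[G,G]$ for large $k$, so the increments $H_{k+1}/H_k$ are finite-dimensional.

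\textbf{$(H_k)_k$ null $\Rightarrow$ CAC null.} This direction needs no hypothesis on $[G,G]$. Since being $(H_k)_k$ null is translation invariant and $\operatorname{CAC}(\mathcal{X})$ consists of left translates of $\operatorname{CAC}'(\mathcal{X})$ measures, it suffices to treat $\mu\in\operatorname{CAC}'(\mathcal{X})$, with $\nu_k\in\operatorname{AC}(H_k)$ as in Definition~\ref{d.CAC}. Set $\mu_k:=\nu_k\ast\cdots\ast\nu_1$ and $\tau_{k+1}:=\lim_m\nu_m\ast\cdots\ast\nu_{k+1}$; factoring $\nu_m\ast\cdots\ast\nu_1=(\nu_m\ast\cdots\ast\nu_{k+1})\ast(\nu_k\ast\cdots\ast\nu_1)$ and letting $m\to\infty$, continuity of convolution (Theorem~\ref{th:conv-cont}) gives $\mu=\tau_{k+1}\ast\mu_k$. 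Now $\mu_k$ is carried by $H_k$ and, being a convolution of the absolutely continuous $\nu_k$ with a probability measure, satisfies $\mu_k\ll\vol_{H_k}$. Given a Borel decomposition $E=\bigcup_k E_k$ witnessing $(H_k)_k$ nullity, \eqref{def_convo} and Tonelli give
\begin{align*}
\mu(E_k)=\int_G\mu_k(x^{-1}E_k\cap H_k)\,\dd\tau_{k+1}(x)=0 ,
\end{align*}
since $\vol_{H_k}(x^{-1}E_k\cap H_k)=0$ for every $x\in G$; summing over $k$ yields $\mu(E)=0$.

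\textbf{CAC null $\Rightarrow$ $(H_k)_k$ null.} I would argue by contraposition: assuming $E$ is not $(H_k)_k$ null, I construct $\mu\in\operatorname{CAC}(\mathcal{X})$ with $\mu(E)>0$. If $E$ were $j$ null for a single $j$ then $E$ alone would furnish an $(H_k)_k$ null decomposition; hence for every $j$ there is $g_j\in G$ with $\vol_{H_j}(g_j^{-1}E\cap H_j)>0$. Given $\varepsilon>0$, by inner regularity choose a compact $A_j\subseteq g_j^{-1}E\cap H_j$ of positive measure and let $\nu_j$ be proportional to $\vol_{H_j}|_{A_j}$; then, choosing absolutely continuous probability measures $\rho_1,\ldots,\rho_{j-1}$ on $H_1,\ldots,H_{j-1}$ supported close enough to $e_G$, continuity of right translation on $L^1(\vol_{H_j})$ forces $(L_{g_j})_\ast(\nu_j\ast\rho_{j-1}\ast\cdots\ast\rho_1)(E)>1-\varepsilon$. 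This gives, at every level $j$, a finite CAC-type measure (translated by an element of $G$) that charges $E$. The core of the proof is to promote these finite data to a single $\mu\in\operatorname{CAC}(\mathcal{X})$: one builds the full sequence $(\nu_k)_{k\in\N}$ together with points $g_k\to g$ and compact sets $C_k$ with $(\nu_k\ast\cdots\ast\nu_1)(C_k)\geqslant c>0$ and $C_k$ sitting inside $g_k^{-1}E$ up to a summably small error. Passing from level $k$ to $k+1$ one uses that $E$ is not $(k+1)$ null together with the quotient integral formula for $\vol_{H_{k+1}}$ over $H_{k+1}/H_k$: working inside the finite-dimensional increment $H_{k+1}/H_k$, a Lebesgue-density argument lets one refine $C_k$ while perturbing $g_k$ only by a summable amount and losing only a summable fraction of mass. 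Finally Theorem~\ref{th:conv-cont} and the portmanteau theorem pass the construction to the limit, leaving a definite fraction of mass, after translation by $g$, inside $E$; thus $\mu:=(L_g)_\ast\lim_k(\nu_k\ast\cdots\ast\nu_1)\in\operatorname{CAC}(\mathcal{X})$ has $\mu(E)>0$.

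\textbf{Where the difficulty lies.} The delicate step is the coherent selection, across all levels $k$, of the cosets $g_kH_k$ witnessing non-$k$-nullity, together with the control making $(g_k)_k$ Cauchy: a Borel set restricted to two nearby cosets of $H_k$ can be entirely unrelated, so one is only allowed to ``move'' the coset through finite-dimensional increments. This is exactly what local compactness of $[G,G]$ provides, since then $G$ is a stratified Banach-Lie group modelled on the Banach space $W=G/[G,G]$ with $H_{k+1}/H_k$ finite-dimensional (Theorem~\ref{th:IDCG-Banach}, Lemma~\ref{l:finite-contains}); this is why the theorem carries the hypothesis. In the abelian base case $G=W$ one can avoid the construction, deducing the equivalence of CAC null and Aronszajn null from Cs\"ornyei's theorem \cite{Csornyei1999a} combined with the easy direction above.
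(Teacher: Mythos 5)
Your forward direction ($(H_k)_k$ null $\Rightarrow$ CAC null) is essentially the paper's argument: it is the content of Lemma~\ref{l:cac-decomp} (factoring $\mu = \eta \ast \nu$ with $\nu \in \operatorname{CAC}_{k}^{\prime}(X_1,\ldots,X_k)$) together with Lemma~\ref{l:cack-ac} (such $\nu$ is absolutely continuous on $H_k$, which requires unimodularity of $H_k$) and Proposition~\ref{th:ldlm}. That part is fine.

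The backward direction has a genuine gap, and it sits exactly where you flag ``the delicate step.'' Your construction starts only from the datum that for every $j$ there exists $g_j$ with $\vol_{H_j}(g_j^{-1}E \cap H_j) > 0$, i.e., that the trivial decompositions $E_j = E$ fail. This is strictly weaker than $E \notin \cA(\mathcal{X})$, which asserts that \emph{every} Borel decomposition $E = \bigcup_k E_k$ fails. A set of the form $E = E_1 \cup E_2$ with $E_1 \in \cA(X_1) \setminus \cA(X_2)$ and $E_2 \in \cA(X_2) \setminus \cA(X_1)$ is $(H_k)_k$ null, hence carries no CAC mass by your own easy direction, yet it satisfies your starting hypothesis; so no construction proceeding from that hypothesis alone can succeed. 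Concretely, ``$E$ is not $(k+1)$-null'' produces one coset of $H_{k+1}$ somewhere in $G$ on which $E$ has positive measure, but says nothing about the cosets $yH_{k+1}$ through the points $y$ where your mass $C_k$ already sits, and no density or quotient-integral argument transports the witnessing coset there. Local compactness of $[G,G]$ does not repair this: the paper explicitly notes after the theorem that no such hypothesis is used (the hypothesis in the statement is vestigial), and Theorem~\ref{th:construction} is proved for arbitrary infinite-dimensional Carnot groups. The paper's mechanism for the coherent selection is different: for a fixed threshold $c$ it defines the sets $A^{\epsilon_1,\ldots,\epsilon_n}_{k_1,\ldots,k_n}$ of points $x$ with $(x\nu^{\epsilon_1,\ldots,\epsilon_n})(E_{k_1,\ldots,k_n}) > c$, collects into $B$ all points from which the induction cannot be continued at some finite stage, and shows via a density-point argument (your Lebesgue-density idea appears here, but applied to the obstruction sets rather than to $E$) that each obstruction set lies in $\cA(X_1,\ldots,X_{n+1})$; hence $B \in \cA(\mathcal{X})$, and since $E \notin \cA(\mathcal{X})$ a ``good'' starting point $x \in E \setminus B$ exists from which the whole sequence $(\epsilon_n)$ can be chosen. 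This use of the full strength of $E \notin \cA(\mathcal{X})$ is the step your sketch is missing. A secondary issue: since $E$ is only Borel, the portmanteau theorem controls only closures; the paper handles this with the Souslin-scheme decomposition of Lemma~\ref{borel-decomp}, and notes in Section~\ref{aronszajn-tight} that reducing a priori to compact sets is an open question, so your appeal to inner regularity at each level does not dispose of it.
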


Note that there is no assumption of local compactness on the commutator.

We will first work to prove the backward direction by showing that if a set is not Aronszajn null, then it is charged by a CAC measure.  Towards that end, let $\mathcal{X} = (X_1,X_2,\ldots)$ be a Carnot-spanning set with the associated filtration $H_k := \langle X_1,\ldots,X_k\rangle$.

For each $k \geqslant 0$ and $\eps \in(0,\infty)$,
consider some
$\nu_{k}^\eps\in \operatorname{AC}(H_{k})$.   We seek conditions for which 
\begin{align}
 e_G \text{ density point of } U\subseteq H_{k}\implies \lim_{\eps \to 0} \nu_{k}^\eps(U) = 1. \label{eq:approximation}
\end{align}
Above, and later, we shall say that a point is a \emph{density point}, if it is a density point with respect to some (and hence every) Haar measure on $H_k$.

The following lemma shows that such measures are plentiful by giving an easy way to construct such families, using the  dilations $\delta_\eps$.

\begin{lemma} \label{l:ac-aoi}
  For each $\nu \in \operatorname{AC}(H_{k})$, 
  the measures 
  $\nu_{k}^\eps := (\delta_\eps)_{\ast}(\nu)$ satisfy \eqref{eq:approximation}.
\end{lemma}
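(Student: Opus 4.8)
The plan is to exploit the fact that $\nu_k^\eps = (\delta_\eps)_\ast\nu$ concentrates all of its mass near $e_G$ as $\eps\to 0$, using the density-point hypothesis on $U$ to control where the concentrating mass lands. Throughout write $\vol := \vol_{H_k}$, let $Q$ be the homogeneous dimension of the Carnot group $H_k$, and set $B_r := \{h\in H_k : d(e_G,h)<r\}$. I will use the standard scaling relations $\delta_t(B_r)=B_{|t|r}$, coming from the metric scalable structure, and $\vol(\delta_t A)=|t|^Q\vol(A)$, which follows since $(\delta_t)_\ast\vol$ is again a left Haar measure on $H_k$, hence proportional to $\vol$. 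Since $\delta_\eps$ is an automorphism of $H_k$ fixing $e_G$ with inverse $\delta_{1/\eps}$, one has $\nu_k^\eps(A)=\nu(\delta_{1/\eps}A)$ for every Borel $A\subseteq H_k$; so, writing $A_\eps := H_k\setminus\delta_{1/\eps}(U)=\delta_{1/\eps}(H_k\setminus U)$, it is enough to prove $\nu(A_\eps)\to 0$ as $\eps\to 0$.

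The first step is a volume estimate at the scale $R\eps$. For a fixed $R>0$, using $B_R=\delta_{1/\eps}(B_{R\eps})$ and that $\delta_{1/\eps}$ is a bijection,
\[
  A_\eps\cap B_R = \delta_{1/\eps}\big(B_{R\eps}\setminus U\big),\qquad\text{so}\qquad
  \vol(A_\eps\cap B_R)=\eps^{-Q}\vol(B_{R\eps}\setminus U)=\frac{\vol(B_{R\eps}\setminus U)}{\vol(B_{R\eps})}\,R^Q\,\vol(B_1).
\]
Since $R\eps\to 0$ and $e_G$ is a density point of $U$, the fraction tends to $0$; hence $\vol(A_\eps\cap B_R)\to 0$ as $\eps\to 0$, for every fixed $R$.

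The second step upgrades this to the measure $\nu$. Let $f$ be the density of $\nu$ with respect to $\vol$ and fix $\eta>0$. Since $f\in L^1(\vol)$ and $H_k=\bigcup_{R>0}B_R$, there is $R$ with $\nu(H_k\setminus B_R)<\eta/2$; since $\nu$ is a finite measure absolutely continuous with respect to $\vol$, there is $\rho>0$ with $\vol(E)<\rho\Rightarrow\nu(E)<\eta/2$; and by Step 1 there is $\eps_0>0$ with $\vol(A_\eps\cap B_R)<\rho$ for all $\eps\in(0,\eps_0)$. Then
\[
  \nu(A_\eps)\leqslant \nu(A_\eps\cap B_R)+\nu(H_k\setminus B_R)<\eta,\qquad\text{for all }\eps\in(0,\eps_0).
\]
Hence $\nu(A_\eps)\to 0$, that is $\nu_k^\eps(U)=1-\nu(A_\eps)\to 1$, which is \eqref{eq:approximation}.

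I expect the only delicate point to be this last passage from "small volume" to "small $\nu$-mass": the density-point hypothesis only describes $U$ near $e_G$, so one must first discard the $\nu$-mass lying outside a large ball $B_R$ (using integrability of $f$) and only then invoke absolute continuity of the integral on the remaining ball. The scaling identities and the formula $\nu_k^\eps(A)=\nu(\delta_{1/\eps}A)$ are routine consequences of the metric scalable structure of $H_k$.
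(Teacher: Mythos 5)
Your proof is correct and follows essentially the same strategy as the paper's: discard the $\nu$-mass outside a large ball $B_R$ using that $\nu$ is a probability measure, convert the density-point condition at scale $R\eps$ into a volume bound inside $B_R$ via the scaling of Haar measure, and finish with absolute continuity of $\nu$ with respect to $\vol_{H_k}$. The only cosmetic difference is that you make the homogeneous dimension $Q$ explicit in the volume computation, whereas the paper phrases the same step as "scaling everything by $R/r$ and using homogeneity of the Haar measure."
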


\begin{proof}
  Let $U \subseteq H_{k}$ be a set with $e_G$ as a density point and $\eta > 0$.  As $\nu$ is a probability measure, there exists $R > 0$ so that $\nu(H_{k} \backslash B_{H_{k}}(e_G,R)) < \eta/2$.  As $\nu$ is absolutely continuous, there is a $\delta > 0$ so that $\nu(A) < \eta/2$ for all sets $A \subset H_{k}$ with $\vol_{H_k}(A) < \delta$.

  As $e_G$ is a density point of $U$, there exists $\rho > 0$ so that
  \begin{align*}
    \frac{\vol_{H_k}(U \cap B_{H_{k}}(e_G,r))}{\vol_{H_k}(B_{H_{k}}(e_G,r))} > 1 - \frac{\delta}{\vol_{H_k}(B_{H_{k}}(e_G,R))}, \qquad \text{ for all } r < \rho.
  \end{align*}
  By scaling everything by $R/r$ and using homogeneity of the Haar measure on $H_{k}$, we get that
  \begin{align*}
    \vol_{H_k}(\delta_{R/r}(U) \cap B_{H_{k}}(e_G,R)) > \vol_{H_k}(B_{H_{k}}(e_G,R)) - \delta, \qquad \text{ for all } r < \rho.
  \end{align*}
  It then follows that
  \begin{align*}
    \nu(B_{H_{k}}(e_G,R) \backslash \delta_{R/r}(U)) < \eta/2, \qquad \text{ for all } r < \rho.
  \end{align*}

  Let $\eps < \rho/R$.  Then we have that
  \begin{align*}
    \nu_{k}^\eps(U) = \nu(\delta_{1/\eps}(U)) \geqslant 1 - \nu(H_{k} \backslash B_{H_{k}}(e_G,R)) - \nu(B_{H_{k}}(e_G,R) \backslash \delta_{1/\eps}(U)) \geqslant 1-\eta.
  \end{align*}
\end{proof}

Given a sequence of families of measures $\nu_1^{\eps},\nu_2^{\eps},\ldots $ with parameter $\eps\in (0,\infty)$, and a sequence of numbers $\epsilon_1,\epsilon_2,\ldots > 0$, we define for $k > m$
\begin{align*}
  \nu^{\epsilon_m,\ldots,\epsilon_k;m} := \nu_k^{\epsilon_k} \ast \cdots \ast \nu_m^{\epsilon_m}.
\end{align*}
We also write $\nu^{\epsilon_1,\ldots,\epsilon_k} = \nu^{\epsilon_1,\ldots,\epsilon_k;1}$.

The following theorem easily implies the backward direction of Theorem~\ref{th:aron-cac}.

\begin{theorem} \label{th:construction}
Let $\mathcal{X} = (X_1,X_2,\ldots)$ be a Carnot-spanning set in an infinite-dimensional Carnot group $G$,
with associated filtration $H_k := \langle X_1,\ldots,X_k\rangle$.
  For $k \in \N$ and $ \epsilon > 0$,  let $\nu_k^\epsilon\in \operatorname{AC}(H_{k})$ satisfying \eqref{eq:approximation}. Let $E \subseteq G$ be a Borel set such that $E\notin \mathcal{A}(\mathcal{X})$.  Then there exists $x \in
  E$ and a sequence $(\epsilon_1, \epsilon_2, \dots)$ such that the weak limit $\nu$  of the sequence
  $\nu^{\epsilon_1, \dots, \epsilon_n}$ exists and
  $(x\nu)(E) > 0$.
\end{theorem}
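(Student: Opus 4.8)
The plan is to build the point $x$ and the scales $\epsilon_j$ by a diagonal/recursive construction, using at stage $k$ the "approximation to the identity" property \eqref{eq:approximation} to guarantee that the mass of $E$ seen along the fibers in direction $X_k$ does not decay too fast. Since $E \notin \mathcal{A}(\mathcal{X})$, there is no Borel decomposition $E = \bigcup_i E_i$ with $E_i \in \mathcal{A}(X_i)$; a standard "greedy" argument then produces, after passing to a subset, a point $x_0 \in E$ such that for every $i$ the slice $\{t : x_0\delta_t(X_i) \in E\}$ has positive Lebesgue measure, and in fact one can localize so that $e_G$ is (essentially) a density point of the appropriate fiberwise sets. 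More precisely, I would first reduce to the finite-dimensional picture via Proposition~\ref{th:ldlm}: not being in $\mathcal{A}(\mathcal{X})$ means that for every Borel decomposition some piece $E_i$ has $\vol_{H_i}(z^{-1}E_i \cap H_i) > 0$ for some $z$, and by a Baire-category / exhaustion argument over the decompositions one extracts a single point through which $E$ is "thick" in infinitely many — hence, after relabeling, all — of the filtration directions simultaneously.

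The recursion itself goes as follows. Suppose inductively that we have chosen $\epsilon_1, \dots, \epsilon_{k-1}$ and a point $x_{k-1}$ lying in (a set of positive $\vol_{H_{k-1}}$-density inside) the relevant translate of $E \cap H_{k-1}$, together with a lower bound $(x_{k-1}\,\nu^{\epsilon_1,\dots,\epsilon_{k-1}})(E) \geqslant c_{k-1} > 0$ with $\prod_j c_j$ bounded below. At step $k$: because the convolution $\nu_k^{\epsilon_k} \ast \nu^{\epsilon_1,\dots,\epsilon_{k-1};1}$ integrates translates of $\nu^{\epsilon_1,\dots,\epsilon_{k-1}}$ against $\nu_k^{\epsilon_k}$, and because $\nu_k^{\epsilon_k}$ concentrates near $e_G$ in $H_k$ as $\epsilon_k \to 0$ by \eqref{eq:approximation}, we can choose $\epsilon_k$ small enough that the new measure still charges $E$ by at least $c_{k-1}(1-2^{-k})$, while simultaneously (again by \eqref{eq:approximation}, applied to the density points of the fiberwise-thick sets in $H_{k+1}$-direction) preserving the inductive density hypothesis that prepares the next step. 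The convergence of $\nu^{\epsilon_1,\dots,\epsilon_n}$: since each $\nu_k^{\epsilon_k} \to \delta_{e_G}$ weakly as $\epsilon_k \to 0$, choosing the $\epsilon_k$ to decay fast enough along a metric for weak convergence (e.g. a Lévy–Prokhorov-type metric on $\operatorname{Prob}(H_m)$ for each fixed $m$, using tightness from the $R$-ball bounds in the proof of Lemma~\ref{l:ac-aoi}) makes $(\nu^{\epsilon_1,\dots,\epsilon_n})_n$ Cauchy, hence weakly convergent to some $\nu$; this is exactly the shape of the $\operatorname{CAC}'(\mathcal{X})$ condition. Finally $x := \lim_k x_k$ exists by the same fast-decay choice (the $x_k$ move by increments controlled by $\epsilon_k$), lies in $E$ since $E$ is Borel and the $x_k$ stay in a decreasing family of thick subsets whose intersection is forced into $E$, and $(x\nu)(E) \geqslant \lim_k c_k > 0$ by lower semicontinuity of $\mu \mapsto \mu(\text{open})$ together with a slight inner-regularity fattening of $E$.

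The main obstacle I expect is the interaction between "staying in $E$" and "charging $E$" across the infinitely many convolution factors, in the absence of a complementary-subgroup structure (Remark~\ref{r.CACversusCube}): because $\nu_k$ is supported on all of $H_k$ rather than on a transversal to $H_{k-1}$, the factor $\nu_k^{\epsilon_k}$ perturbs all earlier coordinates, so one cannot argue coordinate-by-coordinate as in the Banach case. The key technical point to make this work is a uniform continuity / equi-tightness estimate showing that for fixed $m$ the partial convolutions $\nu^{\epsilon_m,\dots,\epsilon_k;m}$ form a Cauchy sequence in the weak topology with a rate that can be made uniform once the tails $\epsilon_{m+1}, \epsilon_{m+2}, \dots$ are small — this is where local compactness of $[G,G]$ enters, via Lemma~\ref{l:G'-homeo} and Lemma~\ref{l:finite-contains}, to guarantee that the relevant "spread" of each $\nu_k^{\epsilon_k}$ in the commutator directions is genuinely compactly controlled and shrinks with $\epsilon_k$. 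Once that equicontinuity is in hand, the diagonal choice of $(\epsilon_k)$ and the verification that $x\nu \in \operatorname{CAC}(\mathcal{X})$ with $(x\nu)(E) > 0$ are bookkeeping.
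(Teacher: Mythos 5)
Your overall architecture --- choosing the $\epsilon_k$ recursively, using \eqref{eq:approximation} at each stage to keep the mass of $E$ above a fixed threshold, and making the $\epsilon_k$ decay fast enough that the infinite convolution converges weakly (your tightness argument is essentially Lemmas \ref{convergent-product} and \ref{l:shrink-fast}) --- matches the paper's. But there are two genuine gaps precisely where the real work happens. The first is the selection of the starting point $x$. You propose a ``greedy''/Baire-category argument producing an $x_0\in E$ through which $E$ is fiberwise thick in every direction $X_i$ simultaneously, and hope that fiber-thickness plus density points will let the induction run forever. This is not the right condition: what must be guaranteed is not a property of $E$ alone but a property of $x$ relative to the entire inductive scheme, namely that at every stage $n$, having committed to $\epsilon_1,\dots,\epsilon_n$ with $(x\,\nu^{\epsilon_1,\dots,\epsilon_n})(E)>c$, some admissible $\epsilon_{n+1}$ keeps the mass above $c$. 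The paper achieves this by defining the Borel set $B$ of ``bad'' points (those from which some reachable stage of the induction cannot be extended) and proving in Claim \ref{key-claim} that each piece $B^{\epsilon_1,\dots,\epsilon_n}_{k_1,\dots,k_n}$ lies in $\mathcal{A}(X_1,\dots,X_{n+1})$, via Proposition \ref{th:ldlm}, a density-point contradiction using \eqref{eq:approximation}, and the disintegration $(yx\,\nu^{\epsilon_1,\dots,\epsilon_{n+1}})(E_n)=\int_{H_{n+1}}(yxz\,\nu^{\epsilon_1,\dots,\epsilon_n})(E_n)\,\nu^{\epsilon_{n+1}}_{n+1}(dz)$; since the $\mathcal{A}(X_i)$ are $\sigma$-ideals and $E\notin\mathcal{A}(\mathcal{X})$, a good point exists. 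Your sketch never establishes anything of this strength, and your claimed extraction of a single point thick in all directions does not follow from $E\notin\mathcal{A}(\mathcal{X})$ in the form you state it.

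The second gap is the endgame. You pass from $(x\,\nu^{\epsilon_1,\dots,\epsilon_n})(E)\geqslant c_n$ to $(x\nu)(E)>0$ ``by lower semicontinuity together with a slight inner-regularity fattening of $E$.'' This does not work: for a weak limit the portmanteau theorem only yields $(x\nu)(\overline{E})\geqslant c$, and $\overline{E}$ can be far larger than the Borel set $E$; inner regularity of the limit measure approximates $E$ from inside by compacta that the approximating measures need not charge. The paper resolves this with the nontrivial descriptive-set-theoretic Lemma \ref{borel-decomp}, which replaces $E$ by a tree of Borel subsets $E_{k_1,\dots,k_n}$ whose closures along any infinite branch intersect inside $E$, and the branch indices $k_n$ must be chosen in tandem with the $\epsilon_n$ during the induction; the paper even records in Section \ref{aronszajn-tight} that an a priori reduction to compact $E$ --- which is what your fattening would amount to --- is an open problem. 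A smaller point: you invoke local compactness of $[G,G]$ (via Lemmas \ref{l:G'-homeo} and \ref{l:finite-contains}) to control the convolution tails, but Theorem \ref{th:construction} carries no such hypothesis and its proof needs none; the tail control is the elementary Borel--Cantelli argument already mentioned.
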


\begin{remark} \label{rem:cac-families}
  Note that this theorem is actually a little stronger than the backward direction of Theorem~\ref{th:aron-cac} as it states that we can specify the families $\{\nu_k^\epsilon\}_{k,\epsilon}$ independently of $E$.
\end{remark}


We begin with several preparatory lemmas.  We first show that the weak limit exists if the $\epsilon_i$ decay fast enough.

\begin{lemma}\label{convergent-product}
  Let $g_1, g_2, \dots$ be a sequence in $G$ with $d(e, g_i) < 2^{-i}$
  for all but finitely many $i$.  Then the infinite product $\ldots
  g_n g_{n-1} \ldots g_1$ converges in $G$.
\end{lemma}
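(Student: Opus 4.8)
The statement is that if $d(e_G, g_i) < 2^{-i}$ for all but finitely many $i$, then the left-infinite product $\cdots g_n g_{n-1} \cdots g_1$ converges in $G$. The plan is to show that the partial products $P_n := g_n g_{n-1} \cdots g_1$ form a Cauchy sequence in the complete metric space $(G, d)$, and then invoke completeness of $G$ (which holds by definition of an infinite-dimensional Carnot group). Since only finitely many terms violate the bound, and a finite left-multiplication by a fixed group element is a $d$-isometry-conjugate bi-Lipschitz map, it costs nothing to assume $d(e_G, g_i) < 2^{-i}$ for \emph{all} $i$; more carefully, writing $P_n = (g_n \cdots g_{N+1}) \cdot P_N$ for $N$ past the last bad index, convergence of $P_n$ is equivalent to convergence of the tail product $g_n \cdots g_{N+1}$, so we may as well assume the bound holds from the start.

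The key computation is the telescoping estimate for the distance between consecutive partial products. Using left-invariance of $d$,
\[
d(P_{n+1}, P_n) = d(g_{n+1} P_n, P_n) = d(g_{n+1}, e_G) < 2^{-(n+1)}.
\]
Hence for $m > n$, the triangle inequality gives
\[
d(P_m, P_n) \leqslant \sum_{j=n}^{m-1} d(P_{j+1}, P_j) < \sum_{j=n}^{m-1} 2^{-(j+1)} < 2^{-n},
\]
which tends to $0$ as $n \to \infty$ uniformly in $m$. Therefore $\{P_n\}$ is Cauchy, and by completeness of $(G,d)$ it converges to some limit in $G$; this limit is by definition the value of the infinite product $\cdots g_n g_{n-1} \cdots g_1$.

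I do not anticipate a serious obstacle here: the argument is a direct application of left-invariance of the metric plus completeness, and the ``all but finitely many'' hypothesis is handled by the reduction to a tail as indicated above. The only point requiring a sentence of care is that the convergence statement concerns the \emph{order} of multiplication (new factors appended on the left), which is precisely why left-invariance — rather than right-invariance — is the relevant property, and why $d(g_{n+1}P_n, P_n) = d(g_{n+1}, e_G)$ is the correct identity to use.
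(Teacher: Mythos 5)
There is a genuine error at the central step. You claim that left-invariance gives
\[
d(P_{n+1},P_n)=d(g_{n+1}P_n,P_n)=d(g_{n+1},e_G),
\]
but left-invariance is the identity $d(ax,ay)=d(x,y)$, i.e.\ it lets you cancel a common factor on the \emph{left}. In $d(g_{n+1}P_n,\,e_G P_n)$ the common factor $P_n$ sits on the \emph{right}, so the cancellation you perform is exactly right-invariance, which the metric on a (non-abelian) scalable group is not assumed to have and in general does not have. Your closing remark has the logic reversed: because new factors are appended on the left, the quantity $d(g_{n+1}P_n,P_n)$ is precisely the one that left-invariance does \emph{not} control. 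The paper's proof opens by pointing out this exact obstruction ("we cannot easily bound $d(s_n, g_{n+1}s_n)$").

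The repair is short and is what the paper does: pass to inverses. Setting $s_n=g_n\cdots g_1$, one has $s_{n+1}^{-1}=s_n^{-1}g_{n+1}^{-1}$, so the new factor now appears on the right and left-invariance legitimately gives
\[
d(s_n^{-1},s_{n+1}^{-1})=d(s_n^{-1},s_n^{-1}g_{n+1}^{-1})=d(e_G,g_{n+1}^{-1})=d(g_{n+1},e_G)<2^{-(n+1)}
\]
for all but finitely many $n$. Your telescoping/Cauchy argument and the appeal to completeness then go through verbatim for the sequence $s_n^{-1}$, and continuity of inversion in the topological group $G$ transfers the convergence back to $s_n$. (Your preliminary reduction to the case where the bound holds for all $i$ is fine, but note the last displayed equality above also uses $d(e_G,g^{-1})=d(g,e_G)$, which follows from left-invariance applied with the factor $g$.)
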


\begin{proof}
  Let $s_n := g_n g_{n-1} \cdots g_1$.  Since the metric $d$ is
  left-invariant instead of right-invariant, we cannot easily bound
  $d(s_n, s_{n+1}) = d(s_n, g_{n+1} s_n)$.  However we can say that
  \begin{equation*}
    d(s_n^{-1}, s_{n+1}^{-1}) = d(s_n^{-1}, s_n^{-1} g_{n+1}^{-1}) =
    d(e, g_{n+1}^{-1}) = d(g_{n+1}, e) < 2^{n+1}
  \end{equation*}
  for all but finitely many $n$, and so by a triangle inequality
  argument, the sequence $s_{n}^{-1}$ is Cauchy and therefore
  converges.  Since $G$ is a topological group, inversion is
  continuous and therefore $s_n$ converges as well.
\end{proof}

\begin{lemma}\label{l:shrink-fast}
  Suppose $\nu^\epsilon_k \in \mathrm{AC}(H_k)$ is a family of
  probability measures satisfying \eqref{eq:approximation}.  There exist
  $\delta_1, \delta_2, \dots$ sufficiently small so that for all $m$
  and all sequences $\epsilon_1, \epsilon_2, \dots$ with $0 <
  \epsilon_i < \delta_i$, the sequence of measures $\nu_{m,k} =
  \nu^{\epsilon_k, \dots, \epsilon_m;m}$ converges weakly to some
  $\mu_m$ as $k \to \infty$.
\end{lemma}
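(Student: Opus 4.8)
The plan is to realize each convolution product $\nu_{m,k}=\nu_k^{\epsilon_k}\ast\cdots\ast\nu_m^{\epsilon_m}$ as the law of a partial product of independent $G$-valued random variables and then apply Lemma~\ref{convergent-product} pathwise. First I would fix the sequence $(\delta_i)$ \emph{before} choosing $m$: for each $i$ the point $e_G$ is a density point of the metric ball $B_{H_i}(e_G,2^{-i})$ in $H_i$ (the density ratio equals $1$ for all radii below $2^{-i}$), so \eqref{eq:approximation} applied to $U=B_{H_i}(e_G,2^{-i})$ lets me pick $\delta_i>0$ with
\[
  \nu_i^{\epsilon}\bigl(B_{H_i}(e_G,2^{-i})\bigr)>1-2^{-i}\qquad\text{whenever }0<\epsilon<\delta_i .
\]
This choice depends only on the given family and on $i$, not on $m$, which is exactly what the statement demands.

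Next, fix $m$ and any sequence $(\epsilon_i)$ with $0<\epsilon_i<\delta_i$. On a probability space I take independent random variables $(Z_i)_{i\geqslant m}$ valued in $G$, with $Z_i$ distributed according to $\nu_i^{\epsilon_i}$ (viewing $\nu_i^{\epsilon_i}\in\operatorname{AC}(H_i)$ as a measure on $G$); such a space exists since $G$ is Polish. Then $\mathbb{P}(d(e_G,Z_i)\geqslant 2^{-i})<2^{-i}$, so $\sum_{i\geqslant m}\mathbb{P}(d(e_G,Z_i)\geqslant 2^{-i})<\infty$ and the Borel--Cantelli lemma produces an almost sure event on which $d(e_G,Z_i)<2^{-i}$ for all but finitely many $i$. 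On this event, Lemma~\ref{convergent-product}, applied to the sequence that equals $e_G$ in positions $1,\dots,m-1$ and $Z_i$ afterwards, shows that the partial products $S_k:=Z_kZ_{k-1}\cdots Z_m$ converge in $G$ as $k\to\infty$; call the almost sure limit $S_\infty$.

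Finally, a short induction using the definition \eqref{def_convo} of convolution together with independence of the $Z_i$ identifies the law of $S_k$ with $\nu_k^{\epsilon_k}\ast\cdots\ast\nu_m^{\epsilon_m}=\nu_{m,k}$. Since $S_k\to S_\infty$ almost surely, the bounded convergence theorem gives $\int_G\phi\,\dd\nu_{m,k}=\E[\phi(S_k)]\to\E[\phi(S_\infty)]$ for every bounded continuous $\phi:G\to\R$, i.e.\ $\nu_{m,k}$ converges weakly to the law $\mu_m$ of $S_\infty$. I do not expect a genuine obstacle here: the one conceptual point is ensuring the $(\delta_i)$ are selected uniformly in $m$ (handled in the first step), and the only other care needed is the mild bookkeeping caused by the left-invariance of $d$, which has already been absorbed into Lemma~\ref{convergent-product}.
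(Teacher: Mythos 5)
Your proof is correct and follows essentially the same route as the paper's: choose $\delta_i$ so that $\nu_i^{\epsilon}$ concentrates mass $1-2^{-i}$ on $B_{H_i}(e_G,2^{-i})$, realize the convolutions as laws of partial products of independent variables, apply Borel--Cantelli and Lemma~\ref{convergent-product} pathwise, and pass from almost sure to weak convergence. The only difference is that you spell out a few steps the paper leaves implicit (the application of \eqref{eq:approximation} to the ball, the identification of the law of $S_k$ with the convolution, and the bounded convergence argument), all of which are fine.
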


\begin{proof}
  For each $i$, choose $\delta_i > 0$ sufficiently small so that $\nu_i^\epsilon(B_{H_i}(e_G,2^{-i})) > 1 - 2^{-i}$ for all $\epsilon < \delta_i$.
  Assuming $\epsilon_i < \delta_i$, let $\xi_i \sim
  \nu_i^{\epsilon_i}$ be independent $G$-valued random variables. By
  the Borel--Cantelli lemma, almost surely we have $d(e_G,\xi_i) <
  2^{-i}$ for all but finitely many $i$.  Thus for each fixed $m$, by
  Lemma~\ref{convergent-product} the sequence of elements $\sigma_k =
  \xi_k \cdots \xi_m$ converges almost surely.  Since $\sigma_k \sim
  \nu^{\epsilon_m,\dots, \epsilon_k;m}$, we have weak convergence of
  the measures.
\end{proof}






The rough idea of the proof of Theorem \ref{th:construction} is to inductively construct the sequence $\epsilon_1, \epsilon_2, \dots$, in such a way that for an appropriate $x \in E$, we have $(x \nu^{\epsilon_1, \dots, \epsilon_n})(E) > c$ for all $n$, where $0 < c < 1$ is an arbitrary fixed constant.  If $E$ were closed, it would then follow from the portmanteau theorem that $(x \nu)(E) \geqslant c$ as well, as desired.  Unfortunately, for a general Borel set $E$, we can only conclude $(x \nu)\left(\overline{E}\right) \geqslant c$, which is not good enough.

To resolve this issue, we make use of the following technical Lemma \ref{borel-decomp}.  It is mentioned in \cite[Lemma 7]{Csornyei1999a}) as a ``standard fact'' with an outline of a proof, so we fill in the details here.  Lemma \ref{borel-decomp} will allow us to shrink the set $E$ slightly at each step $n$ of the induction, producing a nested sequence of smaller Borel sets $E_n = E_{k_1, \dots, k_n}$ such that, by ensuring that $\nu^{\epsilon_1, \dots, \epsilon_n}$ puts sufficient mass on $E_n$, we can conclude that the measure $\nu$ puts sufficient mass on a closed set $\bigcap_n \overline{E_n}$ which is actually contained in $E$.

At a first reading, the reader may wish to focus on the special case of Theorem \ref{th:construction} when $E$ is closed, in which these complications do not enter.  For a closed set $E$, Lemma \ref{borel-decomp} becomes a triviality, as all the sets $E_{k_1, \dots, k_n}$ can simply be taken equal to $E$.  In that case, throughout the proof of Theorem \ref{th:construction}, all appearances of the sequence $k_1, k_2, \dots$ can be ignored.

See Section \ref{aronszajn-tight} for some further remarks on this issue.

\begin{lemma}\label{borel-decomp}
Suppose $E$ is any Borel subset of a Polish space $X$.  There exists a countable family of Borel sets $E_{k_1, \dots, k_n}$, indexed by finite sequences $(k_1, \dots, k_n)$ of natural numbers, with the  following properties:
  \begin{enumerate}
    \item $E_{\emptyset} = E$, where $\emptyset$ denotes the trivial sequence of length $n=0$; \label{decomp-trivial}
    \item   \label{decomp-nest} \label{decomp-union} For every $(k_1, \dots, k_n)$ we have
      \begin{align*}
       E_{k_1, \dots, k_n, 0} \subset E_{k_1, \dots, k_n, 1} \subset
       \dots \subset E_{k_1,\dots k_n} \text{ and } E_{k_1, \dots, k_n} = \bigcup_{j=0}^\infty E_{k_1,
      \dots, k_n, j}
      \end{align*}

    \item For every infinite sequence $k_1, k_2, \dots$, the closed set $F_{k_1,
      k_2, \dots} := \bigcap_{n=0}^\infty \overline{E_{k_1, \dots, k_n}}$ is
      contained in $E$.  \label{decomp-intersect}
  \end{enumerate}
\end{lemma}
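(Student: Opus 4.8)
The plan is to build the family $\{E_{k_1,\dots,k_n}\}$ by recursion on the length $n$ of the index sequence, where at each stage the children of a node are obtained from a countable decomposition that simultaneously (a) exhausts the parent set and (b) forces the closures to shrink onto the parent. The natural tool is a countable basis: since $X$ is Polish, fix a countable base $\{U_j\}_{j\in\N}$ for the topology. Given a Borel set $A$, I would first write $A$ as an increasing union $A = \bigcup_{i} A^{(i)}$ of Borel sets each of which is ``small at scale $2^{-n}$'' in the sense that each $A^{(i)}$ can be covered by countably many basic open sets of diameter (with respect to a fixed complete metric $\rho$ on $X$) at most $2^{-n}$; intersecting $A$ with the members of a locally finite refinement of such a cover and re-indexing gives a countable family $\{A_j\}$ with $\bigcup_j A_j = A$, each $A_j$ of $\rho$-diameter $\le 2^{-n}$, and $A_0 \subset A_1 \subset \cdots$ after replacing $A_j$ by $\bigcup_{l\le j}A_l$ (which is still Borel and still has small diameter up to a factor, or one simply takes finite unions and accepts diameter $\le$ a fixed constant times $2^{-n}$ — either is fine since only summability of a null sequence of diameters is needed). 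Applying this construction with $A = E_{k_1,\dots,k_n}$ and scale parameter $2^{-(n+1)}$ produces $E_{k_1,\dots,k_n,j}$, and we set $E_\emptyset = E$. Properties \eqref{decomp-trivial} and \eqref{decomp-nest} hold by construction.

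For property \eqref{decomp-intersect}, fix an infinite sequence $k_1,k_2,\dots$ and a point $x \in F_{k_1,k_2,\dots} = \bigcap_n \overline{E_{k_1,\dots,k_n}}$. For each $n$, since $x \in \overline{E_{k_1,\dots,k_n}}$ and $\diam_\rho(E_{k_1,\dots,k_n}) \le C 2^{-n}$ (the bound inherited from the construction at the $n$-th stage), there is a point $y_n \in E_{k_1,\dots,k_n}$ with $\rho(x,y_n) \le C 2^{-n}$, so $y_n \to x$. Now I claim the $y_n$ form a Cauchy sequence witnessing that $x$ lies in $E$: the nesting $E_{k_1,\dots,k_{n+1}} \subset E_{k_1,\dots,k_n}$ together with the shrinking diameters means the sets $E_{k_1,\dots,k_n}$ form a decreasing sequence of nonempty Borel sets of diameter $\to 0$, hence by completeness of $\rho$ their closures have a single common point, and that point lies in $\overline{E_{k_1,\dots,k_n}} \subset \overline{E_{k_1,\dots,k_{n-1}}}$ for all $n$; but more to the point, since $E_{k_1,\dots,k_n}\subseteq E$ for every $n$ (an immediate consequence of \eqref{decomp-nest} and $E_\emptyset = E$ by induction), the common point $x$ is a limit of points of $E$. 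This alone only gives $x \in \overline E$, so the genuine content is that the diameters shrink: the unique point of $\bigcap_n \overline{E_{k_1,\dots,k_n}}$ is $x$, and choosing instead a single point $z_n \in E_{k_1,\dots,k_n}$ for each $n$ we get $\rho(z_n, x)\le \diam(\overline{E_{k_1,\dots,k_n}}) \to 0$, so $z_n \to x$ with all $z_n \in E$; since in fact $z_n$ can be taken in $E_{k_1,\dots,k_m}$ for all $m \le n$ — wait, that still only gives closure. The correct resolution is that the decomposition must be arranged so that the pieces shrink in diameter \emph{and} each $E_{k_1,\dots,k_n}$ is itself an ``approximately closed'' Borel set; concretely, it suffices to refine the construction so that $\overline{E_{k_1,\dots,k_n,j}} \subseteq E_{k_1,\dots,k_n}$, which is achievable because in a metric space every set is the increasing union of (Borel) subsets whose closures it contains, namely $A = \bigcup_i \{x \in A : \rho(x, X\setminus A) \ge 1/i\}$ when $A$ is open, and for general Borel $A$ one combines this with the diameter refinement by a standard interleaving. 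With this extra feature, $\overline{E_{k_1,\dots,k_n}} \subseteq E_{k_1,\dots,k_{n-1}}$, so $F_{k_1,k_2,\dots} = \bigcap_n \overline{E_{k_1,\dots,k_n}} \subseteq \bigcap_n E_{k_1,\dots,k_n} \subseteq E$, which is \eqref{decomp-intersect}.

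The main obstacle is precisely the tension exposed above: a naive Borel decomposition into small-diameter pieces gives only $F \subseteq \overline E$, not $F \subseteq E$, so the recursion must be set up so that taking closures of children stays inside the parent. The clean way to achieve this is a two-step refinement at each level — first write the Borel set as an increasing union of pieces whose closures it contains (possible for any Borel, not just open, set in a metric space, by a transfinite/inductive argument on the Borel hierarchy, or more simply by noting it suffices to do this for the generating open sets and observing the class of sets admitting such a decomposition is closed under the relevant operations), then subdivide each piece into small-diameter Borel subpieces, then re-index the resulting countable family as $\{E_{k_1,\dots,k_n,j}\}_{j\in\N}$ in increasing order by taking finite unions. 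I would present this as: Step 1, fix a complete compatible metric $\rho$ and a countable base; Step 2, prove the auxiliary claim that every Borel $A\subseteq X$ can be written as $\bigcup_j B_j$ with $B_0\subseteq B_1\subseteq\cdots$, each $B_j$ Borel with $\overline{B_j}\subseteq A$ and $\diam_\rho(B_j)\le 2^{-n}$ for the prescribed level $n$; Step 3, run the recursion; Step 4, verify \eqref{decomp-trivial}–\eqref{decomp-intersect}, where \eqref{decomp-intersect} is now immediate since $\overline{E_{k_1,\dots,k_{n+1},j}}\subseteq E_{k_1,\dots,k_{n+1}}$ for all relevant indices forces $\bigcap_n\overline{E_{k_1,\dots,k_n}}\subseteq\bigcap_n E_{k_1,\dots,k_n}\subseteq E$. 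The diameter bound, while not literally required by the statement, is the mechanism that makes $F_{k_1,k_2,\dots}$ a genuinely small (and typically singleton or empty) closed set, matching how the lemma is used in the proof of Theorem \ref{th:construction}.
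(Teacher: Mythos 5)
There is a genuine gap, and it sits exactly at the point you flag as the crux. Your Step 2 auxiliary claim --- that every Borel set $A$ in a Polish space can be written as an increasing countable union $A=\bigcup_j B_j$ of Borel sets with $\overline{B_j}\subseteq A$ --- is false. If such a decomposition exists then $A=\bigcup_j\overline{B_j}$, so $A$ must be $F_\sigma$; but not every Borel set is $F_\sigma$. Concretely, take $A$ to be the irrationals in $[0,1]$: any closed set contained in $A$ misses every rational, hence contains no interval, hence is nowhere dense, so a countable union of such closures is meager, whereas $A$ is comeager. Your suggested repair ("it suffices to do this for the generating open sets and observe the class is closed under the relevant operations") fails precisely here: the class of sets admitting such a decomposition is the $F_\sigma$ sets, which is not closed under countable intersection. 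Since the whole mechanism of your argument for property (3) is the containment $\overline{E_{k_1,\dots,k_n,j}}\subseteq E_{k_1,\dots,k_n}$, and that containment cannot be arranged for general Borel sets, the recursion cannot be set up as proposed. (Your own earlier observation that the naive small-diameter decomposition only yields $F\subseteq\overline{E}$ is correct; the proposed fix just does not exist at the level of generality needed.)

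The paper avoids this obstruction by not trying to shrink closures inside $X$ at all. It uses the standard descriptive-set-theoretic representation of a Borel set $E$ as the image $\phi(C)$ of a closed set $C\subseteq\N^{\N}$ under a continuous injection $\phi$, and defines $E_{k_1,\dots,k_n}=\phi(C\cap Z_{k_1,\dots,k_n})$ where $Z_{k_1,\dots,k_n}=\{\alpha:\alpha(i)\leqslant k_i,\ i\leqslant n\}$. Property (3) then follows from compactness of $\prod_n\{0,\dots,k_n\}$ in Baire space together with continuity of $\phi$: a point of $\bigcap_n\overline{E_{k_1,\dots,k_n}}$ is a limit of points $\phi(\alpha_n)$ with $\alpha_n\in C\cap Z_{k_1,\dots,k_n}$, the $\alpha_n$ have a subsequence converging to some $\beta\in C$, and then $x=\phi(\beta)\in E$. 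The compactness lives upstairs in $\N^{\N}$, not in $X$; no diameter control or closure containment in $X$ is needed or available. If you want to salvage a recursion-in-$X$ approach, you would have to restrict to the case where $E$ and all its pieces are $F_\sigma$ (or simply closed, in which case the lemma is trivial as the paper notes), which does not cover the general Borel case the lemma requires.
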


Note that although $E_{k_1, \dots, k_n} \subset E$, the construction does not necessarily ensure that $\overline{E_{k_1, \dots, k_n}} \subset E$, so item \ref{decomp-intersect} of the lemma is nontrivial.

\begin{proof}
  Consider the Baire space $\mathbb{N}^{\mathbb{N}}$ with its product topology, whose elements we write as functions $\alpha : \mathbb{N} \to \mathbb{N}$.  There is a closed set $C \subset \mathbb{N}^{\mathbb{N}}$ and a continuous injective map $\phi : C \to X$ whose image is $E$ (\emph{c.f.} \cite[Theorem 3.3.17]{SrivastavaBook1998}).  For each finite sequence $k_1, \dots, k_n$, let $Z_{k_1, \dots, k_n} = \{ \alpha \in \mathbb{N}^{\mathbb{N}} : \alpha(i) \leqslant k_i, i=1,\dots, n\}$, which is a closed subset of $\mathbb{N}^{\mathbb{N}}$, and let $E_{k_1, \dots, k_n} = \phi(C \cap Z_{k_1, \dots, k_n})$.  Then properties \ref{decomp-trivial} and \ref{decomp-nest} follow immediately from the corresponding obvious properties of $Z_{k_1, \dots, k_n}$.

  For property \ref{decomp-intersect}, fix an infinite sequence $k_1, k_2, \dots$ and let $x \in F_{k_1, k_2, \dots}$. Then for each $n$ there is a point $x_{n} \in E_{k_1, \dots, k_n}$ with $d(x_n, x) < 1/n$, so that $x_n \to x$, and we can write $x_n = \phi(\alpha_n)$ for some $\alpha_n \in C \cap Z_{k_1, \dots, k_n}$.

  We claim that $\alpha_n$ has a convergent subsequence.  Indeed, let $\beta_n \in \mathbb{N}^{\mathbb{N}}$ be defined by
  \begin{align*}
    \beta_n(i) =
  \begin{cases}
    \alpha_n(i), & i \leqslant n \\
    0, & i > n.
  \end{cases}
  \end{align*}
  Then every $\beta_n$ is in the set $\bigcap_n Z_{k_1, \dots, k_n} = \prod_n \{0, 1, \dots, k_n\}$ which is compact by Tikhonov's theorem.  So $\beta_n$ has a convergent subsequence $\beta_{n_j} \to \beta$, that is, $\beta_{n_j}(i) \to \beta(i)$ for every $i$.  And for each $i$ we have $\alpha_n(i) = \beta_n(i)$ for all $n \geqslant i$, so $\alpha_{n_j}(i) \to \beta(i)$ as well.  Thus $\alpha_{n_j} \to \beta$.

  Since $C$ is closed, we have $\beta \in C$.  So by continuity of  $\phi$, we have $x_{n_j} = \phi(\alpha_{n_j}) \to \phi(\beta)$, hence $x = \phi(\beta) \in E$.
\end{proof}

We are now ready to prove Theorem \ref{th:construction}.

\begin{proof}[Proof of Theorem \ref{th:construction}]
  As in the theorem statement, let $\mathcal{X} = (X_1, X_2, \dots)$ be a Carnot-spanning set, $H_1, H_2, \dots$ the associated filtration, and $\nu_k^\epsilon$ the given families of absolutely continuous measures on $H_k$, satisfying \eqref{eq:approximation}.  It will be convenient to have a degenerate base case, so let us also set $H_0 = \{e_G\}$ to be the trivial subgroup, and $\nu_0 = \delta_{e_G}$ to be the unit point mass measure at the identity, which we note is the identity of the convolution operation (so $\mu \ast \delta_{e_G} = \mu$ for all probability measures $\mu$).

Let $\nu^{\epsilon_1, \dots, \epsilon_n} = \nu_n^{\epsilon_n} \ast \dots \ast \nu_1^{\epsilon_1}$, with $\nu^{\emptyset} = \nu_0 = \delta_{e_G}$, where as before $\emptyset$ denotes the trivial sequence of length 0.  Let $\delta_k$ be as in Lemma \ref{l:shrink-fast}.  From now on, each $\epsilon_i$ will denote a \emph{rational} number in $(0, \delta_i)$, so that for any such sequence of $\epsilon_i$, the sequence $\nu^{\epsilon_1, \dots,\epsilon_n}$ will converge weakly.

  Suppose $E$ is a Borel set with $E \notin \mathcal{A}(\mathcal{X})$, and let $E_{k_1, \dots, k_n}$ be a family of sets with the properties described in Lemma \ref{borel-decomp}.  Fix an arbitrary constant $0 < c < 1$.  Our goal will be to produce an $x \in G$ and  sequences $\epsilon_1, \epsilon_2, \dots$ and $k_1, k_2, \dots$ such that for every $n \geqslant 0$, we have
  \begin{equation}\label{good-condition}
    (x \nu^{\epsilon_1, \dots, \epsilon_n})(E_{k_1, \dots, k_n}) > c.
  \end{equation}
  Then, using properties of the family $E_{k_1, \dots, k_n}$ and of weak convergence, we will be able to conclude that $(x \nu)(E) \geqslant c > 0$.

  In fact, we want to find an $x$ for which we can construct the sequences $\epsilon_1, \epsilon_2, \dots$ and $k_1, k_2, \dots$ inductively, in tandem.  As the base case, note that when $n=0$, the inequality \eqref{good-condition} reads $(x \delta_{e_G})(E) > c$, which holds trivially for every $x \in E$.  So given an $x \in E$, it would suffice to show the inductive step: that for every $n \geqslant 0$, for all sequences $\epsilon_1, \dots, \epsilon_n$ and $k_1, \dots, k_n$ satisfying \eqref{good-condition}, there exist $\epsilon_{n+1}$ and $k_{n+1}$ such that \eqref{good-condition} still holds for the extended sequences $\epsilon_1, \dots, \epsilon_{n+1}$ and $k_1, \dots, k_{n+1}$.  We will say that a point $x \in E$ is ``good'' if this holds.

  To show that good points exist, we consider the complementary set $B \subset E$ of ``bad'' points and show that $B \in \mathcal{A}(\mathcal{X})$, so that in particular $B \ne E$.  Translating the above conditions into sets, we write
  \begin{align}
    A^{\epsilon_1, \dots, \epsilon_n}_{k_1, \dots, k_n} &= \left\{ x : (x \nu^{\epsilon_1, \dots, \epsilon_n})(E_{k_1, \dots, k_n}) > c \right\} \\
    B^{\epsilon_1, \dots, \epsilon_n}_{k_1, \dots, k_n} &=     A^{\epsilon_1, \dots, \epsilon_n}_{k_1, \dots, k_n} \setminus \bigcup_{k_{n+1}} \bigcup_{\epsilon_{n+1}} A^{\epsilon_1, \dots, \epsilon_{n+1}}_{k_1, \dots, k_{n+1}} \\
    B &= \bigcup_{n=0}^\infty \bigcup_{k_1, \dots, k_n} \bigcup_{\epsilon_1, \dots, \epsilon_n} B^{\epsilon_1, \dots, \epsilon_n}_{k_1, \dots, k_n} \label{B-def}
  \end{align}
  Here, a union over $k_i$ is understood to be taken from $0$ to $\infty$, and a union over $\epsilon_i$ is taken over the countable set $(0, \delta_i) \cap \mathbb{Q}$.  These are all Borel sets.  Indeed, for any Borel probability measure $\mu$ and any Borel set $D$, the function $f(x) = (x \mu)(D) = \int_G 1_D(xy) \,\mu(dy)$ is a Borel function by Fubini's theorem, as $1_D(xy)$ is a Borel function of $(x,y)$.  Then each $A^{\epsilon_1, \dots, \epsilon_n}_{k_1, \dots, k_n}$ is a set of the form $\{x : (x \mu)(D) > c\} = \{x : f(x) > c\}$ and so is Borel.  Taking countable unions and complements, $B^{\epsilon_1, \dots, \epsilon_n}_{k_1, \dots, k_n}, B$ are Borel as well.

  The heart of the proof is then the following claim:
  \begin{claim}\label{key-claim}
  $B^{\epsilon_1,\ldots,\epsilon_n}_{k_1,\ldots,k_n} \in \mathcal{A}(X_1,\ldots,X_{n+1})$.
  \end{claim}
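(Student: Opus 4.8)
The plan is to invoke Proposition~\ref{th:ldlm}: since $X_1,\dots,X_{n+1}$ generate the finite-dimensional Carnot subgroup $H_{n+1}$, that proposition reduces the claim to showing
\[
\vol_{H_{n+1}}(g^{-1}B^{\vec\epsilon}_{\vec k}\cap H_{n+1})=0\qquad\text{for every }g\in G ,
\]
where, to ease notation, I write $\vec\epsilon=(\epsilon_1,\dots,\epsilon_n)$, $\vec k=(k_1,\dots,k_n)$, $\nu^{\vec\epsilon}=\nu^{\epsilon_1,\dots,\epsilon_n}$, $E_{\vec k}=E_{k_1,\dots,k_n}$, $A^{\vec\epsilon}_{\vec k}=A^{\epsilon_1,\dots,\epsilon_n}_{k_1,\dots,k_n}$, $B^{\vec\epsilon}_{\vec k}=B^{\epsilon_1,\dots,\epsilon_n}_{k_1,\dots,k_n}$. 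I would argue by contradiction, assuming this measure is positive for some $g$. The one computation to record first is a disintegration identity: writing $\nu^{\vec\epsilon,\epsilon'}=\nu_{n+1}^{\epsilon'}\ast\nu^{\vec\epsilon}$ and using $(L_x)_\ast(\mu\ast\sigma)=((L_x)_\ast\mu)\ast\sigma$ together with Tonelli, for every Borel $D\subseteq G$, every $x\in G$ and every $\epsilon'>0$ we have
\[
(x\nu^{\vec\epsilon,\epsilon'})(D)=\int_{H_{n+1}}(xw\,\nu^{\vec\epsilon})(D)\,d\nu_{n+1}^{\epsilon'}(w) .
\]
Thus a one-step-longer CAC measure based at $x$ is an average of the translated measures $xw\,\nu^{\vec\epsilon}$ over $w\in H_{n+1}$, with $\nu_{n+1}^{\epsilon'}$ concentrating near $e_G$ as $\epsilon'\to0$ in the sense of \eqref{eq:approximation}.

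The heart of the proof is a careful choice of base point. I would consider the bounded Borel function $\tilde F\colon H_{n+1}\to[0,1]$, $\tilde F(w):=(gw\,\nu^{\vec\epsilon})(E_{\vec k})$ (Borel by Fubini, exactly as for the sets $A$ above). On the finite-dimensional Carnot group $H_{n+1}$, Haar measure together with the family of $d$-balls forms a doubling metric measure structure, so the Lebesgue density and differentiation theorems hold; in particular $\vol_{H_{n+1}}$-a.e.\ point of $H_{n+1}$ is a point of approximate continuity of $\tilde F$. Since $g^{-1}B^{\vec\epsilon}_{\vec k}\cap H_{n+1}$ has positive measure, I would pick a point $h_0$ in it that is also such a point of approximate continuity, and put $x:=gh_0\in B^{\vec\epsilon}_{\vec k}$ and $F(w):=(xw\,\nu^{\vec\epsilon})(E_{\vec k})=\tilde F(h_0w)$. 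Because $L_{h_0}$ is a $d$-isometry preserving $\vol_{H_{n+1}}$, the identity $e_G$ is then a point of approximate continuity of $F$; and $F(e_G)=(x\,\nu^{\vec\epsilon})(E_{\vec k})>c$ since $x\in B^{\vec\epsilon}_{\vec k}\subseteq A^{\vec\epsilon}_{\vec k}$.

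To finish, fix $c_1$ with $c<c_1<F(e_G)$. Approximate continuity of $F$ at $e_G$ shows that $U:=\{w\in H_{n+1}:F(w)>c_1\}$ has $e_G$ as a density point, so $\nu_{n+1}^{\epsilon'}(U)\to1$ as $\epsilon'\to0$ by \eqref{eq:approximation}; I would choose a rational $\epsilon'\in(0,\delta_{n+1})$ with $\nu_{n+1}^{\epsilon'}(U)>c/c_1$, which gives $\int_{H_{n+1}}F\,d\nu_{n+1}^{\epsilon'}\geqslant c_1\,\nu_{n+1}^{\epsilon'}(U)>c$. Since $E_{\vec k,j}\uparrow E_{\vec k}$ as $j\to\infty$ by property~\ref{decomp-nest} of Lemma~\ref{borel-decomp}, the functions $w\mapsto(xw\,\nu^{\vec\epsilon})(E_{\vec k,j})$ increase pointwise to $F$, so monotone convergence against $\nu_{n+1}^{\epsilon'}$ produces some $k_{n+1}$ with $\int_{H_{n+1}}(xw\,\nu^{\vec\epsilon})(E_{\vec k,k_{n+1}})\,d\nu_{n+1}^{\epsilon'}(w)>c$. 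By the disintegration identity this reads $(x\nu^{\vec\epsilon,\epsilon'})(E_{\vec k,k_{n+1}})>c$, i.e.\ $x\in A^{\epsilon_1,\dots,\epsilon_{n+1}}_{k_1,\dots,k_{n+1}}$, contradicting $x\in B^{\vec\epsilon}_{\vec k}$. Hence the measure in question vanishes for all $g$, and Proposition~\ref{th:ldlm} yields the claim.

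I expect the real obstacle to be extracting the \emph{strict} inequality ``$>c$'': merely integrating the pointwise bound $F>c$ against $\nu_{n+1}^{\epsilon'}$ gives only $\geqslant c\,\nu_{n+1}^{\epsilon'}(U)$, which stays below $c$, and the density-point information alone does not recover the loss. The device that removes this difficulty is to strengthen the base point from a mere density point of $B^{\vec\epsilon}_{\vec k}$ to a point of approximate continuity of the auxiliary function $\tilde F$; this makes $F$ approximately continuous at $e_G$, so the strict value $F(e_G)>c$ upgrades to a uniform bound $F>c_1$ on a set of density one, which is exactly the slack needed to beat $c$ after the averaging and after the monotone passage from $E_{\vec k}$ to $E_{\vec k,k_{n+1}}$. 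A minor point to watch is that $\epsilon_{n+1}$ must be taken rational, which is harmless since $\nu_{n+1}^{\epsilon'}(U)>c/c_1$ holds for all sufficiently small $\epsilon'$; and if one only cares about the case $E$ closed (so every $E_{\vec k}$ equals $E$), the monotone-convergence step disappears.
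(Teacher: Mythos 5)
Your argument is correct, and its skeleton coincides with the paper's: reduce via Proposition~\ref{th:ldlm} to showing that $y^{-1}B^{\epsilon_1,\dots,\epsilon_n}_{k_1,\dots,k_n}\cap H_{n+1}$ is $\vol_{H_{n+1}}$-null, disintegrate $\nu^{\epsilon_1,\dots,\epsilon_{n+1}}=\nu_{n+1}^{\epsilon_{n+1}}\ast\nu^{\epsilon_1,\dots,\epsilon_n}$ over $H_{n+1}$, exploit the increasing union $E_{k_1,\dots,k_n}=\bigcup_{j}E_{k_1,\dots,k_n,j}$ from Lemma~\ref{borel-decomp}, and let \eqref{eq:approximation} concentrate $\nu_{n+1}^{\epsilon_{n+1}}$ on a set of density one. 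The genuine difference is precisely the point you flag at the end. The paper works at a mere density point of the bad set: it derives $(yx\,\nu^{\epsilon_1,\dots,\epsilon_{n+1}})(E_n)\leqslant c$ on one hand and $(yx\,\nu^{\epsilon_1,\dots,\epsilon_{n+1}})(E_n)>c\cdot\nu_{n+1}^{\epsilon_{n+1}}(D)$ on the other, and declares these contradictory once $\nu_{n+1}^{\epsilon_{n+1}}(D)$ is close to $1$; but $c\cdot\nu_{n+1}^{\epsilon_{n+1}}(D)<c$ whenever $\nu_{n+1}^{\epsilon_{n+1}}(D)<1$, so as literally written the final step does not close. Your device of basing the argument at a point of approximate continuity of $\tilde F(w)=(gw\,\nu^{\vec\epsilon})(E_{\vec k})$ --- available at $\vol_{H_{n+1}}$-a.e.\ point since $H_{n+1}$ with its Haar measure and homogeneous balls is doubling, hence at some point of the assumed positive-measure set --- upgrades the pointwise bound to $F>c_1$ on a density-one set for some $c_1>c$, and then $c_1\,\nu_{n+1}^{\epsilon'}(U)>c$ is attainable. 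So your proof is not merely a cosmetic variant: it supplies the quantitative margin that the paper's version is missing. (An alternative repair, closer to Cs\"ornyei's original organization, is to run the induction with a strictly decreasing sequence of thresholds $c_n$ in place of the fixed $c$, contradicting $\leqslant c_{n+1}$ against $>c_n\nu_{n+1}^{\epsilon_{n+1}}(D)$; your route has the advantage of leaving the definition of the sets $A$ untouched.) The remaining ingredients --- rationality of $\epsilon_{n+1}$, the monotone convergence passage from $E_{\vec k}$ to $E_{\vec k,k_{n+1}}$, and the disintegration identity --- all check out.
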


  For brevity, let us write $B^{\epsilon_1,\ldots,\epsilon_n}_{k_1,\ldots,k_n}$ as $B_n$, and $E_{k_1, \dots, k_n}$ as $E_n$.  By Proposition \ref{th:ldlm}, to establish Claim \ref{key-claim} it suffices to show that $\vol_{H_{n+1}} (y^{-1} B_n \cap H_{n+1}) = 0$ for all $y \in G$, which we do by showing that $y^{-1} B_n \cap H_{n+1}$ contains no density points.

Suppose, to the contrary, that $x \in y^{-1} B_n \cap H_{n+1}$ is a density point.  Multiplying on the left by $x^{-1}$, we have that $e_G$ is a density point of the set $D = x^{-1} y^{-1} B_n \cap H_{n+1}$. (Since $x \in H_{n+1}$, we have $x^{-1} H_{n+1} = H_{n+1}$).  Since $yx \in B_n$, for all $\epsilon_{n+1}$ and $k_{n+1}$ we have $yx \notin A^{\epsilon_1, \dots, \epsilon_{n+1}}_{k_1, \dots, k_{n+1}}$, i.e.:
  \begin{equation*}
    (yx \nu^{\epsilon_1, \dots, \epsilon_{n+1}})(E_{k_1, \dots, k_{n+1}}) \leqslant c.
  \end{equation*}
  But $E_n = E_{k_1, \dots, k_n} = \bigcup_{k_{n+1}=0}^\infty E_{k_1, \dots, k_{n+1}}$ as an increasing union, so it follows that
  \begin{equation}\label{measure-small}
    (yx \nu^{\epsilon_1, \dots, \epsilon_{n+1}})(E_n) \leqslant c.
  \end{equation}
  On the other hand, since $\nu^{\epsilon_1, \dots, \epsilon_{n+1}} = \nu_{n+1}^{\epsilon_{n+1}} \ast \nu^{\epsilon_1, \dots, \epsilon_n}$, we can write
  \begin{align*}
    (yx \nu^{\epsilon_1, \dots, \epsilon_{n+1}})(E_n) = \int_{H_{n+1}} (yxz\nu^{\epsilon_1, \dots, \epsilon_n})(E_n) \,\nu^{\epsilon_{n+1}}_{n+1}(dz).
  \end{align*}
  Now if $z \in D$, then $yxz \in B_n \subset A^{\epsilon_1, \dots, \epsilon_n}_{k_1, \dots, k_n}$ and thus $(yxz\nu^{\epsilon_1, \dots, \epsilon_n})(E_n) > c$.  Hence
  \begin{equation}\label{measure-large}
    (yx \nu^{\epsilon_1, \dots, \epsilon_{n+1}})(E_n) > c \cdot \nu^{\epsilon_{n+1}}_{n+1}(D).
  \end{equation}
  But as $\nu^{\epsilon}_{n+1}$ satisfies \eqref{eq:approximation}, we have $\nu^{\epsilon_{n+1}}_{n+1}(D) \to 1$ as $\epsilon_{n+1} \to 0$.  So for small enough $\epsilon_{n+1}$, Eqs.~\eqref{measure-small} and \eqref{measure-large} contradict each other.  Claim \ref{key-claim} is proved.

  We can now proceed following the logic sketched above.  Since $B^{\epsilon_1,\ldots,\epsilon_n}_{k_1,\ldots,k_n} \in \mathcal{A}(X_1,\ldots,X_{n+1})$, we can write $B^{\epsilon_1,\ldots,\epsilon_n}_{k_1,\ldots,k_n} = \bigcup_{i = 1}^{n+1} B^{\epsilon_1,\ldots,\epsilon_n}_{k_1,\ldots,k_n}(i)$ with $B^{\epsilon_1,\ldots,\epsilon_n}_{k_1,\ldots,k_n}(i) \in \mathcal{A}(X_i)$.  Then \eqref{B-def} can be re-indexed as
  \begin{equation*}
        B = \bigcup_{i=1}^\infty \left[ \bigcup_{n=i-1}^\infty \bigcup_{k_1, \dots, k_n} \bigcup_{\epsilon_1, \dots, \epsilon_n} B^{\epsilon_1, \dots, \epsilon_n}_{k_1, \dots, k_n}(i) \right]
  \end{equation*}
  Since $\mathcal{A}(X_i)$ is closed under countable unions, the bracketed set is in $\mathcal{A}(X_i)$, so we have $B \in \mathcal{A}(\mathcal{X})$.    Thus there exists some ``good point'' $x \in E \setminus B$.  We construct, by induction, sequences $\epsilon_1, \epsilon_2, \dots$ and $k_1, k_2, \dots$ for which \eqref{good-condition} holds for all $n$.  As noted above, \eqref{good-condition} holds with $n=0$ simply because $x \in E$.  Suppose now that $\epsilon_1, \dots, \epsilon_n$ and $k_1, \dots, k_n$ have been constructed and that they satisfy \eqref{good-condition}.  This means that $x \in A^{\epsilon_1, \dots, \epsilon_n}_{k_1, \dots, k_n}$.  Since $x \in E \setminus B$, we have $x \notin B^{\epsilon_1, \dots, \epsilon_n}_{k_1, \dots, k_n}$, and therefore there must exist $\epsilon_{n+1}, k_{n+1}$ so that $x \in A^{\epsilon_1, \dots, \epsilon_{n+1}}_{k_1, \dots, k_{n+1}}$, which is to say that $\epsilon_1, \dots, \epsilon_{n+1}$ and $k_1, \dots, k_{n+1}$ satisfy \eqref{good-condition}.  By induction, we have shown that with the constructed sequences, \eqref{good-condition} holds for all $n$.

  To finish the proof, let $\nu$ be the weak limit of $\nu^{\epsilon_1, \dots, \epsilon_n}$, which exists because every $\epsilon_i$ was chosen less than $\delta_i$.  Now for any $n > m$ we have
  \begin{equation*}
    (x\nu)^{\epsilon_1, \dots, \epsilon_n}\left(\overline{E_{k_1, \dots, k_m}}\right) \geqslant (x\nu)^{\epsilon_1, \dots, \epsilon_n}\left(E_{k_1, \dots, k_m}\right) \geqslant (x\nu)^{\epsilon_1, \dots, \epsilon_n}\left(E_{k_1, \dots, k_n}\right) > c
  \end{equation*}
  and so letting $n \to \infty$ and using the portmanteau theorem, we have $(x\nu)\left(\overline{E_{k_1, \dots, k_m}}\right) \geqslant c$.  Now we have $\bigcap_{m=1}^\infty \overline{E_{k_1, \dots, k_m}} \subset E$ as a decreasing countable intersection, so using continuity from below we have
  \begin{equation*}
    (x\nu)(E) \geqslant (x\nu)\left(\bigcap_{m=1}^\infty \overline{E_{k_1, \dots, k_m}}\right) = \lim_{m \to \infty} (x\nu)\left(\overline{E_{k_1, \dots, k_m}}\right) \geqslant c
  \end{equation*}
  and the proof of Theorem \ref{th:construction} is complete.
\end{proof}

We now prove some lemmas that will be useful for proving the forward direction of Theorem \ref{th:aron-cac}.

\begin{lemma} \label{l:cack-ac}Elements in
  $\operatorname{CAC}_{k}(X_1,\ldots,X_{k})$ are absolutely continuous measures  on $H_{k}$.
\end{lemma}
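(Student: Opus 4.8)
The plan is to reduce the statement to the standard fact that convolving an absolutely continuous probability measure on a locally compact group with an \emph{arbitrary} probability measure produces an absolutely continuous measure, combined with the observation that in the product $\nu_k \ast \cdots \ast \nu_1$ only the leftmost factor $\nu_k$ needs to be absolutely continuous on $H_k$ itself; the remaining $\nu_i$ enter merely as an arbitrary probability measure (this is also the phenomenon flagged in Remark~\ref{r.CACversusCube}).

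First I would unwind the definition: an element of $\operatorname{CAC}_k(X_1,\ldots,X_k)$ has the form $(L_g)_\ast(\nu_k \ast \cdots \ast \nu_1)$ with $g \in H_k$ and $\nu_i \in \operatorname{AC}(H_i)$ for $i = 1,\ldots,k$. Since each $\nu_i$, regarded as a measure on $G$, is supported on $H_i \subseteq H_k$, and $H_k$ is a subgroup, the convolution formula $(\mu\ast\nu)(A) = \int_G \mu(Ax^{-1})\,\dd\nu(x)$ shows by induction (writing the product with the bracketing $\nu_k \ast (\nu_{k-1} \ast (\cdots \ast \nu_1))$) that $\mu := \nu_k \ast \cdots \ast \nu_1$ is a probability measure supported on $H_k$; hence so is the whole element, which already gives the "measure on $H_k$" part of the claim. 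Because $\vol_{H_k}$ is left-invariant and $g \in H_k$, the pushforward $(L_g)_\ast$ maps $\vol_{H_k}$-null sets to $\vol_{H_k}$-null sets, so it suffices to show $\mu \ll \vol_{H_k}$.

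For that last point I would write $\mu = \nu_k \ast \mu'$ with $\mu' := \nu_{k-1}\ast\cdots\ast\nu_1$ a probability measure supported on $H_{k-1} \subseteq H_k$ (with $\mu' = \delta_{e_G}$ when $k=1$), and, for a Borel set $N \subseteq H_k$ with $\vol_{H_k}(N)=0$, compute $\mu(N) = (\nu_k \ast \mu')(N) = \int_{H_k}\nu_k(Nx^{-1})\,\dd\mu'(x)$. Since $H_k$ is a Carnot group, hence a nilpotent Lie group, it is unimodular, so $\vol_{H_k}(Nx^{-1}) = \vol_{H_k}(N) = 0$ for every $x \in H_k$; as $\nu_k \ll \vol_{H_k}$, the integrand vanishes identically and $\mu(N)=0$. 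Thus $\mu$, and therefore $(L_g)_\ast\mu$, is absolutely continuous with respect to $\vol_{H_k}$. I do not expect a genuine obstacle here: the whole content of the proof is the bookkeeping that the convolution of the subgroup-supported measures stays supported on $H_k$ and that only the outermost factor $\nu_k$ must be absolutely continuous; everything else is a one-line null-set estimate.
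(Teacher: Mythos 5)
Your proof is correct and follows essentially the same route as the paper: both reduce to the observation that only the outermost factor $\nu_k$ needs to be absolutely continuous, use unimodularity of the nilpotent group $H_k$ to handle the right translations arising from the convolution integral, and use (left-)invariance of $\vol_{H_k}$ to absorb the left translation by $g$. The only difference is cosmetic bookkeeping — you split off the $(L_g)_\ast$ step before the convolution computation, while the paper folds $g^{-1}$ directly into the integrand.
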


\begin{proof}
  Take $g \in H_{k}$ and $\nu_i \in \operatorname{AC}(H_i), \text{ for each } i =1,\ldots,k$.
  We need to prove that $(L_g)_\ast(\nu_{k} \ast \cdots \ast \nu_1) \ll\vol_{H_{k}}$.
  Take a set $E\subset H_{k} $ such that  $\vol_{H_{k}}(E)=0$.
  Then first, since $H_{k}$ is nilpotent (and hence unimodular) we have that  $\vol_{H_{k}}$ is right and left  translation invariant, thus
  $\vol_{ H_{k}}(R_b(L_a(E)))=0$ for all $a,b\in H_{k}$.
  Second,
  recall  that $\nu_{k}\ll\vol_{H_{k}}$, and therefore
  \begin{equation}\label{ciao}\nu_{k}(R_b(L_a(E)))=0, \qquad \text{ for every } a,b\in H_{k}.
  \end{equation} Thus
  recalling that   the measure  $\nu:=\nu_{k-1}\ast \cdots \ast \nu_1  $ is   supported on $H_{k}$,
  we have
  \begin{align*}(L_g)_\ast(\nu_{k} \ast \cdots \ast \nu_1)(E) &=
    (\nu_{k} \ast   \nu )(g^{-1}E)\\
    & =   \int_G  \uno_{g^{-1}E}  \;\dd(\nu_{k} \ast\nu)
    \\
    &\overset{\eqref{def_convo}}{=}
    \int_G   \int_G \uno_{g^{-1}E}  (xy) \;\dd\nu_{k}(x) \dd\nu (y) \\
    \\
    &=
    \int_G   \int_G \uno_{g^{-1}Ey^{-1}}  (x) \;\dd\nu_{k}(x) \dd\nu (y)
    \\
    &=
    \int_G     \nu_{k}({g^{-1}Ey^{-1}}  ) \;  \dd\nu (y)
    \\&=
    \int_{H_{k}}     \nu_{k}(R_{y^{-1}}(L_{g^{-1}}(E)  )) \;  \dd\nu (y)
    \\&\overset{\eqref{ciao}}{=}
    \int_{H_{k}}     0 \;  \dd\nu (y)=0
    .
  \end{align*}
\end{proof}

\begin{lemma} \label{l:cac-decomp}
  For every $k \geqslant 1$ and $\mu \in \operatorname{CAC}^{\prime}(\mathcal{X})$, there exist measures $\nu \in \operatorname{CAC}_{k}^{\prime}(X_1,\ldots,X_{k})$ and $\eta$ on $G$ for which
  \begin{align*}
    \mu = \eta \ast \nu.
  \end{align*}
\end{lemma}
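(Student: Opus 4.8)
The plan is to let $\nu$ be the finite convolution of the first $k$ absolutely continuous measures that witness $\mu \in \operatorname{CAC}^{\prime}(\mathcal{X})$, and to let $\eta$ be the weak limit of the corresponding ``tail'' convolutions; associativity of convolution together with its continuity in the weak topology will then force $\mu = \eta \ast \nu$.

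In detail, since $\mu \in \operatorname{CAC}^{\prime}(\mathcal{X})$, I would first fix a sequence $(\nu_i)_{i \in \N}$ with $\nu_i \in \operatorname{AC}(H_i)$ such that $(\nu_j \ast \cdots \ast \nu_m)_j$ converges weakly as $j \to \infty$ for every $m \in \N$, and $(\nu_j \ast \cdots \ast \nu_1)_j$ converges weakly to $\mu$. Set $\nu := \nu_k \ast \cdots \ast \nu_1$; directly from the definition of $\operatorname{CAC}_{k}^{\prime}(X_1,\ldots,X_k)$ we have $\nu \in \operatorname{CAC}_{k}^{\prime}(X_1,\ldots,X_k)$. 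Next, I would invoke the defining property of $\operatorname{CAC}^{\prime}(\mathcal{X})$ with the index $m = k+1$: this says precisely that the sequence $(\nu_j \ast \cdots \ast \nu_{k+1})_{j > k}$ converges weakly, and I would call its limit $\eta$, which is a probability measure on $G$ as a weak limit of probability measures.

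Finally, since convolution of probability measures on the separable metric group $G$ is associative, for every $j > k$ one has $\nu_j \ast \cdots \ast \nu_1 = (\nu_j \ast \cdots \ast \nu_{k+1}) \ast \nu$. Letting $j \to \infty$, the left-hand side converges weakly to $\mu$, while by Theorem~\ref{th:conv-cont} (continuity of convolution with respect to the weak topology) the right-hand side converges weakly to $\eta \ast \nu$; since the weak topology on $\operatorname{Prob}(G)$ is Hausdorff, weak limits are unique and hence $\mu = \eta \ast \nu$, completing the proof. I do not expect a genuine obstacle here: the only two points that need a careful word are that the tail limit $\eta$ exists (this is exactly the $m = k+1$ instance of the definition of $\operatorname{CAC}^{\prime}(\mathcal{X})$) and that one may pass a weak limit through a convolution, which is exactly the content of Theorem~\ref{th:conv-cont}.
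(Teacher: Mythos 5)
Your proof is correct and follows essentially the same route as the paper: take $\nu$ to be the head convolution $\nu_k \ast \cdots \ast \nu_1$, take $\eta$ to be the weak limit of the tail convolutions (whose existence is exactly the defining property of $\operatorname{CAC}^{\prime}(\mathcal{X})$), and conclude via Theorem~\ref{th:conv-cont}. If anything, your version is slightly more careful: starting the tail at index $k+1$ is the right choice, whereas the paper's displayed $\eta := \lim_j(\nu_j \ast \cdots \ast \nu_k)$ appears to contain an off-by-one slip that would double-count $\nu_k$.
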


\begin{proof}
  Let $\mu$ be of the form $\lim_j (\nu_j \ast \cdots \ast \nu_1)$ with $\nu_j\in \operatorname{ AC}(H_j)$  for $j \in \N$ and for all $m\in \N$ the weak limit $\lim_j
  (\nu_j \ast \cdots \ast \nu_m)  $ exists. Fixed $k\in \N$, take
  $\eta:= \lim_j
  (\nu_j \ast \cdots \ast \nu_{k})$ and
  $\nu := \nu_k \ast \cdots \ast \nu_1$.  Because of Theorem~\ref{th:conv-cont}, which we can apply since $\nu$ and each $(\nu_j \ast \cdots \ast \nu_{k})$ are   probability measures,
  we get the lemma.
\end{proof}

We can now finally prove Theorem \ref{th:aron-cac}.

\begin{proof}[Proof of Theorem~\ref{th:aron-cac}]
  The backward direction follows from Theorem \ref{th:construction}.  We now prove the forward direction.

 Given an Aronszajn null set $E$ and a Carnot-spanning set $\mathcal{X} = (X_1,X_2,\ldots)$,
 we shall first prove  that
\begin{align}
\mu(E) = 0, \qquad \text{ for all } \mu \in \operatorname{CAC}^{\prime}(\mathcal{X}). \label{eq:CAC' null}
\end{align}

Note that $E \in \cA(\mathcal{X})$ by assumption.  Thus, $E = \bigcup_{k=1}^\infty E_{k}$ where $E_{k} \in \cA(X_{k})$.  It suffices to show that $\mu(E_{k}) = 0$ for each $k$.

  Fixed $k$, let $\nu \in \operatorname{CAC}_{k}^{\prime}(X_1,\ldots,X_{k})$ and $\eta$ be the measures from applying Lemma~\ref{l:cac-decomp} to $\mu$.  Then
  \begin{align*}
    \mu(E) = \int_G \int_{H_{k}} {\bf 1}_E(xy) ~d\nu(y) ~d\eta(x) = \int_G \nu(x^{-1}E) ~d\eta(x).
  \end{align*}
  As $x^{-1}E \in \cA(X_{k})$ for every $x \in G$, we get from Proposition~\ref{th:ldlm} and Lemma~\ref{l:cack-ac} that $\nu(x^{-1}E) = 0$, which gives that $\mu(E) = 0$.

  Now let $\nu \in \operatorname{CAC}(\mathcal{X})$ for some Carnot-spanning set $\mathcal{X}$ and let $E$ be Aronszajn null.  Then there is some $g \in G$ and $\mu \in \operatorname{ CAC}^{\prime}(\mathcal{X})$ so that
  \begin{align*}
    \nu(E) = \mu(gE) = 0,
  \end{align*}
  where in the final equality, we used \eqref{eq:CAC' null} and the fact that $gE$ is the left-translate of an Aronszajn null set and so is Aronszajn null.
\end{proof}

\section{Gauss--Haar measures}\label{s.HM}

The original inspiration for the present article was the result of Cs\"ornyei \cite{Csornyei1999a} which showed that in a Banach space, a Borel set $B$ is Aronszajn null if and only if it is Gauss null; that is, if $\gamma(B) = 0$ for every non-degenerate Gaussian measure $\gamma$ on $B$.  Our main
Theorem~\ref{th:aron-cac} can be seen as an analogue of Cs\"{o}rnyei's theorem in the setting of infinite-dimensional Carnot groups, in which the role of
Gaussian measures is played by CAC measures.

In this and the next section, we specialize to the case of an infinite-dimensional Carnot group $G$ with a locally compact commutator subgroup.  By Theorem \ref{th:IDCG-Banach}, $G$ is in fact a stratified Banach-Lie group of some finite step $s$, so that as a set $G$ is equal to a Banach Lie algebra $(\mathfrak{g}, [\cdot, \cdot])$, equipped with the group operation defined by the BCDH formula \eqref{eq:BCH}.  The exponential map in this case is the identity map, and we shall not write it when passing between the group and Lie algebra interpretations of this space.

Let $\mathfrak{g} = V_1 \oplus \dots \oplus V_s$ be the stratification of $\mathfrak{g}$.  To match the notation of Example \ref{ex.DGgroups}, let $W = V_1$ and $\mathbf{C} = V_2 \oplus \dots \oplus V_s$, so that $\mathbf{C}$ is a finite-dimensional nilpotent Lie algebra.  With respect to the scalable group structure on $G$, we have $W = V_1(G)$ and $\mathbf{C} = [G,G]$.  Let $m$ denote Lebesgue measure on $\mathbf{C}$ (in any fixed normalization), which is also the Haar measure on the subgroup  $\mathbf{C} = [G,G]\subset G$.

\begin{lemma}\label{exp-invariant}
  Fix $x \in W$.  The map $\psi_x : \mathbf{C} \to \mathbf{C}$ defined by $\psi_x(y) = xy - x$ is a bijection that leaves the Lebesgue measure $m$ invariant.
\end{lemma}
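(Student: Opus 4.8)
The plan is to read off both assertions from the structure of the BCDH formula~\eqref{eq:BCH} and the grading $\mathfrak{g} = V_1 \oplus \cdots \oplus V_s$, in which $W = V_1$ and $\mathbf{C} = V_2 \oplus \cdots \oplus V_s = [G,G]$ is finite-dimensional (by Theorem~\ref{th:IDCG-Banach}, or Proposition~\ref{p:commutator-HN}). First I would check that $\psi_x$ really maps $\mathbf{C}$ into $\mathbf{C}$: by Remark~\ref{rem:BCDH}, every term of $xy$ other than $x+y$ is a scalar multiple of a nested Lie bracket involving both $x$ and $y$, hence lies in $[\mathfrak{g},\mathfrak{g}] \subseteq \mathbf{C}$; since $y \in \mathbf{C}$ has vanishing $V_1$-component, the $V_1$-component of $xy$ equals $x$, so $\psi_x(y) = xy - x \in \mathbf{C}$.

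Next I would prove bijectivity. Injectivity is immediate, since $\psi_x$ is the composition of the left translation $L_x$ (a bijection of the underlying set $\mathfrak{g}$) with the vector-space translation $v \mapsto v - x$. For surjectivity, observe that $x^{-1} = -x$ because every bracket of $x$ with $-x$ vanishes, so the BCDH product $x(-x)$ equals $0 = e_G$. Given $z \in \mathbf{C}$, put $y := (-x)(x+z)$; the same computation as above shows that its $V_1$-component is $(-x) + x = 0$, so $y \in \mathbf{C}$, and by associativity $\psi_x(y) = x\big((-x)(x+z)\big) - x = (x+z) - x = z$.

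For the measure statement I would exploit the grading of $\mathbf{C}$. Write $y = y_2 + \cdots + y_s$ with $y_j \in V_j$. A nested bracket occurring in $xy$ that uses $x$ with total multiplicity $a \geqslant 1$ and the layer components $y_{i_1},\dots,y_{i_b}$ (with $b \geqslant 1$ and each $i_\ell \geqslant 2$) lands in $V_{a+i_1+\cdots+i_b}$; for it to contribute to the $V_j$-component one needs $i_1 + \cdots + i_b = j - a \leqslant j-1$, so each $i_\ell \leqslant j-1$. Hence, for fixed $x$,
\[
  (\psi_x(y))_j = y_j + Q_j^x(y_2,\dots,y_{j-1}), \qquad j = 2,\dots,s,
\]
for suitable polynomial maps $Q_j^x$ (with $Q_2^x \equiv 0$). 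Under any linear identification of $\mathbf{C}$ with $\prod_{j=2}^s V_j$, the differential $D\psi_x(y)$ is then block lower-triangular with identity diagonal blocks, so $\det D\psi_x(y) = 1$ for every $y$; since $\mathbf{C}$ is finite-dimensional, the change-of-variables formula gives $(\psi_x)_\ast m = m$.

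I expect the only delicate point to be the bookkeeping in the last step: arguing carefully, from the homogeneity of the Lie bracket with respect to the grading and the explicit shape of~\eqref{eq:BCH}, that the $V_j$-component of $xy - x$ depends only on $y_2,\dots,y_{j-1}$ and contains $y_j$ with coefficient the identity. Everything else reduces to the group axioms and the definition of Lebesgue measure on the finite-dimensional space $\mathbf{C}$.
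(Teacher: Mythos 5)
Your proposal is correct and follows essentially the same route as the paper: bijectivity via the explicit inverse $z \mapsto x^{-1}(z+x)$, and measure invariance by observing that the grading forces the $V_j$-component of $\psi_x(y)$ to equal $y_j$ plus a polynomial in $y_2,\dots,y_{j-1}$, so the Jacobian is (block) lower triangular with unit diagonal. You merely fill in a few details the paper leaves implicit, such as checking that $\psi_x$ indeed maps $\mathbf{C}$ into $\mathbf{C}$ and the degree bookkeeping for the bracket terms.
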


\begin{proof}
  It is clear that $\psi_x$ is a bijection, with inverse $\psi_x^{-1}(z) = x^{-1}(z+x)$.  To see it is measure preserving, for each $y \in \mathbf{C}$ let us write $y = y_2 + \dots + y_s$ where $y_k \in V_k$.
  If we write out $\psi_x(y)$ using the BCDH formula \eqref{eq:BCH}, then noting that $\ad x, \ad y$ map $V_k$ into $V_{k+1} \oplus \dots \oplus V_s$,
   we see that for each $k$ the $V_k$ component of $\psi_x(y)$ is given by $y_k$ plus a term depending only on $y_2, \dots, y_{k-1}$.  So if we fix a basis for $C$ adapted to the stratification, the Jacobian matrix of $\psi_x$ is lower triangular with 1s on the diagonal, and so its determinant is 1.
\end{proof}

\begin{corollary}\label{exp-integral}
  Let $\gamma$ be an arbitrary probability measure on $W$.  Then for any Borel function $f$, we have
  \begin{align*}
    \int_W \int_{\mathbf{C}} f(xy)\, m(dy)\,\gamma(dx) &=     \int_W \int_{\mathbf{C}} f(x + \psi_x(y) )\, m(dy)\,\gamma(dx)  \\
    &= \int_W \int_{\mathbf{C}} f(x+y)\, m(dy)\,\gamma(dx)
  \end{align*}
\end{corollary}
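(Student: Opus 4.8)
The plan is to deduce the corollary from Lemma~\ref{exp-invariant} by a change of variables in the inner integral, followed by an application of the Fubini--Tonelli theorem to the outer integration over $\gamma$; the argument is short, and the only point needing attention is joint measurability.

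First I would dispose of the first equality, which is purely formal. By Lemma~\ref{exp-invariant} the map $\psi_x$ sends $\mathbf{C}$ into $\mathbf{C}$, so for $x \in W$ and $y \in \mathbf{C}$ we have, as elements of $\mathfrak{g}$,
\[
  x + \psi_x(y) = x + (xy - x) = xy,
\]
where the additions are those of the vector space $\mathfrak{g}$ and $xy$ is the BCDH group product. Hence the integrands $f(xy)$ and $f\bigl(x + \psi_x(y)\bigr)$ agree pointwise on $W \times \mathbf{C}$, and the first identity holds for every $f$.

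For the second equality I would fix $x \in W$ and work with the inner integral. Since $\psi_x : \mathbf{C} \to \mathbf{C}$ is a bijection with $(\psi_x)_\ast m = m$ by Lemma~\ref{exp-invariant}, the change-of-variables formula applied to the Borel function $y \mapsto f(x + y)$ gives
\[
  \int_{\mathbf{C}} f\bigl(x + \psi_x(y)\bigr)\, m(dy) = \int_{\mathbf{C}} f(x + z)\, m(dz)
\]
for every $x \in W$; this is first checked for $f \geqslant 0$, where both sides take values in $[0,\infty]$, and then extended to general Borel $f$ for which the integrals converge by splitting into positive and negative parts. It remains to integrate this identity against $\gamma(dx)$. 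To license this I would invoke Tonelli's theorem: the group multiplication $(x,y) \mapsto xy$ on $G$ is continuous (this is the continuity of the BCDH product with bounded Lie bracket recorded in Section~\ref{sec:Banach}), hence Borel, so $(x,y) \mapsto f(xy)$, and likewise $(x,y) \mapsto f\bigl(x + \psi_x(y)\bigr)$, are Borel on the separable metric space $W \times \mathbf{C}$ and therefore measurable for the product $\sigma$-algebra $\mathcal{B}(W) \otimes \mathcal{B}(\mathbf{C})$. Since $\gamma$ is a probability measure and $m$ is $\sigma$-finite, Tonelli (applied to $|f|$) and then Fubini justify the iterated integration, and the corollary follows. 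The only obstacle here, a very mild one, is exactly this measurability bookkeeping; there is no analytic difficulty.
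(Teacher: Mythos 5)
Your proof is correct and follows exactly the route the paper intends: the corollary is stated without proof precisely because it is the immediate consequence of Lemma~\ref{exp-invariant} via the identity $x+\psi_x(y)=xy$ and the change of variables $(\psi_x)_\ast m = m$ in the inner integral, with Tonelli/Fubini handling the outer integration. Your extra care about joint measurability and the splitting into positive and negative parts is sound but adds nothing beyond what the paper leaves implicit.
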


We will be interested in the case where $\gamma$ is a Gaussian measure, so we now recall some facts about Gaussian measures on Banach spaces; we refer the reader to \cite{BogachevGaussianMeasures} for full details.  Here we consider only \emph{centered} Gaussian measures (i.e. having mean zero).

A Borel probability measure $\gamma$ on a separable real Banach space $W$ is said to be a \emph{Gaussian measure} if for every continuous linear functional $f \in W^{\ast}$, the pushforward $f_{\ast} \gamma$ is a mean-zero Gaussian measure on $\mathbb{R}$.  Restating this definition in terms of characteristic functions, $\gamma$ is Gaussian if and only if for every $f \in W^{\ast}$ we have $\int_W e^{i f(x)}\,\gamma(dx) = e^{-q(f,f)/2}$, where $q$ is a continuous, positive semi-definite, quadratic form on $W^{\ast}$, called the \emph{covariance form} of $\gamma$.

We say a Gaussian measure $\gamma$ is \emph{non-degenerate} if the following equivalent conditions hold.
\begin{itemize}
\item For all nonzero $f \in W^{\ast}$, the pushforward $f_{\ast} \gamma$ is a non-degenerate Gaussian measure on $\mathbb{R}$ (i.e. it is not the Dirac measure $\delta_0$, which we can view as a Gaussian measure of variance zero);
\item The covariance form $q$ is positive-definite;
\item The topological support of $\gamma$ is all of $W$.
\end{itemize}
In this case, the covariance form $q$ can be viewed as an inner product on $W^{\ast}$ (typically incomplete).  Now it can  be shown that there is an injective linear map $J: W^{\ast} \to W$ such that $q(f,g) = f(Jg)$ for all $f,g \in W^{\ast}$.  If we push forward the inner product $q$ onto $\operatorname{ran} J \subset W$, and then complete $\operatorname{ran} J$ under this inner product, the resulting Hilbert space $(H, \langle \cdot, \cdot \rangle_H)$ is called the \emph{Cameron--Martin space} of $\gamma$.  It turns out that the inclusion map $\operatorname{ran} J \hookrightarrow W$ extends injectively to $H$, so that $H$ can be identified with a linear subspace $H \subset W$ which, under these assumptions, is dense in the $W$-norm.  The $H$-inner product is stronger than the $W$-norm, so in particular, any dense subset $E \subset H$ (with respect to the $H$-inner product) is dense in $W$ with respect to the $W$-norm.  The triple $(W,H,\gamma)$ is also known as an \emph{abstract Wiener space}, e.g. \cite{Gross1967b}.

We recall the celebrated Cameron--Martin quasi-invariance theorem.
\begin{theorem}\label{cameron-martin}
A Gaussian measure $\gamma$ is \emph{quasi-invariant} under translation by elements of its Cameron--Martin space $H$.  That is, if for $y \in W$ we consider the translation map $L_y$ on $W$ defined by $L_y(x) = x+y$, then the translated measure $\gamma_y = (L_y)_{\ast} \gamma$ is mutually absolutely continuous with $\gamma$ when $y \in H$, and when $y \notin H$ we have $\gamma_y \perp \gamma$.
\end{theorem}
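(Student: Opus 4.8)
The statement is the classical Cameron--Martin theorem, so I will only sketch the standard argument (see \cite{BogachevGaussianMeasures}), splitting it into the absolutely continuous regime $y \in H$ and the singular regime $y \notin H$.

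\textbf{Absolute continuity for $y \in H$.} The plan is to first handle the dense class $y = Jf$ with $f \in W^{\ast}$, where $J : W^{\ast} \to W$ is the operator from the definition of the Cameron--Martin space. Here I would write down the candidate density $\rho_y(x) = \exp\bigl(f(x) - \tfrac12 q(f,f)\bigr)$ and verify that the measure $\rho_y\,\gamma$ has the correct characteristic functional: for every $g \in W^{\ast}$, a two-dimensional Gaussian integral together with the identities $q(f,g) = g(Jf) = g(y)$ gives
\[
\int_W e^{i g(x)} \rho_y(x)\, \gamma(dx) = e^{-\frac12 q(g,g) + i\, q(f,g)} = e^{i g(y)} e^{-\frac12 q(g,g)} = \int_W e^{ig(x)}\, \gamma_y(dx),
\]
and since a Borel probability measure on a separable Banach space is determined by its characteristic functional, $\gamma_y = \rho_y\,\gamma$, which is mutually absolutely continuous with $\gamma$ because $\rho_y > 0$. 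For a general $y \in H$ I would pick $f_n \in W^{\ast}$ with $Jf_n \to y$ in the $H$-norm; the functionals $f_n$, viewed in $L^2(\gamma)$, have norms $q(f_n, f_n) = \|Jf_n\|_H^2$ and form a Cauchy sequence, hence converge to a \emph{measurable linear functional} $\hat y \in L^2(\gamma)$ (the Paley--Wiener integral of $y$). The densities $\exp(f_n - \tfrac12\|Jf_n\|_H^2)$ then converge in $L^1(\gamma)$ to $\rho_y := \exp(\hat y - \tfrac12\|y\|_H^2)$, and passing to the limit in the characteristic-functional identity shows $\gamma_y = \rho_y\,\gamma \sim \gamma$.

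\textbf{Singularity for $y \notin H$.} I would fix an orthonormal basis $\{h_n\}$ of $H$ lying in $\operatorname{ran} J$, say $h_n = Jf_n$ with $f_n \in W^{\ast}$; then under $\gamma$ the $f_n$ are i.i.d.\ standard Gaussians, so the map $\Phi : x \mapsto (f_n(x))_n$ pushes $\gamma$ forward to the product measure $\mu_0 = \bigotimes_n N(0,1)$ on $\R^{\N}$ and pushes $\gamma_y$ forward to $\mu_a = \bigotimes_n N(a_n,1)$ with $a_n = f_n(y)$. Kakutani's dichotomy theorem for infinite products says $\mu_0$ and $\mu_a$ are either mutually absolutely continuous or mutually singular, the former occurring precisely when $\sum_n a_n^2 < \infty$. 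A standard identification of abstract Wiener space (\cite{BogachevGaussianMeasures}) gives $y \in H \iff \sum_n f_n(y)^2 < \infty$; hence for $y \notin H$ we get $\mu_0 \perp \mu_a$, and pulling a separating set back through $\Phi$ yields $\gamma \perp \gamma_y$.

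\textbf{Main obstacle.} The only genuinely delicate point is the extension from $y = Jf$ to arbitrary $y \in H$: one must construct the measurable linear functional $\hat y$, which is not $W$-continuous in general, and justify $L^1(\gamma)$-convergence of the exponential densities, using that $L^2(\gamma)$-convergence of centered Gaussian variables implies convergence of their exponentials in every $L^p(\gamma)$. The singular half is comparatively soft once the reduction to product measures is set up and Kakutani's theorem is invoked.
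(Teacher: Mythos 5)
The paper does not prove this statement: Theorem~\ref{cameron-martin} is explicitly ``recalled'' as the celebrated Cameron--Martin theorem, with the reader referred to \cite{BogachevGaussianMeasures} for the background on Gaussian measures, so there is no in-paper argument to compare against. Your sketch is a correct outline of the standard textbook proof of exactly this classical result: the verification via characteristic functionals that $\rho_y = \exp(f - \tfrac12 q(f,f))$ is the density for $y = Jf$, the extension to general $y \in H$ through the Paley--Wiener integral $\hat y$ and $L^1(\gamma)$-convergence of the exponential densities, and the reduction of the singular case to Kakutani's dichotomy for product Gaussians. The two places where the real work hides are the ones you flag (construction of $\hat y$ and the $L^1$ limit) plus the characterization $y \in H \iff \sum_n f_n(y)^2 < \infty$, which underlies the singularity half and is itself a nontrivial fact about abstract Wiener spaces; citing \cite{BogachevGaussianMeasures} for these is appropriate for a proof at this level of detail.
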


We also recall for later use the following properties of Gaussian measures.
\begin{itemize}
\item A dilation of a Gaussian measure is a Gaussian measure;
\item The convolution of two Gaussian measures is a Gaussian measure;
\item The weak limit of a sequence of Gaussian measures, if it exists, is a Gaussian measure.
\end{itemize}

So following the product or convolution constructions of the previous subsection, we obtain a natural class of measures and null sets for study.

\begin{definition}\label{def:gauss-haar}
By a \emph{Gauss--Haar measure} on $G = W \times \mathbf{C}$ we mean a measure $\nu$ of the form $\nu = \gamma \times m$ where  $\gamma$ is a non-degenerate Gaussian measure on $W$ and $m$ is Lebesgue measure on $C$.  We will say a Borel set $E \subset G$ is \emph{Gauss--Haar null} if for every Gauss--Haar measure $\nu$
and every $g \in G$, we have $(L_g)_\ast \nu(E) = \nu(g^{-1} E) = 0$.
\end{definition}

The goal of this section is to prove the following theorem.

\begin{theorem}\label{A-null-iff-GH-null}
A Borel set $E \subset G$ is Aronszajn null if and only if it is Gauss--Haar null.
\end{theorem}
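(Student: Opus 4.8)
\section*{Proof proposal for Theorem~\ref{A-null-iff-GH-null}}

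The plan is to transport both implications to analysis on the Banach space $W = V_1(G) = G/[G,G]$ by means of the fibration $\pi : G \to W$, whose fibers are the cosets of $\mathbf{C} = [G,G]$ and on which $m$ is Haar measure. The engine is a mild strengthening of Lemma~\ref{exp-invariant}: \emph{for every $g \in G$, left and right translation by $g$ carry fibers of $\pi$ to fibers of $\pi$, and after identifying each fiber $w + \mathbf{C}$ with $\mathbf{C}$ via $y \mapsto w+y$ the induced map $\mathbf{C} \to \mathbf{C}$ is a bijection preserving $m$.} This follows from Lemma~\ref{exp-invariant} together with two elementary facts about the nilpotent Lie group $\mathbf{C}$: it is unimodular, so its own left translations preserve $m$; and $\Ad_h|_{\mathbf{C}}$ preserves $m$ for every $h \in G$, because $\operatorname{tr}(\ad_X|_{[\g,\g]}) = 0$ when $\g$ is nilpotent. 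Writing $F_w := \{y \in \mathbf{C} : w + y \in F\}$ and $F^{\sharp} := \{w \in W : m(F_w) > 0\}$ for a Borel set $F \subseteq G = W \times \mathbf{C}$ (with $F^{\sharp}$ Borel by Fubini, using $\sigma$-finiteness of $m$), the product form of $\nu = \gamma \times m$ and the $m$-invariance above give, for every $g \in G$,
\begin{align*}
  \nu(g^{-1}F) = \int_W m\big(F_{\pi(g)+w}\big)\,\gamma(dw), \qquad\text{hence}\qquad \nu(g^{-1}F) = 0 \iff \gamma\big(F^{\sharp} - \pi(g)\big) = 0.
\end{align*}
In particular $F$ is Gauss--Haar null if and only if $F^{\sharp}$ is Gauss null, hence (by Cs\"ornyei's theorem in $W$) Aronszajn null, in $W$.

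For the forward direction, fix a Gauss--Haar measure $\nu = \gamma \times m$; since Aronszajn nullity is preserved by left translation it suffices to prove $\nu(E) = 0$. Choose a Carnot-spanning set $\mathcal{X} = (X_1, X_2, \dots)$ of $G$ whose members lie in and are dense in the Cameron--Martin space $H$ of $\gamma$; this is legitimate because $H$ is dense in $W = V_1(G)$ and, by Proposition~\ref{p:generate-Carnot} (together with nilpotency of $G$, which holds by Proposition~\ref{p:commutator-HN}), any finite subset of $V_1(G)$ generates a finite-dimensional Carnot subgroup. Decompose $E = \bigcup_i E_i$ with $E_i \in \mathcal{A}(X_i)$; it is enough that $\nu(E_i) = \int_W m\big((E_i)_w\big)\,\gamma(dw) = 0$, i.e.\ that $\gamma\big((E_i)^{\sharp}\big) = 0$. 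I would first show that $(E_i)^{\sharp}$ lies in $\mathcal{A}_W(X_i)$, the class of Borel $B \subseteq W$ with $\mathcal{L}^1(\{t : w + tX_i \in B\}) = 0$ for all $w$: if instead some line $w_0 + \R X_i$ met $(E_i)^{\sharp}$ in a set $T$ of positive length, then by Tonelli and the fact that for each fixed $t$ the map $c \mapsto \big(\mathbf{C}\text{-component of }(w_0 \cdot c)\cdot(tX_i)\big)$ is $m$-preserving, we would get $\int_T m\big((E_i)_{w_0 + tX_i}\big)\,dt > 0$, so for some $c_* \in \mathbf{C}$ the curve $t \mapsto (w_0 c_*)\,\delta_t(X_i)$ would meet $E_i$ in positive length, contradicting $E_i \in \mathcal{A}(X_i)$. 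Since $X_i \in H$, the Cameron--Martin decomposition of $\gamma$ along $\R X_i$ has conditional measures absolutely continuous with respect to Lebesgue measure, so $(E_i)^{\sharp} \in \mathcal{A}_W(X_i)$ forces $\gamma\big((E_i)^{\sharp}\big) = 0$, as required.

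For the backward direction, fix an arbitrary Carnot-spanning set $\mathcal{X} = (X_i)$ of $G$ with filtration $H_k = \langle X_1, \dots, X_k \rangle$; I want $E \in \mathcal{A}(\mathcal{X})$. By the displayed equivalence with $g = e$, Gauss--Haar nullity of $E$ makes $E^{\sharp}$ Gauss null, hence Aronszajn null, in $W$; since $\{X_i\}$ has dense span in $W$ this gives a Borel decomposition $E^{\sharp} = \bigcup_i A_i$ with $A_i \in \mathcal{A}_W(X_i)$. Split $E = E' \sqcup E''$ with $E' = E \cap \pi^{-1}(E^{\sharp})$ and $E'' = E \setminus E'$. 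For $E'$, put $E'_i = E' \cap \pi^{-1}(A_i)$; then $\{t : g\,\delta_t(X_i) \in E'_i\} \subseteq \{t : \pi(g) + tX_i \in A_i\}$ has zero length, so $E'_i \in \mathcal{A}(X_i)$ and $E' = \bigcup_i E'_i \in \mathcal{A}(\mathcal{X})$. For $E''$, every fiber $(E'')_w$ is $m$-null by construction, and by the $m$-invariance of the fiber maps the same holds for every left translate $g^{-1}E''$; choosing $N$ with $[H_N, H_N] = \mathbf{C}$ (Proposition~\ref{p:commutator-HN}), so that $\vol_{H_N}$ disintegrates over $V_1(H_N) = W_N$ with fiber measure $m$, we obtain $\vol_{H_N}(g^{-1}E'' \cap H_N) = \int_{W_N} m\big((g^{-1}E'')_w\big)\,dw = 0$ for every $g$, whence $E'' \in \mathcal{A}(X_1, \dots, X_N) \subseteq \mathcal{A}(\mathcal{X})$ by Proposition~\ref{th:ldlm}. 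As $\mathcal{A}(\mathcal{X})$ is a $\sigma$-ideal, $E = E' \cup E'' \in \mathcal{A}(\mathcal{X})$, and since $\mathcal{X}$ was arbitrary, $E$ is Aronszajn null.

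The main obstacle is the $m$-invariance of the twisted fiber maps $y \mapsto \big(\mathbf{C}\text{-component of }g\cdot(w+y)\big)$ and the consequences drawn from it; once this is in place, the forward direction is essentially Cameron--Martin quasi-invariance and the backward direction is Cs\"ornyei's Banach-space theorem combined with the split $E = E' \sqcup E''$. The remaining points are routine but should be checked: Borel measurability of $E^{\sharp}$ and of $w \mapsto m(F_w)$; that in the forward direction the Carnot-spanning set can be taken inside the Cameron--Martin space; and the identification $\vol_{H_N} = \vol_{W_N} \times m$ on $H_N \cong W_N \times \mathbf{C}$ (up to normalization, irrelevant for nullity).
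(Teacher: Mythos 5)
Your proposal is correct, and while your forward direction is essentially the paper's argument in different clothing, your backward direction takes a genuinely different route. For the forward implication, the paper combines quasi-invariance of $\gamma\times m$ under translation by the Cameron--Martin space (Lemma~\ref{gauss-haar-qi}) with the general Fubini argument of Lemma~\ref{A-null-qi}; your disintegration of $\gamma$ along lines $w+\R X_i$ with $X_i\in H$ is the same Cameron--Martin mechanism, just phrased via conditional measures rather than quasi-invariance. The real divergence is in the backward implication. The paper proves the contrapositive: if $E$ is not Aronszajn null, Theorem~\ref{th:construction} produces a CAC measure $\nu$ charging a translate of $E$, and Lemma~\ref{ac-inf} shows $\nu\ll\gamma\times m$ for the Gaussian $\gamma=\pi_\ast\nu$; this is self-contained and, as the paper notes, reproves Cs\"ornyei's theorem in the abelian case. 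You instead observe that the twisted fiber maps $y\mapsto g(w+y)-(\pi(g)+w)$ preserve $m$ (provable by the same triangular-Jacobian argument as Lemma~\ref{exp-invariant}, and worth writing out since the paper only records the case $g\in W$), so that $\nu(g^{-1}E)=0$ for all $g$ and all Gauss--Haar $\nu$ is \emph{equivalent} to $E^{\sharp}=\{w: m(E_w)>0\}$ being Gauss null in $W$; you then invoke Cs\"ornyei's theorem in $W$ as a black box and handle the fiber-null remainder $E''$ via Proposition~\ref{th:ldlm} and the product structure of $\vol_{H_N}$. Your route is shorter and makes transparent that Gauss--Haar nullity of $E$ is exactly Gauss nullity of its shadow $E^{\sharp}$, at the cost of importing the hard direction of Cs\"ornyei's theorem rather than deriving it; the paper's route is longer but leverages machinery (Theorem~\ref{th:construction}) it needs anyway for Theorem~\ref{th:aron-cac} and yields the Banach-space result as a corollary rather than a hypothesis. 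The only items to check carefully in your write-up are the $m$-invariance of the twisted fiber maps for general $g\in G$ and general step $s$, and the Borel measurability of $w\mapsto m(F_w)$; both are routine and you flag them appropriately.
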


\subsection{Quasi-invariance}

The concept of quasi-invariance that appears in the Cameron--Martin theorem (Theorem \ref{cameron-martin}) has a natural extension to measures on topological groups. For a more general approach to measurable group actions and quasi-invariance we refer to \cite{Gordina2017}.

\begin{definition}
Let $G$ be a topological group and let $\nu$ be a measure on $G$.  As usual, let $L_g(h) = gh$ and $R_g(h) = hg$ be the left and right translation maps.  For a set $Q \subset G$, we say $\nu$ is quasi-invariant under left (respectively, right) translation by $Q$ if for every $g \in Q$, the measures $\nu$ and $(L_g)_{\ast} \nu$ (respectively, $(R_g)_{\ast} \nu$) are mutually absolutely continuous.
\end{definition}

Roughly following the idea of \cite[Proposition 5]{Phelps1978}, we have the following lemma.

\begin{lemma}\label{A-null-qi}
  Let $G$ be a nilpotent infinite-dimensional Carnot group (here $[G,G]$ does not need to be locally compact), and $\nu$ a $\sigma$-finite measure on $G$.  Suppose there exists a subset $Q \subset V_1(G)$ that is dense in $V_1(G)$ and closed under dilation, such that $\nu$ is quasi-invariant under right translation by $Q$.  Then for every Aronszajn null set $E \subset G$, we have $\nu(E)=0$.  Moreover, the same is true for every left translate $(L_g)_{\ast} \nu$, where $g \in G$.
\end{lemma}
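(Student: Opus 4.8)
The plan is to reduce the statement to a single-direction Fubini argument along lines lying in $Q$. First I would choose a Carnot-spanning set contained in $Q$: since $G$ is an infinite-dimensional Carnot group it is separable, so $V_1(G)$ is separable, and as $Q$ is dense in $V_1(G)$ I can pick a sequence $X_1, X_2, \dots \in Q$ that is dense in $V_1(G)$. Because $G$ is nilpotent, Proposition~\ref{p:generate-Carnot} shows that each $H_i := \langle X_1, \dots, X_i\rangle$ is a finite-dimensional Carnot group, while density together with the fact that $V_1(G)$ generates $G$ (Proposition~\ref{exist:filtration}) gives $\overline{\bigcup_i H_i} = G$; hence $\mathcal{X} = (X_1, X_2, \dots)$ is a Carnot-spanning set. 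Since $E$ is Aronszajn null, $E \in \mathcal{A}(\mathcal{X})$, so there is a Borel decomposition $E = \bigcup_i E_i$ with $E_i \in \mathcal{A}(X_i)$, and it suffices to prove $\nu(E_i) = 0$ for each $i$.

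Next I would run the following one-directional argument. Fix $i$ and abbreviate $X := X_i \in Q$, $A := E_i$, so $\mathcal{L}^1(\{t \in \R : g\delta_t(X) \in A\}) = 0$ for all $g \in G$. Since $X \in V_1(G)$ the map $t \mapsto \delta_t(X)$ is a one-parameter subgroup, and because $Q$ is closed under dilation, $\delta_t(X) \in Q$ for every $t \neq 0$. The map $\Phi : G \times \R \to G \times \R$, $\Phi(g,t) = (g\delta_t(X), t)$, is a homeomorphism of the Polish space $G \times \R$, so $\Phi^{-1}(A \times \R) = \{(g,t) : g\delta_t(X) \in A\}$ is Borel. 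Applying Tonelli to its indicator against the $\sigma$-finite product measure $\nu \otimes \mathcal{L}^1$, integration in $t$ first gives $\int_G \mathcal{L}^1(\{t : g\delta_t(X) \in A\})\,d\nu(g) = 0$, whereas integration in $g$ first gives $\int_\R \nu(A\delta_{-t}(X))\,d\mathcal{L}^1(t) = 0$ (using $\{g : g\delta_t(X) \in A\} = A\delta_t(X)^{-1} = A\delta_{-t}(X)$). Hence $\nu(A\delta_{-t}(X)) = 0$ for $\mathcal{L}^1$-a.e.\ $t$; I fix such a $t \neq 0$ and put $q := \delta_{-t}(X) \in Q$. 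Then $A = (Aq)q^{-1} = R_q^{-1}(Aq)$, so $\nu(A) = (R_q)_\ast\nu(Aq)$, and quasi-invariance of $\nu$ under right translation by $q \in Q$ — i.e.\ mutual absolute continuity of $\nu$ and $(R_q)_\ast\nu$ — together with $\nu(Aq) = 0$ forces $\nu(A) = 0$. Summing over $i$ gives $\nu(E) = 0$.

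For the last assertion, I would use that the family of Aronszajn null sets is invariant under left translation: if $E \in \mathcal{A}(Y)$, then for any $g \in G$ one has $\{t : h\delta_t(Y) \in g^{-1}E\} = \{t : (gh)\delta_t(Y) \in E\}$, which is $\mathcal{L}^1$-null for all $h$, so $g^{-1}E \in \mathcal{A}(Y)$; hence $g^{-1}E$ is Aronszajn null whenever $E$ is, and $(L_g)_\ast\nu(E) = \nu(g^{-1}E) = 0$ by the first part. (Alternatively, $(L_g)_\ast\nu$ is again $\sigma$-finite and, as left and right translations commute, still quasi-invariant under right translation by $Q$, so it satisfies the hypotheses of the lemma directly.)

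The only delicate point is the bookkeeping in the Fubini step: one must check that $\Phi^{-1}(A\times\R)$ is genuinely Borel (it is, since $\Phi$ is a homeomorphism and $A$ is Borel), that Tonelli applies for the merely $\sigma$-finite $\nu$, and that the ``a.e.\ $t$'' conclusion can be pushed back onto a direction of $Q$ — which is precisely where closure of $Q$ under dilation is used. Everything else is a routine application of the cited structural results and of the definition of quasi-invariance.
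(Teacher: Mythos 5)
Your proposal is correct and follows essentially the same route as the paper: build a Carnot-spanning sequence inside $Q$ via density and Propositions~\ref{p:generate-Carnot} and~\ref{exist:filtration}, decompose $E=\bigcup_i E_i$ with $E_i\in\mathcal A(X_i)$, apply Fubini--Tonelli to the defining integral to get $\nu(E_i\,\delta_{-t}(X_i))=0$ for a.e.\ $t$, and then use quasi-invariance under right translation by a dilate of $X_i\in Q$ to conclude $\nu(E_i)=0$, with the left-translate statement obtained by applying the result to $g^{-1}E$. The only differences are cosmetic (you invoke the absolute continuity $(R_q)_*\nu\ll\nu$ at the translated set rather than $\nu\ll(R_q)_*\nu$ at $E_i$ itself, and you spell out the measurability bookkeeping), so there is nothing to correct.
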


\begin{proof}
Since $Q$ is dense in $V_1(G)$, we can find a sequence $Y_1, Y_2, \dots \in Q$ which is Carnot spanning.  For instance, let $Y_1, Y_2, \dots$ be a dense sequence in $Q$ (recall that $G$ and hence $Q$ are assumed to be separable).  By Proposition \ref{p:generate-Carnot}, for each $k$, the scalable subgroup $H_k$ generated by $Y_1, \dots, Y_k$ is finite-dimensional.  And the closed scalable subgroup $\overline{\bigcup_{k=1}^\infty H_k}$ contains every $Y_k$, hence contains $Q$, hence contains $V_1(G)$, hence contains all of $G$ by Proposition \ref{exist:filtration}.

Now suppose $E \subset G$ is Aronszajn null, so that in particular $E \in \mathcal{A}(Y_1, Y_2, \dots)$.  This means there is a decomposition $E = \bigcup_{k=1}^\infty E_k$ with $E_k \in \mathcal{A}(Y_k)$.  Let $k$ be arbitrary; it suffices to show $\nu(E_k)=0$.  By definition of $\mathcal{A}(Y_k)$, for every $g \in G$ we have
\begin{equation}\label{A-null-integral}
    \int_{\mathbb{R}} 1_{E_k}(g \delta_t(Y_k))\,dt = 0.
\end{equation}
Integrating \eqref{A-null-integral} with respect to $\nu$ and using Fubini's theorem, we see that for almost every $t \in \mathbb{R}$ we have
\begin{equation*}
    \int_G 1_{E_k}(g \delta_t(Y_k))\,\nu(dg) = [(R_{\delta_t(Y_k)})_{\ast} \nu](E_k) = 0.
\end{equation*}
Fix any such $t$.  Since $Q$ was assumed to be closed under dilation, we have that $\nu$ is quasi-invariant under right translation by $\delta_t(Y_k)$ and thus $\nu(E_k)=0$.  We conclude that $\nu(E)=0$, and the corresponding statement for $(L_g)_{\ast} \nu$ follows by applying the same argument to the set $g^{-1} E$ which is also Aronszajn null.
\end{proof}

We remark that in the case when $G$ is a Banach space (i.e. an abelian infinite-dimensional Carnot group), then combining Lemma \ref{A-null-qi} with the Cameron--Martin Theorem \ref{cameron-martin} yields a streamlined proof of Phelps' result that Aronszajn null subsets of a Banach space are Gauss null \cite{Phelps1978}.

We now assume again that $[G,G]$ is locally compact, so that $G$ is a stratified Banach-Lie group with a finite-dimensional commutator subgroup.  In order to apply Lemma \ref{A-null-qi} to a Gauss--Haar measure, we first show the following.
\begin{lemma}\label{gauss-haar-qi}
Let $\gamma$ be a non-degenerate Gaussian measure on $W$ with Cameron--Martin space $H$.  Then the Gauss--Haar measure $\gamma \times m$ is quasi-invariant under left or right translation by $H$.
\end{lemma}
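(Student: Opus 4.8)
The plan is to reduce quasi-invariance of $\gamma\times m$ on $G$ to the Cameron--Martin quasi-invariance of $\gamma$ on $W$, using the multiplicative parametrization of $G$ supplied by Corollary~\ref{exp-integral}. Recall the setup: as a set $G=\mathfrak{g}=W\oplus\mathbf{C}$ with $W=V_1(G)$ and $\mathbf{C}=V_2\oplus\dots\oplus V_s=[G,G]$ a closed normal finite-dimensional subgroup on which $m$ is simultaneously Lebesgue measure in exponential coordinates and (bi-invariant) Haar measure, since $\mathbf{C}$ is nilpotent hence unimodular. Let $\Phi\colon W\times\mathbf{C}\to G$, $\Phi(x,c)=x\cdot c$ (group product); this is a homeomorphism, its $W$-component is $q(\Phi(x,c))=x$, where $q\colon G\to W\cong G/[G,G]$ is the quotient projection, and by Corollary~\ref{exp-integral} (applied to our Gaussian $\gamma$) one has $\Phi_{\ast}(\gamma\times m)=\gamma\times m=\nu$. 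Fix $h\in H\subseteq W$.

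First I would treat right translation $R_h$. Since $\mathbf{C}$ is normal, $c\cdot h=h\cdot(h^{-1}ch)$ with $h^{-1}ch=\operatorname{Ad}(h^{-1})(c)\in\mathbf{C}$; because $\operatorname{ad}(h)$ maps each $V_j$ into $V_{j+1}$, the restriction $\operatorname{Ad}(h^{-1})|_{\mathbf{C}}$ is a unipotent linear automorphism of $\mathbf{C}$, hence preserves $m$. Writing $x\cdot h=(x+h)\cdot\beta_h(x)$ with $\beta_h(x):=(x+h)^{-1}(x\cdot h)\in\mathbf{C}$ (the two sides have the same $W$-component $x+h$), one gets $R_h\circ\Phi(x,c)=\Phi\bigl(x+h,\ \beta_h(x)\cdot\operatorname{Ad}(h^{-1})(c)\bigr)$. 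For each fixed $x$ the map $c\mapsto\beta_h(x)\cdot\operatorname{Ad}(h^{-1})(c)$ is an $m$-preserving bijection of $\mathbf{C}$ (a unipotent linear map followed by a left translation in the Haar group $\mathbf{C}$). Hence, for nonnegative Borel $f$, Fubini together with this change of variables gives
\begin{align*}
  \int_G f\,d(R_h)_{\ast}\nu=\int_W\int_{\mathbf{C}} f\bigl(\Phi(x+h,c)\bigr)\,m(dc)\,\gamma(dx)=\int_W F(x+h)\,\gamma(dx)=\int_W F\,d(L_h)_{\ast}\gamma,
\end{align*}
where $F(x):=\int_{\mathbf{C}} f(x\cdot c)\,m(dc)$, while $\int_G f\,d\nu=\int_W F\,d\gamma$. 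By the Cameron--Martin theorem (Theorem~\ref{cameron-martin}), $(L_h)_{\ast}\gamma$ and $\gamma$ are mutually absolutely continuous with density $\rho:=d(L_h)_{\ast}\gamma/d\gamma$ which is positive $\gamma$-a.e.; substituting and unwinding $F$ shows $(R_h)_{\ast}\nu=(\rho\circ q)\,\nu$ with $\rho\circ q>0$ $\nu$-a.e., so $(R_h)_{\ast}\nu$ and $\nu$ are mutually absolutely continuous. The left case is entirely analogous and slightly easier: $L_h\circ\Phi(x,c)=\Phi\bigl(h+x,\ \beta_h'(x)\cdot c\bigr)$ with $\beta_h'(x)=(h+x)^{-1}(h\cdot x)\in\mathbf{C}$, and $c\mapsto\beta_h'(x)\cdot c$ is an $m$-preserving bijection of $\mathbf{C}$, so the same computation yields $(L_h)_{\ast}\nu=(\rho\circ q)\,\nu$. (Alternatively one deduces the left case from the right case via the inversion map $\iota(g)=-g$, which is linear on $\mathfrak{g}$, satisfies $\iota_{\ast}\nu=\nu$ since $\gamma$ is centered hence symmetric and $m$ is symmetric, and intertwines $L_h=\iota\circ R_{h^{-1}}\circ\iota$ with $h^{-1}=-h\in H$.)

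The routine ingredients are the BCDH bookkeeping and the two facts that conjugation restricts to a unipotent, hence $m$-preserving, automorphism of $\mathbf{C}$ and that left translations of $\mathbf{C}$ preserve $m$. The one genuinely substantive input is the Cameron--Martin theorem, which is precisely what makes the argument close: quasi-invariance fails for translation by $W$-directions outside $H$, so no purely group-theoretic proof is possible, and the whole point is to show that translation by $h\in W$ affects $\nu$ only through its $W$-marginal $\gamma$, to which Cameron--Martin applies verbatim. I expect the main obstacle in writing this cleanly to be notational: keeping straight the additive ($x+c$) and multiplicative ($x\cdot c$) pictures of $G$ and making sure the $\mathbf{C}$-change of variables is performed for each fixed $x$ before integrating in $x$.
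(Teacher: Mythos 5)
Your proof is correct and follows essentially the same route as the paper's: both reduce quasi-invariance of $\gamma\times m$ to the Cameron--Martin theorem on $W$ via Corollary~\ref{exp-integral} and the BCDH observation that $(h+x)^{-1}hx\in\mathbf{C}$, absorbing the resulting $\mathbf{C}$-factor by invariance of $m$. You in fact go slightly further than the written proof, which only carries out the left-translation case: your treatment of right translation via the unipotent, hence $m$-preserving, action of $\operatorname{Ad}(h^{-1})$ on $\mathbf{C}$, and your explicit identification of the density $\rho\circ q$ giving mutual absolute continuity, are both correct and fill in details the paper leaves implicit.
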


\begin{proof}
Suppose $(\gamma \times m)(E) = 0$.  By Corollary \ref{exp-integral} this implies that
\begin{equation*}
    \int_W \int_{\mathbf{C}} 1_E(x y) \,m(dy)\,\gamma(dx) = 0.
\end{equation*}
Fix $h \in H$.  We then have
\begin{align*}
  [(L_{h})_{\ast} (m \times \gamma)](E) &=
  \int_W \int_{\mathbf{C}} 1_E(h x y) \,m(dy)\,\gamma(dx) \\
  &= \int_W \int_{\mathbf{C}} 1_E((h+x) (h+x)^{-1} hx y) \,m(dy)\,\gamma(dx) \\
  &= \int_W \int_{\mathbf{C}} 1_E((h+x) y) \,m(dy)\,\gamma(dx)
\end{align*}
  because $(h+x)^{-1}hx \in \mathbf{C}$ by the Baker--Campbell--Hausdorff--Dynkin formula, and the Haar measure $m$ on $\mathbf{C}$ is invariant under left translation.  But
  \begin{equation*}
    \int_W \int_{\mathbf{C}} 1_E((h+x) y) \,m(dy)\,\gamma(dx) = \int_W \int_{\mathbf{C}} 1_E(x y) \,m(dy)\,\gamma_h(dx) = 0
  \end{equation*}
  because, by the Cameron--Martin theorem, $\gamma_h$ is absolutely continuous to $\gamma$.
\end{proof}

Combining Lemmas \ref{A-null-qi} and \ref{gauss-haar-qi}, we have proved half of Theorem \ref{A-null-iff-GH-null}, namely,

\begin{corollary}
If $E \subset G$ is Aronszajn null, then it is Gauss--Haar null.
\end{corollary}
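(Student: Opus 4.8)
The plan is to realize every Gauss--Haar measure as one of the $\sigma$-finite quasi-invariant measures already handled by Lemma~\ref{A-null-qi}, so that the corollary becomes a matter of checking hypotheses. Fix an arbitrary Gauss--Haar measure $\nu = \gamma \times m$ on $G = W \times \mathbf{C}$, where $\gamma$ is a non-degenerate Gaussian measure on $W = V_1(G)$ with Cameron--Martin space $H$ and $m$ is Lebesgue measure on the finite-dimensional subgroup $\mathbf{C} = [G,G]$. Since $\gamma$ is a probability measure and $m$ is Radon on a finite-dimensional space, $\nu$ is $\sigma$-finite, and by Theorem~\ref{th:IDCG-Banach} the group $G$ is a stratified Banach-Lie group, hence nilpotent; so the standing assumptions on $G$ in Lemma~\ref{A-null-qi} are met.

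Next I would verify that $Q := H$ is an admissible set of translation directions. From the discussion of abstract Wiener spaces recalled above, $H$ is a linear subspace of $W$ that is dense in the $W$-norm, using non-degeneracy of $\gamma$. By \eqref{eq:dilation} the dilation $\delta_\lambda$ restricts on $V_1(G) = W$ to scalar multiplication by $\lambda$; since $H$ is a linear subspace we get $\delta_\lambda(H) = H$ for $\lambda \neq 0$ and $\delta_0(H) = \{e_G\} \subset H$, so $H$ is closed under dilation. Finally, Lemma~\ref{gauss-haar-qi} states precisely that $\nu = \gamma \times m$ is quasi-invariant under right translation by $H$.

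With these checks in hand, Lemma~\ref{A-null-qi} applies directly with $Q = H$: for every Aronszajn null Borel set $E \subset G$ and every $g \in G$ we obtain $(L_g)_\ast\nu(E) = 0$. Since $\nu$ was an arbitrary Gauss--Haar measure, this is exactly the statement that $E$ is Gauss--Haar null in the sense of Definition~\ref{def:gauss-haar}, completing the proof.

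I do not expect a genuine obstacle here: all the analytic content is carried by Lemmas~\ref{A-null-qi} and~\ref{gauss-haar-qi}, and what remains is bookkeeping — confirming density of $H$ in $W$, dilation-invariance of $H$, quasi-invariance of $\nu$, and $\sigma$-finiteness. The only place the specific structure established earlier is essential is that $\delta_\lambda$ acts linearly on the first layer, which is what makes the linear subspace $H$ automatically closed under dilation.
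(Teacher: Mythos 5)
Your proof is correct and follows the paper's own route exactly: the paper derives this corollary by combining Lemma~\ref{A-null-qi} with Lemma~\ref{gauss-haar-qi}, and your write-up simply makes explicit the hypothesis checks (nilpotency via Theorem~\ref{th:IDCG-Banach}, $\sigma$-finiteness, density of $H$ in $W$, and dilation-invariance of $H$ from linearity of $\delta_\lambda$ on the first layer) that the paper leaves implicit.
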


\subsection{Gauss--Haar and CAC measures}

To prove the remaining direction of Theorem \ref{A-null-iff-GH-null}, we will make use of Theorem \ref{th:construction}.  Since Gauss--Haar measures are not CAC measures (they are not even probability measures), instead of using Theorem \ref{th:construction} directly, we construct a family of CAC measures to which it can be applied, each of which will be absolutely continuous to a Gauss--Haar measure.

Let $\pi : G \to W$ denote the projection onto the first layer of the Lie algebra; i.e. such that $\pi(x+y) = x$ for $x \in W$, $y \in \mathbf{C}$.  Note that we also have $\pi(xy) = x$, so $\pi$ is a continuous and surjective scalable group homomorphism from $G$ to the Banach space $W$, whose kernel is $\mathbf{C}$.

Let $\star$ denote the convolution operation for probability measures on the Banach space $W$.  We continue to write $\ast$ for the convolution of measures on $G$.  Note that for probability measures $\nu_1, \nu_2$ on $G$, we have $\pi_{\ast}(\nu_1 \ast \nu_2) = \pi_{\ast} \nu_1 \star \pi_{\ast} \mu_2$.

\begin{lemma}\label{ac-inf}
  Let $\nu_0, \nu_1$ be two probability measures on $G$, and let $\nu = \nu_1 \ast \nu_0$.  Suppose $\nu_0$ has the property that $\nu_0 \ll (\pi_{\ast} \nu_0 \times m)$.  Then $\nu \ll (\pi_{\ast} \nu \times m)$.
\end{lemma}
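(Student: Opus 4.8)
The plan is to reduce the claim to a fiberwise absolute-continuity argument together with one clean identity: that averaging the left-translates of $\pi_\ast\nu_0\times m$ against $\nu_1$ reproduces exactly $\pi_\ast\nu\times m$. First I would observe, using \eqref{def_convo} and Tonelli (the integrands are nonnegative Borel functions since multiplication on $G$ and $\pi$ are continuous), that
\[
  \nu(E)=\int_G (L_x)_\ast\nu_0(E)\,d\nu_1(x),\qquad (L_x)_\ast\nu_0(E)=\nu_0(x^{-1}E).
\]
Since $\nu_0\ll\pi_\ast\nu_0\times m$ by hypothesis, pushing forward by $L_x$ gives $(L_x)_\ast\nu_0\ll (L_x)_\ast(\pi_\ast\nu_0\times m)$ for every fixed $x$. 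Hence, to prove $\nu\ll\pi_\ast\nu\times m$ it suffices to show: if $(\pi_\ast\nu\times m)(E)=0$ then $(L_x)_\ast(\pi_\ast\nu_0\times m)(E)=0$ for $\nu_1$-a.e. $x$, and because that integrand is nonnegative this follows at once from the identity
\begin{equation}
  \int_G (L_x)_\ast(\pi_\ast\nu_0\times m)(E)\,d\nu_1(x)=(\pi_\ast\nu\times m)(E).\tag{$\star$}
\end{equation}

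To establish $(\star)$, write $\gamma:=\pi_\ast\nu_0$. Corollary~\ref{exp-integral} says precisely that $\int_G f\,d(\gamma\times m)=\int_W\int_{\mathbf{C}} f(xy)\,m(dy)\,\gamma(dx)$ with group products, so the left-hand side of $(\star)$ equals $\int_G\int_W\int_{\mathbf{C}}\mathbf 1_E(xwc)\,dm(c)\,d\gamma(w)\,d\nu_1(x)$. For fixed $x\in G$ and $w\in W$ I would write $xw=u\cdot v$ with $u=\pi(xw)=\pi(x)+w\in W$ (here $\pi$ is a homomorphism killing $\mathbf{C}$ and $W$ is abelian) and $v\in\mathbf{C}$; then $xwc=uvc$, and since $m$ is left-invariant Haar measure on the subgroup $\mathbf{C}$, the substitution $c\mapsto vc$ gives $\int_{\mathbf{C}}\mathbf 1_E(xwc)\,dm(c)=\int_{\mathbf{C}}\mathbf 1_E\bigl((\pi(x)+w)c\bigr)\,dm(c)=:\Phi(\pi(x)+w)$, where $\Phi(u):=\int_{\mathbf{C}}\mathbf 1_E(uc)\,dm(c)$ depends on $x$ only through $\pi(x)$. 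Consequently the left-hand side of $(\star)$ becomes $\int_W\int_W\Phi(t+w)\,d\gamma(w)\,d(\pi_\ast\nu_1)(t)=\int_W\Phi\,d(\pi_\ast\nu_1\star\gamma)=\int_W\Phi\,d(\pi_\ast\nu)$, using the identity $\pi_\ast(\nu_1\ast\nu_0)=\pi_\ast\nu_1\star\pi_\ast\nu_0$ recorded just before the lemma. A final application of Corollary~\ref{exp-integral} with $\gamma$ replaced by $\pi_\ast\nu$ identifies $\int_W\Phi\,d(\pi_\ast\nu)$ with $\int_W\int_{\mathbf{C}}\mathbf 1_E(uc)\,dm(c)\,d(\pi_\ast\nu)(u)=(\pi_\ast\nu\times m)(E)$, which is $(\star)$.

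Assembling: if $(\pi_\ast\nu\times m)(E)=0$, then by $(\star)$ we get $(L_x)_\ast(\pi_\ast\nu_0\times m)(E)=0$ for $\nu_1$-a.e. $x$, hence $(L_x)_\ast\nu_0(E)=0$ for $\nu_1$-a.e. $x$ by the fiberwise absolute continuity, and therefore $\nu(E)=\int_G(L_x)_\ast\nu_0(E)\,d\nu_1(x)=0$. The only steps needing genuine care are the repeated passages between the group product $wc$ and the vector-space splitting $w+c$ on $W\times\mathbf{C}$ — which is exactly what Lemma~\ref{exp-invariant} and Corollary~\ref{exp-integral} were set up to handle — and the coset reduction $\int_{\mathbf{C}}\mathbf 1_E(xwc)\,dm(c)=\Phi(\pi(x)+w)$, which relies on $\pi$ being a homomorphism with kernel $\mathbf{C}$ and on left-invariance of $m$ on the (finite-dimensional, nilpotent, hence unimodular) subgroup $\mathbf{C}$; the Tonelli swaps are routine. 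I do not anticipate any real obstacle: the substance is entirely in arranging these identities so that the $\nu_1$-average of $(L_x)_\ast(\pi_\ast\nu_0\times m)$ collapses to $\pi_\ast\nu\times m$.
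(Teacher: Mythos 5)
Your proof is correct, and it reorganizes the argument in a way that differs from the paper's. The paper's proof conditions on the Radon--Nikodym density $\rho = d\nu_0/d(\pi_\ast\nu_0\times m)$, writes $\nu(E)$ as a triple integral involving $\rho$, and performs the change of variables $z=\pi(gh)^{-1}g\pi(h)y$ in the inner $\mathbf{C}$-integral (using that $\pi(gh)^{-1}g\pi(h)\in\mathbf{C}$ and that $m$ is left-invariant) to recognize the integrand as a multiple of the function $F(g,h,y)=1_E(\pi(gh)y)$, which vanishes a.e.\ when $(\pi_\ast\nu\times m)(E)=0$. You never introduce the density: instead you isolate the exact averaging identity $(\star)$, namely that the $\nu_1$-average of the left-translates $(L_x)_\ast(\pi_\ast\nu_0\times m)$ is precisely $\pi_\ast\nu\times m$, and then finish with two soft permanence facts (pushforward by the homeomorphism $L_x$ preserves absolute continuity; a vanishing $\nu_1$-integral of nonnegative quantities forces them to vanish $\nu_1$-a.e.). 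The computational core is the same in both -- Corollary~\ref{exp-integral} to pass between the additive and group-product parametrizations, and left-invariance of $m$ on the central subgroup $\mathbf{C}$ to absorb the twist term ($\pi(gh)^{-1}g\pi(h)$ in the paper, your $v=u^{-1}xw$) -- but your version makes explicit a clean structural statement that is only implicit in the paper's manipulation, at the cost of one extra layer of reduction. Both arguments are complete; the Tonelli swaps and the measurability of $x\mapsto(L_x)_\ast(\pi_\ast\nu_0\times m)(E)$ are indeed routine since all integrands are nonnegative Borel functions, and the $[0,\infty]$-valued integrals cause no trouble.
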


\begin{proof}
  Let $E$ be a Borel set with $(\pi_{\ast} \nu \times m)(E) = 0$.  Using Corollary \ref{exp-integral},
  this implies
  \begin{align*}
    0 &= \int_G \int_{\mathbf{C}}   1_E(\pi(g) y) \,m(dy)\,\nu(dg) \\
    &= \int_G \int_G \int_{\mathbf{C}} 1_E(\pi(gh)y) \, m(dy)\, \nu_0(dh) \, \nu_1(dg).
  \end{align*}
  So if we set $F(g,h,y) = 1_E(\pi(gh)y)$, we have that $F = 0$, $\nu_1 \times \nu_0 \times m$-a.e.

  Let $\rho$ be the density of $\nu_0$ with respect to $\pi_{\ast} \nu_0 \times m$.  Then we write:
  \begin{align*}
    \nu(E) &= \int_G \int_G 1_E(gk)\,\nu_0(dk)\,\nu_1(dg) \\
    &=    \int_G \int_G \int_{\mathbf{C}} 1_E(g\pi(h) y) \rho(\pi(h), y) \,m(dy)\,\nu_0(dh)\,\nu_1(dg).
  \end{align*}
  Now let us write $g \pi(h) y = \pi(gh) \pi(gh)^{-1} g \pi(h) y$.  It is easy to verify that $\pi(gh)^{-1} g \pi(h) \in \mathbf{C}$, so we have
  \begin{align*}
    \nu(E) &= \int_G \int_G \int_{\mathbf{C}} F(g, h, \pi(gh)^{-1} g \pi(h) y) \rho(\pi(h), y) \,m(dy)\,\nu_0(dh)\,\nu_1(dg) \\
    &= \int_G \int_G \int_{\mathbf{C}} F(g, h, z) \rho(\pi(h),  \pi(h)^{-1} g^{-1} \pi(gh) z) \,m(dz)\,\nu_0(dh)\,\nu_1(dg)
  \end{align*}
  where we made the change of variables $z = \pi(gh)^{-1} g \pi(h) y$ and used the left invariance of $m$.  But $F=0$ a.e. with respect to $\nu_1 \times \nu_0 \times m$, so we have $\nu(E)=0$.
\end{proof}

\begin{theorem}\label{GH-implies-A}
  If $E \subset G$ is Gauss--Haar null, then it is Aronszajn null.
\end{theorem}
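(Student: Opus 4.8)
The plan is to prove the contrapositive: if $E$ is not Aronszajn null, then it is not Gauss--Haar null. So fix a Carnot-spanning set $\mathcal{X} = (X_1, X_2, \dots)$ with $E \notin \mathcal{A}(\mathcal{X})$ and associated filtration $H_k = \langle X_1, \dots, X_k\rangle$. The idea is to feed Theorem~\ref{th:construction} a cleverly chosen family of absolutely continuous measures $\nu_k^\epsilon$ so that the resulting CAC measure $\nu$, which charges $E$ after a left translation, is itself absolutely continuous with respect to a genuine Gauss--Haar measure $\gamma \times m$; then that Gauss--Haar measure also charges a translate of $E$, contradicting Gauss--Haar nullity. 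Two facts about $\mathcal{X}$ will be used. First, since $\bigcup_k H_k$ is dense and $\pi$ is a continuous surjection onto $W = V_1(G)$, the span of $\{X_j\}$ is dense in $W$, so no nonzero $f \in W^\ast$ annihilates every $X_j$. Second, the subspaces $[H_k, H_k] \subset \mathbf{C}$ form an increasing chain with dense union in the finite-dimensional space $\mathbf{C}$, so there is an $N$ with $[H_k, H_k] = \mathbf{C}$, hence $\mathbf{C} \subset H_k$ and $H_k \cong V_1(H_k) \times \mathbf{C}$ (with $\pi|_{H_k}$ the first projection and $\vol_{H_k}$ mutually absolutely continuous to $\mathrm{Leb}_{V_1(H_k)} \times m$) for all $k \geqslant N$.

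For the family, I would identify each $H_k$ with $V_1(H_k) \oplus [H_k, H_k]$ and set $\nu_k := \gamma_k \times \rho_k$, where $\gamma_k$ is a non-degenerate Gaussian on the finite-dimensional space $V_1(H_k)$ and $\rho_k$ is any absolutely continuous probability measure on $[H_k, H_k]$ (just $\nu_k = \gamma_k$ when $[H_k,H_k] = 0$). Then $\nu_k \in \operatorname{AC}(H_k)$ and $\pi_\ast \nu_k = \gamma_k$. Put $\nu_k^\epsilon := (\delta_\epsilon)_\ast \nu_k$; Lemma~\ref{l:ac-aoi} gives \eqref{eq:approximation}, each $\nu_k^\epsilon$ is still in $\operatorname{AC}(H_k)$ since $\delta_\epsilon$ is an automorphism of $H_k$, and because $\pi$ is a scalable homomorphism $\pi_\ast \nu_k^\epsilon = (\delta_\epsilon)_\ast \gamma_k$ is again a non-degenerate Gaussian on $V_1(H_k)$. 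Applying Theorem~\ref{th:construction} then produces $x \in E$ and a sequence $(\epsilon_j)$ (each $\epsilon_j$ below the $\delta_j$ of Lemma~\ref{l:shrink-fast}) such that $\nu := \lim_n \nu^{\epsilon_1,\dots,\epsilon_n}$ exists weakly and $(x\nu)(E) > 0$.

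It then remains to check $\nu \ll \gamma \times m$ for a non-degenerate Gaussian $\gamma$. First, $\gamma := \pi_\ast \nu$ is Gaussian: $\pi_\ast \nu^{\epsilon_1,\dots,\epsilon_n} = \pi_\ast \nu_n^{\epsilon_n} \star \cdots \star \pi_\ast \nu_1^{\epsilon_1}$ is a convolution of Gaussians on $W$, and $\pi_\ast \nu$ is its weak limit (since $\pi$ is continuous); it is non-degenerate because its covariance form is $q(f,f) = \sum_j q_j(f,f)$ with $q_j$ the covariance of $\pi_\ast\nu_j^{\epsilon_j}$, positive-definite on $V_1(H_j)$, so for $f \ne 0$ some $X_j \in V_1(H_j)$ has $f(X_j) \ne 0$ and then $q_j(f,f) > 0$. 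Second, for absolute continuity I would peel off a tail factor: writing $\nu^{\epsilon_1,\dots,\epsilon_n} = \nu^{\epsilon_{N+1},\dots,\epsilon_n;N+1} \ast \nu^{\epsilon_1,\dots,\epsilon_N}$ and letting $n \to \infty$, Lemma~\ref{l:shrink-fast} and continuity of convolution (Theorem~\ref{th:conv-cont}) give $\nu = \mu_{N+1} \ast \nu^{\epsilon_1,\dots,\epsilon_N}$ for a probability measure $\mu_{N+1}$. Now $\nu^{\epsilon_1,\dots,\epsilon_N} \in \operatorname{CAC}_N^{\prime}(X_1,\dots,X_N)$, so it is absolutely continuous to $\vol_{H_N}$ by Lemma~\ref{l:cack-ac}, hence to $\mathrm{Leb}_{V_1(H_N)} \times m$ with some density $\rho$; since $\pi|_{H_N}$ is the first projection, the measure $\pi_\ast \nu^{\epsilon_1,\dots,\epsilon_N}$ has density $v \mapsto \int_{\mathbf{C}} \rho(v,y)\,dm(y)$, the set where this integral vanishes is $\nu^{\epsilon_1,\dots,\epsilon_N}$-null, and off it one has a well-defined Radon--Nikodym density, so $\nu^{\epsilon_1,\dots,\epsilon_N} \ll (\pi_\ast \nu^{\epsilon_1,\dots,\epsilon_N}) \times m$. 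Lemma~\ref{ac-inf}, applied with $\nu_1 = \mu_{N+1}$ and $\nu_0 = \nu^{\epsilon_1,\dots,\epsilon_N}$, then yields $\nu \ll \pi_\ast \nu \times m = \gamma \times m$.

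Finally, $\gamma \times m$ is a Gauss--Haar measure, so from $\nu \ll \gamma \times m$ and $(x\nu)(E) > 0$ one concludes $(\gamma \times m)(x^{-1}E) = \big((L_x)_\ast(\gamma \times m)\big)(E) > 0$, i.e.\ $E$ is not Gauss--Haar null, completing the contrapositive. I expect the main obstacle to be the second part of the absolute-continuity step: a CAC measure assembled from the finite-dimensional groups $H_k$ need not be spread out over all of $\mathbf{C}$, so one must isolate a single convolution factor that already lives on a subgroup containing $\mathbf{C}$ — this is exactly where $[H_N,H_N] = \mathbf{C}$ and Lemma~\ref{ac-inf} carry the argument — while at the same time arranging, through the choice of the $\gamma_k$, that the $\pi$-pushforward of the entire product is a \emph{non-degenerate} Gaussian and not merely a Gaussian concentrated on a proper subspace of $W$.
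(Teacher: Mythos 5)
Your proposal is correct and follows essentially the same route as the paper's proof: argue the contrapositive, feed Theorem~\ref{th:construction} the dilated families $(\delta_\epsilon)_\ast\nu_k$ of absolutely continuous measures on $H_k$ with Gaussian projections, show $\pi_\ast\nu$ is a non-degenerate Gaussian via density of $\bigcup_k \pi(H_k)$ in $W$, and use the level-$N$ factor with $[H_N,H_N]=[G,G]$ together with Lemma~\ref{ac-inf} to obtain $\nu \ll \gamma\times m$. The only (harmless) deviation is that you verify the hypothesis $\nu^{\epsilon_1,\dots,\epsilon_N} \ll \pi_\ast\nu^{\epsilon_1,\dots,\epsilon_N}\times m$ by combining Lemma~\ref{l:cack-ac} with a disintegration of the density over $\mathbf{C}$, whereas the paper shortcuts this by requiring $\nu_N$ to have strictly positive density on $H_N$.
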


\begin{proof}
  We prove the contrapositive.  Suppose $E$ is not Aronszajn null, so there exists a Carnot spanning sequence $X_1, X_2, \dots$ with $E \notin \mathcal{A}(X_1, X_2, \dots)$.  For each $k$, let $H_k = \langle X_1, \dots, X_k \rangle$, which by assumption is a Carnot group.  Then fix on $H_k$ a probability measure $\nu_k$ which is absolutely continuous with respect to Haar measure, has strictly positive density, and whose projection $\gamma_k = \pi_{\ast} \nu_k$ is a (centered) non-denegerate Gaussian measure on $W_k = \pi(H_k) \subset H_k \cap W$.  For instance, $\nu_k$ could be a non-degenerate Gaussian measure on $H_k$ viewed as a finite-dimensional vector space; or $\nu_k$ could be a heat kernel measure (see Section \ref{heat-kernel}).

  For $\epsilon > 0$, let $\nu_k^\epsilon = (\delta_\epsilon)_{\ast} \nu_k$, so that by Lemma \ref{l:ac-aoi}, the family $\{\nu_k^\epsilon : \epsilon > 0\}$ satisfies \eqref{eq:approximation}.  By Theorem \ref{th:construction}, there exists a sequence $\epsilon_1, \epsilon_2, \dots$ and an element $g \in E$ such that the sequence of measures $\nu^{\epsilon_1, \dots, \epsilon_n} = \nu_n^{\epsilon_n} \ast \dots \ast \nu_1^{\epsilon_1}$ converges weakly to a measure $\nu$, for which $(g \nu)(E) = \nu(g^{-1} E) > 0$.   To complete the proof, we show that $\gamma = \pi_{\ast} \nu$ is a non-degenerate Gaussian on $W$, and that $\nu \ll \gamma \ast m$.

Observe that since $\pi$ is a continuous homomorphism, $\gamma$ is the weak limit of the sequence $\pi_{\ast} \nu^{\epsilon_1, \dots, \epsilon_n} = \gamma_1 \star \dots \star \gamma_n$, and so $\gamma$ is Gaussian because every $\gamma_k$ is.  To see it is non-degenerate, let $f \in W^{\ast}$ be arbitrary and suppose $q(f,f) = 0$, so that $f=0$, $\gamma$-a.e.  We claim $f$ must be the zero functional.  For every $k$, by properties of convolution and Fourier transforms, $f$ must be constant $\gamma_k$-a.e.  But $\gamma_k$ is a non-degenerate Gaussian measure on the linear subspace $W_k \subset W$, so we must have $W_k \subset \ker f$.  Since $f$ now vanishes on the dense linear subspace $\bigcup_k W_k$, it is the zero functional.

  Next, since $X_1, X_2, \dots$ is Carnot spanning and $[G,G]$ is finite-dimensional, there exists some large enough $N$ so that $[H_N, H_N] = [G,G]$.  Let $\nu' = \nu^{\epsilon_1, \dots, \epsilon_N}$, and let $\nu''$ be the weak limit of $\nu_{M}^{\epsilon_M} \ast \cdots \ast \nu_{N+1}^{\epsilon_{N+1}}$ as $M \to \infty$, so that $\nu = \nu'' \ast \nu'$.  Since $\nu_{N}^{\epsilon_N}$ is an absolutely continuous measure on $H_N$ with strictly positive density, and $\gamma_N \times m$ is another one, they are mutually absolutely continuous.  Hence Lemma \ref{ac-inf} applies, and we conclude that $\nu \ll \pi_{\ast} \nu \times m = \gamma \times m$, which completes the proof.
\end{proof}

\begin{remark}
When $G$ is a Banach space (an abelian infinite-dimensional Carnot group), Theorem~\ref{GH-implies-A} recovers the main result of \cite{Csornyei1999a}.  Even after making the obvious simplifications, the proof is still somewhat more complicated than necessary.  In the abelian case, the \emph{retreading} is not needed, and instead of constructing the desired Gaussian measure as a CAC measure, we could instead construct it simply as a convolution of one-dimensional Gaussians on the axes $Y_i$.
\end{remark}

\section{Heat kernel measures} \label{heat-kernel}

We continue the assumptions and notation of the previous section.  Let $G$ still be an infinite-dimensional Carnot group with locally compact commutator subgroup, which is thus a stratified Banach--Lie group $G =   W \times \mathbf{C}$.

In this section, we discuss hypoelliptic heat kernel measures \cite{BaudoinGordinaMelcher2013,DriverEldredgeMelcher2016}, a family of measures on $G$ that in some ways are even more naturally compatible with its structure than the Gauss--Haar measures defined above in Definition \ref{def:gauss-haar}.   The definition of these measures requires some background from stochastic calculus, which we will not review here, but refer the reader to \cite{DriverGordina2008, Melcher2009a} and for infinite-dimensional non-nilpotent groups to \cite{Gordina2000a, Gordina2000b, Gordina2005b}.

Since the commutator subgroup of $G$ is assumed locally compact, then $G$ is a Banach Lie group (see Theorem~\ref{th:IDCG-Banach}) we can, as previously done in the previous section, identify $G$ with its Lie algebra $\mathfrak{g}$, considering them as the same set, without writing the exponential map.
Moreover, we consider the Banach space $W = V_1 = V_1(G)$, as the first layer of $\mathfrak{g} = G$. Let $\gamma$ be a non-degenerate Gaussian measure on $W$. Then there exists a corresponding Brownian motion process $B_t$ on $W$, which is a continuous Markov process with stationary independent increments such that $(t-s)^{-1/2} (B_t-B_s) \sim \gamma$ for every $t > s > 0$.  The hypoelliptic Brownian motion $g_t$ on $G$ is defined as the solution of the Stratonovich-type stochastic differential equation
\begin{equation}\label{sde}
  \circ dg_t = dL_{g_t} \circ dB_t, \qquad g_0 = e_G.
\end{equation}
It is actually possible to solve \eqref{sde} explicitly in terms of iterated stochastic integrals, by using the Baker--Campbell--Hausdorff--Dynkin formula as in \cite{Melcher2009a}.

Now let $\nu = \operatorname{Law}(g_1)$ be the endpoint distribution of $g_t$ at time $t=1$.  We say $\nu$ is the \emph{(hypoelliptic) heat kernel} on $G$ induced by $\gamma$.

The choice of time $1$ is of course arbitrary, and involves no loss of
generality because of the following time-space scaling property:
\begin{equation}\label{time-space}
  \nu \ast \nu = \operatorname{Law}(g_2) = (\delta_{\sqrt{2}})_{\ast} \nu.
\end{equation}
Equation \eqref{time-space} also provides justification for taking the heat kernel measures as our analogue of Gaussian measures, since the scaling \eqref{time-space} also holds for Gaussian measures on a Banach space, and indeed uniquely characterizes them.  See \cite[Theorem 1.9.5]{BogachevGaussianMeasures}, where the result is attributed to \cite{Polya1923}.

\begin{definition}
We will say a Borel set $E \subset G$ is \emph{heat kernel null} if we have $\nu(g^{-1}E) = 0$ for every heat kernel measure $\nu$ on $G$ and every $g \in G$.
\end{definition}

\begin{remark}
  In \cite{DriverGordina2008, Melcher2009a} the main objects of study were \emph{elliptic} heat kernel measures, which are constructed from a Gaussian measure $\gamma$ supported on all of $\mathfrak{g}$, instead of only on $W$.  Elliptic heat kernel measures are known to satisfy stronger regularity conditions, but they are not compatible with the scalable group structure in the same way; for instance, \eqref{time-space} does not hold for elliptic heat kernel measures.   As such, in this paper we focus only on \emph{hypoelliptic} Brownian motion and heat kernels as defined above. The issue is analogous to the distinction between left-invariant Riemannian and sub-Riemannian metrics on the 3-dimensional Heisenberg group: the Riemannian geometries are easier to study with classical tools, but they do not respect the dilation, and so in that sense they are less natural.
\end{remark}

Until further notice, suppose that $G$ has step $2$, so that it is an infinite-dimensional Heisenberg-like group (Example \ref{ex.DGgroups}).  In this case, we are able to prove the following theorem.
\begin{theorem}\label{A-iff-HK}
Let $G$ be an infinite-dimensional Heisenberg-like group.  A Borel set $E \subset G$ is Aronszajn null if and only if it is heat kernel null.
\end{theorem}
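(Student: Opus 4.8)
The plan is to prove both implications by reducing to Theorem~\ref{A-null-iff-GH-null}, which identifies the Aronszajn null sets with the Gauss--Haar null sets, and then comparing Gauss--Haar measures with heat kernel measures. Throughout I fix the identification $G = W \times \mathbf{C}$ with $W = V_1(G)$ and $\mathbf{C} = [G,G]$ finite-dimensional, let $m$ be Haar (Lebesgue) measure on $\mathbf{C}$, and for a non-degenerate Gaussian measure $\gamma$ on $W$ write $\mu_\gamma$ for the hypoelliptic heat kernel on $G$ induced by $\gamma$ and $\gamma \times m$ for the associated Gauss--Haar measure. Since every heat kernel measure is some $\mu_\gamma$, every Gauss--Haar measure is some $\gamma \times m$, and both of the notions ``heat kernel null'' and ``Gauss--Haar null'' already quantify over all left translates, it suffices to compare, for each such $\gamma$, the families $\{(L_g)_\ast \mu_\gamma\}_g$ and $\{(L_g)_\ast(\gamma\times m)\}_g$ of null sets.

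For the implication ``Aronszajn null $\Rightarrow$ heat kernel null'' I would argue via quasi-invariance rather than absolute-continuity relations. By the regularity theory for sub-Riemannian (hypoelliptic) heat kernels on infinite-dimensional Heisenberg-like groups \cite{BaudoinGordinaMelcher2013, DriverEldredgeMelcher2016}, the measure $\mu_\gamma$ is quasi-invariant under right translation by the Cameron--Martin subgroup, which contains $H_\gamma \times \{0\}$, where $H_\gamma \subset W$ is the Cameron--Martin space of $\gamma$. Viewed inside $G$, the set $Q := H_\gamma \times \{0\}$ is contained in $V_1(G)$, is closed under the dilations $\delta_t$ (which act on $V_1(G)$ as scalar multiplication, so $\delta_t Q = Q$ for $t \neq 0$), and is dense in $V_1(G)$ because $H_\gamma$ is dense in $W$ and, in the step-$2$ Heisenberg-like geometry, $d(g,e_G) \to 0$ along $V_1(G)$ as the $W$-coordinate tends to $0$. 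Since $\mu_\gamma$ is a probability measure, hence $\sigma$-finite, Lemma~\ref{A-null-qi} applies with this $Q$ and yields $\mu_\gamma(g^{-1}E) = (L_g)_\ast\mu_\gamma(E) = 0$ for every Aronszajn null set $E$ and every $g \in G$. As $\gamma$ was arbitrary, $E$ is heat kernel null.

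For the converse I would argue the contrapositive. If $E$ is not Aronszajn null, then by Theorem~\ref{A-null-iff-GH-null} it is not Gauss--Haar null, so there exist a non-degenerate Gaussian $\gamma$ on $W$ and $g \in G$ with $(\gamma\times m)(g^{-1}E) > 0$. The key claim is then that $\gamma \times m \ll \mu_\gamma$; granting it, $\mu_\gamma(g^{-1}E) > 0$, so $E$ is not heat kernel null, completing the proof. Alternatively, this direction can be reproved by re-running Theorem~\ref{th:construction} with $\nu_k$ taken to be finite-dimensional hypoelliptic heat kernels on the Carnot subgroups $H_k$ (which, after dilation, satisfy \eqref{eq:approximation} by Lemma~\ref{l:ac-aoi}): the resulting limit measure $\nu$ with $(g\nu)(E) > 0$ is the law of the endpoint and iterated integral of a concatenation of independent Gaussian increments, its $W$-marginal is a non-degenerate Gaussian $\gamma$, and as in the proof of Theorem~\ref{GH-implies-A} one obtains $\nu \ll \gamma \times m$; combining this with $\gamma\times m \ll \mu_\gamma$ again gives $\mu_\gamma(g^{-1}E) > 0$. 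Either way, the same claim $\gamma\times m \ll \mu_\gamma$ is what is needed.

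The main obstacle is thus establishing $\gamma \times m \ll \mu_\gamma$ when $W$ is infinite-dimensional --- equivalently, that the hypoelliptic heat kernel has a strictly positive density with respect to the Gauss--Haar measure. Writing the solution of \eqref{sde} at time $1$ as $g_1 = (B_1, c_1)$ with $B_1 \sim \gamma$ and $c_1 \in \mathbf{C}$ the corresponding iterated stochastic integral, the $W$-marginal of $\mu_\gamma$ is exactly $\gamma$, so by disintegration the claim reduces to showing that for $\gamma$-almost every $x \in W$ the conditional law of $c_1$ given $B_1 = x$ dominates Lebesgue measure on the finite-dimensional space $\mathbf{C}$, i.e.\ has a density that is positive $m$-a.e. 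This is a statement about the finite-dimensional L\'evy-area data attached to $\omega$: it follows either from the explicit (Gaveau--Hulanicki-type) formula for the conditional characteristic function of $c_1$ --- which, because $\omega$ is surjective, is an everywhere positive, rapidly decaying function of the dual variable, hence the Fourier transform of a smooth strictly positive density --- or from a support-theorem argument, both of which are classical for hypoelliptic heat kernels on finite-dimensional Carnot groups and carry over since $\mathbf{C}$ is finite-dimensional. I expect verifying this regularity input (and locating its precise form in \cite{BaudoinGordinaMelcher2013, DriverEldredgeMelcher2016}) to be the only real work; the remainder of the argument is formal. Finally, nothing above uses that the step is exactly $2$ beyond the availability of these heat kernel facts, so the same scheme would extend the statement to any stratified Banach--Lie group for which $\mu_\gamma \sim \gamma \times m$ is known.
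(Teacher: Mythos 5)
Your proposal is correct and follows essentially the same route as the paper: the forward direction is exactly the paper's argument (quasi-invariance of the heat kernel under the Cameron--Martin subgroup, from \cite{BaudoinGordinaMelcher2013,DriverEldredgeMelcher2016}, fed into Lemma~\ref{A-null-qi}), and the converse is the paper's contrapositive via Theorem~\ref{GH-implies-A} combined with $\gamma \times m \ll \mu_\gamma$. The ``main obstacle'' you identify is precisely what the paper imports wholesale as Theorem~\ref{HK-GH} (namely \cite[Corollaries 4.7 and 4.8]{DriverEldredgeMelcher2016}, which give mutual absolute continuity of the hypoelliptic heat kernel and the Gauss--Haar measure), so no further work is needed there.
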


The proof follows from the developments of the previous section, together with some critical results on the regularity of the heat kernel.

\begin{theorem}[{\cite[Theorem 5.9]{BaudoinGordinaMelcher2013}, see also \cite[Section 7]{DriverEldredgeMelcher2016}}] \label{heis-qi} Let $\nu$ be the heat kernel measure induced by a Gaussian measure $\gamma$ on $W$, and let $H \subset W$ be the Cameron--Martin space for $\gamma$.  Define the \emph{Cameron--Martin subgroup} $G_{CM} = H \times C$, which is a dense scalable subgroup of $G$.  Then $\nu$ is quasi-invariant under left and right translation by $G_{CM}$, and in particular by $H$.
\end{theorem}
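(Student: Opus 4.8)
The plan is to reduce the statement to a uniform estimate on finite-dimensional approximating groups and then pass to the limit. First I would dispatch the structural claims. The set $G_{CM} = H \times \mathbf{C}$ is a subgroup because $\omega(H,H) \subseteq \mathbf{C}$ by \eqref{e.3.5}, so the law \eqref{e.3.2} closes within $H \times \mathbf{C}$; it is dense because $H$ is dense in $W$ (being the Cameron--Martin space of the non-degenerate $\gamma$) while $\mathbf{C}$ is the whole center; and it is scalable because $\delta_\lambda(h,c) = (\lambda h, \lambda^2 c) \in H \times \mathbf{C}$, as $H$ is a linear subspace. Next, pick an increasing sequence of finite-dimensional subspaces $W_n \subset H$ with $\overline{\bigcup_n W_n} = H$, and let $G_n = W_n \times \mathbf{C}$ carry the step-$2$ law obtained by restricting $\omega$. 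Each $G_n$ is a finite-dimensional Heisenberg-like group, the projected Gaussian $\gamma_n$ on $W_n$ induces a finite-dimensional hypoelliptic heat kernel $\nu_n = \operatorname{Law}(g^{(n)}_1)$ solving \eqref{sde} on $G_n$, and $\nu_n \to \nu$ weakly. Since the horizontal fields satisfy H\"ormander's condition already at step $2$, each $\nu_n$ has a smooth strictly positive density $p_n$ against Haar measure, and hence $\nu_n$ is quasi-invariant under every left and right translation, with Radon--Nikodym derivatives $J^{(n),R}_q(x) = p_n(xq)/p_n(x)$ and $J^{(n),L}_q(x) = p_n(q^{-1}x)/p_n(x)$ for $q \in G_n$.

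The crux is a uniform integrability bound: for each $q = (h,k)$ with $h \in W_N$, $k \in \mathbf{C}$, and each exponent $p \in (1,\infty)$, there should be a constant $C = C(p,|h|_H,|k|)$, independent of $n \geqslant N$, with
\[
  \|J^{(n),R}_q\|_{L^p(\nu_n)} \leqslant C \qquad\text{and}\qquad \|J^{(n),L}_q\|_{L^p(\nu_n)} \leqslant C.
\]
I would derive this from an integration-by-parts formula for $\nu_n$ in Cameron--Martin directions, combined with a generalized Bakry--\'Emery curvature--dimension inequality for the sub-Laplacian on $G_n$ whose constants are independent of $n$ (the step-$2$ bracket structure being preserved under the projections). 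Such an inequality controls $\nabla \log p_n$ in $L^p(\nu_n)$ uniformly in $n$. This is precisely the hard analytic input furnished by \cite[Theorem 5.9]{BaudoinGordinaMelcher2013} and \cite[Section 7]{DriverEldredgeMelcher2016}; every other step is soft.

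Granting the uniform bound, I pass to the limit. Fix $q \in G_{CM}$, so $q \in G_n$ for all large $n$, and view each $\nu_n$ as a measure on $G$ through $G_n \hookrightarrow G$. The uniform $L^p$ bound makes $\{J^{(n),R}_q\}_n$ bounded, hence relatively weakly compact, in $L^p$; a weak subsequential limit $J^R_q$ is identified as $\dd((R_q)_\ast\nu)/\dd\nu$ by testing against bounded continuous functions and using $\nu_n \to \nu$ together with $(R_q)_\ast\nu_n \to (R_q)_\ast\nu$, which yields $(R_q)_\ast\nu \ll \nu$; applying the same to $q^{-1}$ gives mutual absolute continuity, and the left case is identical. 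The center shift is in fact elementary: translation in the $\mathbf{C}$-coordinate preserves $\nu$ up to absolute continuity because, conditionally on the $W$-marginal $\gamma$, the area has a smooth strictly positive density while the Haar measure $m$ on $\mathbf{C}$ is translation invariant (cf.\ Lemma~\ref{exp-invariant} and Corollary~\ref{exp-integral}); so the essential content is quasi-invariance under $H \times \{0\}$.

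I expect the main obstacle to be exactly the uniform-in-$n$ control of the logarithmic derivatives $\nabla\log p_n$, since naive heat kernel bounds degenerate as $\dim G_n \to \infty$ and only a dimension-free curvature or integration-by-parts estimate survives the limit. An alternative special to step $2$ avoids approximation: use the explicit representation $g_1 = (B_1, \tfrac12\int_0^1 \omega(B_s,\circ\, dB_s))$ and realize a right translation by $(h,0)$ as an adapted Girsanov shift $B \mapsto B + \eta$ of the driving path, with $\eta$ a Cameron--Martin path over $H$; the obstruction there is that the area term is quadratic, so the drift moving the endpoint by exactly $(h,0)$ is path-dependent, forcing one to verify Novikov's condition and then absorb the induced center shift via the conditional-density positivity above before invoking the Cameron--Martin theorem (Theorem~\ref{cameron-martin}) on Wiener path space.
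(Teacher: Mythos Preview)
The paper does not prove this theorem; it is quoted from \cite{BaudoinGordinaMelcher2013} and \cite{DriverEldredgeMelcher2016} and used as a black-box input to Corollary~\ref{A-implies-HK}. Your outline is a fair summary of the strategy actually carried out in \cite{BaudoinGordinaMelcher2013}: finite-dimensional Heisenberg approximations $G_n$, a dimension-independent generalized curvature--dimension inequality for the sub-Laplacians yielding uniform $L^p$ control on the translated densities $J^{(n)}_q$, and then a weak-limit argument; you are also right that the uniform estimate is the only genuinely hard step and that the central shift is elementary.

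One place where your sketch is slightly glib is the limit passage. The measures $\nu_n$ are supported on the distinct subgroups $G_n \subset G$ and the functions $J^{(n),R}_q$ live on those subgroups, so ``testing against bounded continuous functions and using $\nu_n \to \nu$'' does not by itself identify a weak $L^p(\nu)$ limit as $\dd((R_q)_\ast\nu)/\dd\nu$. In the cited references this is handled by working with cylinder functions and expressing the finite-dimensional densities as conditional expectations, following the framework already set up in \cite{DriverGordina2008} for the elliptic case; the uniform $L^p$ bound then feeds into that machinery rather than into a bare weak-compactness argument. This is still ``soft'' in the sense that no new analytic estimate is needed, but it is more than a one-line remark.
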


By Lemma \ref{A-null-qi} it immediately follows that
\begin{corollary}\label{A-implies-HK}
If $E \subset G$ is Aronszajn null, then it is heat kernel null.
\end{corollary}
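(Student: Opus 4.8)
The plan is to read off Corollary~\ref{A-implies-HK} directly from Lemma~\ref{A-null-qi}, with the quasi-invariance hypothesis of that lemma supplied by the Cameron--Martin quasi-invariance theorem for the hypoelliptic heat kernel, Theorem~\ref{heis-qi}. So I would fix an arbitrary heat kernel measure $\nu$ on $G$, say the one induced by a non-degenerate Gaussian measure $\gamma$ on $W = V_1(G)$ with Cameron--Martin space $H \subseteq W$, and apply Lemma~\ref{A-null-qi} to this $\nu$ with the choice $Q := H$. The whole task then reduces to verifying the hypotheses of Lemma~\ref{A-null-qi}.

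Recall that $G$ is an infinite-dimensional Carnot group and it is nilpotent, being of step $2$ (it is an infinite-dimensional Heisenberg-like group, Example~\ref{ex.DGgroups}); and $\nu$, being a probability measure, is $\sigma$-finite. The set $Q = H$ is a linear subspace of $W = V_1(G)$; it is dense in $W$ because the Cameron--Martin space of a non-degenerate Gaussian measure is dense in the ambient Banach space, and it is closed under the dilations $\delta_t$ because $\delta_t$ acts on the first layer $V_1(G) = W$ by scalar multiplication by $t$ (see \eqref{eq:dilation}) and $H$ is a linear subspace. Finally, Theorem~\ref{heis-qi} asserts that $\nu$ is quasi-invariant under left and right translation by $G_{CM} = H \times \mathbf{C}$, and in particular under right translation by $H = Q$, which is the last hypothesis of Lemma~\ref{A-null-qi}.

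With the hypotheses in place, Lemma~\ref{A-null-qi} yields that for every Aronszajn null set $E \subseteq G$ one has $\nu(g^{-1}E) = [(L_g)_{\ast}\nu](E) = 0$ for every $g \in G$. Since $\nu$ was an arbitrary heat kernel measure, this says precisely that $E$ is heat kernel null, which is the claim. I do not expect a genuine obstacle at this stage: all the substantive work has already been carried out in Lemma~\ref{A-null-qi} (the abstract criterion turning quasi-invariance under a dense dilation-invariant subset of $V_1(G)$ into the vanishing of a measure on Aronszajn null sets) and in Theorem~\ref{heis-qi} (the quasi-invariance of the hypoelliptic heat kernel under its Cameron--Martin subgroup); the only thing to do here is match the hypotheses. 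The step-$2$ assumption on $G$ is needed for this direction only in order to have Theorem~\ref{heis-qi} at hand. The genuinely harder half of Theorem~\ref{A-iff-HK} is the converse implication, where one must show that a set which is \emph{not} Aronszajn null is charged by some heat kernel measure --- which one would approach by feeding heat kernel measures on the finite-dimensional subgroups $H_k$ into Theorem~\ref{th:construction} and then controlling the absolute continuity of the resulting weak limit, using the finite-dimensionality of $\mathbf{C}$ and regularity of the heat kernel.
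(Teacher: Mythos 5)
Your proof is correct and is exactly the paper's argument: the paper derives Corollary~\ref{A-implies-HK} immediately by combining the quasi-invariance of the heat kernel under its Cameron--Martin subgroup (Theorem~\ref{heis-qi}) with the abstract criterion of Lemma~\ref{A-null-qi}, applied with $Q = H$. Your verification of the hypotheses (nilpotency, $\sigma$-finiteness, density of $H$ in $V_1(G)$, closure of $H$ under dilation) is accurate and simply makes explicit what the paper leaves implicit.
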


For the converse direction, we make use of a result from \cite{DriverEldredgeMelcher2016} that was originally a lemma for a proof of Theorem \ref{heis-qi}, but is also potentially useful in its own right.

\begin{theorem}[{\cite[Corollaries 4.7 and 4.8]{DriverEldredgeMelcher2016}}]\label{HK-GH} Let $\nu$ be the heat kernel measure induced by a Gaussian measure $\gamma$ on $W$.  Then $\nu$ is absolutely continuous to the Gauss--Haar measure $\gamma \times m$.  Moreover, the density is positive almost everywhere, so that in fact $\nu$ and $\gamma \times m$ are mutually absolutely continuous.
\end{theorem}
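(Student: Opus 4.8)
The plan is to exploit the fact that, in the step $2$ setting, the hypoelliptic Brownian motion can be solved explicitly and its endpoint is completely described by the terminal value of the driving Brownian motion together with a stochastic area. Concretely, solving \eqref{sde} for a Heisenberg-like group gives
\[
  g_1 = \left( B_1,\ \tfrac{1}{2}\int_0^1 \omega(B_s, \circ\, dB_s)\right) \in W \times \mathbf{C},
\]
and because $\omega$ is skew-symmetric the Stratonovich and It\^o forms of the area agree. Writing $A := \tfrac{1}{2}\int_0^1 \omega(B_s, dB_s)$ for the $\mathbf{C}$-valued area functional, the projection satisfies $\pi_{\ast}\nu = \operatorname{Law}(B_1) = \gamma$, exactly the driving Gaussian. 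Thus, by the disintegration theorem, $\nu = \int_W \nu_w\,\gamma(dw)$, where $\nu_w$ is the conditional law of $A$ given $B_1 = w$; since the $W$-marginals of $\nu$ and of $\gamma \times m$ coincide, the assertion $\nu \ll \gamma \times m$ with positive density is \emph{equivalent} to the statement that for $\gamma$-a.e.\ $w \in W$ the conditional law $\nu_w$ is mutually absolutely continuous with Lebesgue measure $m$ on the finite-dimensional space $\mathbf{C}$.

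First I would establish the conditional absolute continuity. Conditioning on $B_1 = w$ turns $(B_s)_{0 \le s \le 1}$ into a Brownian bridge, and $A$ becomes a $\mathbf{C}$-valued functional of this bridge that is quadratic in an underlying Gaussian field. Expanding in an orthonormal basis $\{e_i\}$ of the Cameron--Martin space $H$ writes $A$ as a convergent series of planar L\'evy areas weighted by the vectors $\omega(e_i,e_j) \in \mathbf{C}$; since $\omega$ is surjective these weights span $\mathbf{C}$. I would then invoke the Bouleau--Hirsch absolute-continuity criterion from Malliavin calculus: a finite-dimensional Wiener functional whose Malliavin covariance matrix is almost surely invertible has a law absolutely continuous with respect to Lebesgue measure. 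The covariance matrix of $A$ along the bridge is a nonnegative quadratic form built from the $\omega(e_i,e_j)$, and its nondegeneracy is precisely the surjectivity of $\omega$; the finite-dimensionality of $\mathbf{C}$ is what makes this a genuine finite matrix even though $W$ is infinite-dimensional. An equivalent and perhaps more transparent route is Fourier analysis: because $A$ is quadratic Gaussian, the conditional characteristic function $\xi \mapsto \mathbb{E}[e^{i\langle \xi, A\rangle} \mid B_1 = w]$ can be written as an explicit product over the spectrum of the antisymmetric operators $\langle \xi, \omega(\cdot,\cdot)\rangle$, and it decays rapidly in $\xi$, so Fourier inversion produces a bounded continuous conditional density.

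It remains to upgrade absolute continuity to \emph{mutual} absolute continuity, i.e.\ positivity of the density a.e.\ on $\mathbf{C}$. Here I would appeal to the Stroock--Varadhan support theorem for the hypoelliptic diffusion $g_t$: the topological support of $\nu_w$ equals the closure of the set of area values attainable by smooth control paths in $W$ starting at $0$ and ending at $w$. Since $\omega$ is surjective and $\mathbf{C}$ is connected, one can perturb any admissible control by a small loop to shift the attained area in an arbitrary direction of $\mathbf{C}$, so the reachable set is all of $\mathbf{C}$; combined with the continuity of the density produced above, this forces strict positivity a.e. Integrating over $\gamma(dw)$ then yields $\nu \ll \gamma \times m$ with an a.e.-positive density, whence $\nu$ and $\gamma \times m$ are mutually absolutely continuous.

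The main obstacle I anticipate is carrying out the estimates in the conditional (bridge) setting \emph{uniformly enough in $w$} to conclude absolute continuity for $\gamma$-a.e.\ $w$ rather than for a single $w$: one must control the Malliavin covariance (equivalently the decay rate of the conditional characteristic function) along the bridge as the endpoint varies, while verifying that the infinite series defining $A$ converges in the relevant $L^2$ and Malliavin--Sobolev norms. In finite dimensions this is routine, but the infinite-dimensionality of $W$ requires care that the nondegeneracy constant does not degenerate and that the integration-by-parts weights remain integrable; this is exactly the technical content carried out in \cite{DriverEldredgeMelcher2016}.
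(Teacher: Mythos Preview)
The paper does not prove this theorem at all: it is quoted directly from \cite[Corollaries~4.7 and~4.8]{DriverEldredgeMelcher2016} and used as a black box to derive Corollary~\ref{HK-implies-A}. There is therefore no ``paper's own proof'' to compare your proposal against.

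Your sketch is a reasonable outline of the strategy actually carried out in \cite{DriverEldredgeMelcher2016}: write $g_1 = (B_1, A)$ with $A$ the stochastic area, disintegrate $\nu$ over its $W$-marginal $\pi_\ast \nu = \gamma$, and show that the conditional law of $A$ given $B_1 = w$ has a continuous strictly positive density on $\mathbf{C}$ for $\gamma$-a.e.\ $w$. The main technical content, as you correctly anticipate at the end, is making the Fourier/Malliavin estimates work uniformly enough in the bridge endpoint $w$, in the infinite-dimensional setting where only surjectivity of $\omega$ and finite-dimensionality of $\mathbf{C}$ are available; this is precisely what \cite{DriverEldredgeMelcher2016} does, and your proposal does not fill in those details but only points to them. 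As a proof \emph{sketch} it is accurate; as a self-contained proof it is not, but neither does the present paper attempt one.
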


\begin{corollary}\label{HK-implies-A}
  If $E \subset G$ is heat kernel null, then it is Aronszajn null.
\end{corollary}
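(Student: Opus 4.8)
The plan is to obtain this statement as a purely formal consequence of the two facts already established in this section, namely Theorem~\ref{HK-GH} and Theorem~\ref{GH-implies-A}; no genuinely new construction is required. Concretely, I would prove the corollary by first reducing the hypothesis ``heat kernel null'' to ``Gauss--Haar null,'' and then quoting Theorem~\ref{GH-implies-A}.

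First I would show that every heat kernel null set is Gauss--Haar null. Let $E \subset G$ be heat kernel null, and fix an arbitrary non-degenerate Gaussian measure $\gamma$ on $W$. Let $\nu$ be the hypoelliptic heat kernel measure on $G$ induced by $\gamma$, that is, $\nu = \operatorname{Law}(g_1)$ for the process solving \eqref{sde}. By the definition of heat kernel null, $\nu(g^{-1}E) = 0$ for every $g \in G$. Now I would invoke Theorem~\ref{HK-GH}, which asserts that $\nu$ and the Gauss--Haar measure $\gamma \times m$ are mutually absolutely continuous; in particular $\nu(g^{-1}E)=0$ forces $(\gamma \times m)(g^{-1}E) = 0$ for every $g \in G$. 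Since $\gamma$ was an arbitrary non-degenerate Gaussian measure on $W$, this is exactly the statement that $E$ is Gauss--Haar null in the sense of Definition~\ref{def:gauss-haar}.

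The proof then concludes simply by applying Theorem~\ref{GH-implies-A}: a Gauss--Haar null set is Aronszajn null. Combined with the reduction of the previous paragraph, this gives that $E$ is Aronszajn null, as claimed. I do not anticipate any real obstacle here; the only point worth flagging is that Theorem~\ref{HK-GH} (drawn from \cite{DriverEldredgeMelcher2016}) is stated for infinite-dimensional Heisenberg-like, i.e.\ step~$2$, groups, which is precisely the standing hypothesis of this part of the section, so it applies without modification. Together with Corollary~\ref{A-implies-HK}, this completes the proof of Theorem~\ref{A-iff-HK}.
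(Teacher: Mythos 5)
Your proposal is correct and is essentially the paper's own argument: the paper proves the contrapositive (not Aronszajn null $\Rightarrow$ not heat kernel null) using exactly the same two ingredients, Theorem~\ref{GH-implies-A} and the absolute continuity $\gamma \times m \ll \nu$ from Theorem~\ref{HK-GH}, whereas you phrase the identical reduction in the direct direction. The observation that the step-$2$ standing hypothesis makes Theorem~\ref{HK-GH} applicable is also the right point to flag.
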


\begin{proof}
  Suppose $E$ is not Aronszajn null.  By Theorem \ref{GH-implies-A}, there is a Gaussian measure $\gamma$ on $W$  and some $g \in G$ such that $(\gamma \times m)(g^{-1}E) > 0$.  Let $\nu$ be the heat kernel measure induced by $\gamma$.  By Theorem \ref{HK-GH}, we have $\gamma \times m \ll \nu$, so that $\nu(g^{-1}E) > 0$ as well.
\end{proof}

Combining Corollaries \ref{A-implies-HK} and \ref{HK-implies-A}, we have proved Theorem \ref{A-iff-HK}.

\section{Open questions} \label{s.Questions}

To conclude the paper, we collect some questions and topics for further research.

\subsection{Other notions of null sets}
  Having shown the equivalence of two notions of null sets, one might consider others.
\begin{question}
 What other \emph{interesting} left-invariant $\sigma$-ideals of Borel sets can one construct on an infinite-dimensional Carnot group, and how do they relate to the Aronszajn / filtration / CAC null $\sigma$-ideal $\mathcal{A}$ studied here?
\end{question}
By \emph{interesting} we mean that the $\sigma$-ideal ought to be defined using the scalable group structure in a meaningful way, unlike for instance the $\sigma$-ideal of countable sets.

\subsection{Stratified Banach-Lie groups}

It is still unclear when infinite dimensional Carnot groups are or are not stratified Banach-Lie groups.  The assumption in Theorem \ref{th:IDCG-Banach} that the commutator subgroup is locally compact seems fairly restrictive.
\begin{question}
  Under what conditions is an nilpotent infinite dimensional Carnot group $G$ isomorphic to a stratified Banach-Lie group?
\end{question}
For instance, perhaps it is sufficient that $[G,G]$ be homeomorphic to a Banach space.  The example of a non-Banach manifold infinite dimensional Carnot groups in \cite{LeDonneLiMoisala2021} has a commutator subgroup that is not homeomorphic to a Banach space.

\subsection{Aronszajn null sets and their compact subsets}\label{aronszajn-tight}

As an infinite-dimensional Carnot group $G$ is a Polish space, every Borel probability measure on $G$ is Radon \cite[Theorem 7.1.7]{BogachevMeasureTheory}, so it follows that a Borel set $E \subset G$ is CAC null if and only if every compact subset of $E$ is CAC null.  Thus, Theorem \ref{th:aron-cac} has the following interesting corollary:
\begin{corollary}\label{aronszajn-tight-cor}
  A Borel set $E \subset G$ is Aronszajn null if and only if every compact subset of $E$ is Aronszajn null.
\end{corollary}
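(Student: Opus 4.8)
The plan is to deduce both implications from Theorem~\ref{th:aron-cac}, which identifies the Aronszajn null sets with the CAC null sets, together with the inner regularity of CAC measures. The forward implication is immediate: if $E$ is Aronszajn null, then since the Aronszajn null sets form a $\sigma$-ideal and any compact $K \subseteq E$ is Borel, $K$ is itself Aronszajn null. So the content lies entirely in the converse.

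For the converse, suppose every compact subset of $E$ is Aronszajn null; by Theorem~\ref{th:aron-cac} it suffices to show that $E$ is CAC null. Fix an arbitrary Carnot-spanning set $\mathcal{X}$ and an arbitrary $\mu \in \operatorname{CAC}(\mathcal{X})$; we must show $\mu(E) = 0$. Here I would use that $G$, being a complete separable metric group, is a Polish space, and that every Borel probability measure on a Polish space is Radon, hence inner regular with respect to compact sets \cite[Theorem 7.1.7]{BogachevMeasureTheory}. Every measure in $\operatorname{CAC}(\mathcal{X})$ is a Borel probability measure (it is a left translate of a weak limit of convolutions of probability measures), so $\mu(E) = \sup\{\mu(K) : K \subseteq E \text{ compact}\}$. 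For each such $K$, the hypothesis gives that $K$ is Aronszajn null, hence CAC null by Theorem~\ref{th:aron-cac}, and in particular $\mu(K) = 0$. Taking the supremum yields $\mu(E) = 0$. Since $\mathcal{X}$ and $\mu$ were arbitrary, $E$ is CAC null, and therefore Aronszajn null by Theorem~\ref{th:aron-cac} once more.

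There is no real obstacle in the argument; the only point worth emphasizing is why the corollary is not circular. The easy direction uses only that the Aronszajn null sets form a $\sigma$-ideal, whereas the nontrivial direction genuinely requires the measure-theoretic description: it is precisely because ``Aronszajn null'' is equivalent to ``annihilated by every CAC measure,'' and CAC measures are Radon, that compact subsets suffice to detect the property. Equivalently, one could first isolate the intermediate statement that a Borel set is CAC null if and only if each of its compact subsets is CAC null---which is exactly the inner-regularity argument above applied uniformly over the family of CAC measures---and then transport it across the equivalence of Theorem~\ref{th:aron-cac} on both sides.
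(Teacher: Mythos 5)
Your proposal is correct and follows essentially the same route as the paper: the authors likewise observe that $G$ is Polish, so every Borel probability measure (in particular every CAC measure) is Radon and hence inner regular, which makes ``CAC null'' detectable on compact subsets, and then they transport the statement across the equivalence of Theorem~\ref{th:aron-cac}. Your write-up just makes explicit the trivial forward direction and the intermediate ``compactly CAC null $\Rightarrow$ CAC null'' step that the paper states in one sentence.
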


If Corollary \ref{aronszajn-tight-cor} could be proved \emph{a priori}, then in proving Theorem \ref{th:construction}, we could assume without loss of generality that the set $E$ is compact.  In this case, the proof of Theorem \ref{th:construction} could be simplified somewhat, and the technical Lemma \ref{borel-decomp} would not be needed.

\begin{question}
  Is there a self-contained proof of Corollary \ref{aronszajn-tight-cor}?
\end{question}

This would be interesting even in the setting of Banach spaces.

\subsection{Heat kernel null sets in higher step}

The only reason that Theorem \ref{A-iff-HK} is restricted to infinite-dimensional Heisenberg--like groups is that the regularity results Theorems \ref{heis-qi} and \ref{HK-GH} have, to date, only been proven in that setting.  However, we  conjectured that they should hold more broadly; for instance, in every stratified Banach-Lie group with finite-dimensional commutator subgroup (the setting of Section \ref{s.HM}).  We do not know of any fundamental obstructions to such a result; it is mainly that working in step 3 or higher introduces technical difficulties and much more extensive computation.  As such, we pose the following conjecture.
\begin{conjecture}\label{conj.7.8}
    In every stratified Banach-Lie group $G$ with finite-dimensional commutator subgroup, the class of Aronszajn null sets and the class of heat kernel null sets coincide.
\end{conjecture}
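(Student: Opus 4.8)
The plan is to prove the conjecture by reducing it, exactly as in the proof of Theorem~\ref{A-iff-HK}, to the two heat-kernel regularity statements that are currently known only in step $2$. Write $G = W \times \mathbf{C}$ as in Section~\ref{s.HM}, with $\mathbf{C} = [G,G]$ finite-dimensional and $G$ of step $s$, fix a non-degenerate Gaussian measure $\gamma$ on $W$ with Cameron--Martin space $H$, let $g_t$ solve \eqref{sde}, and set $\nu = \operatorname{Law}(g_1)$. The goal is to establish: (i) $\nu$ is quasi-invariant under left and right translation by $H$ (equivalently, by $G_{CM} = H \times \mathbf{C}$); and (ii) $\nu$ and the Gauss--Haar measure $\gamma \times m$ are mutually absolutely continuous. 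Given (i), Lemma~\ref{A-null-qi}, which already applies to any nilpotent infinite-dimensional Carnot group, yields that every Aronszajn null set is heat kernel null. Given (ii), the contrapositive argument of Corollary~\ref{HK-implies-A}, combined with Theorem~\ref{GH-implies-A} (already proved for general stratified Banach-Lie groups with finite-dimensional commutator), yields that every heat kernel null set is Aronszajn null. So the conjecture is equivalent to (i) and (ii).

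For (i), I would follow the finite-dimensional-approximation strategy of \cite{DriverEldredgeMelcher2016}. Using the BCDH formula one solves \eqref{sde} explicitly: the $W$-component of $g_1$ is $B_1 \sim \gamma$, and the $\mathbf{C}$-component is a finite sum of iterated Stratonovich integrals of $B$ of orders $2,\dots,s$, valued in the finite-dimensional space $\mathbf{C}$. Fix a dense sequence $\{x_i\} \subset H$, let $H_n = \langle x_1,\dots,x_n\rangle$ be the finite-dimensional Carnot subgroups of the associated filtration, and $\nu_n$ the corresponding finite-dimensional hypoelliptic heat kernels; classical finite-dimensional sub-Riemannian heat-kernel theory gives quasi-invariance of $\nu_n$ under $H_n \cap G_{CM}$ with an explicit Radon--Nikodym density $J_n^h$ for $h \in H$. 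The analytic heart is then an $L^p(\nu)$ bound on $J_n^h$ uniform in $n$ (an integration-by-parts/Malliavin computation of Driver--Gross type), which gives uniform integrability and allows one to pass to the limit and identify $d((R_h)_*\nu)/d\nu$. Left quasi-invariance follows either from the same computation or from right quasi-invariance together with continuity of inversion.

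For (ii), the inclusion $\nu \ll \gamma \times m$ is obtained by conditioning on the $W$-component: one must show that, conditionally on $B_1 = x$ for $\gamma$-a.e.\ $x$, the $\mathbf{C}$-valued iterated-integral functional has a density with respect to Lebesgue measure $m$ on $\mathbf{C}$. This is a hypoellipticity statement: it should follow from a Malliavin-calculus argument that the conditional Malliavin covariance matrix of the $\mathbf{C}$-component is a.s.\ invertible, reducing to the generating horizontal directions and invoking the bracket-generating condition encoded in the stratification. The reverse inclusion $\gamma \times m \ll \nu$ (positivity of the density) can be derived, as in \cite{DriverEldredgeMelcher2016}, from the quasi-invariance in (i) together with the density of $G_{CM}$ and the fact that $\nu$ has full topological support, or from a finite-dimensional support theorem carried through the approximation scheme.

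The main obstacle is the uniform $L^p(\nu)$ estimate on the densities $J_n^h$ in step (i): in step $2$ these are controlled via the explicit quadratic (Gaussian-Lévy-area) structure of the second layer, whereas in step $s \geqslant 3$ the commutator component involves genuinely higher iterated integrals and the integration-by-parts and moment estimates become much heavier — precisely the technical difficulty flagged in the text. A secondary obstacle is making the conditional non-degeneracy in (ii) quantitative enough to survive the infinite-dimensional limit; here one must exploit that $\mathbf{C}$ is finite-dimensional and spanned by iterated brackets of $W$-directions lying in the dense Cameron--Martin subspace $H$.
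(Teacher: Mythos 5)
The statement you are addressing is stated in the paper as a \emph{conjecture}, not a theorem: the authors explicitly leave it open, explaining that the only missing ingredients are higher-step analogues of the two regularity results Theorem~\ref{heis-qi} (quasi-invariance of the heat kernel under translation by the Cameron--Martin subgroup) and Theorem~\ref{HK-GH} (mutual absolute continuity of the heat kernel with the Gauss--Haar measure), which to date are proven only for step-$2$ infinite-dimensional Heisenberg-like groups. Your reduction of the conjecture to statements (i) and (ii) is correct and is exactly the reduction the paper itself has in mind: given (i), Lemma~\ref{A-null-qi} gives that Aronszajn null implies heat kernel null, and given (ii), Theorem~\ref{GH-implies-A} together with the argument of Corollary~\ref{HK-implies-A} gives the converse. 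Both halves of that reduction are already established in the paper in the generality you need.

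The genuine gap is that you do not prove (i) or (ii); you only describe a strategy and then explicitly flag its analytic core as an obstacle. The uniform-in-$n$ $L^p(\nu)$ bound on the finite-dimensional Radon--Nikodym densities $J_n^h$, and the almost-sure invertibility of the conditional Malliavin covariance of the $\mathbf{C}$-component in higher step, are precisely the estimates that have not been carried out for $s \geqslant 3$ and that keep this statement a conjecture. In step $2$ the second-layer component of $g_1$ is a quadratic (L\'evy-area type) functional of the Gaussian path, and the known proofs in \cite{BaudoinGordinaMelcher2013, DriverEldredgeMelcher2016} lean heavily on that explicit structure; for higher iterated integrals no such uniform estimates are currently available, and nothing in your outline supplies them. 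So what you have is a correct and useful reduction plus a plausible program, but not a proof: the conjecture remains open exactly at the points you identify as ``obstacles.''
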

Indeed, it is even possible that the assumption of finite-dimensional commutator subgroup is unnecessary, or even the assumption of nilpotency, but these cases seem further out of reach.

\bibliographystyle{plain}
\bibliography{EGLLD}

\begin{thebibliography}{10}

\bibitem{BaudoinGordinaMelcher2013}
Fabrice Baudoin, Maria Gordina, and Tai Melcher.
\newblock Quasi-invariance for heat kernel measures on sub-{R}iemannian
  infinite-dimensional {H}eisenberg groups.
\newblock {\em Trans. Amer. Math. Soc.}, 365(8):4313--4350, 2013.

\bibitem{BillingsleyConvPMBook1999}
Patrick Billingsley.
\newblock {\em Convergence of probability measures}.
\newblock Wiley Series in Probability and Statistics: Probability and
  Statistics. John Wiley \& Sons, Inc., New York, second edition, 1999.
\newblock A Wiley-Interscience Publication.

\bibitem{BogachevMeasureTheory}
V.~I. Bogachev.
\newblock {\em Measure theory. {V}ol. {I}, {II}}.
\newblock Springer-Verlag, Berlin, 2007.

\bibitem{BogachevGaussianMeasures}
Vladimir~I. Bogachev.
\newblock {\em Gaussian measures}, volume~62 of {\em Mathematical Surveys and
  Monographs}.
\newblock American Mathematical Society, Providence, RI, 1998.

\bibitem{BonfiglioliLanconelliUguzzoniBook}
A.~Bonfiglioli, E.~Lanconelli, and F.~Uguzzoni.
\newblock {\em Stratified {L}ie groups and potential theory for their
  sub-{L}aplacians}.
\newblock Springer Monographs in Mathematics. Springer, Berlin, 2007.

\bibitem{bourbaki-evt}
Nicolas Bourbaki.
\newblock {\em Espaces vectoriels topologiques. {C}hapitres 1 \`a 5}.
\newblock Masson, Paris, new edition, 1981.
\newblock \'{E}l\'{e}ments de math\'{e}matique. [Elements of mathematics].

\bibitem{Breuillard2014}
Emmanuel Breuillard.
\newblock Geometry of locally compact groups of polynomial growth and shape of
  large balls.
\newblock {\em Groups Geom. Dyn.}, 8(3):669--732, 2014.

\bibitem{Csornyei1999a}
Marianna Csornyei.
\newblock Aronszajn null and {G}aussian null sets coincide.
\newblock {\em Israel J. Math.}, 111:191--201, 1999.

\bibitem{DriverEldredgeMelcher2016}
Bruce~K. Driver, Nathaniel Eldredge, and Tai Melcher.
\newblock Hypoelliptic heat kernels on infinite-dimensional {H}eisenberg
  groups.
\newblock {\em Trans. Amer. Math. Soc.}, 368(2):989--1022, 2016.

\bibitem{DriverGordina2008}
Bruce~K. Driver and Maria Gordina.
\newblock Heat kernel analysis on infinite-dimensional {H}eisenberg groups.
\newblock {\em J. Funct. Anal.}, 255(9):2395--2461, 2008.

\bibitem{FollandHABook}
Gerald~B. Folland.
\newblock {\em A course in abstract harmonic analysis}.
\newblock Studies in Advanced Mathematics. CRC Press, Boca Raton, FL, 1995.

\bibitem{Gordina2000b}
Maria Gordina.
\newblock Heat kernel analysis and {C}ameron-{M}artin subgroup for infinite
  dimensional groups.
\newblock {\em J. Funct. Anal.}, 171(1):192--232, 2000.

\bibitem{Gordina2000a}
Maria Gordina.
\newblock Holomorphic functions and the heat kernel measure on an
  infinite-dimensional complex orthogonal group.
\newblock {\em Potential Anal.}, 12(4):325--357, 2000.

\bibitem{Gordina2005b}
Maria Gordina.
\newblock Heat kernel analysis on infinite dimensional groups.
\newblock In {\em Infinite dimensional harmonic analysis III}, pages 71--81.
  World Sci. Publ., Hackensack, NJ, 2005.

\bibitem{Gordina2017}
Maria Gordina.
\newblock {\em An application of a functional inequality to quasi-invariance in
  infinite dimensions}, pages 251--266.
\newblock Springer New York, New York, NY, 2017.

\bibitem{Gross1967b}
Leonard Gross.
\newblock Abstract {W}iener spaces.
\newblock In {\em Proc. {F}ifth {B}erkeley {S}ympos. {M}ath. {S}tatist. and
  {P}robability ({B}erkeley, {C}alif., 1965/66), {V}ol. {II}: {C}ontributions
  to {P}robability {T}heory, {P}art 1}, pages 31--42. Univ. California Press,
  Berkeley, Calif., 1967.

\bibitem{Guivarch1973a}
Yves Guivarc'h.
\newblock Croissance polynomiale et p\'eriodes des fonctions harmoniques.
\newblock {\em Bull. Soc. Math. France}, 101:333--379, 1973.

\bibitem{LeDonne2017}
Enrico Le~Donne.
\newblock A primer on {C}arnot groups: homogenous groups,
  {C}arnot-{C}arath\'{e}odory spaces, and regularity of their isometries.
\newblock {\em Anal. Geom. Metr. Spaces}, 5(1):116--137, 2017.

\bibitem{LeDonneGolo_Nicolussi2021}
Enrico Le~Donne and Sebastiano~Nicolussi Golo.
\newblock Metric {L}ie groups admitting dilations.
\newblock {\em Ark. Mat.}, 59(1):125--163, 2021.

\bibitem{LeDonneLiMoisala2021}
Enrico Le~Donne, Sean Li, and Terhi Moisala.
\newblock Infinite-dimensional {C}arnot groups and {G}\^{a}teaux
  differentiability.
\newblock {\em J. Geom. Anal.}, 31(2):1756--1785, 2021.

\bibitem{LeDonneRigot2016}
Enrico Le~Donne and S\'{e}verine Rigot.
\newblock Remarks about the {B}esicovitch covering property in {C}arnot groups
  of step 3 and higher.
\newblock {\em Proc. Amer. Math. Soc.}, 144(5):2003--2013, 2016.

\bibitem{LindenstraussPreiss2001}
J.~Lindenstrauss and D.~Preiss.
\newblock Fr\'{e}chet differentiability of {L}ipschitz functions (a survey).
\newblock In {\em Recent progress in functional analysis ({V}alencia, 2000)},
  volume 189 of {\em North-Holland Math. Stud.}, pages 19--42. North-Holland,
  Amsterdam, 2001.

\bibitem{MagnaniRajala2014}
Valentino Magnani and Tapio Rajala.
\newblock Radon-{N}ikodym property and area formula for {B}anach homogeneous
  group targets.
\newblock {\em Int. Math. Res. Not. IMRN}, 2014(23):6399--6430, 2014.

\bibitem{Melcher2009a}
Tai Melcher.
\newblock Heat kernel analysis on semi-infinite {L}ie groups.
\newblock {\em J. Funct. Anal.}, 257(11):3552--3592, 2009.

\bibitem{ParthasarathyBook2005}
K.~R. Parthasarathy.
\newblock {\em Probability measures on metric spaces}.
\newblock AMS Chelsea Publishing, Providence, RI, 2005.
\newblock Reprint of the 1967 original.

\bibitem{Phelps1978}
R.~R. Phelps.
\newblock Gaussian null sets and differentiability of {L}ipschitz map on
  {B}anach spaces.
\newblock {\em Pacific J. Math.}, 77(2):523--531, 1978.

\bibitem{Polya1923}
Georg P\'{o}lya.
\newblock Herleitung des {G}au\ss schen {F}ehlergesetzes aus einer
  {F}unktionalgleichung.
\newblock {\em Math. Z.}, 18(1):96--108, 1923.

\bibitem{SrivastavaBook1998}
S.~M. Srivastava.
\newblock {\em A course on {B}orel sets}, volume 180 of {\em Graduate Texts in
  Mathematics}.
\newblock Springer-Verlag, New York, 1998.

\end{thebibliography}

\end{document}